\newtheorem{thm}{Theorem}[section]
\newtheorem{lemma}[thm]{Lemma}
\newtheorem{corollary}[thm]{Corollary}
\newtheorem*{prop*}{Proposition}
\newtheorem*{lemma*}{Lemma}
\newtheorem{conv}[thm]{Convention}
\newtheorem{prop}[thm]{Proposition}
\theoremstyle{definition}
\newtheorem{definition}[thm]{Definition} 
\newtheorem{defns}[thm]{Definitions}
\newtheorem{notn}[thm]{Notation}
\newtheorem{ex}[thm]{Example}
\newtheorem{review}[thm]{Review}
\newtheorem{remark}[thm]{Remark}
\newcounter{enumitemp}
\newcommand\pref[1]{(\ref{#1})}
\DeclareMathOperator{\Fix}{Fix}
\DeclareMathOperator{\Per}{Per}
\DeclareMathOperator{\PA}{P}
\DeclareMathOperator{\rk}{rank}
\DeclareMathOperator{\lin}{Lin}
\newcommand{\Z}{{\mathbb Z}}
\newcommand{\cT}{{\mathcal T}}
\newcommand{\f}{F_n}
\newcommand{\E}{{\mathcal E}}
\newcommand{\V}{\mathcal V}
\newcommand{\W}{\mathcal W}
\newcommand{\oone}{\phi} 
\newcommand{\aone}{\Phi}
\newcommand{\athree}{\Theta}
\newcommand{\Out}{\mathsf{Out}}
\newcommand{\Aut}{\mathsf{Aut}}
\newcommand{\Inn}{\mathsf{Inn}}
\newcommand{\ffs}{free factor system}
\newcommand{\F}{\mathcal F}
\newcommand{\rtt}{relative train track map}
\def\L{\mathcal L}
\newcommand{\A}{\mathcal A}
\newcommand{\fG} {f : G \to G}
\newcommand{\ti} {\tilde}
\newcommand{\iNp} {indivisible Nielsen path}
\newcommand{\filt}{\emptyset = G_0 \subset G_1 \subset \ldots  \subset G_N = G}
\newcommand{\eg}{EG}
\newcommand{\noneg}{NEG}
\newcommand{\ipNp}{indivisible periodic Nielsen path}
\newcommand{\ct}{\textup{CT}}
\newcommand{\K}{\mathcal K}
\newcommand\restrict{\bigm |}
\def\sig{\eta}
\def\CS{CS}
\def\twistpath{twist path}
\def\stallings{{\hat S}} % was \def\stallings{S}
\def\pstallings{S} %was \def\pstallings{PS}
\def\circuit{\gamma}
\def\trans{T}
\def\marking{\mu}
\def\cR{\mathcal R}
\newcommand{\hK} {h : K \to K}
\def\AS{\pstallings}
\def\CS{\pstallings}
\def \zt{Z_\cT}
\newcommand{\urn}[1]{\uppercase\expandafter{\romannumeral#1}}
\title{Algorithmic constructions of relative train track maps and CTs} 
\author{Mark Feighn\thanks{This material is based upon work supported by the
National Science Foundation under Grant No.~DMS-1406167 and also under Grant~No.~DMS-14401040 while the author was in residence at the Mathematical Sciences Research Institute in Berkeley, California, during the Fall 2016 semester.}\and
Michael Handel\thanks{This material is based upon work supported by
the National Science Foundation under Grant No.~DMS-1308710 and by PSC-CUNY  grants in Program Years 46 and 47.}}
\begin{document}
\maketitle
\begin{abstract}  
Building on \cite{bh:tracks, bfh:tits1}, we proved in \cite{fh:recognition} that every element $\psi$ of the outer automorphism group of a finite rank free group is represented by a particularly useful relative train track map. In the case that $\psi$ is rotationless (every outer automorphism has a rotationless power), we showed that there is a type of relative train track map, called a \ct, satisfying additional properties. The main result of this paper is that the constructions of these relative train tracks can be made algorithmic.  A key step in our argument is proving that it is algorithmic to check if an inclusion $\F \sqsubset \F'$ of $\phi$-invariant free factor systems is reduced.  We also give applications of the main result.
\end{abstract}
\tableofcontents

\section{Introduction} \label{s:intro}
An automorphism $\Phi$ of the rank $n$ free group $F_n$ is typically represented by giving its effect on a basis of $F_n$. Equivalently, if we identify the edges of the rose $R_n$ (the graph with one vertex $*$ and $n$ edges) with basis elements of $F_n$, then $\Phi$ may be represented as a self homotopy equivalence of $R_n$ preserving $*$. In this paper, we are interested in outer automorphisms, that is we are interested in elements of  the quotient $$\Out(F_n):=\Aut(F_n)/\Inn(F_n)$$ of the automorphism group $\Aut(F_n)$ of $F_n$ by its subgroup $\Inn(F_n)$ of inner automorphisms. Since outer automorphisms are defined only up to conjugation, the base point loses its special role and $\phi\in\Out(F_n)$ is typically represented as a self homotopy equivalence of $R_n$ that is not required to fix $*$. More generally it is also typical to take advantage of the flexibility gained by representing $\phi$ as a self homotopy equivalence $\fG$ of a marked graph $G$, i.e.\ a graph $G$ equipped with a homotopy equivalence $\marking: R_n\to G$.   The marking identifies the fundamental group of $G$ with $F_n$, but only up to conjugation. In an analogy with linear maps, representing $\phi$ as $\fG$ corresponds to writing  a linear map in terms of a particular basis. 

In \cite{bh:tracks}, Bestvina-Handel showed that every element of $\Out(F_n)$ has a representation as a relative train track map, that is a representation $\fG$ as above but with strong properties. In the analogy with linear maps, a relative train track map corresponds to a normal form. \cite{bh:tracks} goes on to use relative train track maps to solve the Scott conjecture: the rank of the fixed subgroup of an element of $\Aut(F_n)$ is at most $n$. Further applications spurred the further development of the theory of relative train tracks. See for example, \cite{bfh:tits1} where improved relative train tracks (IRTs) were used to show that $\Out(F_n)$ satisfies the Tits alternative or \cite{fh:abeliansubgroups} where completely split relative train tracks (\ct s) \cite{fh:recognition} were used to classify abelian subgroups of $\Out(F_n)$. 

\ct s are relative train tracks that were designed to satisfy the properties that have proven most useful (to us) for investigating elements of $\Out(F_n)$. By convention,  the identity map on the rose with one petal is a \ct\ representing the trivial element of $\Out(F_1)$. For the definition when $n \ge 2$, see Section~\ref{the definition}. Not every $\phi\in\Out(F_n)$ is represented by a \ct , but all rotationless  (see Definition~\ref{def:principal auto}) elements are. This is not a big restriction since there is a specific $M>0$  depending only on $n$ (see Corollary~\ref{c:kn}) such that $\phi^M$ is rotationless. We believe that \ct s will be of general use in approaching algorithmic questions about $\Out(F_n)$. As a first step in that process, our main theorem (Theorem~\ref{algorithmic ct}) verifes that \ct s can be constructed algorithmically.

Many arguments involving \ct s go by induction up through the strata. It is therefore useful  if a \ct\ $\fG$ satisfies: 
\begin{description}
\item (Inheritance)  The restriction of $f$ to each component of each core filtration element is   a \ct.
\end{description}

As Example~\ref{e:example} shows,   not every \ct\ satisfies (Inheritance).   By using an upward induction argument (see Section~\ref{s:extension}) instead of the downward induction arguments used in previous relative train track constructions, we prove that \ct s satisfying (Inheritance) can be  constructed algorithmically.

\begin{thm} \label{algorithmic ct} \label{ct plus}  There is an algorithm whose input is a  rotationless $\phi \in \Out(F_n)$ and whose output is a \ct\ $\fG$  that represents $\phi$ and  satisfies  (Inheritance).   Moreover, for any nested sequence $\cal C$   of $\phi$-invariant free factor systems, one can choose $\fG$ so that  each non-empty element of $\cal C$ is realized by a core filtration element.
\end{thm} 
 
A \ct\ $\fG$ representing rotationless $\phi\in\Out(\f)$ is a graphical representation of $\phi$, and may be used to find graphical representations of some important invariants of $\phi$. For example, there is an algorithm to compute a finite core graph $\pstallings(f)$ immersing to $G$ such that a closed path in $G$ represents a fixed $\phi$-conjugacy class iff and only if it lifts to a closed path in $\pstallings(f)$. There is also a graph $\pstallings_N(f)$ immersing to $G$, obtained from $\pstallings(f)$ by attaching finitely many rays, that additionally records fixed points at infinity (see Sections~\ref{s:fix phi} and \ref{s:stallings_N}). Arbitrarily large neighborhoods of $\pstallings(f)$ in $\pstallings_N(f)$ may be computed algorithmically.

The proof of Theorem~\ref{algorithmic ct}  is in Sections~\ref{s:extension} and \ref{s:extend or reduce proof}. It relies heavily on our paper \cite{fh:recognition} and, more specifically, on the proof of Theorem~4.28 of \cite{fh:recognition} which states that every rotationless outer automorphism of $F_n$ is represented by a \ct. We strongly recommend that the reader have a copy of \cite{fh:recognition} handy while reading the current one. We will also rely on \cite{fh:recognition} for complete references.  Much of the proof of \cite[Theorem~4.28]{fh:recognition} is already algorithmic and the main work in the current paper is to make algorithmic the parts of that proof that are not explicitly algorithmic. In fact, there are only three places in the proof of Theorem~4.28 where non-algorithmic arguments are given. The first has to do with relative train track maps for general elements of $\Out(F_n)$ \cite{bh:tracks}. In this case there is an underlying algorithm to the arguments and we make it explicit in Sections~\ref{section:rtt} and \ref{sec:rotationless iterates}. 

The second part of the proof to be made algorithmic involves checking whether the filtration by invariant free factor systems induced by the given filtration by invariant subgraphs is reduced. (See Section~\ref{standard} for a review of the relevant definitions.) This requires new arguments and is carried out in Section~\ref{section:reducibility}.  The main result of Section~\ref{section:reducibility} is captured in Corollary~\ref{reducible} below, which we believe to be of independent interest. Although we present this result as a corollary of Theorem~\ref{algorithmic ct}, a stand-alone proof could be given using the methods in Section~\ref{section:reducibility}; see in particular Proposition~\ref{find reduction}.  These methods are in turn key ingredients in the proof of the Theorem~\ref{algorithmic ct}.

Recall that if  $\psi\in\Out(\f)$ and  $\F_1 \sqsubset\F_2$ are $\psi$-invariant free factor systems
then $\psi$ is said to be {\it fully irreducible relative to $\F_1 \sqsubset\F_2$}  if for all $k\ge 1$ there are no   $\psi^k$-invariant free factor systems properly contained between $\F_1$ and $\F_2$.  Equivalently,  if $\phi$ is a rotationless iterate of $\psi$ then there are no $\phi$-invariant free factor systems properly contained between $\F_1$ and $\F_2$ ; see \cite[Lemma~3.30]{fh:recognition}. 

\begin{corollary}\label{reducible} There is an algorithm with input   $\psi\in\Out(F_n)$ and $\psi$-invariant free factor systems $\F_1 \sqsubset\F_2$ and output {YES} or {NO} depending on whether or not $\psi$ is fully irreducible relative to $\F_1 \sqsubset\F_2$.  In the case that $\psi$ is not fully irreducible,  $k\ge 1$ and a  $\psi^k$-invariant free factor system that is properly contained between $\F_1$ and $F_2$ are found.  
\end{corollary}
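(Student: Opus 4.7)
The plan is to reduce to checking reducedness of $\F_1 \sqsubset \F_2$ for a rotationless iterate of $\psi$, and then extract the answer from the filtration of a CT produced by Theorem~\ref{algorithmic ct}.

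First, by Corollary~\ref{c:kn} there is a universal integer $M = M(n)$ so that $\phi := \psi^M$ is rotationless; compute such an $M$. Since $\F_1$ and $\F_2$ are $\psi$-invariant they are automatically $\phi$-invariant. By the equivalence stated immediately before the corollary (a consequence of \cite[Lemma~3.30]{fh:recognition}), $\psi$ is fully irreducible relative to $\F_1 \sqsubset \F_2$ if and only if there is no $\phi$-invariant free factor system properly contained between $\F_1$ and $\F_2$. Thus the task reduces to deciding reducedness of $\F_1 \sqsubset \F_2$ for the rotationless $\phi$.

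Next, apply Theorem~\ref{algorithmic ct} to $\phi$ with the nested sequence $\mathcal{C} = (\F_1 \sqsubset \F_2)$. This produces a CT $\fG$ with filtration $\emptyset = G_0 \subset G_1 \subset \cdots \subset G_N = G$ realizing $\mathcal{C}$, so there are indices $i < j$ with $[G_i] = \F_1$ and $[G_j] = \F_2$. For each $r$ with $i < r < j$, the subgraph $G_r$ is $\phi$-invariant and its associated free factor system $[G_r]$ (the conjugacy classes of the fundamental groups of its noncontractible components) is directly readable from $G_r$. Loop over these finitely many $r$: if some $[G_r]$ is properly contained between $\F_1$ and $\F_2$, output {\tt NO} together with $k = M$ and the $\psi^k$-invariant free factor system $[G_r]$. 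Otherwise output {\tt YES}.

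The main obstacle is justifying the {\tt YES} case: we must show that if no $[G_r]$ is properly between $\F_1$ and $\F_2$, then no $\phi$-invariant free factor system is properly contained between them at all. The property to invoke is that the filtration of a CT is maximal relative to the prescribed chain: the CT construction refines the filtration between $G_i$ and $G_j$ to realize every $\phi$-invariant free factor system compatible with $\mathcal{C}$, so any such intermediate $\F'$ must appear as some $[G_r]$. This is essentially the content of Proposition~\ref{find reduction} from Section~\ref{section:reducibility}, which also provides the standalone route to the corollary mentioned in the introduction, and is what makes the scan above both correct and terminating.
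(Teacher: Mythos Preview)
Your approach is essentially the same as the paper's: pass to the rotationless iterate $\phi=\psi^M$ via Corollary~\ref{c:kn}, build a \ct\ realizing $\F_1\sqsubset\F_2$ via Theorem~\ref{algorithmic ct}, and scan the intermediate filtration elements. The paper phrases the scan in terms of \emph{core} subgraphs $G_s$ between $G_r$ and $G_t$, which is slightly cleaner since non-core $G_s$ deformation retract to core ones and contribute nothing new; your version with all $G_r$ is correct but mildly redundant.

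The one point to sharpen is the justification of the {\tt YES} case. You invoke Proposition~\ref{find reduction}, but that proposition is a tool used \emph{inside} the proof of Theorem~\ref{algorithmic ct} to arrange reducedness for \eg\ strata; it is not itself the statement you need. The correct reference is simply the defining property (Filtration) of a \ct\ (Definition~\ref{def:ct}(3)): the filtration of any \ct\ is reduced, meaning that between consecutive distinct free factor systems $[G_r]\sqsubset[G_{r'}]$ realized by the filtration there is no $\phi$-invariant free factor system properly in between. So once Theorem~\ref{algorithmic ct} has handed you a \ct, reducedness is already guaranteed by definition, and the scan is complete. Your phrase ``the filtration of a CT is maximal relative to the prescribed chain'' captures the right idea, but (Filtration) is the precise citation, exactly as the paper uses it.
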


\begin{proof}
Construct a \ct\ $\fG$ with filtration $\filt$ for a rotationless $\phi = \psi^M$ with $M$ as in Corollary~\ref{c:kn} in which $\F_1 \sqsubset\F_2$ are realized by core subgraphs  $G_r \subset G_t$.  Then, by defining property (Filtration) of a \ct, $\psi$ is fully irreducible relative to $\F_1 \sqsubset\F_2$ if there are no core subgraphs $G_s$ properly contained between $G_r$ and $G_t$ .  If there is such a subgraph then its associated free factor system  is properly contained between $\F_1$ and $\F_2$. 
\end{proof}

\begin{remark} In the special case that $\F_1 = \emptyset$, Corollary~\ref{reducible} is an algorithm for checking if $\psi$ is fully irreducible.  Our algorithm in this special case is different from the ones given in   \cite{ik:iwip}  and  \cite{cmp:iwip}. More recently, Kapovich \cite{ik:algorithm} has produced a polynomial time algorithm to detect full irreducibility.
\end{remark}

The third and final non-algorithmic part of the proof of \cite[Theorem~4.28]{fh:recognition} requires a new fixed point result (Lemma~\ref{finding a fixed point}) allowing us to properly attach the terminal endpoints of \noneg\ edges; this takes place in Section~\ref{section:sliding}.

We include some sample applications of Theorem~\ref{algorithmic ct} -- most already known. 
\begin{itemize}
\item In Proposition~\ref{p:fix}, we give another proof of the result of Bogopolski and Maslakova \cite{bm:fix} that it is algorithmic to compute $\Fix(\Phi)$ for $\Phi\in\Aut(F_n)$. \item An outer automorphism $\phi$ is {\em primitively atoroidal} if it does not act periodically on the conjugacy class of a {\it primitive} element of $\f$, i.e.\ an element of some basis.   In Corollary~\ref{c:primitively atoroidal} we give an algorithm to decide if $\phi$ is primitively atoroidal.  
\item  In Section~\ref{s:index} we prove that the well-known index invariant $i(\phi)$ of \cite{gjll:index} can be computed algorithmically and introduce a similar invariant $j(\phi) \ge i(\phi)$ that can also be computed algorithmically.  In Proposition~\ref{p:index} we show that  $j(\phi) \le n-1$ and so the key  inequality satisfied by $i(\phi)$ is still satisfied by $j(\phi)$. 
\item In Corollary~\ref{c:brinkmann} we reprove a result of Ilya Kapovich \cite{ik:hyperbolic} that it is algorithmic to tell if a given $\phi\in\Out(F_n)$ is hyperbolic.  The Kapovich result is stronger in that $\phi$ is only assumed to be an injective endomorphism.
\end{itemize}

We thank the referee for suggesting applications to be added to this paper, specifically: algorithmically computing the index (Proposition~\ref{old index is algorithmic}); algorithmically deciding if an automorphism is primitively atoroidal
(Corollary~\ref{c:primitively atoroidal}); and algorithmically finding the possibilities for
$[\Fix(\Phi)]$ for a $\Phi$ representing a rotationless outer automorphism (Corollary~\ref{c:possibilities}).

\section{Relative train track maps in the general case} \label{section:rtt}
In this section we revisit three existence theorems for relative train track maps representing arbitrary $\phi \in \Out(F_n)$, rotationless or not.   The original statements of these results did not mention algorithms and the original proofs did not emphasize their algorithmic natures. In this section we give the algorithmic versions of two of these results; see Theorem~\ref{thm:rtt}  and Lemma~\ref{cor:rtt}.  The third existence result that we revisit is \cite[Theorem~2.19]{fh:recognition} which in the current paper is reproduced as Theorem~\ref{2.19}. We will need the algorithmic version of Theorem~\ref{2.19} and its proof is postponed until Section~\ref{proving 2.19} because the proof depends on consequences of Theorem~\ref{2.19} established in Section~\ref{sec:rotationless iterates}.  

Rather than cut and paste arguments from \cite{bh:tracks}, \cite{bfh:tits1} and \cite{fh:recognition} into this paper, we will point the reader to specific sections in those papers and explain how they fit together to give the desired results.  In some cases, we refer to arguments that occur in lemmas whose hypotheses  are not  satisfied in our current context.  Nonetheless the arguments that we refer to  will apply.

\subsection{Some standard notation and definitions}  \label{standard} In this section we recall the basic definitions of relative train track theory, assuming that the reader has some familiarity with this material.  Complete details can be found in any of \cite[Sections~ 1 and 5]{bh:tracks}, \cite[Sections~2 and 3]{bfh:tits1}, \cite[Section 2]{fh:recognition} and  Part \urn{1} of \cite{hm:subgroups}.

Identify $F_n$  with $\pi_1(R_n,*)$ where the rose $R_n$ is the graph with one vertex $*$ and $n$ edges.  A (not necessarily connected) graph is {\it core} if it is the union of its immersed circuits.  A {\it marked graph} is a finite core graph $G$ equipped with a homotopy equivalence $\mu :R_n \to G$ called the {\em marking} by which we  identify $\Out(\pi_1(G))$ with $\Out(\pi_1(R_n))$ and hence with $\Out(F_n)$.  In this way, a homotopy equivalence $\fG$ determines an element $\phi \in \Out(F_n)$; we say that $\fG$ {\em represents $\phi$}  or that $\fG$ is a {\em topological representative of $\phi$}.   Conversely, each $\phi \in \Out(F_n)$ is represented (non-uniquely) by a homotopy equivalence of any marked graph $G$, cf.\ Lemma~\ref{l:restrictions extend}. 

\begin{conv}\label{c:linear}
Unless otherwise stated, we assume that  $f$ is an immersion when restricted to edges and that if $J$ is an   interval in the interior of an  edge then some $f$-iterate of $J$ either contains a vertex or is contained in a periodic edge. This can be arranged for example by assigning each edge length one and making the restriction of $f$ to each edge linear.
\end{conv}

A (finite, infinite or bi-infinite) path in $G$ is an  immersion $\sigma : J \to G$ defined on an interval $J$ such  that the image of each end of $J$ crosses infinitely many edges of $G$; equivalently, $\sigma$ lifts to a proper map into the universal cover $\ti G$. We allow the possibility that $J$ is a single point in which case we say that the path is {\em trivial}. Subdividing at the full pre-image of the set $\V$ of vertices of $G$, we view $J$ as a simplicial complex  and $\sigma$ as a map whose restriction to an edge is one-to-one with image an edge or  partial edge. In this way we view $\sigma$ as an {\em edge path}; i.e.\ a concatenation of edges of $G$, where we allow the first and last to be partial edges if the endpoints are not at vertices.  We do not distinguish between paths that have the same associated edge path and we often identify a path with its associated edge path and write $\sigma \subset G$. A path has {\em height $r$} if it is contained in $G_r$ but not $G_{r-1}$.
 
 A bi-infinite path $\sigma$ is called a {\em line}.  Each   lift $\ti \sigma \subset \ti G$ of a line  $\sigma \subset G$ has well defined endpoints in the set $\partial G$ of ends of $G$.  A singly infinite path is called a {\em ray}.  Each lift of a ray has one endpoint in $\ti G$ and one ideal endpoint in $\partial \ti G$.  Conversely, each ordered pair of distinct points in $\partial \ti G$ is the endpoint set of a unique line in $\ti G$.  For any marked graph $G$, one can identify $\partial G$ with $\partial F_n$;  see Section~\ref{sec:rotationless iterates}.  In this way lines in $G$ are identified with $F_n$-orbits of ordered pairs $(P,Q)$  of distinct points in $\partial F_n$.  We sometimes refer to $(P,Q)$ as an {\em abstract line} or even just a {\em line}.  Thus each line in $G$ determines an $F_n$-orbit of abstract lines. Each lift $\ti f :\ti G \to \ti G$ extends to a homeomorphism (also called $\ti f$)  of the set $\partial G$ of the ends of $\ti G$.   

We denote the conjugacy class of a subgroup $A$ of $F_n$ by $[A]$.     If $A_1 \ast \ldots \ast A_m$ is a free factor of $F_n$ and each $A_i$ is non-trivial  then $\{[A_1],\ldots,[A_m]\}$ is a {\em free factor system} and each $[A_i]$ is a {\em component} of that free factor system. We also allow the {\it{trivial free factor system} $\emptyset$.}    For any marked graph $G$ and   subgraph $C$ with non-contractible components $C_1,\ldots, C_m$,  the fundamental group of each $C_i$ determines a well defined conjugacy class that we denote $[C_i ]$ and  $[C] := \{[C_1],\ldots,[C_m]\}$ is a free factor system. We say that $C \subset G$ {\em realizes} $[C]$.   Every free factor system is realized by some subgraph of some marked graph. There is a partial order $\sqsubset$ on free factor systems  defined by $\{[A_1],\ldots,[A_k]\}  \sqsubset \{[B_1],\ldots,[B_l]\}$ if each $A_i$ is conjugate to a subgroup of some $B_j$. There is a natural action of $\phi\in \Out(F_n)$ on conjugacy classes of free factors.  If  each element of a free factor system $\F$  is $\phi$-invariant then we say that {\em $\F$ is $\phi$-invariant}.

More generally, a {\em subgroup system} is a finite collection $\{[A_1],\dots, [A_k]\}$ of distinct conjugacy classes of finitely generated non-trivial subgroups of $F_n$. The conjugacy class $[c]$  of $c \in F_n$ is {\em carried by} the subgroup system $\{[A_1],\ldots,[A_k]\} $ if  $c$ is conjugate to an element of some $A_i$.  Equivalently, the fixed points for the action of a conjugate of $c$ on $\partial F_n$ are contained in some $\partial A_i$.  More generally if $P,Q \in \partial A_i$  then the $F_n$-orbit of {\em the abstract line} with endpoints $P$ and $Q$ {\em is carried by $\{[A_1],\ldots,[A_k]\} $. }  If a subgraph $C$ of a marked graph $G$ realizes a free factor system $\F$ then  a conjugacy class is carried by $\F$ if and only if the circuit representing it in $G$ is contained in $C$; similarly,    the $F_n$-orbit of an abstract line   is carried by $\F$ if and only if the corresponding line in $G$ is contained in $C$.  By \cite[Lemma~2.6.5]{bfh:tits1} (see also Lemma~\ref{free factor support}),  for each collection of conjugacy classes of elements and $F_n$-orbits of abstract lines there is  a unique minimal (with respect to the above partial order $\sqsubset$) free factor system that carries them all.  If that minimal free factor system is $\{[F_n]\}$ then we say that the collection {\em fills}.   In this context, we will treat a subgroup as the collection of conjugacy classes that it carries.  In particular, it makes sense to talk about a subgroup system filling.

A {\em filtration} of a marked graph $G$ is an increasing sequence of subgraphs $\filt$.   The $r^{th}$ {\em stratum} $H_r$ is the subgraph whose edges are contained in $G_r$ but not $G_{r-1}$.   A homotopy equivalence $\fG$ {\it preserves the filtration} if  $f(G_r) \subset G_r$ for each $G_r$.  Assuming this to be the case and that the edges in $H_r$ have been ordered, the {\em transition matrix} $M_r$ associated to $H_r$ is the square matrix with one row and column for each edge  of $H_r$ and whose $ij^{th}$ coordinate is the number of times that the $f$-image of the $i^{th}$ edge of $H_r$ crosses  (in either direction) the $j^{th}$ edge of $H_r$.   After enlarging the filtration if necessary, we may assume that each $M_r$ is either the zero matrix or  irreducible; we say that   $H_r$ is  {\em a zero stratum} or {\em an irreducible stratum} respectively. In the irreducible case, each $M_i$ has a {\em Perron-Frobenius eigenvalue} $\lambda_r \ge 1$.  The stratum $H_r$ is {\em  \eg}\ if $\lambda_r > 1$ and is {\em \noneg}\  if $\lambda_r = 1$. If $\fG$ is a topological representative of $\phi$ then we sometimes say that {\em $\fG$  and $\filt$ represent $\phi$}.

If $\sigma \subset G$ is a path   and $\fG$ is a homotopy equivalence then $f(\sigma) := f \circ \sigma :J \to G$ need not be immersed and so need not be a path.  If $\sigma$ is a finite path, we define $f_\#(\sigma)$ to be the unique path that is homotopic to $f(\sigma)$ rel endpoints.  For rays and lines, one defines $f_\#(\sigma)$ by  choosing a lift $\ti \sigma$, defining $\ti f_\#(\ti \sigma)$ to be the unique path that is homotopic to $\ti f(\ti \sigma)$ rel endpoints (including ideal endpoints) and then projecting $\ti f_\#(\ti \sigma)$ to a path $f_\#(\sigma) \subset G$.

If paths $\sigma_1$ and $\sigma_2$ can be concatenated then we denote the concatenation by  $\sigma_1 \sigma_2$.    A decomposition $\sigma = \ldots \sigma_1 \sigma_2\ldots \sigma_m\ldots$ of a path into subpaths is a {\em splitting} if $f_\#^k(\sigma) = \dots f_\#^k(\sigma_1)f_\#^k(\sigma_2) \ldots f_\#^k(\sigma_m)\ldots$ for all $k \ge 1$; in this case we usually write $\sigma = \ldots \cdot \sigma_1\cdot \sigma_2\cdot \ldots \cdot \sigma_m\cdot \ldots$.

If $f_\#^k(\sigma) = \sigma$ for some $k \ge 1$ and finite path $\sigma$ then $\sigma$ is a {\em periodic Nielsen path}.  If $k = 1$ then $\sigma$ is {\em a Nielsen path}.  If a (periodic) Nielsen path $\sigma$ can not be written as the concatenation of non-trivial (periodic) Nielsen subpaths then it is {\em indivisible}.   Two points in $\Fix(f)$ are in the same {\em Nielsen class} if they bound a Nielsen path.

An edge $E$ in a (necessarily \noneg) stratum $H_i$  is {\em linear} if $f(E)  = E u$ where $u\subset G_{i-1}$ is a non-trivial Nielsen path. If $u=w^d$ for some root-free $w$ and some $d \ne 0$ then the unoriented conjugacy class of $w$ is called the {\em axis} for $E$. All other edges in \noneg\ strata are {\em non-linear}.
       
A {\em direction} at a point $x \in G$  is a germ of non-trivial finite paths with initial vertex $x$.   If $x$ is not a vertex then there are two directions at $x$.  Otherwise there is one direction for each oriented edge based at $x$ and we identify the direction with the oriented edge.  A homotopy equivalence $\fG$ induces a map $Df$ from directions at $x$ to directions at $f(x)$.  A {\em turn} at $x$ is an unordered pair $(d_1,d_2)$ of directions based at $x$; it is {\it degenerate} if $d_1 = d_2$ and {\em non-degenerate} otherwise.  $Df$ induces a map on turns that we also denote by $Df$.  A turn is {\em illegal} if its  image under some iterate of $Df$ is degenerate and is {\em legal} otherwise.  If  $\sigma =\ldots E_i E_{i+1}\ldots$  and if each turn $(\bar E_i,E_{i+1})$ is legal then {\em $\sigma$ is legal}.  Here $\bar E_i$ denotes $E_i$ with the opposite orientation. We sometimes also use the exponent -1 to indicate the inverse of a path. If $\sigma$ has height $r$  then $\sigma$ is {\em $r$-legal}  if each  turn $(\bar E_i,E_{i+1})$ for which both $E_i$ and $E_{i+1}$ are edges in $H_r$  is legal.    
       
The homotopy equivalence $\fG$ is a {\em relative train track map}   \cite[page 38] {bh:tracks}  if it maps vertices to vertices and if the following conditions hold for every \eg\ stratum $H_r$.
\begin{description} 
\item [(RTT-i)]$Df$ maps directions in $H_r$ to directions in $H_r$.
\item  [(RTT-ii)]  If $\sigma \subset G_{r-1}$ is a non-trivial path with endpoints in $H_r \cap G_{r-1}$ then $f_\#(\sigma)$ is a non-trivial path with endpoints in $H_r \cap G_{r-1}$.
\item  [(RTT-iii)]  If $\sigma \subset H_r$ is legal then $f(\sigma)$ is an $r$-legal path.
\end{description}

\subsection{Algorithmic proofs} \label{subset:algpf} The following theorem is modeled on \cite[Theorem~5.12]{bh:tracks} which proves the existence of relative train track maps that satisfy an additional condition called {\it stability}.  This condition  is algorithmically built into \ct s  at a later stage of the argument  (see \cite[Step 1( \eg\ Nielsen Paths) in Section~4.5]{fh:recognition}) so we do not need it here. See also \cite{dv:endos}. The proof of  \cite[Theorem~5.12]{bh:tracks} is mostly algorithmic.  We have added  Lemmas~\ref{checking rtt} and \ref{core subdivision is algorithmic} to  
make the entire argument algorithmic.   
   
\begin{thm} \label{thm:rtt} There is an algorithm that produces for each $\phi \in \Out(F_n)$ a relative train track map $\fG$ representing $\phi$.
\end{thm}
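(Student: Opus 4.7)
The plan is to revisit the Bestvina--Handel algorithm of \cite[Section~5]{bh:tracks} underlying \cite[Theorem~5.12]{bh:tracks}, checking that each step is effective on the finite combinatorial data describing $\phi$. First I would fix an explicit topological representative $f_0 : R_n \to R_n$ by sending each oriented edge of the rose to the reduced word expressing its image under a lift of $\phi$. From this one extracts the transition matrix, computes its strongly connected components using Tarjan's algorithm, and orders them compatibly to obtain a filtration in which every stratum is either a zero stratum or has irreducible transition matrix. All of this is standard finite graph-algorithmic bookkeeping.

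Next I would iterate the Bestvina--Handel core loop that modifies $(f,G)$ until (RTT-i)--(RTT-iii) hold at every \eg\ stratum. Each of (RTT-i), (RTT-ii), (RTT-iii) is a condition on the restriction of $f$ to the finite subgraph $G_r$, and each admits a straightforward algorithmic check: (RTT-i) by iterating the direction map $Df$ on the finitely many directions in $H_r$ until the forward orbit stabilises, (RTT-ii) by verifying that $f_\#$ is non-trivial on a finite list of connecting paths in $G_{r-1}$ between vertices of $H_r \cap G_{r-1}$, and (RTT-iii) by checking that no illegal turn in $H_r$ appears in the interior of $f(E)$ for each edge $E$ of $H_r$. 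When a condition fails, one applies the remedial move prescribed in \cite[Section~5]{bh:tracks} --- subdivision at a preimage of a vertex, a partial fold at an illegal turn, collapsing an invariant pretrivial forest, or a valence-one or valence-two homotopy --- each of which is an explicit finite manipulation of the edge-word description of $(f,G)$.

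The main obstacle, as in the original Bestvina--Handel proof, is termination. Following \cite[Section~5]{bh:tracks}, I would assign to each $(f,G)$ the complexity whose leading coordinate is the non-increasing tuple of Perron--Frobenius eigenvalues of the irreducible strata, with ties broken lexicographically by combinatorial invariants such as total edge counts and numbers of strata. Every essential move in the core loop strictly decreases this complexity in the resulting well-founded lexicographic order, so the loop halts after finitely many steps. At that point the algorithmic checks of the previous paragraph certify that the output $\fG$ is a relative train track map representing $\phi$, as required.
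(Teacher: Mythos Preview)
Your approach is the same as the paper's---both follow the Bestvina--Handel algorithm of \cite[Section~5]{bh:tracks} and use the lexicographic order on the tuple of Perron--Frobenius eigenvalues of the \eg\ strata as the termination measure. However, there is a genuine gap in your termination argument.

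You assert that the lexicographic order on these eigenvalue tuples is well-founded, but this is false without further control. The Perron--Frobenius eigenvalues of irreducible non-negative integer matrices are dense in $[1,\infty)$, and the number of \eg\ strata is not bounded \emph{a priori} once you allow subdivision and folding to enlarge the graph. So a strictly decreasing sequence of such tuples need not terminate. The paper closes this gap by introducing the notion of a \emph{bounded} representative: one with at most $3n-3$ \eg\ strata, each of whose eigenvalues is realised by an irreducible matrix of size at most $3n-3$. Any graph all of whose vertices have valence $\ge 3$ is bounded, and the set of $\Lambda(f)$'s arising from bounded representatives is well-ordered. The algorithm then interleaves the complexity-decreasing moves with an explicit step (\cite[Lemma~5.5]{bh:tracks}) that restores boundedness without increasing $\Lambda$. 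Your list of remedial moves includes valence-one and valence-two homotopies, which is exactly what is needed, but you never connect this to the well-foundedness of the complexity; that connection is the content of the termination proof.

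A smaller point: your proposed check of (RTT-ii) as ``verifying that $f_\#$ is non-trivial on a finite list of connecting paths'' is not correct as stated, since there are infinitely many connecting paths in $G_{r-1}$ with endpoints in $H_r\cap G_{r-1}$. The actual remedy in \cite{bh:tracks} (and cited in the paper) is the operation of collapsing an inessential connecting path, which is detected by a different finite check.
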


The proof of Theorem~\ref{thm:rtt} appears at the end of this section after we recall some definitions and results from \cite{bh:tracks}. 

\begin{definition} \label{def:bounded} Suppose that $\fG$ and $\filt$ are a topological representative   and filtration  representing $\phi$. For each \eg\ stratum $H_r$,  let $\lambda_r \ge 1$ be the  Perron-Frobenious eigenvalue  of the transition matrix $M_r$ associated to $H_r$.  Let $\Lambda(f)$ be the set of (not necessarily distinct) $\lambda_r$'s  associated to \eg\ strata $H_r$ of $\fG$, listed in non-increasing order.     Say that  $\fG$    is {\em bounded} if there are at most $3n-3$ exponentially growing strata $H_r$   and if each  $\lambda_r$   is the Perron-Frobenious eigenvalue of some irreducible matrix with at most  $3n-3$ rows and columns.  Note that if each vertex of $G$ has  valence at least $3$   then $\fG$ is bounded because $G$ has at most $3n-3$ edges. Note also that if the set of all possible $\Lambda(f)$'s is ordered lexicographically, then any strictly decreasing sequence of $\Lambda(f)$'s associated to bounded $f$'s is finite; see \cite[page 37]{bh:tracks}. 
\end{definition}

\begin{lemma}\label{checking rtt}There is an algorithm that checks if a given   bounded topological representative $\fG$  of $\phi$ is a \rtt.
\end{lemma}

\proof Property (RTT-i) is obviously a finite property.  We  may therefore assume that each \eg\ stratum satisfies (RTT-i).  Suppose that $H_j$ is an \eg\ stratum and that $C$ is a component of $G_{j-1}$.  If $C$ is contractible then it contains only  finitely many paths with endpoints at vertices so checking (RTT-ii) for paths in $C$ is a finite process.  If $C$ is not contractible then there is a smallest $p \ge 1$ such that $f^p(C) \subset C$.  Since $H_j$ satisfies (RTT-i), $f^p(H_j \cap C) \subset H_j \cap C$.  If $f^p$ induces a bijection of $H_j \cap C$ then (RTT-ii) is satisfied for all $\sigma \subset C$.  Otherwise, there exist distinct $v,w \in H_j \cap C$ and a  smallest $1 \le q \le p$ such that  $f^q(v) = f^q(w)$.  After replacing $v$ and $w$ by $f^{q-1}(v)$ and $f^{q-1}(w)$, we may assume that $q =1$.  In this case $v$ and $w$ are connected by a unique path $\sigma \subset C$ whose $f_\#$-image is trivial and (RTT-ii) fails.    We have now proved that (RTT-ii) is a finite property.  Finally, (RTT-iii) is equivalent to the statement that $f(E)$ is $j$-legal for each edge $E \subset H_j$ and so is a finite property.
\endproof

\begin{definition}\cite[paragraph before Lemma 5.13]{bh:tracks} Suppose that $\fG$ is a topological representative and that $E$ is an edge in an \eg\ stratum $H_r$. The {\it core of $E$} is defined to be the smallest closed subinterval of $E$ such that each point in the complement of the core is eventually mapped into $G_{r-1}$.   The set of endpoints of the cores of all the edges in $H_j$  is mapped into itself by $f$.  Subdivision at this finite set (and enlarging the  original filtration to accommodate the new edges formed by the complements of the core) is called the {\em core subdivision of $H_r$}.
\end{definition}

\begin{lemma} \label{core subdivision} \cite[Lemma 5.13]{bh:tracks} Suppose that $\fG$   is a bounded topological representative    of $\phi$ and that $f' : G' \to G'$ is obtained from $\fG$ by a core subdivision of the \eg\ stratum $H_r$.  Then:
\begin{enumerate}
\item $\Lambda(f) = \Lambda(f')$
\item  There is a   bijection $H_j \leftrightarrow H'_{j'}$ between the \eg\ strata of $f$ and the \eg\ strata of $f'$ such that:
\begin{enumerate}
\item $H'_{j'}$ satisfies (RTT-i).
\item  relative height is preserved; i.e. $j < k $ if and only if $j' < k'$
\item  If $j \ne r$ and   $H_j$ satisfies (RTT-i)  or  (RTT-ii) then $H'_{j'}$ satisfies (RTT-i) or (RTT-ii), respectively. \qed
\end{enumerate}  
\end{enumerate}
\end{lemma}

\begin{lemma}  \label{core subdivision is algorithmic} Core subdivision is algorithmic.
\end{lemma}

\proof Suppose that $\fG$ is a topological representative and that $E$ is an edge in an \eg\ stratum $H_r$.  If the $Df$ orbit of $E$ (thought of as the initial direction of $E$) is contained in $H_r$   then the core of $E$ contains an initial segment of $E$ and $E$ does not contribute to the set of core subdivision points.   Suppose then that  some iterate of  $Df$ maps  $E$    into  $G_{r-1}$.  Define $D_rf(E)$  to be the first $H_r$-edge in the edge path $f(E)$.  If  $E$ is $D_rf$-periodic of minimal period $p$, let $E_i = D_r f^i(E)$  for $i = 0,\ldots,p-1$.  Thus $f(E_i) = u_i E_{i+1} v_i$ where $u_i $ is a possibly trivial path in $G_{r-1}$ and indices are taken mod $p$.   Consider the subintervals of  $E_i$ whose images  under $f^p$ are single edges.  The first such subinterval $e_i$  that is mapped into $H_r$  satisfies $f^p(e_i) = E_i$ and we choose the core subdivision point corresponding to $E_i$ to be the unique point in $e_i$ that is fixed by $f^p$.  If $E$ is not $D_rf$-periodic then choose the minimal $q$ such that  $D_{r}f^q(E)$ is $D_{r}f$-periodic and take the core subdivision point for $E$ to be the first point in $E$ that is mapped to the core subdivision point of $D_{r}f^q(E)$.
\endproof

\begin{lemma} \label{subdivide and fold} Given a bounded topological representative $\fG$  of $\phi$ with an \eg\ stratum that satisfies (RTT-i) but not (RTT-iii) there is an algorithm to construct a   bounded topological representative $f'' :G'' \to G''$  of $\phi$ such that $\Lambda(f'') < \Lambda(f)$.
\end{lemma}

\proof  The  proof of \cite[Lemma~5.9]{bh:tracks} contains an algorithm that modifies $\fG$ to produce a  not necessarily bounded, topological representative $f' :G ' \to G'$ such that $\Lambda(f') < \Lambda(f)$.  The  proof of \cite[Lemma~5.5]{bh:tracks} contains an algorithm that modifies $f' :G ' \to G'$ to produce a   bounded topological representative $f'' :G'' \to G''$  of $\phi$ such that $\Lambda(f'') \le \Lambda(f')< \Lambda(f)$.
\endproof
 
We next recall  \cite[Lemma 5.14]{bh:tracks}, making the algorithm used in its proof part of the statement of the lemma.

\begin{lemma} \label{collapsing} \cite[Lemma 5.14]{bh:tracks} Suppose that $\fG$ is a bounded topological representative   of $\phi$ and that $H_s$ is an \eg\ stratum that does not satisfy (RTT-ii).  Then there is an algorithm to construct a bounded topological representative $f' :G' \to G'$  of $\phi$ such that
\begin{enumerate}
\item $\Lambda(f) = \Lambda(f')$
\item  There is a   bijection $H_j \leftrightarrow H'_{j'}$ between the \eg\ strata of $f$ and the \eg\ strata of $f'$ such that:
\begin{enumerate}
\item  relative height is preserved; i.e. $j < k $ if and only if $j' < k'$.
\item   $|H'_{s'} \cap G'_{s'-1}|  < |H_s \cap G_{s-1}|$

\item  if $k > s$ and $H_k$ satisfies (RTT-i) and (RTT-ii) then $H'_{k'}$ satisfies (RTT-i) and (RTT-ii). 
\item  if $k \ge s$ and $H_k$ satisfies (RTT-i)  then $H'_{k'}$  (RTT-i).  \qed
\end{enumerate}  
\end{enumerate}
\end{lemma}

\medspace
\noindent{\em Proof of Theorem~\ref{thm:rtt}:}\ \ \    Start with any bounded topological representative $\fG$  of $\phi$.  For example, one can choose any homotopy equivalence of the rose that represents $\phi$.   If there are no \eg\ strata then $\fG$ is a \rtt\ and we are done.   Otherwise, apply Lemma~\ref{core subdivision}  to produce a    bounded topological representative (still called $\fG$)  of $\phi$ whose top \eg\ stratum satisfies (RTT-i).   If the top \eg\ stratum does not also satisfy (RTT-ii), apply Lemma~\ref{collapsing}   to produce a  new  bounded topological representative (still called $\fG$) of $\phi$ whose top \eg\ stratum still satisfies (RTT-i).  If the top \eg\ stratum of the current $\fG$ does not satisfy (RTT-ii) apply Lemma~\ref{collapsing} again.  Item (b) of that lemma guarantees that after finitely many applications of  Lemma~\ref{collapsing}, we arrive at $\fG$ whose top \eg\ stratum satisfies    (RTT-i) and (RTT-ii). 

Repeat this procedure on the second highest \eg\ stratum to produce a    bounded topological representative   of $\phi$ whose top two \eg\ stratum satisfies   (RTT-i) and (RTT-ii).   After finitely many iterations, we have a bounded topological representative  (still called $\fG$)    of $\phi$, all of whose \eg\ strata  satisfy   (RTT-i) and (RTT-ii). 

Apply Lemma~\ref{checking rtt} to check if $\fG$ is a \rtt.  If yes, we are done.  Otherwise  apply Lemma~\ref{subdivide and fold} to produce a bounded topological representative  $f':G' \to G' $ of $\phi$ with $\Lambda(f') < \Lambda(f)$.  Then start over again with $f':G' \to G'$ replacing the original $\fG$.   Since every decreasing sequence $\Lambda(f) > \Lambda(f') > \ldots$ is finite, this process produces a \rtt\ in finite time. \qed 
\medspace 
       
\begin{corollary}  \label{cor:rtt} There is an algorithm that takes   $\phi \in \Out(F_n)$ and a nested sequence $\cal C$ $= \F_1 \sqsubset \F_2 \sqsubset \cdots \sqsubset \F_m$ of $\phi$-invariant \ffs s as input and produces a    \rtt\ $\fG$ and filtration $\filt$ representing $\phi$ and such that for each $\F_i$ there exists $G_j$ satisfying $\F_i = [G_j]$.
\end{corollary}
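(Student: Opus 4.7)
The plan is to adapt the algorithm of Theorem~\ref{thm:rtt} so that, at every stage of the construction, the current filtration contains subgraphs realizing the given nested sequence $\cal C = \F_1 \sqsubset \cdots \sqsubset \F_m$. As initial input, first algorithmically construct a marked graph $G$ together with nested core subgraphs $\emptyset \subset K_1 \subset \cdots \subset K_m = G$ satisfying $[K_i] = \F_i$; this is possible because every \ffs\ is realized by a core subgraph of some marked graph, and a nested sequence of \ffs s can be realized nestedly.  Next, produce a topological representative $\fG$ of $\phi$ satisfying $f(K_i)\subset K_i$ for every $i$.  Since each $\F_i$ is $\phi$-invariant, any topological representative of $\phi$ is homotopic to one preserving the homotopy classes of the $K_i$, and a standard tightening argument promotes this to genuine invariance.

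Refine the coarse filtration $\emptyset = K_0 \subset K_1 \subset \cdots \subset K_m = G$ to a filtration $\filt$ in which each stratum is either irreducible or a zero stratum, by ordering the edges of each $K_i \setminus K_{i-1}$ according to the irreducible block decomposition of the associated transition submatrix.  After subdividing at vertex pre-images if necessary, this puts $\fG$ in bounded form with each $K_i$ equal to some $G_{j(i)}$, and hence in valid input form for the algorithm of Theorem~\ref{thm:rtt}.  Now run that algorithm.  The three operations it uses---the reductions of \cite[Lemmas~5.5 and 5.9]{bh:tracks} that lower $\Lambda(f)$ when (RTT-iii) fails, and the core subdivision and inessential connecting path collapses on pages~42--45 of \cite{bh:tracks} that enforce (RTT-i) and (RTT-ii)---all act stratum by stratum.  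In particular, each subdivision, fold, or collapse takes place inside the closure of a single refined stratum and therefore inside a single layer $K_i \setminus K_{i-1}$, so the $K_i$ remain unions of strata and the conjugacy classes $[K_i] = \F_i$ are unchanged.

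The main point to check is this last assertion, that the moves used in the algorithm of Theorem~\ref{thm:rtt} can be arranged to keep the $K_i$ as unions of strata and to preserve $[K_i] = \F_i$.  This is expected to be routine but is the crux of the matter: the data driving each move (the Perron--Frobenius eigenvector of $M_r$, the illegal turns in $H_r$, the inessential connecting paths in the closure of $H_r$) depends only on the single stratum $H_r$, so any vertex identification or subdivision performed to execute the move lies in the closure of $H_r$ and affects only edges of strata with index at most $r$.  No such operation can move an edge from inside $K_i$ to outside or conversely.  Termination and boundedness are inherited from Theorem~\ref{thm:rtt} via the strictly decreasing $\Lambda(f)$, and the output is a \rtt\ whose filtration realizes $\cal C$ in the required sense.
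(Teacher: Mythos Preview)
Your approach is essentially the same as the paper's: construct an initial topological representative whose filtration realizes $\cal C$, then run the relative train track algorithm of Theorem~\ref{thm:rtt} and verify that each of its moves preserves the realization of $\cal C$. The paper simply cites \cite[Lemma~2.6.7]{bfh:tits1} for this verification rather than spelling it out.

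One imprecision worth flagging: your claim that each operation ``takes place inside the closure of a single refined stratum'' is not literally correct. The moves coming from \cite[Lemma~5.5]{bh:tracks} (collapsing invariant forests, valence-one and valence-two homotopies used to restore boundedness) are not local to a single stratum; a forest collapse or a valence-two homotopy can span several strata at once. What is true---and what is actually checked in the proof of \cite[Lemma~2.6.7]{bfh:tits1}---is that after each such move the resulting filtration still contains, for every $i$, a filtration element $G_{j}$ with $[G_{j}]=\F_i$. This is weaker than locality but is all that is needed. Your heuristic (``the data driving each move depends only on the single stratum $H_r$'') is a good intuition but does not by itself rule out side effects on the subgraphs $K_i$; the careful bookkeeping is genuinely in \cite{bfh:tits1}.
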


\proof  The proof of this corollary is explicitly contained in the proof of \cite[Lemma 2.6.7]{bfh:tits1} (even though the statement of  that lemma is weaker in that it assumes that $\cal C$ is a single free factor system).  The first step of the proof of the lemma is to inductively construct a bounded topological representative $\fG$ and filtration $\filt$ representing $\phi$  such that for each $\F_i$ there exists $G_j$ satisfying $\F_i = [G_j]$.  Then one applies the relative train track algorithm of Theorem~\ref{thm:rtt}, checking that $\cal C$ is preserved,   to promote $\fG$ to a \rtt.
\endproof

The third existence theorem that needs discussion is \cite[Theorem 2.19]{fh:recognition}.  We first recall some notation that is used in its statement.

\begin{notn} \label{n:zero} If $u < r$ and 
  \begin{enumerate}
  \item $H_u$ is irreducible;
  \item $H_r$ is \eg\ and  each component of $G_r$ is non-contractible; and
  \item   for each $u < i < r$,   $H_i$ is a zero stratum that is a component of $G_{r-1}$  and each vertex of $H_i$ has valence at least two in $G_r$
  \end{enumerate}
then we say that each $H_i$ is {\em enveloped by $H_r$} and write $H_r^z = \cup_{k=u+1}^r H_k$.  
  \end{notn}

\begin{thm}[{\cite[Theorem~2.19]{fh:recognition}}] \label{2.19}
For each  $\oone \in \Out(F_n)$    there is a \rtt\ $\fG$ and a  filtration $\filt$ representing $\oone$  and satisfying the following properties.  
 \begin{description}
  \item [(V)] The endpoints of all indivisible periodic Nielsen paths are
vertices.
\item [(P)] If  a stratum $ H_m \subset \Per(f)$ is a forest then there exists a filtration element $G_j$ such  $[G_j] \ne [G_l \cup H_m]$ for any $G_l$.  
 \item [(Z)]  Each zero stratum $H_i$ is enveloped by an  \eg\ stratum $H_r$.    Each vertex in $H_i$ is contained in $H_r$ and has link  contained in $H_i \cup H_r$.
 \item [(\noneg)] The terminal endpoint   of an edge in a non-periodic NEG stratum  $H_i$  is periodic and is contained in a filtration element of height less than $i$ that is its own core.
  \item [(F)]   The core of each filtration element is a filtration element.
  \end{description}
  Moreover, if  $\cal C$ is a   nested sequence 
of $\oone$-invariant free factor systems then we may choose $\fG$ so that for each $\F_i \in \cal C$  there exists  $G_j$ satisfying  $\F = [G_j]$.
\end{thm}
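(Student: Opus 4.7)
The plan is to start with a relative train track map $\fG$ and filtration $\filt$ representing $\phi$ and realizing the nested sequence $\cal C$, as provided by Corollary~\ref{cor:rtt}, and then to modify $\fG$ and its filtration in stages so as to achieve properties (V), (P), (Z), (NEG), and (F) in turn. The permissible modifications are of three kinds: (a) subdividing an edge of a stratum at a periodic point, (b) re-indexing the filtration by inserting or permuting $f$-invariant subgraphs, and (c) a sliding move that rewrites a non-periodic NEG edge as a concatenation whose terminal endpoint is moved to a periodic vertex in a lower filtration element. Each of these preserves the relative train track property and preserves the list of free factor systems realized by filtration elements, which is what ensures that the realization of $\cal C$ is not lost along the way.

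First I would establish (V) by subdividing at the endpoints of all indivisible periodic Nielsen paths. Finiteness (up to iteration) of the set of such paths in each EG stratum is a standard consequence of the combinatorial analysis of EG strata from \cite{bh:tracks}, so this is a terminating procedure. Next I would attend to (F) by replacing each filtration element by its core and inserting the resulting core subgraphs into the filtration; since the core of an $f$-invariant subgraph is $f$-invariant, this is a genuine refinement that preserves the realization of $\cal C$. With (F) in hand I would address (NEG): for each non-periodic NEG edge $E$ in a stratum $H_i$, the terminal vertex is eventually mapped into $G_{i-1}$, and combining (V) with a sliding move replaces $E$ by an edge whose terminal endpoint is periodic and lies in a filtration element of height less than $i$, which is then its own core by (F).

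For (Z), the goal is to show that every zero stratum can be placed so as to be enveloped by an EG stratum. Zero strata created during the train track algorithm exist precisely to make (RTT-ii) hold for some EG stratum $H_r$; once (V), (F), and (NEG) are in place, one reindexes the filtration so that these zero strata appear immediately below $H_r$, and then enforces the link and valence-at-least-two conditions on $H_i$ by collapsing valence-one vertices of zero strata, an operation that is relative-train-track preserving. Finally, (P) is handled combinatorially: if $H_m \subset \Per(f)$ is a forest then $H_m$ is periodic as a set of edges, hence can be swapped past a neighboring stratum without destroying invariance of either filtration element, and such a swap produces the required $G_j$ with $[G_j] \ne [G_l \cup H_m]$ whenever the current filtration fails to have one.

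The main obstacle is managing the interference between the stages: subdivision for (V) creates new vertices that can disrupt (NEG), sliding for (NEG) can introduce new periodic Nielsen paths whose endpoints must in turn be added to the vertex set, and re-indexing for (Z) or (P) can threaten the realization of $\cal C$. To control this I would introduce a lexicographic complexity invariant, built from $\Lambda(f)$ as in the proof of Theorem~\ref{thm:rtt} together with a combinatorial count of the number of current violations of (V) through (NEG), and verify that each elementary move either strictly decreases this invariant or, in the case of pure re-indexing moves, leaves $\Lambda(f)$ unchanged while strictly reducing the violation count. Termination then yields a representative satisfying all five conditions while still realizing~$\cal C$.
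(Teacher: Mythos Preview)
Your outline follows the shape of the original (non-algorithmic) proof in \cite{fh:recognition}, but the paper's task here is to make that proof \emph{algorithmic}, and the single genuinely non-algorithmic step is precisely the one you treat as routine. For (V) you write that ``finiteness (up to iteration) of the set of such paths in each EG stratum is a standard consequence of the combinatorial analysis of EG strata from \cite{bh:tracks}, so this is a terminating procedure.'' Finiteness is indeed known (\cite[Lemma~2.12]{fh:recognition}), but that does not tell you \emph{where} the endpoints are: to subdivide algorithmically you need an a~priori bound on the period of periodic Nielsen paths, and no such bound is available from \cite{bh:tracks} alone. The paper's proof supplies this bound via Corollary~\ref{c:kn} (the uniform rotationless iterate $K_n$), then subdivides at the finitely many isolated fixed points of $f^{K_n}$; Proposition~3.29 and Lemma~3.28 of \cite{fh:recognition} then certify that every indivisible periodic Nielsen path already has period at most $K_n$, so no further subdivision is needed. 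This is why the paper reverses your order and does (V) \emph{last}: the bound $K_n$ itself rests on consequences of Theorem~\ref{2.19} established in Section~\ref{sec:rotationless iterates}, so the other four properties are arranged first (by simply invoking the already-algorithmic steps of the original proof), and only then is (V) handled.

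Two smaller points. First, because the paper does the five properties in a single pass with (V) last, there is no interference between stages and no need for your lexicographic complexity invariant; the paper explicitly notes that the steps for (P), (Z), (NEG), (F) ``make no use of (V).'' Second, your description of (NEG) is off: for an NEG edge $E$ in $H_i$ one has $f(E)=E\cdot u$ with $u\subset G_{i-1}$, so the terminal vertex of $E$ is already in $G_{i-1}$; the content of (NEG) is that this vertex be \emph{periodic} and lie in a filtration element that is its own core, which is arranged in \cite{fh:recognition} without the CT-level sliding argument you allude to.
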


The proof that $\fG$ as in Theorem~\ref{2.19} can be constructed algorithmically is contained in Section~\ref{proving 2.19}

\section{Rotationless iterates} \label{sec:rotationless iterates}  Every element of $\Out(F_n)$ has an iterate that is  rotationless (Definition~\ref{def:principal auto}).  Corollary~\ref{c:kn} below gives an explicit bound on the size of the iterate; see also   \cite[Lemma~4.42]{fh:recognition} for a proof that such a bound exists.

\subsection{More on markings}\label{s:markings}
 In this subsection we discuss markings in more detail and recall definitions and results from \cite[Section~3]{fh:recognition}.  We assume throughout this subsection that $\fG$ is a \rtt\ representing $\phi \in \Out(F_n)$.  
 
 Markings are used to translate the geometric properties of $\fG$  into algebraic properties of $\phi$. In this paper, we will focus on the geometric properties of the homotopy equivalences and only bring in markings at the last minute when necessary. Further  details on the material presented in this section can be found in  \cite[Section~2.3]{fh:recognition}.

Recall that the rose $R_n$ denotes the rose with vertex $*$ and that we have once and for all identified $\pi_1(R_n,*)$ with $F_n$.    A lift  $\ti * \in \ti R_n$ of $*$ to the universal cover $\ti R_n$ determines an isomorphism $J_{\ti *}$ from $F_n =  \pi_1(R_n,*)$ to the group $\cT(\ti R_n)$ of covering translations of $\ti R_n$   given by $[\circuit]$ 
maps to the covering translation $\trans$ of $\ti R_n$ that takes $\ti *$ to the terminal endpoint  
 of the lift   of $\circuit$ with initial endpoint $\ti *$.

Let $G$ be a finite graph equipped with a marking $\marking: R_n\to G$. Denoting $\mu(*)$ by $\star$, \ $\mu:(R_n,*) \to (G,\star)$ induces an isomorphism $\mu_\# : \pi_1(R_n,*) \to \pi_1(G,\star)$ that identifies $F_n$ with $\pi_1(G,\star)$.  
Fix a lift $\ti\star$ of $\star$ to $\ti G$. The lift $\ti \marking:(\ti R_n,\ti *)\to (\ti G, \ti\star)$ determines a homeomorphism $\partial\ti\marking:\partial\ti R_n\to\partial\ti G$ of Gromov boundaries. In this way $\partial F_n$, $\partial\ti R_n$, and $\partial\ti G$ are all identified.  Since covering translations are determined by their action on Gromov boundaries, there is an induced identification of   $\cT(\ti R_n)$  with $\cT(\ti G)$.  For any $v \in G$ and lift $  \ti v \in \ti G$, there is an induced isomorphism $J_{\ti v} : \pi_1(G,v) \to \cT(\ti G)$ defined exactly as $J_{\ti *}$.    It is straightforward to check that   $\mu_\# = J_{\ti \star}^{-1} J_{\ti *}  : \pi_1(R_n,*) \to \pi_1(G,\star)$.  

We also have an identification of automorphisms representing $\phi\in\Out(F_n)$ with lifts $\ti f:\ti G\to \ti G$ of $f:G\to G$ given by $\Phi\leftrightarrow\ti f$ if the actions of $\Phi$ and $\ti f$ on $\partial F_n$ agree, i.e.\ if $\partial\Phi=\partial\tilde f$. We usually specify $\tilde f$ by specifying $\ti f(\ti\star)$ or equivalently by specifying the path $\ti\rho=[\ti\star,\ti f(\ti\star)]$ or its image $\rho$ in $G$. We say that $\Phi$ or $\ti f$ is {\it determined by $\ti f(\ti\star)$, $\ti \rho$, or $\rho$.} The action of $\Phi$ on $\pi_1(G,\star)$ is given by $\gamma\mapsto f(\gamma)^\rho:=\rho f(\gamma)\overline\rho$. If $\ti f$ is determined by $\rho$ and $\ti f'$ is determined by $\rho'$ then $\Phi'=i_{\gamma}\Phi$ where $\Phi\leftrightarrow\ti f$, $\Phi'\leftrightarrow \ti f'$, and $\gamma\in F_n$ is represented by the loop $\rho'\overline\rho$.  Working in the universal cover $\ti G$ is algorithmic in the sense that we can always compute the action of $\ti f$ on arbitrarily large balls (in the graph metric) around $\ti \star$.   In particular, given $\Phi$ we may algorithmically find $\ti f$ with $\Phi\leftrightarrow \ti f$ and {\it vice versa}. If $\ti f$ fixes $\ti v\in\ti G$ then $\ti f\leftrightarrow \Phi$ for $\Phi$ determined by $\rho$ where $\ti \rho=\ti\sig\ti f(\ti\sig^{-1})$ and $\ti \sig=[\ti\star,\ti v]$.

\begin{definition}\label{d:zt}
For $\ti f : \ti G \to \ti G$  a lift of $f :G \to G$, we denote the subgroup of $\cT(\ti G)$ consisting of covering translations that commute with $\ti f$ by  $\zt(\ti f)$.
\end{definition}
We state the following well known fact (see for example \cite[Lemma~2.1]{fh:recognition}) as  a lemma for easy reference.

\begin{lemma} \label{identify Fix}  If $\ti f$ corresponds to $\Phi$ as above then  $\zt(\ti f)$ and $\Fix(\Phi)$ are equal when viewed as subgroups of $\cT(\ti G)$.
\end{lemma}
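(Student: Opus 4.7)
The plan is to conjugate. For any $T \in \cT(\ti G)$, the product $\ti f\, T\, \ti f^{-1}$ is again a covering translation of $\ti G$, so $T \in \zt(\ti f)$ iff $\ti f\, T\, \ti f^{-1} = T$ in $\cT(\ti G)$. Writing $T = J_{\ti\star}(c)$ for a unique $c \in F_n$, it suffices to establish the identity
\[
\ti f\, J_{\ti\star}(c)\, \ti f^{-1} = J_{\ti\star}(\Phi(c)),
\]
because then $T \in \zt(\ti f)$ iff $\Phi(c) = c$, i.e.\ iff the element of $F_n$ corresponding to $T$ lies in $\Fix(\Phi)$, which is exactly the claimed equality of subgroups of $\cT(\ti G)$.

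To prove the displayed identity I would pass to the boundary, using the fact that a covering translation is determined by its action on $\partial \ti G$. Under the identifications $\partial \ti G = \partial F_n$ and $\cT(\ti G) \cong F_n$ induced by $\ti\star$, the covering translation $J_{\ti\star}(c)$ acts on $\partial F_n$ as the boundary extension of left multiplication $L_c : F_n \to F_n$, $x \mapsto cx$; this is immediate from the definition of $J_{\ti\star}$ as the map sending $c$ to the covering translation carrying $\ti\star$ to the terminal endpoint of the lift of $c$ starting at $\ti\star$. By the defining property of the correspondence $\ti f \leftrightarrow \Phi$ one has $\partial \ti f = \partial \Phi$. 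The automorphism identity $\Phi(cx) = \Phi(c)\Phi(x)$ says precisely $\Phi \circ L_c = L_{\Phi(c)} \circ \Phi$ on $F_n$, and extends by continuity to $\partial\Phi \circ \partial L_c = \partial L_{\Phi(c)} \circ \partial\Phi$ on $\partial F_n$. Conjugating gives $\partial\ti f \circ \partial L_c \circ \partial\ti f^{-1} = \partial L_{\Phi(c)}$, which is the boundary incarnation of the displayed identity; since covering translations are determined by their boundary action, the identity itself follows.

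The only obstacle worth mentioning is bookkeeping: the correspondence $\ti f \leftrightarrow \Phi$ and the isomorphism $J_{\ti\star}$ both depend on the choice of lift $\ti\star$, so the two ingredients above must be set up relative to the same basepoint. Once that is done, the argument is a purely formal manipulation of boundary extensions, and the standard observation that a covering translation is pinned down by its action on $\partial \ti G$ finishes the proof.
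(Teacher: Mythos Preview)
Your argument is correct. The paper does not actually supply a proof of this lemma; it simply records it as a well-known fact with a pointer to \cite[Lemma~2.1]{fh:recognition}, so there is nothing in the paper to compare your approach against.

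One small point of exposition: you write ``the product $\ti f\, T\, \ti f^{-1}$ is again a covering translation,'' but $\ti f$ is only a lift of the homotopy equivalence $f$ and need not be invertible as a self-map of $\ti G$. What is true, and what you in effect use, is that for each $T \in \cT(\ti G)$ there is a unique $T' \in \cT(\ti G)$ with $\ti f\, T = T'\, \ti f$ (both sides are lifts of $f$, and lifts differ by covering translations). Your boundary computation shows $\partial(\ti f\, T) = \partial(J_{\ti\star}(\Phi(c))\,\ti f)$, and since two lifts of $f$ with the same boundary extension are equal, this gives $\ti f\, T = J_{\ti\star}(\Phi(c))\,\ti f$. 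Hence $T$ commutes with $\ti f$ iff $J_{\ti\star}(\Phi(c)) = T = J_{\ti\star}(c)$, i.e.\ iff $c \in \Fix(\Phi)$. So the substance is fine; just rephrase the opening line to avoid the literal inverse of $\ti f$ on $\ti G$.
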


Automorphisms $\Phi_1, \Phi_2\in\Aut(F_n)$ are {\em isogredient} if $\Phi_1 = i_a \Phi_2 i_a^{-1}$ for some inner automorphism $i_a$.   Lifts $\ti f_1$ and $\ti f_2$ of $f$ are {\it isogredient} if the corresponding automorphisms are isogredient. That is, $\ti f_1$ and $\ti f_2$ are isogredient if there exists a covering translation $T$ of $\ti G$ such that $\ti f_2=T\ti f_1 T^{-1}$. The set of {\em attracting laminations} for $\phi \in \Out(F_n)$ is denoted $\L(\phi)$; see \cite[Section~3]{bfh:tits1}.

\subsection{Principal automorphisms and principal points}  \label{s:principal} 
Recall from \cite{gjll:index} that  for each $\Theta \in \Aut(F_n)$,  
  $$\Fix(\partial\Theta) = \Fix_-(\partial \Theta) \cup \Fix_+(\partial  \Theta) \cup \partial \Fix(\Theta)$$ 
 where  $\Fix(\Theta)$ is the fixed subgroup for $\Phi$, $\Fix_-(\partial  \Theta) \subset \partial F_n$ is a finite union of $\Fix(\Theta)$-orbits of isolated repellers and  $\Fix_+(\partial  \Theta) \subset \partial F_n$ is a finite union of $ \Fix(\Theta)$-orbits of isolated attractors.
 
 Associated to each $\phi \in \Out(F_n)$ is a finite set $\L(\phi)$ of  {\em attracting laminations}, each a closed subset of abstract lines which we refer to as {\em leaves} of the lamination.  A  leaf $\gamma$ of $\Lambda \in \L(\phi)$  is {\em generic} in $\Lambda$ if both of its ends are dense in $\Lambda$.  See   \cite[Section 3.1]{bfh:tits1}.

\begin{definition}\cite[Definition 3.1]{fh:recognition}  \label{def:principal auto} For $\Phi \in \Aut(F_n)$ representing $\phi $, denote the set of non-repelling fixed points of $\partial \Phi$ by  $\Fix_N(\partial\Phi)$.  We say that $\Phi$ is a {\em principal automorphism} and write $\Phi \in \PA(\phi)$ if either of the following hold.
\begin{itemize}
\item $\Fix_N(\partial \Phi)$ contains at least three points.
\item $\Fix_N(\partial \Phi)$ is a two point set that is neither the set of fixed points for the action of some non-trivial $a \in F_n$ on $\partial F_n$ nor the set of endpoints of a lift of a generic leaf of an element of $\L(\phi)$.
\end{itemize}
If $\fG$ is a topological representative of $\phi$ and   $\ti f : \ti G \to \ti G$ is the lift corresponding to principal $\Phi$  then $\ti f$ is a {\em principal lift}.
\end{definition}

If $\Phi \in \PA(\phi)$ and $k > 1$ then $\Fix_N(\partial\Phi) \subset \Fix_N(\partial\Phi^k)$ and $\Phi^k \in \PA(\phi^k)$.  It may be that the  injection  $\Phi \mapsto \Phi^k$ of  $\PA(\phi)$ into $\PA(\phi^k)$ is not surjective.     It may also be that $\Fix_N(\partial\Phi^k)$ properly contains $\Fix_N(\partial\Phi)$ for some principal $\Phi$ and some $k > 1$. If neither of these happen then we say that $\phi$ is {\em forward rotationless}. For a formal definition, see \cite[Definition~3.13]{fh:recognition}.
 
\begin{remark}
It is becoming common usage to suppress the word ``forward" in ``forward rotationless" and we will follow that convention in this paper. So, when we say that $\phi\in\Out(F_n)$ is {\it rotationless}, we mean that $\phi$ is forward rotationless. This convention was followed in the recent work of Handel-Mosher \cite{hm:subgroups}. Be aware though that the term ``rotationless" has a slightly different meaning in \cite{fh:abeliansubgroups}.
 \end{remark}

Suppose that $\fG$ is a topological representative of $\phi$.  By \cite[Corollary~3.17]{fh:recognition}, \  $\Fix(\ti f) \ne \emptyset$ for each principal lift $\ti f$.   The projected  image of $ \Fix(\ti f)$ is exactly a Nielsen class in $\Fix(f)$ and a pair of principal lifts are isogredient if and only if they determine the same Nielsen class of $\Fix(f)$ \cite[Lemma~3.8]{fh:recognition}. 

\begin{definition} \label{d:principal vertex} We say that $x \in \Per(f)$ is {\em principal}   if neither of the following conditions are satisfied.
\begin{itemize}
\item $x$ is not an endpoint of a non-trivial periodic  Nielsen path and there are exactly two periodic directions at $x$, both of which are contained in the same \eg\ stratum.
\item $x$ is contained in a component  $C$ of $\Per(f)$ that is topologically a circle and each point in $C$ has exactly two periodic directions.  
\end{itemize} 
If each principal periodic vertex is fixed and if each periodic direction based at a principal periodic vertex is fixed then we say that $f$ is {\em rotationless}.
\end{definition}

\begin{remark} \label{r:stable} By definition, a  point is principal with respect to $f$ if and only if it is principal with respect to  $f^k$ for all  $k \ge 1$.
\end{remark}

\begin{remark}  Definition~\ref{d:principal vertex} is a corrected version of Definition~3.18 of \cite{fh:recognition} in which  \lq $x$ is not an endpoint of a non-trivial Nielsen path\rq\ in the first item of Definition~\ref{d:principal vertex} is replaced with the inequivalent condition \lq $x$ is the only point in its Nielsen class\rq.  Our thanks to Lee Mosher who pointed this out to us.  Fortunately, the definition we give here and not the one given in \cite{fh:recognition} is the one that is actually used in \cite{fh:recognition} so no further corrections to \cite{fh:recognition} are necessary.
\end{remark}

 We are mostly interested in the case of a \ct , where characterizations of principal points are simpler. The next lemma gives two.

\begin{lemma}\label{l:not principal}
Suppose $\fG$ is a \ct. 
\begin{enumerate}
\item\label{i:ct principal}
A point $x\in\Per(f)$ is principal iff $x\in\Fix(f)$ and the following condition is not satisfied.
\begin{itemize}
\item
 $x$ is not an endpoint of a  non-trivial Nielsen path and there are exactly two periodic directions at $x$, both of which are contained in the same $EG$-stratum.
\end{itemize}
\item\label{i:leaf}
The following are equivalent for a point $x\in\Fix(f)$. Let $\ti f:\ti G\to \ti G$ be a lift of $f$ fixing a lift $\ti x$ of $x$.
\begin{enumerate}
\item\label{i:x principal}
$x$ is principal.
\item\label{i:f tilde principal}
$\ti f$ is principal.
\item\label{i:leaf 2}
$\Fix_N(\partial\ti f^2)$ is not the set of endpoints of a generic leaf of an element of $\L(\phi)$. 
\end{enumerate}
\end{enumerate}
\end{lemma}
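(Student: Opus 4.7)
Two features of a \ct\ $f$ beyond its being a \rtt\ drive both parts: it is rotationless (so by \cite[Lemma~3.30]{fh:recognition} every periodic Nielsen path is a Nielsen path and every periodic direction at a fixed point is fixed), and its filtration satisfies the restrictive properties of Theorem~\ref{2.19}. My strategy is to use rotationlessness to reduce Definition~\ref{d:principal vertex} to the single-bullet form for part (1), and then to use the dictionary between fixed points of $\ti f$ in $\ti G$ and non-repelling fixed points of $\partial\ti f$ to match this with the lift-based Definition~\ref{def:principal auto} for part (2).

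For part (1), rotationlessness immediately implies that every principal $x\in\Per(f)$ lies in $\Fix(f)$, and lets me replace ``periodic'' by ``fixed'' throughout the first bullet of Definition~\ref{d:principal vertex}. The remaining task is to exclude the second bullet, i.e.\ a circular component $C$ of $\Per(f)$ with exactly two periodic directions at each point. Since each point of $C$ is fixed with exactly two fixed directions, $C$ must be a fixed closed loop whose edges all lie in periodic strata; property (Z) of Theorem~\ref{2.19} excludes zero strata from contributing, and property (\noneg) forces the terminal endpoint of each edge in a non-periodic \noneg\ stratum into a strictly lower filtration element. A height bookkeeping around $C$ then either forces extra incident edges at the vertices (contradicting the ``exactly two'' hypothesis) or violates the filtration structure, ruling out the second bullet. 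This collapses Definition~\ref{d:principal vertex} to the displayed condition.

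For part (2), the equivalence (a)$\Leftrightarrow$(b) is a translation between data at $x$ and data on $\partial F_n$ via the unique lift $\ti f$ fixing $\ti x$: fixed directions at $\ti x$ contained in \eg\ or periodic \noneg\ strata iterate to fixed rays and so contribute elements of $\Fix_N(\partial\ti f)$; endpoints of Nielsen paths at $x$ contribute additional non-repelling fixed points; and by \cite[Corollary~3.17]{fh:recognition} together with \cite[Lemma~3.8]{fh:recognition} every non-repelling fixed point of $\partial\ti f$ arises from the Nielsen class of $\ti x$ this way. Comparing cardinalities and matching the ``endpoints of a generic leaf'' exclusion to the ``both directions in the same \eg\ stratum'' case under this dictionary lines up the characterization of (1) with Definition~\ref{def:principal auto}, yielding (a)$\Leftrightarrow$(b).

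For (b)$\Leftrightarrow$(c), rotationlessness gives $\Fix_N(\partial\ti f) = \Fix_N(\partial\ti f^2)$, so the generic-leaf exclusion in (c) matches the generic-leaf exclusion in Definition~\ref{def:principal auto}. What remains is to see that the axis-of-$a$ exclusion in Definition~\ref{def:principal auto} is automatic once $\ti f$ has a fixed point in $\ti G$: if $\Fix_N(\partial\ti f) = \{P,Q\}$ were the pair of endpoints of the axis of some nontrivial $a\in F_n$, then by Lemma~\ref{identify Fix} the corresponding covering translation lies in $\zt(\ti f)$, and its action on the axis together with the fixed point $\ti x$ produces a third non-repelling fixed point of $\partial\ti f$, contradicting $|\Fix_N(\partial\ti f)|=2$. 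I expect the main obstacle to be the exclusion of the circular-component case in part (1), since that step alone requires a genuine use of the CT filtration axioms rather than routine bookkeeping.
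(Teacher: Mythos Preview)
Your outline has a genuine gap in the (c)$\Rightarrow$(b) direction of part~(2). You invoke rotationlessness to obtain $\Fix_N(\partial\ti f) = \Fix_N(\partial\ti f^2)$, but rotationlessness asserts this equality only for \emph{principal} lifts; when $\ti f$ is not principal you have no control over $\Fix_N(\partial\ti f^2)$, which could strictly contain $\Fix_N(\partial\ti f)$. The paper avoids this by proving not-(a)$\Rightarrow$not-(c) instead: if $x$ is not principal then part~(1) gives exactly two periodic directions at $x$ in a common \eg\ stratum, and \cite[Lemma~2.13]{fh:recognition} places the endpoints of a generic leaf inside $\Fix_N(\partial\ti f^2)$. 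The crucial step is then to bound $|\Fix_N(\partial\ti f^2)|<3$ by observing (Remark~\ref{r:stable}) that $x$ is also not principal for $f^2$, and applying (a)$\Leftrightarrow$(b) to $f^2$ to conclude $\ti f^2$ is not principal. This stability-under-iteration trick is what makes the squared lift tractable, and it is absent from your plan. Your separate argument ruling out the axis case is likewise incomplete: having $T_a$ commute with $\ti f$ and $\ti x\in\Fix(\ti f)$ does not by itself produce a third point of $\Fix_N(\partial\ti f)$ when $\ti x$ lies on the axis with only two fixed directions.

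For part~(1), your exclusion of the circular component via (Z), (\noneg), and unspecified ``height bookkeeping'' is far harder than necessary. The (Periodic Edges) axiom of a \ct\ asserts directly that each periodic edge is fixed and each endpoint of a fixed edge is principal; hence if a circular component $C\subset\Per(f)$ as in the second bullet existed, its vertices would be simultaneously principal (by the axiom) and not principal (by the bullet). Also, your claim that rotationlessness forces every principal periodic point into $\Fix(f)$ addresses only vertices; for a periodic-but-not-fixed interior point one argues separately (again via (Periodic Edges)) that the containing edge lies in an \eg\ stratum, so the point is not principal.
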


\begin{proof}
(1): Periodic Nielsen paths in a \ct\ are fixed \cite[Lemma~4.13]{fh:recognition} and so  the bulleted item in the lemma is equivalent to the first bulleted item of the Definition~\ref{d:principal vertex}. By definition, periodic edges of a \ct\ are fixed and the  endpoints of fixed edges are principal.  Therefore the second item in Definition~\ref{d:principal vertex} never holds.  To complete the proof it remains to show that all principal points of $f$ are fixed.    This holds for vertices because \ct s are rotationless.  If $x$ is a periodic but not fixed point in the interior of an edge then (by definition of a \ct) that edge must be in an \eg\ stratum and so $x$ is not principal.

(2): By \cite[Corollaries~3.22 and 3.27]{fh:recognition}, (\ref{i:x principal}) and (\ref{i:f tilde principal}) are equivalent. If $\ti f$ is principal, then by definition of rotationless and principal, $\Fix_N(\partial\ti f)=\Fix_N(\partial\ti f^2)$ is not contained in the set of endpoints of a generic leaf. We see (\ref{i:f tilde principal}) implies (\ref{i:leaf 2}). If $x$ is not principal for $f$ then the bulleted item in (1) holds. In particular there are exactly two periodic directions at $x$, both of which are in the same $EG$-stratum. By \cite[Lemma~2.13]{fh:recognition}, $\Fix_N(\partial\ti f^2)$ contains the set of endpoints  of a generic leaf of an element of $\L(\phi)$. By Remark~\ref{r:stable}, $x$ is not principal for $f^2$, and so $\ti f^2$ is not a principal lift. Hence $|\Fix_N(\partial\ti f^2)|<3$. We conclude (\ref{i:leaf 2}) implies (\ref{i:x principal}).
\end{proof}

\subsection{A sufficient condition to be rotationless and a uniform bound} \label{s:uniform bound}
Before turning to Lemma~\ref{rotationless}, which gives a sufficient condition for an outer automorphism to be rotationless, we recall the connection between edges in a \ct\ and elements of $\Fix_+(\Phi)$.

\begin{definition}\label{def:slash E}  Given a \ct\ $\fG$ representing $\phi$, let $\E$ (or $\E_f$) be the set of oriented, non-fixed, and non-linear edges in $G$ whose initial vertex is principal and whose initial direction is fixed by $Df$.  For each $E \in \E$,  there is a path $u$ such that $f^k_\#(E) = E\cdot u\cdot f_\#(u) \cdot \ldots \cdot f^{k-1}_\#(u)$ for all $k \ge 1$ and such that $|f^k_\#(u)| \to \infty$ with $k$.  The union of the increasing sequence  $$E \subset f(E) \subset  f^2_\#(E) \subset \ldots  $$  of paths in $G$ is a ray $R_E$.  Each lift $\tilde R_E$ of $R_E$ to the universal cover of $G$ has a well-defined terminal endpoint $\partial\tilde R_E\in\partial F_n$ and so $R_E$ determines an $F_n$-orbit $\partial R_E$ in $\partial F_n$. 
\end{definition}
 
\begin{lemma} \label{Fix+}  \label{l:identifying Fix+}  Suppose that $\fG$  is a \ct\ and that $E \in \E$.  If $\ti E$ is a lift of $E$ and $\ti f$ is the lift of $f$ that fixes the initial endpoint of $\ti E$
then the lift $\ti R_{\ti E}$ of $R_E$ that begins with $\ti E$ converges to a point in $\Fix_+(\partial\ti f)$. Moreover, $E \mapsto \partial R_E$ defines a surjection $\E\to\big(\cup_{\Phi\in\PA(\phi)}\Fix_+(\partial\Phi)\big)/\f$.
\end{lemma}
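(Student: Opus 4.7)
My plan is to prove the two assertions in sequence. For the first assertion, the key point is that $E \in \E$ has a fixed initial direction, so the splitting $f_\#(E) = E \cdot u$ lifts to a splitting $\ti f_\#(\ti E) = \ti E \cdot \ti u$, where $\ti u$ is the lift of $u$ starting at the terminal endpoint of $\ti E$. Iterating,
$$\ti f^k_\#(\ti E) = \ti E \cdot \ti u \cdot \ti f_\#(\ti u) \cdot \ldots \cdot \ti f^{k-1}_\#(\ti u)$$
is an increasing sequence of initial segments of $\ti R_{\ti E}$ whose lengths tend to infinity, since $|f^k_\#(u)| \to \infty$. Hence $\ti R_{\ti E}$ is an honest infinite ray with a well-defined endpoint $P \in \partial \ti G$. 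Because $\ti f(\ti R_{\ti E})$ begins at the same initial vertex (which is fixed by $\ti f$) and contains $\ti R_{\ti E}$ beyond an initial segment, it has the same endpoint at infinity as $\ti R_{\ti E}$, giving $\partial \ti f(P) = P$.

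To verify that $P$ is attracting, I would apply the standard attractor criterion in $\partial\ti G$. Any ray $\ti \gamma$ based at the initial vertex of $\ti E$ whose boundary point is close to $P$ shares an arbitrarily long common initial segment with $\ti R_{\ti E}$. Because $\ti f^k_\#$ maps the initial edge $\ti E$ to the increasingly long initial segment $\ti f^k_\#(\ti E)$ of $\ti R_{\ti E}$, the common initial segment of $\ti f^k(\ti \gamma)$ and $\ti R_{\ti E}$ grows without bound with $k$. Consequently $(\partial \ti f)^k$ drives a neighborhood of $P$ into smaller neighborhoods of $P$, proving $P \in \Fix_+(\partial \ti f)$.

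For the moreover clause, well-definedness of the map is automatic: by definition of $\E$, the initial vertex of $E$ is principal, and Lemma~\ref{l:not principal}(2) then identifies the lift $\ti f$ fixing the initial endpoint of $\ti E$ with a principal automorphism. For surjectivity, I would reverse-engineer $E$ from a pair $(\Phi, Q)$ with $\Phi \in \PA(\phi)$ and $Q \in \Fix_+(\partial \Phi)$. Let $\ti f$ be the corresponding principal lift; by \cite[Corollary~3.17]{fh:recognition}, $\Fix(\ti f)$ is non-empty, and the classification of non-repelling fixed points of lifts of \ct s (see \cite[Section~3]{fh:recognition}) shows that $Q$ is the endpoint of a ray emanating from some fixed vertex $\ti v$ of $\ti f$ with initial direction $\ti d$ satisfying $Df(\ti d) = \ti d$. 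Because $Q$ is attracting, the underlying edge $\ti E'$ is neither fixed nor linear; projecting to $G$ then gives $E' \in \E$ with $\partial R_{E'}$ in the $F_n$-orbit of $Q$.

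The main obstacle will be the final step of surjectivity: extracting the ray realizing $Q$ and verifying that its initial edge lies in $\E$. Ruling out the linear and fixed cases uses that $Q$ is an attractor (linear edges contribute fixed points of inner automorphisms, and fixed edges contribute fixed endpoints of Nielsen paths, neither of which is an attracting fixed point of $\partial \ti f$); ruling out the possibility that $\ti v$ projects to a non-principal vertex or that $Q$ is an endpoint of a generic leaf of an attracting lamination relies on the principal hypothesis on $\Phi$, Lemma~\ref{l:not principal}, and \cite[Lemma~2.13]{fh:recognition}.
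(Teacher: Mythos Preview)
Your proposal is essentially correct and tracks the paper's structure, but the execution differs: the paper's proof is almost entirely by citation, while you attempt to argue directly.

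For the first assertion, the paper invokes \cite[Lemma~4.36(1)]{fh:recognition} to place the endpoint $P$ in $\Fix_N(\partial\Phi)$ and then \cite[Proposition~I.1]{gjll:index} (using $|f^k_\#(u)|\to\infty$) to upgrade this to $\Fix_+(\partial\Phi)$. Your direct attractor argument is in the right spirit but has a small gap: when you assert that the common initial segment of $\ti f^k_\#(\ti\gamma)$ and $\ti R_{\ti E}$ grows without bound, you need the bounded cancellation lemma to control what happens at the juncture between the shared prefix and the unknown tail of $\ti\gamma$. With bounded cancellation in hand, the growth condition $|f^k_\#(u)|\to\infty$ eventually dominates the cancellation constant and the argument goes through; this is exactly what \cite[Proposition~I.1]{gjll:index} packages.

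For surjectivity, the paper simply cites \cite[Lemma~4.36(2)]{fh:recognition}. Your sketch amounts to reproving that lemma: start from $\ti v\in\Fix(\ti f)$, take the ray to $Q$, and show its initial edge lies in $\E$. The piece you flag as the obstacle---that the projected initial vertex is principal---is handled cleanly by Lemma~\ref{l:not principal}(\ref{i:leaf}): since $\ti f$ is principal and fixes $\ti v$, the projection $v$ is principal. That the initial direction of this ray is fixed is a separate fact (cf.\ \cite[Lemma~3.16]{fh:recognition}), which you should cite explicitly rather than folding into a general reference to ``the classification in Section~3.'' Ruling out the fixed and linear cases for the initial edge is correct as you describe.

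In short: both proofs follow the same outline, but the paper outsources the two substantive steps to \cite{fh:recognition} and \cite{gjll:index}, whereas you unpack them. Your version would be self-contained once you insert bounded cancellation into the attractor step and pin down the fixed-direction citation in the surjectivity step.
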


\proof   Suppose that  $x$ is the initial endpoint of $E \in \E$, that  $\ti x$ is a lift of $x$, that $R_{\ti E}$ is the lift  of $R_E$ that begins at $\ti x$ and that $\ti f : \ti G\to \ti G$ is the lift of $f$ that fixes  $\ti x$. Lemma~\ref{l:not principal}\pref{i:leaf}  implies that $\ti f$ is a principal lift and \cite[Lemma 4.36(1)]{fh:recognition} implies that $\ti R_{\ti E}$ converges to a point $\partial\ti R_E \in \Fix_N(\partial\Phi)$ where $\Phi$ is the principal automorphism corresponding to $\ti f$.  Since $|f^k_\#(u)| \to \infty$, it follows \cite[Proposition~I.1]{gjll:index} that  $\partial\tilde R_{\ti E} \in \Fix_+(\partial\Phi)$. \cite[Lemma 4.36(2)]{fh:recognition} implies that  $E \mapsto \partial R_E$ is surjective.
\endproof

\begin{remark} \label{unique lift} By \cite[Proposition I.1]{gjll:index},  $P \in  \Fix_+(\partial\Phi)$ is not fixed by any $i_a$ and so is not fixed by $\partial \Phi'$ for any   $\Phi' \ne \Phi$ representing $\phi$.  Thus $\cup_{\Phi\in\PA(\phi)}\Fix_+(\partial\Phi)$ is a disjoint union.
\end{remark}

\begin{lemma}  \label{rotationless} Suppose that $\theta \in \Out(F_n)$ acts trivially on $H_1(\f;\Z/3\Z)$ and induces the trivial permutation on $\big(\cup_{\Phi\in\PA(\phi)}\Fix_+(\partial\Phi)\big)/\f$
for some (any) rotationless iterate $\oone = \theta^L$ of $\theta$.  Then $\theta$ is rotationless.
\end{lemma}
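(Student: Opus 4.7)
The plan is to verify the two defining properties of rotationless from Definition~\ref{def:principal auto}: that the injection $\Psi \mapsto \Psi^k$ of $\PA(\theta)$ into $\PA(\theta^k)$ is surjective for each $k > 1$ and that $\Fix_N(\partial\Psi^k) = \Fix_N(\partial\Psi)$ for every $\Psi \in \PA(\theta)$. Since $\phi = \theta^L$ is assumed rotationless, it will suffice to produce, for each principal lift $\Phi \in \PA(\phi)$, a distinguished $L$-th root $\Theta' \in \PA(\theta)$ with $\Fix_N(\partial\Theta') = \Fix_N(\partial\Phi)$; an analogous argument at arbitrary $k > 1$ (using that both $\phi$ and $\phi^k$ are rotationless) will then produce the required surjection and $\Fix_N$-equality.

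The first ingredient is to build the $L$-th root from the trivial-permutation hypothesis. Fix $\Phi \in \PA(\phi)$ and $P \in \Fix_+(\partial\Phi)$ and choose any lift $\Theta$ of $\theta$. Because $\theta$ commutes with $\phi$ as outer automorphisms, conjugation by $\Theta$ sends the pair $(\Phi, P)$ to $(\Theta\Phi\Theta^{-1}, \partial\Theta(P))$ with $\Theta\Phi\Theta^{-1} \in \PA(\phi)$ and $\partial\Theta(P) \in \Fix_+(\partial(\Theta\Phi\Theta^{-1}))$. The hypothesis yields $a \in \f$ with $\partial\Theta(P) = \partial i_a(P)$, so $\Theta' := i_a^{-1}\Theta$ represents $\theta$ and fixes $P$. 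Since an attractor of $\partial\Phi$ is not fixed by any nontrivial inner automorphism (the remark following Lemma~\ref{l:identifying Fix+}), $\Theta'$ is the unique representative of $\theta$ with $P$ as a fixed point; consequently $(\Theta')^L$ is the unique representative of $\phi$ with $P$ as a fixed point, namely $\Phi$. Moreover, a fixed point of $\partial\Theta'$ whose $L$-th iterate is attracting is itself attracting (extract an $L$-th root of the local contraction), so $P \in \Fix_+(\partial\Theta')$.

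The second ingredient uses the $H_1(\f;\Z/3\Z)$-hypothesis to promote the equality $(\Theta')^L = \Phi$ to the $\Fix_N$-equality $\Fix_N(\partial\Theta') = \Fix_N(\partial\Phi)$. The inclusion $\Fix_N(\partial\Theta') \subseteq \Fix_N(\partial(\Theta')^L) = \Fix_N(\partial\Phi)$ is automatic, since repelling for $\partial(\Theta')^L$ implies repelling for $\partial\Theta'$. For the reverse inclusion one must handle the neutral fixed points $\Fix_N(\partial\Phi) \setminus \Fix_+(\partial\Phi)$, which by \cite[Proposition~I.1]{gjll:index} are precisely the boundary points of the fixed subgroup $\Fix(\Phi) \subset \f$. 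Each element of $\Fix(\Phi)$ projects to a $\phi$-fixed conjugacy class, and triviality of $\theta$ on $H_1(\f;\Z/3\Z)$ forces triviality of $\theta$ on such fixed conjugacy classes (this is the standard mechanism underlying the uniform rotationless bound of Corollary~\ref{c:kn}, cf.\ \cite[Lemma~4.42]{fh:recognition}), so $\Theta'$ preserves $\Fix(\Phi)$ setwise and $\partial\Theta'$ fixes $\partial\Fix(\Phi)$ pointwise. Combined with the previous paragraph, $\partial\Theta'$ fixes every point of $\Fix_N(\partial\Phi)$, yielding the reverse inclusion and hence $\Theta' \in \PA(\theta)$.

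The main obstacle is exactly this reverse $\Fix_N$-inclusion for the neutral fixed points: the attractor hypothesis alone cannot see them, and the $H_1(\f;\Z/3\Z)$-hypothesis is introduced precisely to handle this case. Once the $L$-level statement is established, the two rotationless conditions at arbitrary $k > 1$ follow by applying the same construction to the rotationless iterate $\phi^{k}$ of $\theta^{k}$, noting that the $H_1$-triviality and the $\Fix_+$-triviality of $\theta$ automatically pass to all powers $\theta^{k}$, and that uniqueness of the $L$-th root then forces compatibility across the various levels.
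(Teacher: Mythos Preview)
Your overall strategy matches the paper's: construct an $L$-th root $\Theta'$ of each $\Phi\in\PA(\phi)$ from the trivial-permutation hypothesis, then upgrade to $\Fix_N(\partial\Theta')=\Fix_N(\partial\Phi)$, and deduce rotationlessness at all levels $k$. The construction of $\Theta'$ with $(\Theta')^L=\Phi$ and $P\in\Fix_+(\partial\Theta')$ is correct, as is the observation that $\Theta'$ preserves $\Fix(\Phi)$ setwise (simply because it commutes with $\Phi=(\Theta')^L$).

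The genuine gap is the sentence ``triviality of $\theta$ on $H_1(F_n;\Z/3\Z)$ forces triviality of $\theta$ on such fixed conjugacy classes \ldots\ so $\partial\Theta'$ fixes $\partial\Fix(\Phi)$ pointwise.'' There are two problems. First, that $\theta$ fixes each $\phi$-fixed conjugacy class is not a formal consequence of the $H_1$ hypothesis via anything in Corollary~\ref{c:kn} or \cite[Lemma~4.42]{fh:recognition}; the paper obtains it from the nontrivial \cite[Theorem~4.1]{hm:subgroups2}. Second, and more seriously, fixing the conjugacy class $[a]$ for each $a\in\Fix(\Phi)$ only says $\Theta'(a)$ is conjugate to $a$ in $F_n$; it does not say $\Theta'(a)=a$. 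The assertion that $\partial\Theta'$ fixes $\partial\Fix(\Phi)$ pointwise is exactly the assertion that $\Theta'\restrict\Fix(\Phi)$ is the identity, and this requires real work. In the paper's proof (for $\mathbb F:=\Fix(\Phi)$ of rank $\ge 2$) one must show that $\mathbb F$ is its own normalizer, that the induced $\theta|\mathbb F\in\Out(\mathbb F)$ is well-defined, that it acts trivially on $H_1(\mathbb F;\Z/3\Z)$ (which is \emph{not} the same as $H_1(F_n;\Z/3\Z)$ and needs a separate argument via a carefully chosen basis of $\mathbb F$), and then invoke torsion-freeness of the kernel of $\Out(\mathbb F)\to GL(H_1(\mathbb F;\Z/3\Z))$ to conclude $\theta|\mathbb F$ is trivial. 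Finally one must check that the representative $\Theta'$ singled out by the attractor $P$ coincides with the representative fixing $\mathbb F$ pointwise; the paper does this in a separate paragraph with its own case analysis on $\rk(\mathbb F)$. None of these steps appears in your argument, and without them the reverse inclusion $\Fix_N(\partial\Phi)\subset\Fix_N(\partial\Theta')$ is unproved.
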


\begin{proof} We show below that 
\begin{description}
\item($\ast$)  For any $\aone\in\PA(\oone)$, there is $\Theta\in\PA(\theta)$ with the property that $\Fix_N(\partial\aone)\subset\Fix_N(\partial\Theta)$.
\end{description}
To see why this is sufficient to prove the lemma, let $\Theta_k\in\PA(\theta^k)$ for some $k\ge 1$.  Since $\theta^{kL}=\oone^k$, $\Theta_k^L\in \PA(\oone^k)$. Since $\oone$ is rotationless, there exists $\aone \in \PA(\oone)$  such that $\Theta_k^L = \aone^k$ and $\Fix_N(\partial\aone)=\Fix_N(\partial\aone^k)= \Fix_N(\partial\Theta_k^L)$. By $(\ast)$, there is $\Theta\in\PA(\theta)$ such that 
$$\Fix_N(\partial\Theta_k)\subset\Fix_N(\partial\Theta^L_k)=\Fix_N(\partial\aone)\subset\Fix_N(\partial\Theta) \subset \Fix_N(\partial\Theta^k)$$
 It follows that $\Theta_k=\Theta^k$. We have now seen that $\PA(\theta)\to\PA(\theta^k)$ given by $\Theta\mapsto \Theta^k$ is surjective.
By \cite[Definition~3.13 and Remark~3.14]{fh:recognition}, to show that $\theta$ is rotationless it remains to show that $\Fix_N(\partial\Theta^k)=\Fix_N(\partial\Theta)$ for all $\Theta\in\PA(\theta)$ and $k\ge 1$. This follows from the above displayed sequence of inclusions by taking $\Theta_k:=\Theta^k$.

We now turn to the proof of ($\ast$).   Set $\mathbb F:=\Fix(\aone)$. We claim that    there exists $\Theta$ representing $\theta$ such that $\mathbb F \subset \Fix(\Theta)$.    If the rank of $\mathbb F$ is $< 2$ then this follows from \cite[Part \urn{2} Theorem~4.1]{hm:subgroups}, which implies that $\theta$ fixes each conjugacy class that is fixed by $\oone$ and in particular fixes each conjugacy class  represented by an element of $\Fix(\aone)$.

Suppose then that $\mathbb F$ has rank $\ge 2$. We recall two facts.
\begin{itemize}
\item Each element  of $F_n$ is fixed by only finitely many elements of $\PA(\oone)$ and the root-free ones that are fixed by at least two such automorphisms determine only finitely many conjugacy classes; see  \cite[Lemma~4.40]{fh:recognition}.
\item $\mathbb F$ is its own normalizer in $F_n$. Proof:  Since $\mathbb F$ is finitely generated and has rank $> 1$, we can choose   $x \in \partial \mathbb F\subset \partial\f$ that is not fixed by any $\partial i_a$, $a \in\f\setminus \{1\},$ and so is not fixed by $\partial \Phi'$ for any automorphism $\Phi' \ne \Phi$ representing $\phi$.     If   $y$ normalizes $\mathbb F$ then $\partial i_y(x)  \in \partial \mathbb F$ and   $x$ is fixed by $\partial \Phi'$ where $\Phi' =  i_{y^{-1}}\aone i_y =   i_{y^{-1} \Phi(y)}\aone$.  It follows that $\aone(y) = y$ and hence $y \in \mathbb F$.
\end{itemize}
By the first bullet, we may choose a basis $\{b_j\}$ for $\mathbb F$ consisting of elements that are not fixed by any other element of $ \PA(\oone)$.  Applying  \cite[Part \urn{2} Theorem~4.1]{hm:subgroups} again, choose an automorphism $\Theta_j$ representing $\theta$ and fixing $b_j$.    The automorphism $\Theta_j \aone \Theta_j^{-1}$ fixes $b_j$ by construction and belongs to $\PA(\oone)$ by  \cite[Lemma~2.6]{fh:abeliansubgroups} and the fact that $\theta$ and $\oone$ commute.  By uniqueness, $\Theta_j \aone \Theta_j^{-1} = \aone$ and so $\Theta_j$ commutes with $\aone$.  In particular, $\Theta_j$ preserves $\mathbb F$.  Since $\mathbb F$ is its own normalizer, the outer automorphism $\theta | \mathbb F$ of $\mathbb F$ determined by $\Theta_j$   is  independent of $j$.   It follows that     $\theta | \mathbb F$ acts trivially on $H_1(\mathbb F;\Z/3\Z)$.  Since $\theta^L | \mathbb F$ is the identity and since the kernel of natural map $\Out(F_n)\to H_1(F_n;\mathbb Z/3\mathbb Z)$ is torsion-free\footnote{This follows from the standard fact that the kernel of the natural map $GL_n(\mathbb Z)\to GL_n(\mathbb Z/3\mathbb Z)$ is torsion-free and the result of Baumslag-Taylor \cite{bt:tf} that the  kernel of the natural map $\Out(F_n)\to GL_n(\mathbb Z)$ is torsion-free.}, $\theta |\mathbb F$ is the identity and the claim is proved.

Since $\theta$ acts trivially on $\big(\cup_{\Phi\in\PA(\phi)}\Fix_+(\partial\Phi)\big)/\f$, each $Q \in \Fix_+(\partial\aone)$ is fixed by some $\Theta_Q$ representing $\theta$. Since $\Theta_Q^L $ and $ \aone$  both fix $Q$ and represent $\oone$ we have $\Theta_Q^L= \aone$.  As above, $\Theta_Q$ commutes with $\aone$ and so preserves $\mathbb F$ and $\Fix_+(\partial\aone)$.  For any other $Q' \in \Fix_+(\partial\aone)$ we have $\Theta_{Q} = i_a \Theta_{Q'}$   for some $a \in F_n$.  Since both $\Theta_Q$ and $\Theta_{Q'}$  preserve  $\mathbb F$, $i_a$ does as well and so $a \in \mathbb F$.      

It suffices to show that $\Theta_Q$ is independent of $Q$ and $\mathbb F \subset \Fix(\Theta_Q)$.  This is obvious if $\mathbb F$ is trivial. If $\mathbb F$ has rank one then $\mathbb  F= \Fix(\Theta_Q)$  and 
$$Q' =\aone(Q') = \Theta_Q^L(Q') =  (i_a \Theta_{Q'})^L(Q') =  i_{a}^L\Theta_{Q'}^L(Q') =   i_{a}^L (Q')$$
 which implies that $a$ must be trivial and we are done. If $\mathbb F$ has rank $\ge 2$ then there is a unique $\Theta$ such that $\mathbb F \subset \Fix(\Theta)$ and $\aone$ is the only automorphism representing $\oone$ such that $\mathbb F \subset \Fix(\aone)$.  Thus  $\Theta^L = \aone$.  There exists $b \in \mathbb F$ such that $\Theta_Q = i_b \Theta$.    We have $\aone = \Theta_Q^L  = i_b^L \Theta^L =i_b^L \aone$ so $b$ is trivial and the proof is complete.  \end{proof}
 
To apply Lemma~\ref{rotationless} we will need a bound on the cardinality of $$\big(\cup_{\Phi\in\PA(\phi)}\Fix_+(\partial\Phi)\big)/\f$$

\begin{lemma}\label{l:bound on Fix+} If   $\oone\in\Out(\f)$ is rotationless then $$
\big(\cup_{\Phi\in\PA(\phi)}\Fix_+(\partial\Phi)\big)/\f\le15(n-1)$$
\end{lemma}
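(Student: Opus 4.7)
The plan is to apply Lemma~\ref{Fix+} to replace the bound on orbits of attracting fixed points by a combinatorial count on a CT. First, I would invoke Theorem~4.28 of \cite{fh:recognition} to produce a CT $\fG$ representing the rotationless $\phi$. Lemma~\ref{Fix+} then provides a surjection from $\mathcal{E}_f$ onto $\big(\bigcup_{\Phi\in\PA(\phi)}\Fix_+(\partial\Phi)\big)/\f$, reducing the proposition to showing $|\mathcal{E}_f|\le 15(n-1)$.

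Next I would estimate $|\mathcal{E}_f|$ by a crude count on $G$. Every element of $\mathcal{E}_f$ is an oriented edge of $G$ with $Df$-fixed initial direction at a principal vertex, so $|\mathcal{E}_f|$ is at most the number of oriented edges of $G$, namely $2E$ where $E$ is the edge count. From the Euler-characteristic identity $E=V+n-1$ for a rank-$n$ core graph, the task becomes bounding the vertex count $V$. In a CT, vertices are of restricted type --- principal periodic vertices, endpoints of indivisible Nielsen paths (placed at vertices by property (V) of Theorem~\ref{2.19}), and the subdivision points forced by the linear and NEG structure --- and each source can be enumerated against the rank and the length of the filtration using the CT structure theorems.

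The main obstacle is extracting the specific constant $15$. My plan is to split $\mathcal{E}_f$ into contributions from EG and NEG strata and bound each separately: EG contributions can be controlled using (RTT-i) together with the fact that a principal fixed direction at a vertex inside an EG stratum accounts for a definite share of that stratum's rank, while the NEG contribution involves only non-linear non-fixed NEG edges, which are tightly restricted by the CT axioms. Summing the per-stratum bounds against the total rank $n$ should yield the inequality. I expect $15$ is a convenient round bound chosen for later applications rather than the sharp constant --- indeed, the invariant $j(\phi)\le n-1$ introduced in Section~\ref{s:index} suggests that the sharp bound on $\big|\bigcup_\Phi\Fix_+(\partial\Phi)/\f\big|$ is considerably smaller, so one has substantial slack when aiming only for $15(n-1)$.
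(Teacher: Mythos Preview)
Your reduction via Lemma~\ref{Fix+} is correct and matches the paper's first step. After that, however, your plan diverges from the paper and, as written, does not go through.

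The difficulty is your second paragraph. Bounding $|\mathcal E_f|\le 2E$ and then $E=V+n-1$ shifts the problem to bounding the total vertex count $V$ of a CT, but a CT is typically a subdivided graph: the (Vertices) axiom forces subdivision at endpoints of EG indivisible Nielsen paths, and NEG edges can have initial endpoints of valence two. So $V$ is not bounded by the $2(n-1)$ one gets for a trivalent core graph, and producing an explicit bound $V\le c\,n$ requires precisely the case analysis you are hoping to avoid. Your proposed split into ``EG contributions'' and ``NEG contributions'' is stated only at the level of a wish; no mechanism is offered for why either piece is $O(n)$.

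The paper's argument avoids counting all edges by working with \emph{natural} edges (edges of the underlying trivalent graph), of which there are at most $6(n-1)$ when oriented. It then observes that at any natural vertex, certain oriented edges cannot lie in $\mathcal E$: the terminal direction of a non-fixed NEG edge is never fixed, and each EG illegal turn of height $r$ eats one fixed direction. This gives a single inequality
\[
\#\{\text{$\mathcal E$-edges at natural vertices}\}+\#\{\text{non-fixed NEG edges}\}+\#\{\text{EG strata with an illegal turn}\}\ \le\ 6(n-1).
\]
What remains are $\mathcal E$-edges based at valence-two principal vertices $v$. The CT axioms pin these down: either the other edge at $v$ is a non-fixed NEG edge (so these $v$ are bounded by the NEG term above, another $6(n-1)$), or both edges at $v$ lie in a single EG stratum $H_r$ and $v$ is an endpoint of the unique height-$r$ indivisible Nielsen path $\rho$. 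Here the paper exploits a point you miss by counting $|\mathcal E_f|$ rather than the image of $E\mapsto[R_E]$: the two directions pointing into $\rho$ determine the \emph{same} orbit, so the (up to four) $\mathcal E$-edges at the endpoints of $\rho$ contribute only three image points. Since each such $\rho$ already contributed one illegal-turn unit to the first $6(n-1)$, only two more are needed per $\rho$; with at most $\tfrac32(n-1)$ EG strata this adds $3(n-1)$. Summing gives $6(n-1)+6(n-1)+3(n-1)=15(n-1)$.

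Your intuition that $15$ is not sharp is correct (Corollary~\ref{c:improved ends count} later gives $6(n-1)$), but getting any explicit constant requires engaging with the valence-two vertices directly rather than trying to bound the global edge count.
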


\begin{proof}   Choose a \ct\ $\fG$ representing $\phi$ and assume the notation of Definition~\ref{def:slash E}. 
By Lemma~\ref{Fix+} it suffices to show that the cardinality of the image of $E \mapsto [R_E]$  is bounded by $15(n-1)$.  By construction, the initial vertex of $E$ is principal. If  it has valence at least three then we say that it is {\em natural}.

There are at most $6(n-1)$ oriented edges based at natural vertices.  Some of these are not fixed and so do not contribute to $\E$.  For example, if $E$ is \noneg\  then (Lemma~4.21 of \cite{fh:recognition}) the terminal vertex of $E$  is natural and the direction determined by $\bar E$ is not fixed.   Similarly,  if $(E_1,E_2)$ is an illegal turn of \eg\ height then the basepoint for this turn is natural and  either $E_1$ or $E_2$ determines a non-fixed direction.   It follows that $6(n-1)$  is  an upper bound for the sum of  the number of edges in $\E$ that are based at natural vertices,  the number of  non-fixed  \noneg\ edges   and  the number of \eg\ stratum $H_r$ with an illegal turn of height $r$.   

It remains to account for those \eg\ edges $E \in \E$ that are based at  valence two vertices $v$.   By our previous estimate there are at most $6(n-1)$ such $v$ with the other edge incident to $v$ being non-fixed \noneg. The only other possibility is that   both edges incident to $v$ are \eg.   If the edges were in different strata, say $H_r$ and $H_{r'}$ with $r <r'$ then  $v$ would have valence $\ge 2$ in $G_r$ (because $G_r$ is a core subgraph), a contradiction.  Thus  both edges belong to the same stratum $H_r$. Since $v$ is a principal vertex, it must be an endpoint of a Nielsen path $\rho$ of height $r$.   There are at most four edges incident to valence two vertices at the endpoints of $\rho$ and these determine at most three points in 
$\big(\cup_{\Phi\in\PA(\phi)}\Fix_+(\partial\Phi)\big)/\f$ because the two directions pointing into $\rho$ determine the same point. There is at most one such $\rho$ for each \eg\ stratum $H_r$  and $\rho$ has an illegal turn of height $r$ so our initial bound of $6(n-1)$  counted each $\rho$ once; we now have to count it  two more times.    In passing from the highest core $G_s$ with $s<r$ to $G_r$, at least two natural edges are added. It follows that the number of $EG$ strata is $\le \frac{3}{2}(n-1)$.  The total count then is $6(n-1) + 6(n-1) + 3(n-1) =  15 (n-1)$. 
\end{proof}

\begin{corollary}\label{c:kn} Let $h(n)=|GL(\Z/3/Z, n)|=3^{(n^2-1)}$, let $g(m)$ be Landau's function, the maximum order of an element in the symmetric group $S_m$, and let $K_n = g(15(n-1))!\cdot h(n)$. If $\theta\in\Out(\f)$ then $\theta^{K_n}$ is rotationless.
\end{corollary}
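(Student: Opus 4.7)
The plan is to apply Lemma~\ref{rotationless} to $\theta^{K_n}$, using Lemma~\ref{l:bound on Fix+} to control the permutation of principal attractors and a Lagrange-type cardinality count for the action on $H_1(F_n;\Z/3\Z)$.  First I would fix some $L$ for which $\phi := \theta^L$ is rotationless (which exists by \cite[Lemma~4.42]{fh:recognition}), and verify the preliminary fact that any further power of a rotationless outer automorphism is again rotationless.  This lets me use $\phi^{K_n} = (\theta^{K_n})^L$ as the rotationless iterate of $\theta^{K_n}$ that is required by Lemma~\ref{rotationless}.

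Next I would check the two hypotheses of Lemma~\ref{rotationless} for $\theta^{K_n}$.  The action of $\theta$ on $H_1(F_n;\Z/3\Z)$ lies in $GL(n,\Z/3\Z)$, a group of order $h(n)$, so by Lagrange its order divides $h(n)$; since $h(n) \mid K_n$, the element $\theta^{K_n}$ acts trivially on $H_1(F_n;\Z/3\Z)$.  For the permutation hypothesis, set $S := \bigl(\bigcup_{\Phi\in\PA(\phi)}\Fix_+(\partial\Phi)\bigr)/F_n$.  Because $\theta$ and $\phi$ commute, conjugation by any lift $\Theta$ representing $\theta$ preserves $\PA(\phi)$ (the same argument as in the proof of Lemma~\ref{rotationless}), and replacing $\Theta$ by $i_b\Theta$ changes $\Theta\Phi\Theta^{-1}$ only by isogredience; this conjugation therefore descends to a well-defined permutation of $S$.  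Lemma~\ref{l:bound on Fix+} gives $|S|\le 15(n-1)$, so the order of this permutation is at most $g(15(n-1))$ and hence divides $g(15(n-1))!$, which in turn divides $K_n$.  Consequently $\theta^{K_n}$ induces the trivial permutation on $S$.

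With both hypotheses verified, Lemma~\ref{rotationless} applied to $\theta^{K_n}$ with the rotationless iterate $\phi^{K_n}$ yields the conclusion.  The one place I expect to need real care, rather than a direct cardinality count, is the preliminary claim that powers preserve rotationlessness; this should follow by combining the natural injection $\PA(\phi)\hookrightarrow\PA(\phi^k)$ given by $\Phi\mapsto\Phi^k$ with the hypothesis that for rotationless $\phi$ this map is a bijection preserving $\Fix_N$, which forces the same two defining conditions for any further power.
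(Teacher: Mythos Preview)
Your proposal is correct and follows essentially the same approach as the paper: bound $|S|$ by Lemma~\ref{l:bound on Fix+}, kill the permutation with $g(15(n-1))!$, kill the $H_1(F_n;\Z/3\Z)$ action with $h(n)$, and apply Lemma~\ref{rotationless}. The paper's proof is terser and simply writes $\theta^{K_n}=(\theta^{g(15(n-1))!})^{h(n)}$, leaving implicit the point you flagged about needing a rotationless iterate of $\theta^{K_n}$; your observation that powers of a rotationless element are again rotationless (immediate from the bijectivity of $\Phi\mapsto\Phi^k$ on $\PA$ together with the $\Fix_N$ condition) is exactly what fills that gap, and your justification that $\theta$ permutes $S$ via commutation with $\phi$ mirrors the argument inside the proof of Lemma~\ref{rotationless}.
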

 
\begin{proof}   Lemma~\ref{l:bound on Fix+} implies that $\theta^{g(15(n-1))!}$ induces the trivial permutation  on 
$$\big(\cup_{\Phi\in\PA(\phi)}\Fix_+(\partial\Phi)\big)/\f$$ for any rotationless iterate $\phi$ of $\theta$. Hence $\theta^{K_n} = (\theta^{g(15(n-1))!})^{h(n)} $ satisfies the hypotheses of Lemma~\ref{rotationless}.  
\end{proof}

\begin{remark}\label{r:better count}
In Corollary~\ref{c:improved ends count}, we will see that $$|
\big(\cup_{\Phi\in\PA(\phi)}\Fix_+(\partial\Phi)\big)/\f
|\le 6(n-1)$$ and so we could take $K_n = g(6(n-1))!\cdot h(n)$ in Corollary~\ref{c:kn}.
\end{remark}

\subsection{Algorithmic Proof of Theorem~\ref{2.19}} \label{proving 2.19}
We review the proof of the existence of $\fG$ for $\theta$ as given in \cite[Theorem~2.19]{fh:recognition}, altering  it slightly to make it algorithmic.   

If $\cal C$ is not specified, take it to be the single \ffs\  $[F_n]$.   Apply   Corollary~\ref{cor:rtt} to construct a \rtt\ $\fG$ and $\filt$ such that for each $\F_i \in \cal C$  there exists  $G_j$ satisfying  $\F = [G_j]$.    The modifications necessary to arrange all the properties but (V) are explicitly described in the original proof.  These steps come after (V) has been established in that proof but make no use of (V) so there is no harm in our switching the order in which properties are arranged. For notational simplicity we continue to refer to the \rtt\ as $\fG$ even though it has been modified to satisfy all the properties except possibly (V).   

For $K_n$ as in Corollary~\ref{c:kn}, $\theta^{K_n}$ is rotationless. Subdivide $\fG$ at the (finite) set $S$ of isolated points
 in $\Fix(f^{K_n})$ that are not already vertices; these occur only in \eg\ edges $E$ and are in one to one correspondence  with the occurrences of $E$ or $\bar E$ in the edge path $f^{K_n}_\#(E)$. We claim that property (V) is satisfied.  If not, then perform a further finite (\cite[Lemma 2.12]{fh:recognition}) subdivision so that  (V) is satisfied. \cite[Proposition 3.29]{fh:recognition} and \cite[Lemma 3.28]{fh:recognition} imply that every periodic Nielsen path of $\fG$ (after the further subdivision) has period at most $K_n$.  But then $S$ contains the endpoints of all  indivisible periodic Nielsen paths after all and no further subdivision was necessary.  Since subdivision at $S$ is algorithmic, we are done. \qed

\section{Reducibility}\label{section:reducibility}
Given a \rtt\ $\fG$ and filtration $\filt$ representing $\phi \in \Out(F_n)$, let $\emptyset = \F_0 \sqsubset \F_1\sqsubset \ldots \sqsubset \F_K$ be the increasing sequence of distinct $\phi$-invariant free factor systems that are realized by the $G_i$'s. Assuming that $\fG$ satisfies property (F) of Theorem~\ref{2.19}, $\F_i$ is realized by a unique core filtration element for each $i \ge 1$ and $\F_0$ is realized by $G_0$. If $\F$ is a free factor system that is invariant by some iterate $\phi^k$ of $\phi$ and that is  properly contained between $\F_i$ and $\F_{i+1}$ then we say that $\F$ is a {\em reduction for $\F_i \sqsubset \F_{i+1}$ with respect to $\phi$}; if there is no such $\F$ then {\em $\F_i \sqsubset \F_{i+1}$} is   {\em reduced with respect to $\phi$}.   If each $\F_i \sqsubset \F_{i+1}$ is reduced with respect to $\phi$ then we say that $\fG$ {\em is  reduced}.

We assume for the rest of the section that $\phi$ is rotationless.  In particular,  a free factor system that is invariant by some iterate of $\phi$ is $\phi$-invariant   \cite[Lemma 3.30]{fh:recognition}.

The main results of this section are Proposition~\ref{find reduction} and Lemma~\ref{reducible noneg}.  The former, which assumes   that $\fG$ satisfies the conclusions of Theorem~\ref{2.19},  provides an    algorithm in the \eg\   case   for deciding if  $\F_i \sqsubset \F_{i+1}$ is reduced and for finding a reduction if there is one.   The latter has stronger requirements and easily leads to an algorithm that handles the \noneg\ case. We save the final details of that algorithm for Section~\ref{s:extend or reduce proof}.  

\subsection{The \eg\ case}
Recall (\cite[Section~2]{bfh:tits1} or \cite[Part \urn{1} Fact 1.3]{hm:subgroups}) that a pair of free factor systems $\F^1$ and $\F^2$ has a well-defined {\em meet} $\F^1 \wedge \F^2$ characterized by $[A] \in \F^1 \wedge \F^2$ if and only if there there exist subgroups $A^1, A^2$ such that $[A^i] \in \F^i$ and $A^1 \cap A^2  =  A$.

Let $\mathcal B$ be the basis of $F_n$ corresponding to the edges of $R_n$ (see Section~\ref{standard}). If $A$ is a finitely generated subgroup of $F_n$ then the {\it Stallings graph} $R_A$ of the conjugacy class $[A]$ of $A$ is the core of the cover of $R_n$ corresponding to $A$. There is an immersion $R_A\to R_n$ and if we subdivide $R_A$ at the pre-image of the vertex of $R_n$ then we view the edges of $R_A$ as {\it labeled} by their image edges in $R_n$ and hence by elements of $\mathcal B$. The {\it complexity} of $A$ is the number of edges in (subdivided) $R_A$. Stallings graphs are generalized in Section~\ref{s:stallings} and more discussion can be found there.

\begin{lemma} \label{compute wedge}   Given free factor systems $\F^1$ and $\F^2$ one can algorithmically construct $\F^1 \wedge \F^2$.
\end{lemma}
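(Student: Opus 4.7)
The plan is to realize each $\F^i$ by a disjoint union of Stallings graphs immersed in $R_n$ and then compute the fiber product. Concretely, $\F^i$ is specified by finitely many conjugacy classes of free factors $[A^i_1],\ldots,[A^i_{k_i}]$, each presented by a finite generating set. For each $A^i_j$ one algorithmically builds (via Stallings folding) the immersed core graph $R_{A^i_j}\to R_n$ whose fundamental group, based at a chosen lift of the vertex, is (a conjugate of) $A^i_j$. Set $G^i := \bigsqcup_j R_{A^i_j}\to R_n$.

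Next, form the graph-theoretic pullback $P := G^1\times_{R_n} G^2$ (vertices are pairs $(v_1,v_2)$ with $\pi^1(v_1)=\pi^2(v_2)$, and similarly for edges). This is a finite graph that can be written down explicitly, and it too immerses into $R_n$. Take the core $P_c\subset P$ (the union of immersed circuits) and list its connected components $C_1,\ldots,C_m$; each is a finite immersed subgraph of $R_n$. By Stallings' theorem on intersections of finitely generated subgroups of free groups, the conjugacy classes $[\pi_1(C_\ell)]$ are exactly the non-trivial conjugacy classes of subgroups of the form $A^1_i\cap g A^2_j g^{-1}$ with $g\in F_n$ and $1\le i\le k_1$, $1\le j\le k_2$, with one component of $P_c$ for each such conjugacy class.

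This is precisely what the meet requires: by the defining characterization, $[A]\in\F^1\wedge\F^2$ iff $A=B^1\cap B^2$ with $[B^i]\in\F^i$, and after conjugating one of the factors into the chosen representative this is the same as saying $A$ is conjugate to some $A^1_i\cap gA^2_j g^{-1}$. Hence the collection $\{[\pi_1(C_\ell)]\}_{\ell=1}^m$ is exactly $\F^1\wedge\F^2$, and one can read off generating sets from each $C_\ell$ by choosing a spanning tree. That the output is genuinely a \ffs\ follows from the classical fact that the intersection of two free factors of $\f$ is again a free factor (together with the observation that distinct components of $P_c$ give pairwise non-conjugate subgroups, since their universal cover lifts sit in different $\f$-orbits in $\ti R_n$).

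The only step that is not a pure bookkeeping exercise is producing the Stallings graphs $R_{A^i_j}$ from generators, which is a standard folding algorithm, and verifying the Stallings fiber-product identification of intersections, which is a well-known result; I do not foresee a genuine obstacle here, and the whole procedure terminates because $G^1$, $G^2$, and hence $P$ are finite graphs built by effective combinatorial operations.
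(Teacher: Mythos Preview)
Your proposal is correct and follows essentially the same approach as the paper: the paper reduces to the case $\F^1=\{[A]\}$, $\F^2=\{[B]\}$ and then cites Stallings to the effect that the conjugacy classes of intersections of $A$ with conjugates of $B$ are represented by the components of the pullback $R_A\to R_n\leftarrow R_B$. Your write-up simply spells out the same fiber-product construction in more detail.
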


\proof   We may assume without loss that $\F^1 =\{[A]\}$ and $\F^2 = \{[B]\}$ for given subgroups $A,B$. According to Stallings \cite[Theorem~5.5 and Section~5.7(b)]{st:folding}, the conjugacy classes of the intersections of $A$ with conjugates of $B$ are all represented by components of the pullback of the diagram $R_A\to R\leftarrow R_B$.
\endproof  

\begin{lemma} \label{free factor support}  Given a  finite set $\{a_i\}$ of elements of $F_n$ and a finite set $\{A_j\}$ of finitely generated subgroups of $F_n$ there is an algorithm that finds the unique minimal free factor system that carries each $[a_i]$ and each conjugacy class carried by some $[A_j]$. 
 \end{lemma}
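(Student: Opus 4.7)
The plan is to reduce the problem to finding, for a given finitely generated subgroup, the smallest free factor containing it, and then to iterate over partitions of the input.  First, replace each element $a_i$ by its cyclic subgroup $\langle a_i\rangle$, whose Stallings graph is a cycle labeled by the reduced word for $a_i$.  The problem then becomes: given a finite family $\{[B_1],\ldots,[B_k]\}$ of conjugacy classes of finitely generated subgroups, each presented by its Stallings graph $R_{B_s}\to R_n$, find the minimal \ffs\ $\F$ that carries each $[B_s]$.  Existence and uniqueness of $\F$ are guaranteed by \cite[Lemma~2.6.5]{bfh:tits1}.  Its components are conjugacy classes of free factors, and $\F$ determines a partition $\mathcal P^*$ of $\{1,\ldots,k\}$: each block $I\in\mathcal P^*$ corresponds to a component $[C_I]$ of $\F$, and $[C_I]$ is the smallest free factor of $F_n$ (up to conjugacy) that simultaneously carries $\{[B_s]:s\in I\}$.

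My algorithm then has two layers.  The inner layer is a subroutine that, given a sub-family indexed by $I$, computes the unique smallest free factor $[C_I]$ of $F_n$ carrying $\{[B_s]:s\in I\}$.  Concretely, take the disjoint union of the Stallings graphs $R_{B_s}$, $s\in I$, with its immersion to $R_n$; perform Stallings folds to obtain an immersion of a single folded graph into $R_n$; then apply Whitehead automorphisms to $R_n$ to reduce the complexity of the image until no further reduction is possible.  At that point the resulting immersed graph embeds as a core subgraph of a marked graph representing $F_n$, and the conjugacy class of its fundamental group is $[C_I]$.  The outer layer enumerates all partitions $\mathcal P$ of $\{1,\ldots,k\}$ (a finite set).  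For each $\mathcal P$, apply the subroutine to each block to produce a candidate collection $\{[C_I]\}_{I\in\mathcal P}$; then check, using Stallings-graph pullbacks in the spirit of the proof of Lemma~\ref{compute wedge}, whether this candidate collection is a valid \ffs\ of $F_n$, that is, whether no $C_I$ is conjugate into another and whether they can be simultaneously realized as disjoint core subgraphs of a common marked graph.  Among the partitions passing this validity test, output the finest one; this yields $\F$.

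The main obstacle is the inner subroutine: justifying termination and correctness of Whitehead--Stallings reduction applied to a subgroup system, so that the smallest free factor carrying a given finite collection is actually found.  This rests on classical peak-reduction arguments on Stallings graphs together with strict complexity decrease under Whitehead moves.  Once the subroutine is in hand, the outer enumeration over partitions and the \ffs-validity test are clearly finite and effective, and the algorithm terminates with $\F$.
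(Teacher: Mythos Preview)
Your outer-layer strategy of enumerating partitions is unnecessary and the inner subroutine, as you describe it, does not work.  Folding the disjoint union of the Stallings graphs $R_{B_s}$, $s\in I$, does nothing: Stallings folds identify edges with the same label that share a vertex, and in a disjoint union no two edges from different components share a vertex.  So you never obtain ``a single folded graph'', and the phrase ``its fundamental group is $[C_I]$'' has no clear meaning.  Separately, your validity test---deciding whether a given collection $\{[C_I]\}$ of conjugacy classes of free factors is simultaneously realizable as a free factor system---is not a matter of Stallings-graph pullbacks; pullbacks compute intersections of conjugates, which is a necessary but far from sufficient condition for being a free factor system.  You give no algorithm for this step, and it is the heart of the matter.

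The paper's proof sidesteps both difficulties by applying Gersten's minimization \cite{sg:whitehead} once to the entire family $\mathcal A=\{[A_j]\}$ rather than block by block.  After finding $\Theta\in\Aut(F_n)$ minimizing the total complexity of $\Theta(\mathcal A)$, one reads off the finest partition $\mathcal P$ of the fixed basis $\mathcal B$ such that the labels appearing in each $R_{\Theta(A_j)}$ lie in a single block; the resulting basis-subset free factor system $\mathcal F(\mathcal P)$ is then the minimal \ffs\ carrying $\Theta(\mathcal A)$ by \cite[Lemma~9.19]{df:models}, and $\Theta^{-1}(\mathcal F(\mathcal P))$ is the answer.  No partition enumeration and no separate ``is this a \ffs'' test are needed.
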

 
\begin{proof}

By replacing $[a_i]$ by $[\langle a_i\rangle]$, we may assume that finite set $\{a_i\}$ is empty. The {\it complexity} of $\mathcal A=\{[A_j]\}$ is the sum of the complexities of the $[A_j]$. By Gersten \cite{sg:whitehead}, there is an algorithm to find $\athree\in\Aut(F_n)$ so that $\athree(\mathcal A)=\{[\athree(A_j)]\}$ has minimal complexity in the orbit of $\mathcal A$ under the action of $\Aut(F_n)$. Let $\mathcal P$ be the finest partition of $\mathcal B$ such that the labels of each Stallings graph $R_{\athree(A_j)}$ are contained in some element of $\mathcal P$. The free factor system $\mathcal{F(P)}$ determined by $\mathcal P$ is the minimal free factor system carrying $\athree(\mathcal A)$; see \cite[Lemma~9.19]{df:models}. Hence $\athree^{-1}(\mathcal{F(P)})$ is the minimal free factor system carrying $\mathcal A$.
\end{proof}

\begin{corollary}  \label{checking on ffs}  Suppose that $\phi \in \Out(F_n)$, that  $\F^1$ is a proper \ffs\  and that $\F^0 \sqsubset \F^1$ is a (possibly trivial) $\phi$-invariant \ffs.  
Then there is an algorithm that decides if there is a $\phi$-invariant \ffs\ $\F \sqsubset  \F^1$  that  properly contains  $\F^0$ and  that finds such an $\F$ if one exists. 
\end{corollary}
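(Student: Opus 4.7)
The plan is to combine the relative train track construction from Corollary~\ref{cor:rtt} with the computational tools of Lemmas~\ref{compute wedge} and \ref{free factor support}, searching through a finite family of candidate intermediate free factor systems.

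First, I would apply Corollary~\ref{cor:rtt} with input $\phi$ and the nested sequence $\mathcal{C} = \F^0$, obtaining a relative train track map $f : G \to G$ with filtration $\filt$ such that $\F^0 = [G_j]$ for some $j$.  As a preliminary pass, for each core subgraph $H$ of $G$ properly containing $G_j$, test whether $[H] \sqsubset \F^1$ via Lemma~\ref{compute wedge} (checking $[H] \wedge \F^1 = [H]$) and whether $[H]$ is $\phi$-invariant by computing the minimal free factor system carrying $f(H)$ via Lemma~\ref{free factor support} and comparing with $[H]$.  If a suitable $H$ is found, output $[H]$.

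A valid $\F$ need not be realized as a core subgraph of this particular $G$, so the full algorithm enumerates more broadly.  It enumerates tuples $\mathcal{A} = ([A_1], \ldots, [A_s])$ of conjugacy classes of non-trivial finitely generated subgroups, each carried by some component of $\F^1$ (testable by Lemma~\ref{compute wedge}), whose total Stallings complexity is bounded by a computable function of $n$ and $\F^1$.  For each tuple, use Lemma~\ref{free factor support} to compute the minimal free factor system $\F^*$ carrying $\F^0$ together with $\mathcal{A}$, then test (i) $\F^0 \sqsubsetneq \F^* \sqsubset \F^1$ by two applications of Lemma~\ref{compute wedge}, and (ii) $\phi$-invariance, by applying Lemma~\ref{free factor support} to $\phi(\mathcal{A})\cup \F^0$ and comparing the resulting minimal free factor system with $\F^*$.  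If some candidate passes all checks, output it; if the bounded enumeration is exhausted, report that no such $\F$ exists.

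The hard part will be rigorously bounding the enumeration and proving completeness: that every $\phi$-invariant $\F$ with $\F^0 \sqsubsetneq \F \sqsubset \F^1$ arises as $\F^*$ for some $\mathcal{A}$ of bounded Stallings complexity.  Completeness is straightforward, since one may take $\mathcal{A}$ to be a tuple of subgroups representing the components of $\F$ itself.  The boundedness will come from the facts that the components of such an $\F$ are free factors of components of $\F^1$ and hence have rank at most $n$, together with an argument, via $\phi$-invariance and the finiteness of $\phi$-orbits of free factor systems carried by the fixed $\ffs$ $\F^1$, that each component of $\F$ admits a representative of bounded Stallings complexity.  Once these finiteness estimates are in place, each individual step of the search is algorithmic by the earlier lemmas, and the algorithm terminates.
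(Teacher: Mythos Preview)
Your proposal has a genuine gap in the ``boundedness'' step, and the rest of the argument hinges on it.  You assert that there is a computable bound, depending only on $n$ and $\F^1$, on the Stallings complexity of the components of some $\phi$-invariant $\F$ with $\F^0 \sqsubsetneq \F \sqsubset \F^1$, should one exist.  The justification offered---``finiteness of $\phi$-orbits of free factor systems carried by the fixed \ffs\ $\F^1$''---does not hold up.  There are infinitely many free factor systems carried by $\F^1$, their $\phi$-orbits need not be finite, and even among the $\phi$-invariant ones there can be infinitely many (take $\phi$ to be the identity).  What you actually need is that \emph{if} some valid $\F$ exists \emph{then} one of bounded complexity exists, with the bound computable in advance; you give no mechanism for producing such a bound, and any honest bound would have to depend on $\phi$ as well, not just on $n$ and $\F^1$.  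Without this, your enumeration has no stopping criterion for the \texttt{NO} case.

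The paper's proof avoids all of this with a short iterative argument: replace $\F^1$ by $\F^1 \wedge \phi(\F^1)$ (computable via Lemma~\ref{compute wedge}) and repeat.  The key observation is that $\F^1 \wedge \phi(\F^1)$ still contains every $\phi$-invariant \ffs\ that was contained in $\F^1$, and is strictly smaller unless $\F^1$ was already $\phi$-invariant.  Since strictly decreasing chains of free factor systems have length at most $2n$, the iteration terminates at the unique maximal $\phi$-invariant \ffs\ contained in the original $\F^1$; one then simply compares it with $\F^0$.  This sidesteps any need for an \emph{a priori} complexity bound and uses only the meet operation you already cited.
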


\proof    First check if $\F^1$ is $\phi$-invariant or if $\F^1= \F^0$.  If the latter is true the output of the algorithm is NO.  If the latter is false and the former is true the output is YES.   If neither is true apply Lemma~\ref{compute wedge} to compute $\F^1 \wedge \phi(\F^1)$  which is properly contained in $\F^1$, contains $\F^0$  and contains every $\phi$-invariant \ffs\ that is contained in $\F^1$.   Repeat these steps    with $\F^1 \wedge \phi(\F^1)$ replacing $\F^1$.  Since  there is a uniform bound to the length of a strictly decreasing sequence of \ffs s  \cite[Part \urn{1} Fact~1.3]{hm:subgroups} the process stops after finitely many steps.   
\endproof

The following lemma is used in Step 1 of the proof of Proposition~\ref{find reduction}.    Recall from \cite[Remark 3.20]{fh:recognition} that if $\fG$ represents a rotationless $\phi$ and satisfies   the conclusions of  Theorem~\ref{2.19}, then for each $\Lambda \in \L(\phi)$ there is an \eg\ stratum $H_r$ such that $\Lambda$ has height $r$ and  this defines a bijection between $\L(\phi)$ and the set of \eg\ strata.  The definition of a  path being weakly attracted to $\Lambda^+$ appears as   \cite[Definition 4.2.3]{bfh:tits1}.

\begin{lemma}  \label{critical constant} Suppose that $\fG$ and $\filt$ are  a \rtt\ and filtration representing a rotationless $\phi$ and satisfying the conclusions of  Theorem~\ref{2.19} and that $H_r$ is an \eg\ stratum  with associated attracting lamination $\Lambda^+$.  Then there is a computable constant $C$ such that if  $\sigma_0 \subset G_r$ is an $r$-legal path   that crosses at least $C$ edges in $H_r$   then every path $\sigma \subset G_r$ that contains $\sigma_0$ as a subpath is weakly attracted to $\Lambda^+$.
\end{lemma}    

\proof Choose $l$ so that the $f^l$ image of each edge in $H_r$ crosses at least two edges in $H_r$.  It is shown in the proof of \cite[Lemma 4.2.2]{bfh:tits1} (see also \cite[Corollary 4.2.4]{bfh:tits1})  that $C =4 l C_0+1$ satisfies the conclusions of our lemma for any constant  $C_0$ that is greater than or equal to the     bounded cancellation constant for $f$.    Since the latter can be computed \cite[Lemma 3.1]{bfh:tits0}  using only the transition matrix for $f$, we are done.
\endproof

\begin{lemma}  \label{finding iNps}Suppose that $\fG$ and $\filt$ are  a \rtt\ and filtration representing a rotationless $\phi$ and satisfying the conclusions of  Theorem~\ref{2.19}.  Suppose further that $H_r$ is an \eg\ stratum.  Then every \ipNp\ with height $r$  has period one and there is an algorithm that finds them all.  \end{lemma}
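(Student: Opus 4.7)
The plan is to handle the period one claim first and then give a Bestvina--Handel style enumeration.

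For period one, I would use that $\phi$ is rotationless: by Proposition~3.29 of \cite{fh:recognition}, $f$ is rotationless in the sense of Definition~\ref{d:principal vertex}. For a periodic Nielsen path $\rho$ of height $r$, the endpoints are principal and by property (V) of Theorem~\ref{2.19} they are vertices, hence fixed. A standard analysis of the cancellation at the central illegal turn, using that the incident directions are periodic and that the endpoints are fixed, then forces $f_\#(\rho)=\rho$.

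For the algorithm, I would proceed by induction on the filtration so that all iNps at each stratum of height less than $r$ have been enumerated. Every iNp $\rho$ of height $r$ has the form $\rho=\bar\alpha\beta$, where $\alpha,\beta$ are $r$-legal paths emanating from the illegal turn vertex $v$, each beginning with an $H_r$-edge, the two initial edges forming the single illegal turn of $\rho$. There are only finitely many illegal turns in $H_r$. For each fixed choice, Lemma~\ref{critical constant} provides a computable $C$ such that if $\alpha$ or $\beta$ contained at least $C$ edges of $H_r$, then $\rho$ would contain an $r$-legal subpath that attracts $\rho$ weakly to $\Lambda^+$, contradicting $f_\#(\rho)=\rho$. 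This bounds the $H_r$-content of $\rho$.

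It remains to control the $G_{r-1}$ subpaths of $\rho$ between consecutive $H_r$-edges. Combining $f_\#(\rho)=\rho$ with property (RTT-ii)---which prevents $H_r$-edges from cancelling adjacent $G_{r-1}$ material---one identifies each such subpath as a periodic, hence fixed, Nielsen path of $f|G_{r-1}$ whose endpoints lie in the finite set $H_r \cap G_{r-1}$. By the inductive hypothesis these intermediate Nielsen paths form an explicit finite list. Taking concatenations of finitely many bounded $H_r$-patterns with intermediate pieces drawn from this list yields a finite effectively computable list of candidate iNps; each candidate $\rho$ is then tested by directly computing $f_\#(\rho)$.

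The main obstacle is ensuring that the $G_{r-1}$ subpaths decompose cleanly into iNps of strictly lower height, so that the inductive hypothesis applies. This requires carefully combining (RTT-ii), the rotationless hypothesis, and properties (Z), (\noneg) and (F) of Theorem~\ref{2.19}, which constrain the shape of zero strata, non-periodic \noneg\ strata and filtration elements, respectively. Once this decomposition is in place, the remaining combinatorial enumeration and verification is routine.
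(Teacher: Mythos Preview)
Your period-one argument is essentially the paper's: Proposition~3.29 and Lemma~3.28 of \cite{fh:recognition} together give that periodic Nielsen paths of a rotationless relative train track map have period one, and (V) gives vertex endpoints. The ``standard analysis'' you allude to is precisely Lemma~3.28, so you may as well cite it.

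The genuine gap is in your enumeration. The assertion that each maximal $G_{r-1}$-subpath of $\rho$ is a Nielsen path for $f|G_{r-1}$ is false. Write $\rho=\alpha\bar\beta$ with $\alpha,\beta$ $r$-legal and $\alpha_0,\beta_0$ their initial $H_r$-edges. Then $\alpha$ is an initial segment of $f^k_\#(\alpha)$ for every $k$, since cancellation in $f^k_\#(\rho)=\rho$ occurs only at the illegal-turn end. Thus a given $G_{r-1}$-subpath $c$ of $\alpha$ is eventually a connecting path that appears \emph{inside} $f^k_\#(\alpha_0)$, not $f^k_\#(c)$; there is no reason for $c$ to satisfy $f_\#(c)=c$, and indeed its endpoints need not even be periodic. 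Property (RTT-ii) only says such subpaths have nontrivial image, not that they are fixed. Even were this repaired, your induction would not close: the lemma you are proving enumerates iNps of \eg\ height only, while iNps of \noneg\ height between two fixed vertices can be infinite in number (already the family $Ew^k\bar E$ for a linear edge $E$), so no ``explicit finite list'' of lower-height Nielsen paths is available under the hypotheses of Theorem~\ref{2.19}.

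The paper sidesteps both problems in one stroke. Having bounded by $C$ (via Lemma~\ref{critical constant}) the number of $H_r$-edges crossed by $\alpha$ and by $\beta$, choose $k$ so that $f^k_\#(E)$ crosses more than $C$ edges of $H_r$ for each edge $E$ of $H_r$. Since no $H_r$-edges cancel when tightening $f^k(\alpha)$ to $f^k_\#(\alpha)$, and since $\rho=f^k_\#(\rho)$ is obtained from $f^k_\#(\alpha)f^k_\#(\beta)^{-1}$ by cancelling only at the juncture, one concludes $\alpha\subset f^k_\#(\alpha_0)$ and $\beta\subset f^k_\#(\beta_0)$. This bounds the \emph{total} edge-length of $\rho$ directly, with no induction and no separate control of the $G_{r-1}$-subpaths.
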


\begin{remark} For a more efficient method than the one described in the proof see \cite[Section 3.4]{hm:axes}.
\end{remark}

\proof  Suppose that $\rho$ is an \ipNp\ of height $r$.    Proposition~3.29   and ~Lemma 3.28 of \cite{fh:recognition} imply that $\rho$ has period $1$ and property (V) of Theorem~\ref{2.19} implies that the endpoints of $\sigma$ are vertices.   By \cite[Lemma 5.11]{bh:tracks}, $\rho$ decomposes as a concatenation $\rho = \alpha \beta^{-1}$ of $r$-legal edge  paths  whose initial and terminal edges are in $H_r$. Let $\alpha_0$ and $\beta_0$ be the initial edges of $\alpha$ and $\beta$   respectively.   By Lemma~\ref{critical constant} we can bound the number of $H_r$ edges crossed by $\alpha$ and $\beta$ by some positive constant $C$.  Since $H_r$ is an \eg\ stratum we can choose $k$ so that $f^k_\#(E)$ crosses more than $C$  edges in $H_r$ for each edge $E$ in $H_r$.    Since $\rho = f^k_\#(\rho)$ is obtained from $f^k_\#(\alpha) f^k_\#(\beta^{-1})$ by canceling edges at the  juncture point  and since no edges in $H_r$ are cancelled when $f^k(\alpha)$  and $f^k(\beta)$ are tightened to $f^k_\#(\alpha)$  and $f^k_\#(\beta)$,  $\alpha \subset f^k_\#(\alpha_0)$ and  $\beta \subset f^k_\#(\beta_0)$.     In particular, we can compute an   upper bound for the number of edges crossed by $\rho$, reducing us to testing a finite set of paths to decide which are \iNp s. 
\endproof

A subgroup system $\A$ is a {\em vertex group system} if there exists   a real $F_n$-tree with trivial arc stabilizers such that $\A$ is the set of non-trivial vertex stabilizers.

\begin{lemma}  \label{ANA}Suppose that $\fG$ and $\filt$ are  a \rtt\ and filtration representing a rotationless $\phi$ and satisfying the conclusions of  Theorem~\ref{2.19}.  Suppose also that $H_{N}$ is an \eg\ stratum with attracting lamination $\Lambda^+$, and that   $[G_u] \cup \Lambda^+$ fills   where  $G_u =  G \setminus H^z_{N}$.  Then the following are satisfied.  
\begin{enumerate}
\item  \label{item:subgroup system} There is a unique vertex  group system $\A$ such that a conjugacy class $[a]$ is not weakly attracted to $\Lambda^+$ if and only if $[a]$ is carried by an element of $\A$.
\item \label{item:ANA splitting}A circuit $\sigma \subset G$ represents an element of $\A$ if and only if  $\sigma$ splits as a concatenation of subpaths each of which is either contained in $G_{u}$ or is an \iNp\ of height $N$.
\item \label{item:characterize irreducibility} There is a proper free factor system $\F' \sqsupset [G_{u}] $ such that one of the following holds.
\begin{enumerate}
\item $\A = \F'$
\item $\A$ fills and  $\A= \F'\cup \{[A]\}$ where $[A]$ has rank one.
\end{enumerate}
Moreover, $[G_u] \sqsubset [G_N]$ is reduced with respect to $\phi$   if and only if $\F' = [G_{u}]$.
\end{enumerate}
\end{lemma}

\begin{remark}\label{r:core}  $G_u$ is not necessarily a core subgraph.  It deformation retracts to a core subgraph $G_s$ and is obtained from $G_s$ by adding  \noneg\ edges with terminal endpoints in $G_s$.  We use $G_u$ in this lemma rather than $G_s$ because \iNp s of height $N$ can have endpoints at the valence one vertices of $G_u$.
\end{remark}

\proof   The existence of a vertex group  system $\A$ as in \pref{item:subgroup system} follows from \cite[Theorem~6.1]{bfh:tits1} and  \cite[Part \urn{3} Proposition~1.4(1)]{hm:subgroups}.     Uniqueness of $\A$  follows from the fact  \cite[Part \urn{1} Lemma~3.1]{hm:subgroups} that a vertex group system is determined by the conjugacy classes that it carries.  
  For the rest of this proof we take \pref{item:subgroup system}  to be the defining property of $\A$.  Note that $\A$ depends only on $\Lambda^+$ and $\phi$ and not on the choice of $\fG$.  In particular, $\A$ is $\phi$-invariant.

If a circuit $\sigma \subset G$ splits into subpaths that are either contained in $G_u$ or are Nielsen paths of height $N$ then the number of $H_N$ edges in $f^k_\#(\sigma)$ is independent of $k$ and $\sigma$ is not weakly attracted to $\Lambda^+$.  This proves the if direction of  \pref{item:ANA splitting}.  
 
The only if direction of  \pref{item:ANA splitting} is more work. \cite[Lemmas~4.2.6 and 2.5.1]{bfh:tits1}  and Lemma~\ref{finding iNps} imply that  there exists  $k\ge 1$ such that $f^k_\#(\sigma)$ splits into subpaths that are either contained in   $G_{N-1}$,  are  indivisible Nielsen paths of height $N$  or are edges of height $N$.  Assuming that $\sigma$, and hence   $f^k_\#(\sigma)$,  is not weakly attracted to $\Lambda^+$,     \cite[Corollary~4.2.4]{bfh:tits1} implies that no term in this splitting is an edge of height $N$.     If  $f^k_\#(\sigma) \subset G_u$ then $\sigma \subset G_u$ and we are done.   If  $f^k_\#(\sigma)$ is a closed Nielsen path of height $N$ then $\sigma$ and $f_\#^k(\sigma)$ have the same $f^k_\#$-image and so are equal.  In particular, $\sigma$ is a Nielsen path of height $N$. In the remaining case,  there is a splitting  $$f^k_\#(\sigma) = \mu_1 \cdot \nu_1\cdot \mu_2 \cdot \nu_2\cdot \ldots \cdot \mu_m \cdot \nu_m$$  into subpaths $\mu_i \subset G_u$ and Nielsen paths $\nu_i$ of height $N$.    Since the endpoints of each $\mu_i$ are fixed by $f$ and since the restriction of $f$ to each $f$-invariant component of $G_u$ is a homotopy equivalence, there exist paths $\mu_i' \subset G_u$ with the same endpoints as $\mu_i$ such that $f^k_\#(\mu_i') = \mu_i$.  Letting $$\sigma' = \mu'_1 \cdot \nu_1\cdot \mu'_2 \cdot \nu_2\cdot \ldots \cdot \mu'_m \cdot \nu_m$$  we have $f^k_\#(\sigma') = f^k_\#(\sigma)$   and hence $\sigma' =  \sigma$.  In particular, $\sigma$ splits into subpaths of $G_u$ and indivisible Nielsen paths of height $N$.   This completes the proof of \pref{item:ANA splitting}.
 
The main statement of \pref{item:characterize irreducibility} follows from \cite[Proposition 6.0.1 and Remark 6.0.2]{bfh:tits1} (which applies because there is a \ct\ representing $\phi$ in which $\Lambda^+$ corresponds to the highest stratum and $[G_u]$ is realized by a core filtration element).  Since $\phi$ preserves $\A$, it acts periodically on the components of $\A$.  \cite[Lemma 3.30]{fh:recognition}  therefore implies that $\phi$ preserves each rank one component of $\A$ and so also preserves $\F'$.    If  $\F' \ne [G_u]$ then $\F'$ is a reduction for $[G_u] \sqsubset [G_N]$.  This proves the only if direction of the moreover statement. 

Suppose then that $\F' = [G_u] $.  If either (a) or (b) holds then any free factor system $\F$ that properly contains $[G_u]$ carries a conjugacy class not carried by $\A$.  Item \pref{item:subgroup system} implies that $\F$ carries a conjugacy classs that is weakly attracted to $\Lambda^+$.  If $\F$ is $\phi$-invariant then $\F$  carries $\Lambda^+$ in addition to containing $G_u$ and so is improper.  This completes the proof of the if direction of the moreover statement.
\endproof

The next proposition shows how to reduce a relative train track map satisfying the conclusions of Theorem~\ref{2.19}. One way to create an unreduced example is to identify a pair of distinct fixed points in a stratum $H_r$ of a \ct\ where $H_r$ is both highest and \eg.  

\begin{prop}  \label{find reduction} Suppose that $\fG$ and $\filt$ are  a \rtt\ and filtration representing a rotationless $\phi$ and satisfying the conclusions of  Theorem~\ref{2.19}.  Suppose also that $H_{r}$ is an \eg\ stratum and that  $G_s$ is the highest core filtration element  below $G_r$. Then there is an algorithm to decide if $[G_s] \sqsubset [G_r]$   is reduced and if it is not to find a reduction. 
\end{prop}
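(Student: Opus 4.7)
The plan is to algorithmically implement the structural characterization of reducibility provided by Lemma~\ref{ANA}. Since that lemma is stated for a top EG stratum, I would first pass from $\fG$ to $f|G_r : G_r \to G_r$, a relative train track map satisfying the conclusions of Theorem~\ref{2.19} with $H_r$ as its top stratum; the $\phi$-invariant free factor systems of $F_n$ strictly between $[G_s]$ and $[G_r]$ correspond bijectively to the proper $(\phi|[G_r])$-invariant FFSs of $\pi_1(G_r)$ strictly containing $[G_s]=[G_u]$, so the reducibility question is preserved.

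Next, I would algorithmically compute the minimal $\phi$-invariant free factor system $\F^*$ of $\pi_1(G_r)$ carrying $[G_u]$ and the attracting lamination $\Lambda^+$. To do this, fix a circuit $\gamma$ in $G_r$ that crosses $H_r$ and iteratively compute the minimal FFS $\F_k$ carrying $[G_u]$ and $f^k_\#(\gamma)$ via Lemma~\ref{free factor support}, using Lemma~\ref{critical constant} to ensure that sufficiently large iterates $f^k_\#(\gamma)$ see the full support of $\Lambda^+$. The resulting sequence stabilizes in the finite lattice of FFSs of $\pi_1(G_r)$ to $\F^*$. If $\F^* \ne [G_r]$, then $\F^*$ properly contains $[G_u]$ (since $\Lambda^+$ is not carried by $[G_{r-1}] \supseteq [G_u]$), is proper in $[G_r]$, and is $\phi$-invariant, so we output $\F^*$ as the reduction.

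If instead $\F^* = [G_r]$, then $[G_u] \cup \Lambda^+$ fills and Lemma~\ref{ANA} applies. I would use Lemma~\ref{finding iNps} to obtain the finitely many iNPs $\rho_1,\ldots,\rho_k$ of height $r$, build $\Gamma$ by attaching to $G_u$ one new edge per $\rho_i$ with the same endpoints, and read off the subgroup system $\A$ as the fundamental groups of the components of $\Gamma$ (via a spanning tree); this does indeed recover $\A$ by Lemma~\ref{ANA}(\ref{item:ANA splitting}). From $\A$ I would then extract $\F'$ as in Lemma~\ref{ANA}(\ref{item:characterize irreducibility}): test via Whitehead's algorithm whether $\A$ itself is a free factor system (set $\F' = \A$, case (a)); and otherwise search over the rank-one components $[A] \in \A$ for the unique one such that $\A \setminus \{[A]\}$ is a free factor system (case (b), $\F' = \A \setminus \{[A]\}$). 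By the moreover clause of Lemma~\ref{ANA}, $[G_s] \sqsubset [G_r]$ is reduced iff $\F' = [G_u]$; I would test this equality directly and, if it fails, output $\F'$ as the reduction.

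The hardest step will be the correct extraction of $\F'$ from $\A$ in case (b): identifying the unique rank-one component to remove requires repeated applications of Whitehead's algorithm to test free-factor-ness of the residue. All other steps follow directly from the algorithmic lemmas already established in this section.
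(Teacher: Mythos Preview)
Your proposal has a genuine gap in its second paragraph, precisely at the step you treat as routine: computing the minimal free factor system $\F^*=\F_{s,\Lambda}$ carrying $[G_u]$ and $\Lambda^+$.  Iterating a circuit $\gamma$ and applying Lemma~\ref{free factor support} to $[G_u]\cup\{f^k_\#(\gamma)\}$ does not compute $\F_{s,\Lambda}$.  There is no reason $\F_{s,\Lambda}$ carries $\gamma$ or any of its iterates, so the minimal free factor system carrying $[G_u]$ and $f^k_\#(\gamma)$ can strictly overshoot $\F_{s,\Lambda}$; in particular, your test ``$\F^*=[G_r]$ implies $[G_u]\cup\Lambda^+$ fills'' is invalid.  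Lemma~\ref{critical constant} gives no stopping criterion here: it says that a path crossing many $H_r$-edges is weakly attracted to $\Lambda^+$, not that it determines the free factor support of $\Lambda^+$.  Deciding algorithmically whether $\F_{s,\Lambda}$ is proper is in fact the heart of the proposition, and the paper devotes three of its four steps to it via an indirect method: it searches over a finite (computably bounded) family of labeled marked graphs $J$ with a subgraph realizing $[G_s]$, applies Corollary~\ref{checking on ffs} to each candidate, and uses a separate existence argument (their Step~1) to guarantee that if $\F_{s,\Lambda}$ is proper then some candidate witnesses a reduction within a bounded number of rounds.

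Your treatment of the filling case also needs repair.  The endpoints of the height-$r$ \iNp\ $\rho$ need not lie in $G_u$, so ``attach an edge to $G_u$ at the endpoints of $\rho_i$'' is not always defined, and the fundamental groups of the resulting $\Gamma$ are not in general the components of $\A$.  The paper instead argues by cases on how concatenations of elements of $P$ meet $G_u$ (their Step~4), using Lemma~\ref{free factor support} to compute $\F_{u,B}$ or $\F_{u,\sigma}$ and Lemma~\ref{ANA}\pref{item:characterize irreducibility} to interpret the outcome.  The extraction of $\F'$ you flag as hardest is actually the straightforward part; the real difficulty is the one you skipped.
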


 \begin{proof}   By \cite[Lemma 3.30]{fh:recognition}, the non-contractible components of $G_r$ are $f$-invariant;   in particular   $H_r$ is contained in a single component of $G_r$. By restricting to this component we may assume  that $H_r$ is the top stratum and hence that $r=N$.

 Let $\Lambda^+ \in \L(\phi) $ be the lamination    associated to $H_N$.  By \cite[Lemma 3.2.4]{bfh:tits1}  there exists $\Lambda^- \in \L(\phi^{-1}) $ such that the smallest free factor system that carries $\Lambda^+$ is the same as the  smallest free factor system that carries $\Lambda^-$ and we denote this by $\F_\Lambda$. It follows that the realizations of $\Lambda^+$ and $\Lambda^-$ in any marked graph cross the same set of edges in that graph.  It also  follows that the smallest \ffs\ that carries $[G_s]$ and $\Lambda^+$  is the same as the smallest free factor system that carries $[G_s]$ and $\Lambda^-$ and we denote this by  $\F_{s,\Lambda}$.   Since both  $[G_s]$ and $\Lambda^\pm$ are $\phi$-invariant, \cite[Corollary~2.6.5]{bfh:tits1} implies that $\F_{s,\Lambda}$ is $\phi$-invariant.  
    
Choose constants as follows.
\begin{itemize}
\item $C_E = 6(n-1)$   is the  maximal number of oriented natural edges in a marked core graph of rank $n$. 
\item  $M =  2n$;   if $\F_0 \sqsubset \F_1 \sqsubset \ldots \sqsubset \F_P$ is an  increasing nested sequence of free factor systems of $F_n$ then  $P \le M-1$.  (This follows by induction on the rank $n$ and the observation that  if $\F_P=\{F_n\}$ with $n>1$ then there exists $\F$ such that $\F_{P-1} \sqsubset \F\sqsubset\F_P$ and $\F$ consists of a pair of conjugacy classes whose ranks add to $n$.)
\item $C_0= C +2$  where $C$ satisfies the conclusion  of Lemma~\ref{critical constant}.  
\end{itemize}    

\vspace{.1in}   
\noindent{\bf Step 1: An existence result when $\F_{s,\Lambda}$  is proper} : 
     Consider the set $\cal K$ of marked graphs $K$ containing a (possibly empty) core subgraph $K_0$ and equipped with a marking preserving homotopy equivalence $p :K \to G$ taking vertices to vertices such that 
\begin{description}
\item [(a)] $p \restrict K_0 :K_0 \to G_s$ is a homeomorphism  (If $s =0$ then $G_s  = K_0 = \emptyset$). 
\item  [(b)] the restriction of $p$ to each natural edge is either an immersion or constant.
\item [(c)]  there is at most one natural edge on which $p$ is constant.  
\end{description}
Each natural edge $E$  of $K\in\cal K$ is {\it labeled} by $p(E)$, thought of as a (possibly trivial) edge path in $G$.  We do not distinguish between two elements of $\cal K$ if there is  a label preserving homeomorphism between them.  The length $|E|$ of a natural edge $E$ is the number of edges in  $p(E)$ and the total length $|K|$ of $K$ is the sum of the length of its natural edges.         The number of $H_N$ edges in $p(E)$ is denoted $|E|_N$.  Let $\Lambda^+_K$  and $\Lambda^-_K$ be the realizations of $\Lambda^+$ and $\Lambda^-$ in $K$.  As observed above, the set of edges crossed by  $\Lambda^+$ is the same as  the set of edges crossed by  $\Lambda^-$.  We denote this common core subgraph by $K_\Lambda$.
       
We claim that if $\F_{s,\Lambda}$  is proper then there exists an element $K \in \cal K$ with the following properties.
 \begin{enumerate}
 \item  $|E|_N \le  C_0$ for all natural edges $E$ of $K$.
 \item The  restriction of $p$ to a leaf of $\Lambda_K^+$ is an immersion.
 \item $K_0 \cup K_\Lambda$ is a proper core subgraph.
 \end{enumerate}
Note that  $K_0 \cup K_\Lambda$ is a core subgraph because it is a union of core subgraphs so the content of (3) is that $K_0 \cup K_\Lambda$ is proper. Our proof of the claim makes use of an idea from the proof of Proposition~3.4 in Part \urn{4}  of \cite{hm:subgroups}.
  
Assuming that $\F_{s,\Lambda}$ is proper, there exists a marked graph  $K$ with proper core subgraphs $K_0 \subset K_1 \subset K$   and  a marking preserving homotopy equivalence $p:K \to G$ satisfying (a) and (b) and $[K_1]=\F_{s,\Lambda} $.  The last property implies that $K_1 = K_0 \cup K_\Lambda$ so (3) is satisfied. If there is at least one natural edge in   $K\setminus K_1$  on which $p$ is an immersion then  collapse each natural edge in  $K$  on which $p$ is constant to a point. Otherwise, collapse each natural edge in $K_1$ on which $p$ is constant and all but  one natural edge  in $K \setminus K_1$  on which $p$ is constant to a point.    The resulting marked graph, which we continue to denote $K$, still satisfies (a) and (b) because $p|K_0$ is injective and so  $K_0$ is unaffected by the collapsing and (c) is now satisfied so $K \in \K$.  Replacing $K_1$ by its image under the collapse, it is still true that  $K_1 = K_0 \cup K_\Lambda$ is a proper core subgraph and now the restriction of $p$ to each natural edge of $K_1$ is an immersion.  (It may now be that $[K_1] $ properly contains $\F_{s,\Lambda}$.)
  
If $p\restrict K_1$ is not an immersion then there is a pair of natural edges $E_1,E_2$ in $K_1$ with the same initial vertex  and such that the edge paths $p(E_1)$ and $p(E_2)$ have the same first edge, say $e$.     Folding the initial segments of $E_1$ and $E_2$ that map to $e$ produces an  element  $K' \in \cal K$ with subgraph $K_0'$ satisfying (a) and such that  $|K'| < |K|$.   Note that (3) is still satisfied    because $K'_0 \cup K'_\Lambda $ is  contained in the image  $K_1' \subset K'$    and  $[K_1] = [K_1']$.  Replacing $K$ with $K'$ and repeating this finitely many times, 
 we may assume that $p \restrict K_1$ is an immersion and hence that $p$ restricts to an immersion on   leaves of $\Lambda_K^+$  and  $\Lambda_K^-$.  In particular, (2) is satisfied.  If  (1) is satisfied then   the proof of the claim is complete.  
 
 Suppose then that there  is a  natural edge $E$ of $K$ such that   $|E|_r >  C_0  = C +2$.   By \cite[Theorem~6.0.1]{bfh:tits1},  a leaf of $\Lambda^-$ is not weakly attracted to a generic leaf of $\Lambda^+$.     Lemma~\ref{critical constant} therefore  implies that   at least one of the two laminations $\Lambda_K^+$ and $\Lambda_K^-$  does not cross $E$.    It follows that    $\F_\Lambda  \sqsubset [K \setminus E]$ and hence that  $E$ is  contained in the complement of $K_\Lambda$.    Since $E$ is obviously in the complement of $K_0$, we have that $E$  is contained in the complement of $K_0 \cup K_\Lambda$.   If there is a natural edge on which $p$ is constant, it must be in the complement of $K_0 \cup K_\Lambda \cup E$ and we collapse it to a point.  As above, the resulting marked graph  is still in $\K$ and (2) and (3) are still satisfied.   We may now assume that $p$ is an immersion on each natural edge of $K$. The map $p: K \to G$ is not a homeomorphism so there is at least one pair of edges that can be folded.  Perform the fold and carry the $K_0 \subset  K$ notation to the new marked graph.  It is still true that $K \in \K$ and that  (2) holds.  Folding reduces $|E|_r$ by at most $2$ so there is still an edge with $|E|_r  > C$.  Arguing as above we see that   $E$  is contained in the complement of $K_0 \cup K_\Lambda$ so (3) is still satisfied.  If  (1) is satisfied then   the proof of the claim is complete. Otherwise perform another fold.  Conditions   (2) and (3)    are satisfied so check condition (1) again.    Folding reduces $|K|$ so after finitely many folds, (1) is satisfied and the claim is proved.

\vspace{.1in}   
\noindent{\bf Step 2:   Part 1 of the algorithm} :   In this step we present an algorithm that     either finds  a reduction for $[G_s] \sqsubset [G_r]$ or   concludes that $\F_{s,\Lambda}$ is improper, i.e.  $\F_{s,\Lambda} = \{[F_n]\}$.  In the former case we are done.  In the latter case we move on to the second part of the algorithm in which we  either find  a reduction for $[G_s] \sqsubset [G_r]$  or we conclude that $[G_s] \sqsubset [G_r]$ is irreducible.    
\begin{description}
\item  [(A1)]    Choose a generic leaf $\gamma \subset G$ of $\Lambda^+$.  One way to do this is to  choose an edge $e$ of $H_r$ and $k > 0$ so that at least one occurrence of $e$ in the edge path  $f^k_\#(e)$ is neither the first nor last $H_r$ edge.     Then $e \subset f^k_\#(e) \subset f^{2k}_\#(e) \subset \ldots$ and the union of these paths is a generic leaf of $\Lambda^+$ by  \cite[Corollary 3.1.11 and Lemma 3.1.15]{bfh:tits1}. 
\item  [(A2)]
Let $C_1= 1$.  Choose  a subpath $\gamma_1$ of $\gamma$  that  crosses at least $$(C_E+1)C_0+ C_E(C_1 +C_0) +2C_0 $$ edges in $H_r$ and let $L_1$ be the number of edges in $\gamma_1$.  (The choice of these constants will be clarified in Step 3.)

\item  [(A3)] \label{item:first time through} Enumerate all   core graphs $J$ of rank   $<n$ satisfying: 
there is a core subgraph $J_0 \subset J$ and  a map $p :J \to G$ taking vertices to vertices such that the restriction of $p$ to each natural edge   is  an immersion onto a path of length at most $L_1$ and      such that $p \restrict J_0 :J_0 \to G_s$ is a homeomorphism. Label the natural edges of $J$  by their $p$ images. We    do not distinguish between    labeled graphs that differ by a label preserving homeomorphism so there are only finitely many  $J$  and we consider them one at a time. If $\sigma \subset J$ is a path with endpoints at  natural vertices and if $p$ restricts to an immersion on $\sigma$ then we let $|\sigma|$ be  the number of edges crossed by $p(\sigma)$ and $|\sigma|_r$ the number of $H_r$ edges crossed by $p(\sigma)$.  
      
       Let $p_\#$ be the homomorphism induced by $p:J\to G$ on fundamental groups.  By {Lemma~\ref{free factor support}} we can decide if $p_\#$ is an isomorphism to a \ffs\ $[J]$  of $F_n$.  If not, then move on to the next candidate.  If yes, then apply Lemma~\ref{checking on ffs} with $\F_0 = [J_0] = [G_s]$ and $\F_1 = [J]$.  If this produces a $\phi$-invariant   $\F \sqsubset [J]$ that properly contains $[G_s]$ then we have found a reduction and the algorithm stops.  Otherwise we know that  $[J]$ does not contain such an $\F$. In  particular $[J]$ does not contain  $\F_{s,\Lambda}$ and so does not carry $\Lambda^+$. Choose  a finite subpath $\gamma_{1,J} \subset \gamma \subset G$ that does not lift to $J$.
       
           By \cite[Lemma 3.1.10(4)]{bfh:tits1} there exists an edge $E$ in $H_r$ and $k \ge 1$ such that $\gamma_{1,J}$ is a subpath of $\ti f^k_\#(E)$. By  \cite[Lemma 3.1.8(3)]{bfh:tits1} there is a computable $k_0$ so that for any edge $E'$ in $H_r$ and any $l \ge k + k_0$, \  $\ti f^k_\#(E)$, and hence $\gamma_{1,J}$, is a subpath of $f^l_\#(E')$.  Finally, by \cite[Lemma 3.1.10(3)]{bfh:tits1} there exists computable $C_{2,J} > 0$ so that if $\sigma$ is a subpath of $\gamma$ that crosses  $\ge C_{2,J}$ edges of $H_r$  then $\sigma$ contains some $f^l_\#(E')$, and hence contains $\gamma_{1,J}$,  as a subpath  and so does not lift into $J$.  Now move on to the next candidate.  
       
At the end of the  process we have either found a reduction and the algorithm stops or we have found  a constant $C_2  = \max\{C_{2,J}\}$ so that if $\sigma$ is a subpath of $\gamma$ that crosses at least $C_2$ edges of $H_r$    then $\sigma$ does not lift into any $J$.           Choose a subpath $\gamma_2$  of  $\gamma$  that crosses at least $$(C_E+1)C_0+ C_E(C_2 +C_0) +2C_0  $$ edges of $H_r$         and let $L_2$ be the number  of edges crossed by $\gamma_2$.
       
\item     [(A4)]  \label{item:second time}   Repeat  (A3)  replacing $L_1$ with $L_2$.      At the end of the  process we have either found a reduction and the algorithm stops or we have found  a constant $C_3 $ so that if $\sigma$ is a subpath of $\gamma$ that crosses at least $C_3$ edges of $H_r$   then $\sigma$ does not lift into any $J$. Choose a subpath $\gamma_3$  of  $\gamma$  that crosses at least $$(C_E+1)C_0+ C_E(C_3 +C_0) +2C_0  $$ edges of $H_r$ and let $L_3$ be the number of edges crossed by $\gamma_3$.

\item  [(A5)]  Iterate this process up to $M$ times.   If after $M$ iterations the algorithm has not found a reduction and stopped, then stop and conclude that $\F_{s,\Lambda} = \{[F_n]\}$.  
  \end{description}
  
       \vspace{.1in}   
\noindent{\bf Step 3:   Justifying part 1 of the algorithm}: In this step    we   verify that if $\F_{s,\Lambda}$ is proper then  the above algorithm finds a reduction in at most $M$ steps.

Suppose $K\in \K$ satisfies (1) - (3) and let $K(L_1)$ be the subgraph of $K$ consisting of natural edges $E$ with $|E| \le L_1$.  Lift the path $\gamma_1$ into $K_\Lambda$.  After removing initial and terminal subpaths contained in single natural edges of $K_\Lambda$, we have a natural (in $K$) edge path $\gamma_{1,K} \subset \Lambda^+_K \subset K_\Lambda$ that projects onto all of   $\gamma_1$ except perhaps initial and terminal segments that cross at most $C_0$ edges of $H_r$.  Thus $\gamma_{1,K}  \subset K(L_1)$ and 
      $$|\gamma_{1,K}|_r \ge (C_E+1)C_0+ C_E(C_1 +C_0) $$
 Combining this inequality with    (1), we see that  $\gamma_{1,K}$ has a subpath that decomposes as 
 $$\alpha_1 \beta_1 \alpha_2\ldots \beta_{C_E} \alpha_{C_E+1}$$ where each $\alpha_i$ is a single natural edge and each $\beta_i$ is a natural subpath whose image under $p$ crosses at least $C_1$ edges in $H_r$.   Since there are at most $C_E$ oriented natural edges  in $K$, it follows that $\gamma_{1,K}$  contains a natural subpath that begins and ends with the same oriented natural edge and whose $p$-image crosses at least  $  C_1 =1$ edges in $H_r$. This proves that that there is a circuit in $ K(L_1)$ that is not in $K_0$ and hence  that $[K(L_1)]$ properly contains $[K_0]$.  By (3), $K_0\cup K_\Lambda$ is proper and so if $K_0 \cup K_\Lambda \subset K(L_1)$ then    $K_0 \cup K_\Lambda$ occurs as a $J$ in the first iteration of the process and    Lemma~\ref{checking on ffs} finds a reduction in the first iteration of  (A3) because   $\F_{s,\Lambda} \sqsubset [K_0 \cup K_\Lambda]$.

If no reduction is found in the first iteration then proceed to the second iteration as described in (A4).  By the same reasoning, there is a natural edge path $\gamma_{2,K} \subset K(L_2)$  such that $$|\gamma_{2,K}|_r \ge (C_E+1)C_0+ C_E(C_2 +C_0) $$ and there is a subpath of $\gamma_{1,K}$  that decomposes as 
 $$\alpha_1 \beta_1 \alpha_2\ldots \beta_{C_E} \alpha_{C_E+1}$$ where each $\alpha_i$ is a single natural edge and each $\beta_i$ is a natural subpath whose image under $p$ crosses at least $C_2$ edges in $H_r$.  It follows that there is a circuit in $ K(L_2)$ that is not in $K(L_1)$ and hence that   $[K(L_2)]$ properly contains $[K(L_1)]$.  If $K_0 \cup K_\Lambda \subset K(L_2)$ then    $K_0 \cup K_\Lambda$ occurs as a $J$ in the second iteration of the process and  our algorithm finds a reduction.
 
Continuing on, the iteration either produces a reduction within $M$ steps or produces a properly nested sequence $$[K_0] \sqsubset [K(L_1)] \sqsubset [K(L_2)] \sqsubset \ldots \sqsubset [K(L_M)]$$ of free factors.  Since the latter contradicts the definition of $M$, a reduction must have been found.
\vspace{.1in}

\vspace{.1in}   
\noindent{\bf Step 4:   Part 2 of the algorithm}: In this part of the algorithm we assume that $ \F_{s,\Lambda}$ is improper.   The filtration element $G_u = G_N \setminus H_N^z$ deformation retracts to $G_{s}$, see Remark~\ref{r:core}.  In particular, $[G_u] = [G_{s}]$.
    
Given a circuit $\sigma \subset G$ [resp.\ a  subgroup $A$] let $\F_{u, \sigma}$ [resp.\ $\F_{u,A}$] be the smallest free factor system that carries $[\sigma]$ [resp.\ every conjugacy class in $[A]$] and every conjugacy class in $[G_u]$. By Lemma~\ref{free factor support},  $\F_{u,\sigma}$ and  $\F_{u,A}$ can be algorithmically determined. 
   
By Lemma~\ref{finding iNps}, the set $P $ of indivisible periodic Nielsen paths with height $r$ is finite, can be determined algorithmically  and  each element of $P$  has period one.  For notational convenience we assume that $P$ is closed under orientation reversal. Let $\Sigma$ be the set of circuits in $G$ that split into a  concatenation of paths  in $G_u$  and elements of $P$.   Lemma~\ref{ANA} implies that $\Sigma$ is the set of circuits that are not weakly attracted to $\Lambda^+$ and that the conjugacy classes determined by $\Sigma$ are exactly those carried by the subgroup system  $\A$ of that lemma. 

We consider several cases, each of which can be checked by inspection of $G_u$ and the elements of $P$.  If  every circuit in $\Sigma$ is contained in $G_u$ then $\A = [G_u]$ and $H_{N}$ is reduced by  Lemma~\ref{ANA}\pref{item:characterize irreducibility}.
       
As a second  case, suppose that  there is a non-trivial path $\mu \subset G$ with endpoints   $x,y \in G_u$ such that $\mu$ is  homotopic rel endpoints to a concatenation of elements of $P$ and so   is a Nielsen path of height $N$.  Let $[B]$ be the conjugacy class of the subgroup of $F_n$ represented by closed paths based at $x$ that decompose as a concatenation of subpaths, each of which is either $\mu, \mu^{-1}$ or a path in $G_u$ with endpoints in $\{x,y\}$. Then $[B]$ is $\phi$-invariant, has  rank $\ge 2$,  and each conjugacy class in $[B]$ is represented by a circuit $\sigma \in \Sigma$.  By  Lemma~\ref{ANA}\pref{item:characterize irreducibility} there is  a proper free factor system that carries $[G_u]$ and the conjugacy class of each element of $B$.   It follows that  $\F_{u,B}$ is proper.  Since both $[G_u]$ and $[B]$ are $\phi$-invariant,    $\F_{u,B}$ is $\phi$-invariant by \cite[Corollary~2.6.5]{bfh:tits1}   and we have found   a reduction of $H_N$.
  
The  final case is  that there are no paths $\mu \subset G$ as in the second case and there is at least one  element $\sigma \in \Sigma$ that is not contained in $G_u$.  Each such $\sigma$ is homotopic to a concatenation of elements of $P$.  In particular,  the conjugacy class determined by $\sigma$ is $\phi$-invariant and so   $\F_{u,\sigma}$   is $\phi$-invariant. Choose one such $\sigma$ and check if $\F_{u,\sigma}$  is proper.  If it is then we have found a reduction of $H_N$  and we  are done   so suppose that it is not.   Lemma~\ref{ANA}\pref{item:characterize irreducibility} implies that $[\sigma]$ is carried by a rank one component $[A]$ of $\A$ and that $\A = \F' \cup [A]$ for some $\phi$-invariant free factor system $\F'$. If  there exists an element $\sigma' \in \Sigma$ that is not carried by $[G_u]$ and such that $\sigma$ and $\sigma'$ are not multiples of the same root-free circuit then $[\sigma']$ is not carried by $[A]$ so $\F_{u,\sigma'}$ is a reduction. Otherwise,   $\F' = [G_u]$ and   $H_r$ is reduced.  This completes the second step in the algorithm and so also the proof of the proposition.
\end{proof}

\subsection{The \noneg\ case}
We now consider reducibility for \noneg\ strata, beginning with a pair of examples. 

\begin{ex} \label{finding a loop}  Suppose that $\fG$ is a homotopy equivalence with filtration $\filt$ representing $\phi \in \Out(F_n)$ and that  $G_{r+2} = G_r \cup E_{r+1} \cup E_{r+2}$   where $G_r$ is a  connected core subgraph and where $H_{r+1} = E_{r+1}$ and $H_{r+2}=E_{r+2}$ are oriented edges with a common initial vertex not in $G_r$ and a common terminal endpoint in $G_r$.  Suppose also that   $f(E_{r+1}) = E_{r+1}u$ and $f(E_{r+2}) = E_{r+2}u$ for some closed  non-trivial path $u \subset G_r$    and  that $f\restrict G_r$ is a \ct.     Then the (NEG Nielsen Paths) property of  $f \restrict G_{r+2}$ fails.   The \ct\ algorithm corrects this (see Section~\ref{section one edge}) by discovering that $E_{r+2} \bar E_{r+1}$ is a Nielsen path and then sliding the terminal endpoint of $E_{r+2}$ along $\bar E_{r+1}$.  In other words, $E_{r+2}$ is replaced by a fixed loop $E'_{r+2}$ based at the initial  endpoint of $E_{r+1}$.  Note that while establishing (NEG Nielsen Paths) for $f \restrict G_r$, we have discovered a reduction of $[G_r] \sqsubset [G_{r+2}]$. Namely, the  $\phi$-invariant \ffs\   $\{[G_r], [E'_{r+2}]\}$ is properly contained between $[G_{r+1}] = [G_r]$ and $[G_{r+2}]$.    
\end{ex}  

\begin{ex} \label{fixed edge}  Suppose that $\fG$ is a homotopy equivalence with filtration $\filt$  representing $\phi \in \Out(F_n)$, that  $G_{r+1} = G_r \cup E_{r+1}$   where $G_r$ is a connected  core subgraph such that $f\restrict G_r$   is a \ct\  and where $H_{r+1} = E_{r+1}$ is a fixed edge whose initial and terminal endpoints, $x$ and $y$, belong to the same Nielsen class of $f \restrict G_r$.      Then  $f \restrict G_{r+1}$ satisfies all the properties of a \ct\ except that $[G_r] \sqsubset [G_{r+1}]$ is not reduced.     The \ct\ algorithm corrects this in stages.  First,  a Nielsen path $\sigma \subset G_r$ connecting $y $ to $x$ is found.  Then the terminal end of $E_{r+1}$ is slid along $\sigma$ so that its new terminal endpoint is $x$.  Thus $E_{r+1}$ is replaced by a fixed edge $E'_{r+1}$ with both endpoints at $x$.  Finally, $x$ is blown up to a fixed edge   $E'_{r+2}$ with $E'_{r+1}$ attached at the \lq new\rq\  endpoint of $E'_{r+2}$ and the  remaining edges in the link of $x$ still attached at $x$.    Both $[G_r] \sqsubset [G_{r+1}]$ and $[G_{r+1}] \sqsubset [G_{r+2}]$  are reduced.
\end{ex}  

\begin{lemma}\label{reducible noneg}    Suppose that $\fG$ and $\filt$ are a \rtt\ and filtration  satisfying the conclusions of Theorem~\ref{2.19} and representing a rotationless $\phi \in \Out(F_n)$. Suppose further  that $G_s$ is  core, that $H_s = \{E_s\}$ is an \noneg\ stratum, that $C$ is the component of $G_s$ that contains $H_s$  and that $f \restrict C$ satisfies  (\noneg\ Nielsen Paths). If $[C \setminus E_s] \sqsubset [C]$   is  reducible then   $E_s$ is fixed and its terminal endpoint is connected to its initial endpoint by a Nielsen path $\beta \subset C\setminus E_s$.  In particular, $E_s\beta$ is a basis element and    $\{[G_{s-1}],[E_s\beta]\}$ is a $\phi$-invariant \ffs\ that is properly contained between $[G_{s-1}]$ and $[G_s]$.  \end{lemma}

\proof  By restricting to $C$,   we may assume that $G = C = G_s$ and hence that $G_{s-1}$ has either a single component of rank $n-1$ or two components whose ranks add to $n$.   If $[C \setminus E_s] \sqsubset [C]$   is  reducible,  there is a marked graph $K$  with distinct proper  core subgraphs $K_1 \subset K_2   \subset K$ such that $[K_1] = [G_{s-1}]$ and such that $[K_2]$ is $\phi$-invariant.     From  $$-\chi(K_1) \le -\chi(K_2) < -\chi(K) = -\chi(K_1) +1$$  it  follows that $K_2$ is obtained from $K_1$ by adding a disjoint loop $\alpha$ and that $K \setminus K_2$ is an edge $E$.  In particular,   $K_1$ is connected.

We claim that the circuit $\sigma \subset G$ representing  $[\alpha]$ crosses $E_s$ exactly once.    The marked graph $K'$ obtained from $K$ by collapsing the components of  a maximal forest is a rose with $\alpha$ as one of its edges.  
The marked graph $G'$ obtained from $G$ by collapsing the components of  a maximal forest in  $G_{s-1}$ is a rose with  $E_s$ as one of its edges.  Moreover $[K'\setminus \alpha] = [K_1] = [G_{s-1}] = [G' \setminus E_s]$. Let   $h : K' \to G'$ be a homotopy equivalence that respects markings and that restricts to an immersion on each edge.  Then $h
(K'\setminus \alpha) = G' \setminus E_s$ and it suffices to show that $h(\alpha)$  crosses $E_s$ exactly once.  This follows from \cite[Corollary 3.2.2]{bfh:tits1}.

 Having verified the claim, we can now complete the proof of the lemma.  Since $[K_2]$ is $\phi$-invariant and $\alpha$ is a component of $K_2$, $\alpha$ determines a $\phi$-invariant conjugacy class.  It follows that $\sigma$ decomposes as a concatenation of \iNp s and fixed edges.  Since $\sigma$ crosses $E_s$ exactly once, the (NEG Nielsen Paths)  property of $\fG$ implies that $E_s$  is a fixed edge.  Thus $\sigma$ decomposes as a circuit into $E_s \beta$ where $\beta$ is a Nielsen path in $G_{s-1}$.  
 \endproof
 
\section{Definition of a \ct}    \label{the definition}
The definition of a \ct\ evolved over many years, growing out of improved relative trains \cite{bfh:tits1} which grew out of relative train tracks \cite{bh:tracks}.  In addition to the nine items that make up the formal definition,   there are numerous auxiliary consequences of these definitions that are used repeatedly.   We have included the complete definition  here  for the reader's convenience but recommend consulting  \cite{fh:recognition} (see also \cite[ Part \urn{1} Section 1.5]{hm:subgroups})  for discussion and elaboration. 

We have already discussed some of the technical terms in the definition of \ct.   
\begin{itemize}
\item  For basics of relative train track theory, including the definitions of \rtt s, filtrations,  \eg\ and \noneg\ strata, linear edges and  Nielsen paths   see Section~\ref{standard}.
\item  For zero strata enveloped by \eg\ strata see Notation~\ref{n:zero}.
\item  For principal vertices and rotationless $\fG$ see Definition~\ref{def:principal auto}. 
\item  For the definition of a filtration being reduced see the beginning of Section~\ref{section:reducibility}.
\end{itemize}

There are a few more  terms that need defining. 

\begin{definition} \cite[Definition 4.1]{fh:recognition}
If $w$ is a closed root-free Nielsen path and $E_i, E_j$ are linear edges satisfying $f(E_i) = E_i w^{d_i}$ and $f(E_j) = E_i w^{d_j}$ for distinct $d_i, d_j  > 0$ then a path of the form $E_i w^* \bar E_j$  is called an {\em exceptional path}.   
\end{definition}

 \begin{definition}  \cite[Definition 4.3]{fh:recognition}    If $H_r$ is \eg\ and $\alpha \subset G_{r-1}$  is a non-trivial path 
with endpoints in $H_r \cap G_{r-1}$ then we say that $\alpha$ is a   {\em connecting path  for $H_r$}.  If $E$ is an edge in an irreducible 
stratum $H_r$  and  $k >0$ then a maximal subpath $\sigma$ of $f^k_\#(E)$ in a zero stratum $H_i$  is said to be {\em $r$-taken}  or just {\em taken}  if $r$ is irrelevant. A non-trivial path or circuit $\sigma$ is {\em completely split} if it has a splitting, called a {\it complete splitting}, into subpaths, each of which is either a single edge in an irreducible stratum, an \iNp, an exceptional path  or    a connecting path in a zero stratum $H_i$ that is both maximal (meaning that it is not   contained in a larger subpath of $\sigma$ in $H_i$)  and taken.  Note that the endpoints, if any, of a completely split path are at vertices.
\end{definition}

\begin{definition}    [Definition 4.4 \cite{fh:recognition}] A \rtt\ is {\em completely split} if 
\begin{enumerate}
\item  $f(E)$ is completely split for each edge $E$ in each irreducible 
stratum.  
\item If $\sigma $ is a taken connecting path  in a zero stratum   then  $f_\#(\sigma)$ is completely split.   \end{enumerate}
\end{definition}

Proper extended folds, which are referred to in the (\eg\ Nielsen Paths) property of a \ct\ are defined in \cite[Definition 5.3.2]{bfh:tits1}.  We will not review that here, in part because the actual definition is never used in this paper and in part because in applications one almost always refers to a consequence of this property (for example \cite[Corollary 4.19]{fh:recognition}) rather than to the property itself.

\begin{definition} \label{def:ct} A   \rtt\ $\fG$ and filtration $\F $ given by $ \filt$\ is said to be a {\em \ct}\ (for completely split improved relative train track map) if it satisfies the following properties.   
\begin{enumerate}

\item{\bf(Rotationless)} $\fG$ is rotationless.(Definition~\ref{def:principal auto})
\item{\bf(Completely Split)} $\fG$ is completely split. 
\item {\bf (Filtration)}  $\F$ is reduced.   (Section~\ref{section:reducibility}) The core of each filtration element is a filtration element.   
\item {\bf (Vertices)} The endpoints of all indivisible periodic (necessarily fixed) Nielsen paths are (necessarily principal) vertices.   The terminal endpoint of each non-fixed \noneg\ edge is principal (and hence fixed).   
\item {\bf(Periodic Edges)} Each periodic edge is fixed and each endpoint of a fixed edge is principal.    If the unique edge $E_r$ in a fixed stratum $H_r$  is not a loop then $G_{r-1}$ is a core graph  and both ends of $E_r$ are contained in $G_{r-1}$. 
\item {\bf (Zero Strata)}  If $H_i$ is a zero stratum, then  $H_i$ is enveloped by an  \eg\ stratum $H_r$, each edge in $H_i$ is $r$-taken and each vertex in $H_i$ is contained in $H_r$ and has link  contained in $H_i \cup H_r$.    
\item {\bf(Linear Edges)}  For each linear $E_i$ there is a closed root-free Nielsen path $w_i$  such that $f(E_i) = E_i w_i^{d_i}$ for some $d_i \ne 0$.     If $E_i$ and $E_j$ are distinct linear edges  with the same axes  then $w_i = w_j$ and  $d_i \ne d_j$.   
\item {\bf (\noneg\ Nielsen Paths)} 
If the highest edges in an \iNp\ $\sigma$  belong to an \noneg\ stratum then there is a  linear edge $E_i$ with $w_i$  as in (Linear Edges) and there exists $k \ne 0$ such that $\sigma = E_i w_i^k \bar E_i$.   
\item {\bf (\eg\ Nielsen Paths)}  If $H_r$ is \eg\  and $\rho$ is an \iNp\ of height $r$, then   $f|G_r = \theta\circ f_{r-1}\circ f_{r}$  where :
\begin{enumerate}
\item $f_r : G_r \to G^1$ is a composition of proper extended folds  defined by iteratively folding $\rho$.
\item $f_{r-1} : G^1 \to  G^2$ is a composition of   folds   involving edges in $G_{r-1}$.
\item $\theta : G^2 \to G_r$ is a homeomorphism.
\end{enumerate}
\end{enumerate}
\end{definition}

We include the following for future reference.

\begin{lemma}\label{l:power of ct is ct}
 If $\fG$ is a \ct\ then $f^k : G \to G$  is a \ct\ for all $k \ge 1$. 
\end{lemma}

\proof  By  \cite[Lemma 4.13]{fh:recognition},    every periodic Nielsen path for $f$ has period one.   With this in hand, the first eight \ct\ properties for $f^k$ are easy to check.  The remaining property (EG Nielsen Paths) for $f^k$ follows from  \cite[Corollary 4.33]{fh:recognition}.
\endproof

The following example shows that the restriction of a \ct\ $f$ to a component of a core filtration element need not be a \ct. 

\begin{ex}\label{e:example}
We refer to Figure~\ref{f:fillgap} for notation. The map $f:G\to G$ given by $e\mapsto e$, $b\mapsto be$, $c\mapsto c$, and $d\mapsto de^2$ is a \ct\ with the filtration 
$$\emptyset\subset\{e\}\subset \{e,b\}\subset\{e,b,c\}\subset G$$
and the restriction of $f$ to each filtration element is a \ct. If we however consider the new filtration 
$$\emptyset\subset\{e\}\subset\{e,c\}\subset\{e,c,d\}\subset G$$
then $f$ is still a \ct\ with the new filtration but $f|\{e,c,d\}$ is not a \ct\ because it does not satisfy (Vertices).  
 \begin{figure}[h!]
 \centering
 \includegraphics[width=0.25\textwidth]{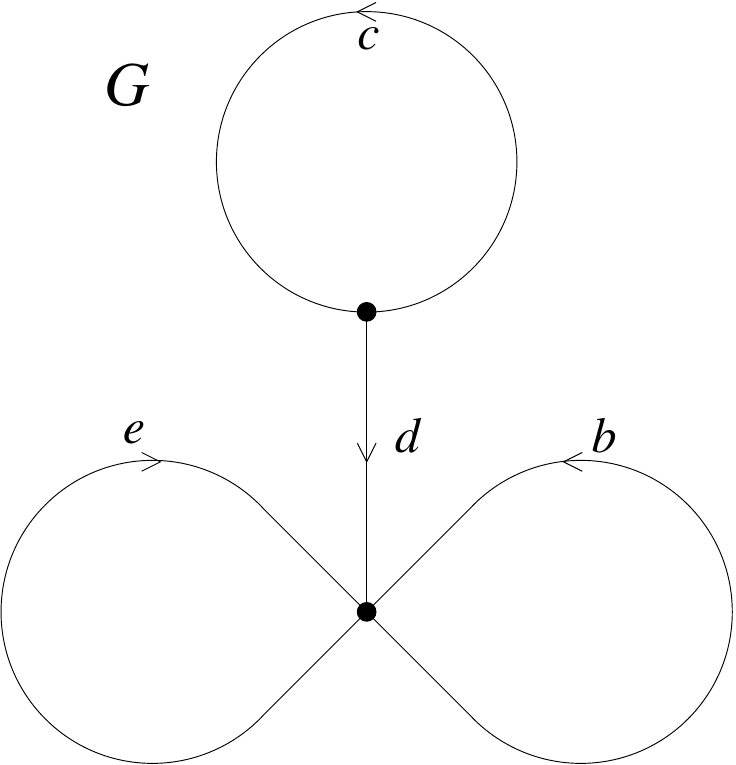}
 \caption{Example~\ref{e:example}}
 \label{f:fillgap}
  \end{figure}
\end{ex}

\section{Sliding \noneg\ edges}\label{section:sliding}
The key step for arranging that an \noneg\ edge has good properties under iteration is to slide the terminal endpoint of the edge into an optimal position in the lower filtration element that contains it.   This is carried out in \cite[Proposition 5.4.3]{bfh:tits1}.  In this section we make the algorithmic arguments needed to replace the non-algorithmic parts of the original proof. 
 
\subsection{Completely Split Rays}
Recall from \cite[Definition~4.4]{fh:recognition} that a splitting  $\sigma = \sigma_1 \cdot \sigma_2 \ldots $  is a {\em complete splitting}  if each $\sigma_i$  is either a single edge in an irreducible stratum, an \iNp, an exceptional path or a maximal subpath in a zero stratum (with some additional features that we will not recall here.) A finite path or circuit has at most one complete splitting by \cite[Lemma 4.11]{fh:recognition}. The first item of our next lemma states that the same is true for rays.
 
Recall from Section~\ref{standard} that  a ray $\sigma$ with initial point a vertex is thought of as an edge path $\sigma = E_0 E_1 \ldots$.   The initial and terminal endpoints, $w_i$ and $w_{i+1}$  of $E_i$ are  the {\em vertices of $\sigma$}.  We view the set $\W$ of vertices of a path as being ordered by their subscripts.  A decomposition of $\sigma$ into subpaths is specified by a subset of $\W$; if $w_i$ and $w_j$ are consecutive elements of the subset then $E_i \ldots E_{j-1}$ is a term in the decomposition.   If the decomposition is a splitting then we refer to these vertices as {\em splitting vertices}.  A similar definition holds for circuits.  
 
\begin{lemma}  \label{unique splittings} Suppose that $\fG$ is  a \ct, that $R \subset G$ is a completely split ray and that $R_0$ is a  subray of $R$ that has a complete splitting. Then
\begin{enumerate}  
\item The complete splitting of $R$ is unique.  
\item  Let $v$ be the first splitting vertex for $R$ that is contained in $R_0$ (when the edge path $R_0$ is viewed as a   subpath of the edge path  $R$).  Then each splitting vertex $w$ for $R_0$  that comes after $v$ (in the ordering of splitting vertices of $R_0$)  is a splitting vertex for $R$. 
\end{enumerate}
\end{lemma}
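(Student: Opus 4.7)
The plan for both parts rests on the following rigidity principle, combined with the uniqueness of complete splittings for finite paths from \cite[Lemma~4.11]{fh:recognition}: \emph{each type of complete-splitting piece---single edge, \iNp, exceptional path, and maximal taken connecting path in a zero stratum---admits no interior vertex at which the piece itself splits in the abstract sense, i.e.\ at which $f_\#^k$ introduces no cancellation for every $k\ge 1$.}  The claim is immediate for single edges, and for \iNp s and exceptional paths it follows from the observation that any nontrivial abstract splitting $\pi=\pi_1\cdot\pi_2$ of such a (periodic) Nielsen piece would force $|f_\#^k(\pi_1)|+|f_\#^k(\pi_2)|=|\pi|$ for all $k$, which fails as soon as $\pi_1$ or $\pi_2$ contains an edge whose $f$-iterates grow unboundedly.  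For maximal taken zero-stratum pieces the same conclusion is extracted from the maximality clause and the structural constraints on the enveloping \eg\ stratum imposed by property (Zero Strata).  The upshot is that, for any completely split path, the abstract splitting vertices are precisely the splitting vertices of its complete splitting.

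For part (1), suppose $R$ has two complete splittings $S=s_1\cdot s_2\cdot\ldots$ and $T=t_1\cdot t_2\cdot\ldots$.  Both $s_1$ and $t_1$ are initial edge-path segments of $R$ starting at its initial vertex, so one is an initial segment of the other; assume $s_1$ is an initial segment of $t_1$.  The terminal vertex of $s_1$ is an abstract splitting vertex of $R$ by the definition of the splitting $S$, but if $s_1$ were a proper initial segment of $t_1$, this vertex would lie in the interior of $t_1$, contradicting the rigidity principle applied to the piece $t_1$.  Hence $s_1=t_1$, and iterating on the subray starting at this common endpoint gives $s_i=t_i$ for all $i$.

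For part (2), abstract splitting is a local property at each interior vertex, so the abstract splitting vertices of $R_0$ lying in the interior of $R_0$ coincide with those of $R$ at the same locations.  Given any splitting vertex $w$ of $T$ with $w>v$, the vertex $w$ is past the initial vertex of $R_0$ and so lies in its interior; hence $w$ is an abstract splitting vertex of $R$, and the rigidity principle applied to $S$ forces $w$ to lie at a boundary between consecutive pieces of $S$, making $w$ a splitting vertex of $R$.

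The main obstacle in executing this plan is verifying the rigidity principle uniformly across all four piece types, and in particular for maximal taken zero-stratum pieces, where the length-growth argument available for Nielsen pieces does not directly apply and where one must instead appeal to the structural definition of ``taken'' together with the enveloping \eg-stratum structure of property (Zero Strata).
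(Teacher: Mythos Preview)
Your proof has a genuine gap.  The rigidity principle you state concerns an individual piece $\mu$ \emph{as a standalone path}: no interior vertex of $\mu$ is an abstract splitting vertex \emph{of $\mu$}.  But in both parts you actually need the stronger statement that no interior vertex of $\mu$ is an abstract splitting vertex \emph{of the ambient ray}.  These are not the same thing, and the passage between them is exactly what your phrase ``abstract splitting is a local property'' is meant to cover --- but that claim is false.  Whether $w$ is an abstract splitting vertex of $\sigma$ depends on the terminal edge of $f_\#^k(\sigma_{\le w})$, which in turn depends on all of $\sigma_{\le w}$, not just on the edges near $w$.  Concretely, in part~(2) you assert that a splitting vertex $w$ of $R_0$ is automatically an abstract splitting vertex of $R$ because $R$ and $R_0$ agree near $w$; but $f_\#^k(R_{\le w})$ and $f_\#^k((R_0)_{\le w})$ can end in different edges, so the cancellation behaviour at $w$ can differ.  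The same conflation appears in part~(1): knowing that the terminal vertex of $s_1$ is an abstract splitting vertex of $R$ does not, by your rigidity principle alone, preclude its lying in the interior of $t_1$, because rigidity only tells you $t_1$ does not split there as a standalone path.  (You would additionally need to know that $\tilde f^k$ preserves the order of splitting vertices along the ray, which is not automatic.)

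The paper's argument avoids this issue entirely.  Rather than comparing abstract splitting vertices, it uses two concrete facts: the interior of any \iNp\ or exceptional path is an increasing union of pre-trivial subpaths (\cite[Remark~4.2 and Lemma~2.11(2)]{fh:recognition}), and the hard splitting property \cite[Lemma~4.11(2)]{fh:recognition}, which says that in a complete splitting any pre-trivial subpath lies inside a single term.  Given a term $\mu_0$ of $R_0$ starting at $w$: if $\mu_0$ is an \iNp\ or exceptional path, its pre-trivial interior forces it into a single term $\mu$ of $R$, and the role of $v$ ensures $\mu\subset R_0$, whence the symmetric argument gives $\mu=\mu_0$.  If $\mu_0$ is a single edge or zero-stratum piece and $w$ were interior to a term $\mu$ of $R$, then $\mu$ must be an \iNp\ or exceptional path, and a pre-trivial subpath of $\mu$ crossing $w$ contradicts hard splitting for $R_0$.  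This approach never needs to reason about ``abstract splitting vertices'' of the ambient ray, and handles zero-stratum pieces without a separate structural analysis.
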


\proof   The second item implies the first by taking $R_0 =R$ so we need only prove the second.  Let $\mu_0$ be the term in the complete splitting of $R_0$ whose initial vertex is $w$. If $\mu_0$ is either an \iNp\ or an exceptional path then the interior of $\mu_0$ is an increasing union of pre-trivial paths by \cite[Remark 4.2 and Lemma 2.11(2)]{fh:recognition} and so by \cite[Lemma 4.11(2)]{fh:recognition} is contained in a single term $\mu$ of the complete splitting of $R$.  Obviously $\mu$ is not a single edge and is not contained in a zero stratum so it must be either an \iNp\ or an exceptional path.  Since $v$ is the initial endpoint of some term in the complete splitting of $R$  and $w$ comes after $v$,  it follows that $v$ is not contained in the interior of $\mu$ and so  $\mu \subset  R_0$.  The symmetric argument therefore applies to show that $\mu$ is contained in a term of the complete splitting of $R_0$ and hence that $\mu = \mu_0$ as desired.  If $\mu_0$ is either a single edge or is a maximal subpath in a zero stratum and $\mu_0$ is not a term in the complete splitting of $R$ then $\mu_0$   is properly contained in a term $\mu$ of $R$ that is an \iNp\ or an exceptional path.  But this violates the hard splitting property \cite[Lemma 4.11(2)]{fh:recognition} for the complete splitting of $R_0$ (applied to its finite completely split subpaths) and the fact that the interior of $\mu$ is the increasing union of pre-trivial paths.   Thus $\mu_0$ is a term in the complete splitting of $R$ and we are done.
\endproof

\begin{defns} \label{d:generates a ray} Suppose that $\fG$ is a \ct, that $x \in G$,  that  $\sigma \subset G$  is a non-trivial completely split path connecting $x$ to $f(x)$ and that  the turn at $f(x)$ determined by $\bar \sigma$ and $f_\#(\sigma)$ is legal.  The ray  $R = \sigma \cdot f_\#(\sigma) \cdot f^2_\#(\sigma)\cdot \ldots$  satisfies $f_\#(R) \subset R$ and the given splitting of $R$   has a refinement that is a complete splitting by  \cite[Lemma 4.11]{fh:recognition}; we say that {\em $R$ is generated by $\sigma$}.

If  $E$ is a non-fixed  edge of $G$ whose initial direction is fixed then $f(E) = E \cdot \sigma$ for some $\sigma$ as above.  The ray $R_E = E \cdot\sigma \cdot f_\#(\sigma) \cdot f^2_\#(\sigma)\cdot \ldots$ is the {\em eigenray} determined by $E$.  Note that we are not requiring that the initial vertex of $E$ be principal (as we did in Section~\ref{s:uniform bound}) or that $E$ is non-linear and \noneg\ so we are using the term eigenray a little more generally than is sometimes the case.   We will need this inclusiveness in the proof of Lemma~\ref{finding a fixed point}.  For the same reason we assume that each isolated fixed point for $f$ is a vertex.
\end{defns}

\begin{lemma}  \label{comparing rays} Suppose that $\fG$ is a \ct\ and that $\sigma$ and $\sigma'$ are completely split non-Nielsen paths generating rays $R$ and $R'$ respectively.  Then there is an algorithm to decide if  the rays $R$ and  $R'$   have a common terminal subray and if so to find initial subpaths $\tau \subset R$ and $\tau ' \subset R'$ that terminate at splitting vertices of $R$ and $R'$ respectively and whose complementary terminal subrays are equal.  Equivalently we find splitting vertices $v \in R$ and $v' \in R'$ such that terminal subrays of $R$ and $R'$ initiating at $v$ and $v'$ are equal (as edge paths).
\end{lemma}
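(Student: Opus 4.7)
The plan is to combine Lemma~\ref{unique splittings} with the substitution-like generation of $R$ and $R'$ by $\sigma$ and $\sigma'$.

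First I reduce to matching splitting vertices. Suppose $R$ and $R'$ share a common terminal subray $S$. Then $S$ is a subray of the completely split ray $R$, so it has a unique complete splitting by Lemma~\ref{unique splittings}(1). Applying Lemma~\ref{unique splittings}(2) with $R_0 = S$ first to $R$ and then to $R'$, past the first splitting vertex of $R$ in $S$ and the first splitting vertex of $R'$ in $S$, every splitting vertex of $S$ is simultaneously a splitting vertex of $R$ and of $R'$. Choosing any such vertex and reading off its positions in $R$ and in $R'$ produces $v \in R$ and $v' \in R'$ whose complementary terminal subrays agree as edge paths. The converse is immediate, so the problem reduces to algorithmically deciding the existence of such $(v,v')$ and producing them.

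Second, I describe the complete splittings of $R$ and $R'$ algorithmically. Writing $R = \sigma \cdot f_\#(\sigma) \cdot f_\#^2(\sigma) \cdots$ (the defining splitting), the endpoints of successive blocks $f_\#^k(\sigma)$ are splitting vertices of $R$, and the complete splitting of each $f_\#^k(\sigma)$ is obtained by applying $f_\#$ term-by-term to the complete splitting of $\sigma$, using \cite[Lemma~4.11]{fh:recognition}. Thus arbitrary finite prefixes of $R$ together with their splitting vertices are effectively computable, and likewise for $R'$.

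Third, the algorithm enumerates candidate pairs $(v, v')$ of splitting vertices in $R$ and $R'$ up to an increasing position bound, and for each pair compares the complementary tails on an increasing initial segment. The main obstacle is effective termination: I need a computable bound $B$ on the positions of candidate $(v, v')$ and a computable length $L$ such that agreement of two candidate tails on a prefix of length $L$ forces full tail agreement. Since $\sigma$ and $\sigma'$ are non-Nielsen, at least one splitting term in each grows under iteration, so $|f_\#^k(\sigma)|$ and $|f_\#^k(\sigma')|$ tend to infinity; this forces any common terminal subray to contain generation boundaries of both rays after an initial portion whose length is controlled by $|\sigma|$, $|\sigma'|$, and the bounded cancellation constant for $f$, yielding $B$. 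For $L$, once $(v, v')$ is fixed the continuations of the two tails are deterministically generated from the positions of $v$ and $v'$ in the respective substitution structures, so agreement of prefixes long enough to synchronize one further generation boundary propagates by induction on generations to full agreement. The hard part will be making these bounds explicit; the finiteness of the alphabet of splitting terms and the determinism of the substitution $\mu \mapsto (\text{complete splitting of } f_\#(\mu))$ are what make this tractable.
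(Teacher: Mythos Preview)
Your plan is headed in the right direction but has two genuine gaps, both in your ``third'' step, and the paper's proof handles them by a different reduction that you are missing.

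\textbf{The missing reduction.} You try to compare the infinite tails from candidate splitting vertices $v$ and $v'$ by checking agreement on a computable prefix of length $L$ and then arguing that ``agreement of prefixes long enough to synchronize one further generation boundary propagates by induction.'' This propagation step is not justified and is in fact the crux of the problem. The paper avoids it entirely: for each splitting vertex $v_i$ of $R$ it introduces the finite path $\sigma_i$ from $v_i$ to $f(v_i)$ (and similarly $\sigma'_j$ for $R'$), and observes that the terminal subray of $R$ from $v_i$ is exactly the ray generated by $\sigma_i$. Thus $R$ and $R'$ share a terminal subray if and only if $\sigma_i=\sigma'_j$ for some $i,j$, a \emph{finite} equality of edge paths. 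The ``only if'' direction requires lifting to $\ti G$: the common endpoint $P$ is an attractor (here the non-Nielsen hypothesis is used), so there is a unique lift $\ti f$ fixing $P$, and this forces $\ti f(\ti v_i)=\ti f(\ti v'_j)$ once the splitting vertices have been aligned via Lemma~\ref{unique splittings}. Your prefix-matching scheme does not invoke this uniqueness and cannot succeed without it: two rays generated by different $\sigma_i$ and $\sigma'_j$ could in principle agree on a long prefix.

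\textbf{The bound $B$.} Your appeal to ``$|\sigma|$, $|\sigma'|$, and the bounded cancellation constant'' does not produce the needed bound. The paper instead computes the maximal height $r$ of a growing term in the complete splitting of $\sigma$ (and $r'$ for $\sigma'$); the first step of the algorithm is to check $r=r'$, outputting NO otherwise. With $r=r'$, one defines $K$ by: $K=1$ if $H_r$ is \noneg, and otherwise $K$ large enough that $f^K_\#(E)$ crosses at least the critical constant $C$ of Lemma~\ref{critical constant} edges in $H_r$ for each $H_r$-edge $E$. The bound is then $I=$ number of splitting terms in $\sigma\cdot f_\#(\sigma)\cdots f_\#^K(\sigma)$ (and $J$ symmetrically). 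The justification that some $i\le I$ works lifts to $\ti G$ and, in the \eg\ case, uses Lemma~\ref{critical constant} applied to the path $\ti\tau$ from a point of $\ti\sigma$ to a point of $\ti f^{k'-k}_\#(\ti\sigma')$ that is killed by $\ti f^k_\#$; in the \noneg\ case it uses the basic splitting property \cite[Lemma~4.1.4]{bfh:tits1}. Bounded cancellation alone does not give this.

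A smaller technical point: in your first paragraph you invoke Lemma~\ref{unique splittings}(1) to say that the common subray $S$ ``has a unique complete splitting.'' That lemma asserts uniqueness of a complete splitting, not existence; $S$ need not begin at a splitting vertex of either ray, so one must first pass to a subray of $S$ that does.
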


\proof   
Let $\V=\{  v_0,  v_1,\ldots \}$ be the set of splitting vertices for $ R$ ordered so that $ v_{i-1}$ and $ v_i$ are the endpoints of the $i^{th}$ term in the complete splitting.    By construction,   $ f( \V) \subset  \V$.  For each $i \ge 0$, let $\sigma_i \subset  R$ be the path connecting $v_i$ to $ f( v_i)$ and let $\ell_i = |\sigma_i|$ be the number of edges in $\sigma_i$; in particular, $\sigma = \sigma_0$.   Note that $\sigma_i$ generates the terminal subray of $R$ that begins with $v_i$.   Define $ V',  \sigma_j'$ and $\ell_j'$  similarly using $\sigma'$ and $R'$ in place of $\sigma$ and $R$. It is obvious that  $R$ and $R'$ have a common terminal subray if $\sigma_i = \sigma'_j$ for some $i$ and $j$. (Namely,  the subrays of $R$ and $R'$  initiating at $v_i \in \V'$ and $v'_j \in \V'$ respectively.)     The converse follows from  Lemma~\ref{unique splittings}.   Our goal then is to either find $i$ and $j$ such that $\sigma_i = \sigma'_j$ or to conclude that no such $i$ and $j$ exist.

Let $r$ [resp $r']$ be  the maximal height of  a term in the complete splitting of $\sigma$  [resp. $\sigma'$] that is not a Nielsen path.  Since $f(\sigma)$ and $\sigma$ have a common endpoint,    $\sigma$ is not entirely contained in a zero stratum.  Thus any term $\tau$  in the complete splitting of $\sigma$ that is contained in a zero stratum is adjacent to a term that intersects the \eg\ stratum that envelops $\tau$ (Notation~\ref{n:zero}).  Since  this adjacent edge has at least one non-fixed  endpoint, it is neither an exceptional path nor   a Nielsen path so must be  a single edge in that \eg\ stratum. We conclude that $H_r$ is not a zero stratum.   It follows    that if $\mu$  is any height $r$ term   in the complete splitting of $\sigma$ and if $\mu$ is not a Nielsen path then  the length of $f^i_\#(\mu)$ goes to infinity with $i$;    we say that $\mu$ is  {\em growing}.      Note that $r$  is the maximal height of a growing term in the complete splitting of any $\sigma_i$ and similarly for $r'$ and $\sigma'_j$. Note also that for any given $L > 0$   one can find, by inspection,  $M>0$ such that $\ell_i, \ell'_j >L$ for all $i,j \ge M$.
 
 The first step in the algorithm is to check if $r = r'$.  If yes, then move on to step two.  If not then there do not exist $i$ and $j$ such that   $\sigma_i = \sigma'_j$  so the algorithm stops and outputs NO.   

 We may now assume that $r = r'$.  If $H_r$ is \noneg\ define $K=1$.  Otherwise $H_r$ is \eg\ and we choose $K$ 
 so that for each edge $E$ of $H_r$, $f^K_\#(E)$ contains at least $C$ edges of $H_r$  where $C$ is the constant of Lemma~\ref{critical constant}.   Now define $I$  to be the number of terms in the complete splitting of   $\sigma
  \cdot f_\#(\sigma) \cdot \ldots \cdot   f_\#^{K}(\sigma) $ or equivalently so that  $ i \le I$ if and only if $\ti v_i \in \ti \sigma \cdot \ti f_\#(\ti \sigma) \cdot \ldots \cdot  \ti  f_\#^{K}(\ti \sigma) $.  Define      $J$  to be the number of  terms in the complete splitting of   $\sigma' \cdot f_\#(\sigma') \cdot \ldots \cdot   f_\#^{K}(\sigma') $ or equivalently so that  $ j \le J$ if and only if $\ti v'_j \in \ti \sigma' \cdot \ti f_\#(\ti \sigma') \cdot \ldots \cdot  \ti  f_\#^{K}(\ti \sigma') $.    
 
Fix $j$.  To check if $\sigma'_j = \sigma_i$ for some $i$ we need only consider $i < M$ where   $\ell_i > \ell'_j$ for all $i \ge M$.   The symmetric argument implies that for any fixed $i$ we can check if $\sigma_i  = \sigma'_j$ for some $j$.  
The second and final step of the algorithm is to decide if there exists $i \le I$ such that  $\sigma_i = \sigma'_j$ for some $j$ or if there exists $j \le J$   such that $\sigma_i = \sigma'_j$ for some $i$. If not then the algorithm outputs NO. If yes then the algorithm outputs YES and $v_i,v'_{j}$.

It remains to prove that if  $R$ and $R'$ have a common subray then the algorithm outputs YES in the  second step.  Suppose that  $R = \rho R''$ and $R' = \rho' R''$ where  $\rho \subset R$ and $\rho' \subset R'$ are finite and  $R''$ is  a  maximal common subray. Lift   $R = \rho R'' \subset G$ to  $\ti R = \ti \rho \ti R'' \subset \ti G$ and the initial segment $\sigma \subset R$ to an initial segment $\ti \sigma \subset \ti R$.     Let $\ti f : \ti G \to \ti G$ be the lift of $f$ that takes the initial endpoint of $\ti \sigma$ to the terminal endpoint of $\ti \sigma$ and note that $\ti R =  \ti \sigma \cdot \ti f_\#(\ti \sigma) \cdot \ti f^2_\#(\ti \sigma)\cdot \ldots$.   Since $\sigma$ is not a Nielsen path,  $|f_\#^k(\sigma)| \to \infty$.  It follows that the terminal endpoint $P \in \partial F_n$ of $\ti R$    is an attractor for the action of $\partial \ti f$ and so is not fixed by any covering translation \cite[Proposition~I.1]{gjll:index}. In particular, $\ti f$ is the only lift of $f$ that fixes $P$.    Lift $R' = \rho' R''$  to  $\ti R' = \ti \rho' \ti R''$ and $\sigma'$ to an initial segment $\ti \sigma'$ of $\ti R'$.  The uniqueness of $\ti f$ implies  that  $\ti R' =  \ti \sigma' \cdot \ti f_\#(\ti \sigma') \cdot \ti f^2_\#(\ti \sigma')\cdot \ldots$.     Let   $\ti E''$ be the first height $r$ edge crossed by $\ti R''$.     There exist unique $k,k' \ge 0$ such that $\ti E''$ is crossed by $\ti f^k_\#(\ti \sigma)$ and by $ \ti f^{k'}_\#(\ti \sigma')$.   We may assume without loss that $k' \ge k$.  
By Lemma~\ref{unique splittings}, it suffices to show that $k \le K$ or equivalently that $\rho \subset \ti \sigma \cdot \ti f_\#(\ti \sigma) \cdot \ldots \cdot \ti f^{K}(\ti \sigma)$. 

If $H_r$ is \noneg\ then by the basic splitting property of \noneg\ edges  \cite[Lemma 4.1.4]{bfh:tits1} there is a unique height $r$ edge whose image under $\ti f^{k}$  crosses $\ti E''$.  Since both $\ti\sigma$ and $\ti f_\#^{k'-k}(\ti \sigma')$ cross such an edge, their intersection is non-empty.    It follows that $\ti \rho \subset \ti \sigma_1$ so we are done.       

If $H_r$ is \eg\ then there exist $\ti x \in \ti \sigma$ and  $\ti x' \in  \ti f^{k'-k}_\#(\ti \sigma')$ such that $\ti f^k(\ti x) = \ti f^k(\ti x')$.  
The   path $\ti \tau$ from $\ti x$ to $\ti x'$ decomposes as a concatenation of subpaths $\ti \alpha   \ti \beta^{-1}$ where   $\ti \alpha \subset \ti \rho$ and $\ti \beta \subset \ti \rho'$. By construction $\ti f^k_\#(\ti \tau)$ is trivial so in particular $\tau$ is not weakly attracted to $\Lambda^+$.   Lemma~\ref{critical constant} implies that $\ti \alpha$ does not cross $C$ edges  that project to $H_r$ and so does not contain $\ti f^K_\#(\ti E)$ for any edge $\ti E$ that projects to  $H_r$.     Since $\ti \sigma$ crosses such an  $\ti E$ it follows that $\ti \alpha$ and hence $\ti \rho$ is contained in $ \ti \sigma \cdot \ti f_\#(\ti \sigma) \cdot \ldots \cdot \ti f^{K}(\ti \sigma)$ as desired. 
 \endproof

\subsection{Finding a Fixed Point}
The proof of Theorem~\ref{algorithmic ct} begins with an arbitrary \rtt\ $\fG$ and filtration and  modifies  $\fG$ and the filtration so that it satisfies more and more of the \ct\ properties.   Certain steps in this process are  inductive and involve consideration of a,  not necessarily core, component of a filtration element. Lemma~\ref{finding a fixed point} below is applied in that context. The reader will note that essentially all of the arguments take place in a core filtration element.

We state our next result in terms of a lift $\ti f : \ti G \to \ti G$ of $ f:G \to G$ and fixed points for $\ti f$. We could just as easily have stated it in terms of finite paths  $\rho \subset G$ as described in Section~\ref{s:markings} but it seems more natural to work with lifts.

Following Definition~\ref{d:generates a ray} we say that a completely split path $\ti \sigma \subset \ti G$ {\em generates a completely split ray} $\ti R$ if $\ti R = \ti \sigma \cdot \ti f_\#(\ti \sigma) \cdot \ti f^2_\#(\ti \sigma)\cdot \ldots$.  As we have seen,  $\ti f_\#(\ti R) \subset \ti R$ and  $\ti f$ maps the set of splitting vertices for $\ti R$ into itself.

\begin{lemma}  \label{finding a fixed point}  Suppose that $\fG$ is a homotopy equivalence of a connected finite graph, that  $\emptyset =G_0 \subset G_1 \subset \ldots \subset G_K =G$ is an $f$-invariant filtration and that there is a connected core filtration element $G_r$ such that $f \restrict G_r$ is a \ct\ and such that for each $r < s \le K$, $H_{s}$ is a single non-fixed edge $E_s$ satisfying the following properties.
\begin{enumerate}
 \item The terminal endpoint of $E_s$ is contained in $G_r$ and the initial vertex of $E_s$ has valence one in $G$. 
 \item $f(E_s) =  E_s \cdot u_s$ where $u_s$ is a completely split closed path whose endpoint is principal for $f\restrict G_r$. 
 \item $\fG$ satisfies (Linear Edges) and (NEG Nielsen Paths). 
  \end{enumerate}
 \noindent Then  there is an algorithm that takes    a    lift   $\ti f : \ti G \to \ti G$  of  $ f:G \to G$ as input and determines if $\Fix(\ti f)$ is non-empty. If it is non-empty then the output of the algorithm is an element of $\Fix(\ti f)$.  If it is empty then the output of the algorithm is a completely split  path $\ti \sigma \subset \ti G_r$ that generates a completely split ray $\ti R \subset \ti G_r$.  Moreover, if  $\Fix(\ti f) = \emptyset$   and if the projection $\sigma \subset G_r$ of $\ti\sigma$ is not a Nielsen path then   $\Fix_N(\partial\ti f) = \Fix_N(\partial (\ti f \restrict \ti G_r))= \{P\}$ where $P$ is the endpoint of $\ti R$ and  $P$ is not the endpoint of an axis of a  covering translation. 
\end{lemma}

\proof    We  dispense with the moreover statement first.  Suppose that $\Fix(\ti f) = \emptyset$   and that   $\sigma$   is not a Nielsen path.    Then $|f^k_\#(\sigma)| \to \infty$ and   \cite[Proposition~I.1]{gjll:index} implies that the terminal endpoint of   $\ti R = \ti \sigma \cdot  \ti f_\#(\ti \sigma) \cdot \ti f^2_\#(\ti \sigma)\cdot \ldots$, which is evidently fixed by $\partial \ti f$, is an attractor for the action of $\partial \ti f$,  is contained in $\Fix_N(\partial\ti f)$ and  is not the endpoint of an axis of a  covering translation.  Since $\Fix(\ti f) = \emptyset$,   \cite[Corollary 3.16]{fh:recognition} implies that $P$ is the only attractor in  $\Fix_N(\partial\ti f)$.  If there were another point in  $\Fix_N(\partial\ti f)$ then it would be the endpoint of the axis of a covering translation that commuted with $\ti f$ and the translates of $P$ would be additional attractors in  $\Fix_N(\partial\ti f)$.  Thus $\Fix_N(\partial\ti f) =  \{P\}$. 

We now turn to the algorithm.    Following the proof of \cite[Proposition~5.4.3]{bfh:tits1}, we say that for each non-fixed vertex $\ti v \in \ti G_{r}$,  the initial edge of the path from $\ti v$ to $\ti f(\ti v)$ is    {\em preferred} by   $\ti v$.     If both  $\ti E$  and $\ti E^{-1}$ are preferred by their initial vertices   then some sub-interval of $\ti E$ is mapped over itself by $\ti f$ and so contains a fixed point.
 
Consider the following (possibly infinite) method for finding either   a fixed point in $\ti G$ or  a ray whose terminal endpoint is an element of $\Fix_N(\partial\ti f)$.    Choose any vertex $\ti v_0 \in \ti G_r$.  If $\ti v_0$ is not fixed, let $\ti E_0$ be the  edge  preferred by $\ti v$.  If $\ti E_0^{-1}$ is preferred by the terminal vertex of $\ti E_0$ then $\ti E_0$ contains a fixed point that we can find by inspection (Section~\ref{proving 2.19}).   Otherwise, let    $\ti E_1$ be the edge preferred by the terminal vertex of $\ti E_0$.  Repeat this to either  find a fixed point in $\ti E_1$ or define $\ti E_2$ and so on.  If this process does not terminate by finding a fixed point then the ray $\ti R_{\ti v_0} = \ti E_1 \ti E_2 \ldots$  that it produces converges to a point in $\partial \ti F_n  $ that is evidently fixed and not repelling so is contained in $\Fix_N(\partial\ti f)$.   For each $m \ge 0$, let $\ti \sigma_m$ be the path connecting the initial endpoint of $\ti E_m$ to its $\ti f$-image.   

\vspace{.1in}   
\noindent{\bf  Step 1 of the algorithm}: Modify the above process by stopping not only if $\tilde E_m$ contains a fixed point  but also if  $\ti \sigma_m$ is completely split and the turn between $\ti \sigma_m$ and $\ti f_\#(\ti \sigma_m)$ is legal.    

\vspace{.1in}

To see that this modified process  stops in finite time,  
it suffices to show that if the original process  produces a ray $\ti R_{\ti v_0}$  then at least one of the $\ti \sigma_m$'s has the desired  properties. We verify this by   following (and tweaking) the proof  of   \cite[Proposition~5.4.3]{bfh:tits1}.   

Consider the  subsequence $\{\ti v_i\}$  of the set of vertices of $\ti R_{\ti v_0}$ starting with  $\ti v_0$ and inductively defined by letting $p  \ge i$ be the largest integer such that the closest point  to $\ti f(\ti v_i)$ in $\tilde E_0 \tilde E_1 \ldots \ti E_{p}$ is the terminal endpoint of $\tilde E_{p}$ and then taking   $\tilde v_{i+1}$ to be the terminal endpoint  of $\tilde E_{p}$.  Equivalently, ${\tilde v}_{i+1}$ is the nearest point in $\ti R_{\ti v_0}$ to $\ti f(\ti v_i)$. 

Letting $[\ti v_i, \ti v_{i+1}]$ be the path   connecting $\ti v_i$ to $\ti v_{i+1}$, the key property of  the $\ti v_i$'s is $$ \ti f_\# ([\ti v_i,\ti v_{i+1}])\supset [\ti v_{i+1},\ti v_{i+2}] $$
For $m \ge 1$, define 
$$\ti Y_m = \{\ti y \in[ \ti v_0,\ti v_1] : \ti f^i(\ti y) \in[\ti v_i,\ti v_{i+1}] \   \forall  \ 1 \le i \le m\}$$
 The obvious induction argument shows that $\ti f(\ti Y_m) =     [\ti v_m,\ti v_{m+1}]$ and in particular that $\ti Y_m$ is non-empty.  The $Y_m$'s are a nested sequence of closed non-empty subsets of $[ \ti v_0,\ti v_1]$ and so their intersection $\cap_{m=0}^\infty  \ti Y_m$  is non-empty.  Each element of $\cap_{m=0}^\infty  \ti Y_m$ is contained in 
  $$\ti X = \{\ti x: \{\ti x, \ti f(\ti x), \ti f^2(\ti x),\ldots\} \text{ is an ordered sequence of }\ti R_{\ti v_0}\}$$

In the first two paragraphs on page 68 of \cite{bfh:tits1} it is shown that $\ti X \subset \ti R_{\ti v_0}$ contains a vertex $\ti v$ that is the initial vertex of an irreducible edge.      For sufficiently large $k$, the paths $\ti \mu := [\ti f^k(\ti v), \ti f^{k+1}(\ti v)]$ and $\ti \nu := [\ti f^{k+1}(\ti v), \ti f^{k+2}(\ti v)]$  are completely split by \cite[Lemma 4.25]{fh:recognition}.  It follows, after increasing $k$ if necessary,   that the initial directions of $\mu^{-1}$ and $ \nu$ are periodic by $Df$.      Since $\ti v \in \ti X$,  these directions are distinct and so the turn they define is legal. Letting $E_m$ be the edge in $\ti R_{\ti v_0}$ that begins with  
 $\ti f^k(\ti v)$,  we have found the desired $\ti\sigma_m$.  (The  proof of  \cite[Proposition~5.4.3]{bfh:tits1} allows the possibility of subdividing at an endpoint of a periodic Nielsen path; in our context, these points are already fixed vertices so no subdivision is required.)  This completes the proof  that the first part of our algorithm stops in finite time.  

\vspace{.1in}
If the first step of the algorithm produces a fixed point we are done and the algorithm stops. Suppose then that the first step produces a path $\ti \sigma= \ti \sigma_m$ as above.  Let $P \in \Fix_N(\partial\ti f)$ be the terminal endpoint   of the ray $\ti R = \ti \sigma \cdot \ti f_\#(\ti \sigma ) \cdot \ldots$   generated by $\ti \sigma $.   The hard splitting property \cite[Lemma~4.11(2)]{fh:recognition} implies that $\ti R$ is fixed point free.  If $\Fix(\ti f) \ne \emptyset$ then there exists a ray $\ti R'$ with initial endpoint $\ti z \in \Fix(\ti f)$, terminal endpoint $P$ and with interior disjoint from  $\Fix(\ti f)$. The initial edge $\ti E$ of $\ti R'$ determines a fixed direction;    this follows from  \cite[Lemma~3.16]{fh:recognition} if $E \subset G_r$ and by hypothesis if $E \subset G \setminus G_r$. Let $R_E$ be the eigenray determined by $E$ (Definition~\ref{d:generates a ray}),  let $\ti R_{\ti E} $ be the lift of $R_E$ with initial edge $\ti E$ and let  $P' \in \Fix_N(\partial\ti f)$ be the terminal endpoint of $\ti R_{\ti E}$. If $P \ne P'$ then the line connecting $P$ to $P'$ would be a fixed point free line in $\ti G_r$ with endpoints in $\Fix_N(\ti f)$  in contradiction to \cite[Lemma~3.16]{fh:recognition}. Thus $P' = P$ and the rays  $\ti R$ and $\ti R_{\ti E} = \ti R'$ have a common terminal subray.

If $\sigma$ is a Nielsen path then $E$ is a linear edge, $f(E) = E w^d$ for some root-free Nielsen path $w$ that forms a circuit,  $R_E = Ew^\infty$ and $\ti R $ is a ray in  a line $\ti L$ that projects to $w$.  It follows that that the terminal endpoint of $\ti E$ is contained in $\ti L$.  The root-free covering translation that preserves   $L$   commutes with $\ti f$.   After translating by some iterate of this covering translation we may assume that the terminal endpoint of $\ti E$ is contained in any chosen lift $\ti w$ of $w$ in $\ti R$.    This analysis justifies the next  two steps of the algorithm.  
  
\vspace{.1in}   
\noindent{\bf  Step 2 of the algorithm:} Check if $\sigma$ is a Nielsen path.  If it is not, go to Step 3.  If it is, then the algorithm ends as follows.  Consider the finite set of points that are the initial vertex of a linear edge $\ti E$ with terminal endpoint in $\ti \sigma$.  If an element of this set is contained in $\Fix(\ti f)$ then that point is the output of the algorithm.  Otherwise conclude that $\Fix(\ti f) = \emptyset$.  

\vspace{.1in}   
We may now assume that $\sigma$ is not a Nielsen path and hence that $P$ is not fixed by any covering translation.  It follows that $\ti f$ is the only lift of $f$ that fixes $P$ and hence that $\ti f$ fixes the initial endpoint of any eigenray that converges to $P$.   

\vspace{.1in}   
\noindent{\bf  Step 3 of the algorithm} :       Apply Lemma~\ref{comparing rays} to the  set of eigenrays  (Definition~\ref{d:generates a ray}) for $f$, one by one, to decide  if there is  an eigenray $R_E$  that shares a terminal end with $R$.  If there is no such   eigenray, then   $\Fix(\ti f) = \emptyset$.    Otherwise,  we have an edge $E$, its eigenray $R_E$ and decompositions $R = \tau R''$ and $R_E = \tau' R''$ for some ray $R''$.   Let $\tau$ be the lift of $\tau$ that begins with $\ti \sigma$ and let $\ti \tau'$ be the lift of $\tau'$ that shares a terminal endpoint with $\ti \tau$.  Equivalently, $\ti R = \ti \tau \ti R''$ and $\ti R_{\ti E} = \ti \tau' \ti R''$.   Then the initial endpoint of $\ti \tau'$ is fixed by $\ti f$.      
\endproof

\section{Upward induction and extension}\label{s:extension}
The original construction of \rtt s is by downward induction through the strata, making it difficult to prove extension statements.  In this paper, we construct \ct s using upward induction.

\begin{notn}
Let $\F=\{[F^i]\}$ be a free factor system in $\f$.   A core graph $K=\sqcup_i K_i$ is {\it $\F$-marked} if each $K_i$ is marked by $F^i$, i.e.\ there is a rose $R(F^i)$ whose fundamental group is identified with $F^i$ and a homotopy equivalence $R(F^i)\to K_i$.

Suppose $\phi\in\Out(\f)$, $K$ is $\F$-marked, $\F$ is $\phi$-invariant and $h:K\to K$ is a homotopy equivalence that preserves components. We say that $h$ is a {\it topological representative of $\phi|\F$} if each induced map $h\restrict K_i:K_i\to K_i$ is a topological representative of $\phi\restrict F^i$. If each $h\restrict K_i $ is a \ct\ [and satisfying (Inheritance)] then we say that $h : K \to K$ is {\em a \ct\ representing $\phi\restrict\F$}  [and satisfying (Inheritance)].  See Section~\ref{s:intro} for the definition of (Inheritance).

A topological representative $\fG$  of  $\phi$ is an {\em extension of $h :K \to K$} if there is an embedding $K\to G$ respecting markings such that the following diagram commutes.
$$
\begindc{0}[500]
\obj(1,2)[12]{$G$}[\south]
\obj(1,3)[13]{$K$}
\obj(2,2)[22]{$G$}[\south]
\obj(2,3)[23]{$K$}[\south]
\mor{13}{12}{}[\atright,\solidarrow]
\mor{23}{22}{}
\mor{12}{22}{$f$}
\mor{13}{23}{$h$}
\enddc
$$ 
In this situation we also say that {\it $f$ extends $h$} and that {\it $h$ extends to $\f$}.
\end{notn}

Interpreted in this language, the proof of Lemma~2.6.7 of \cite{bfh:tits1} shows that every restriction extends.

\begin{lemma}\label{l:restrictions extend}
Every topological representative for $\phi\restrict \F$ extends to $\f$. \qed
\end{lemma}

\begin{remark}
The proof of Lemma~2.6.7 assumes the property that each $h\restrict K_i$ fixes a point. This assumption is not necessary. However, we will only use Lemma~\ref{l:restrictions extend} when $h|K$ is a \ct\ and so this property holds.
\end{remark}

As mentioned, we want a construction of \ct s that proceeds by upward induction. Our main tool will be a relative version of Theorem~\ref{algorithmic ct}.

\begin{thm}[{ \bf Extension}]  \label{thm:extension}
Suppose that $\phi \in \Out(F_n)$ is rotationless, that ${\cal C}$ is a nested sequence $(\F=\F_0)\sqsubset \F_1 \sqsubset \F_2 \sqsubset \ldots \sqsubset (\F_m=\{[F_n]\})$ of $\phi$-invariant free factor systems and that  $h : K \to K$ is a \ct\ representing $\phi \restrict \F$. Then  there is an algorithm that produces a \ct\ $\fG$ and filtration  that  represents $\phi$,  extends $\hK$ and  such that each element of $\cal C$ is realized by a core filtration element;  if   $\hK$   satisfies (Inheritance) then   $\fG$ satisfies (Inheritance).
\end{thm}
  
Theorem~\ref{algorithmic ct} is the special case of Theorem~\ref{thm:extension} in which $\F = \emptyset$.  
  
The filtration  we start with might not be reduced so our algorithm will have to discover reductions, if they exist, as it proceeds. 
   
\begin{thm}[{\bf Extend or Reduce}]  \label{extend or reduce}
Suppose that $\phi \in \Out(F_n)$ is rotationless,   that $ \F$ is a   $\phi$-invariant free factor system and that $h : K \to K$ is a \ct\ that represents $\phi \restrict \F$  [and satisfies (Inheritance)].  Then there is an algorithm that either produces a \ct\ $\fG$ that represents $\phi$ [satisfies (Inheritance)]   and extends   $h : K \to K$ or finds a $\phi$-invariant proper free factor system $\F'$ that properly contains $\F$.
\end{thm}
 
\medspace
 
\noindent{\em Proof of Theorem~\ref{thm:extension}  given Theorem~\ref{extend or reduce}:}  
The {\it proper length of ${\cal C}$} is the number of inclusions that are proper. If ${\cal C}'$ is a sequence of inclusions of $\phi$-invariant free factor systems and if each element of ${\cal C}$ is an element of ${\cal C'}$ then we say that {\it ${\cal C'}$ is an extension of ${\cal C}$}. Define $L({\cal C})\ge 0$ to be the maximal proper length of some $\cal C'$ extending $\cal C$. The proof is an induction on $L({\cal C})$. If $L({\cal C})=0$ then $\F = \{[F_n]\}$ and the statement is vacuous.  If $L({\cal C})=1$ the statement follows from Theorem~\ref{extend or reduce}. Assume then that $L({\cal C})\ge 2$. We may assume that the inclusions $\F\sqsubset\F_{m-1}\sqsubset \{[\f]\}$ are proper.

{\bf Step 1} (Extend $h$ to $\F_{m-1})${\bf :} Each component $[F]$ of $\F_{m-1}$ induces the $\phi\restrict F$-invariant nested sequence ${\cal C}| F$ of free factor systems in $F$ given by 
$$(\F|F=\F_0|F)\sqsubset \F_1|F \sqsubset \F_2|F \sqsubset \ldots \sqsubset (\F_{m-1}|F=\{[F]\})$$ where $\F_i | F$  is the union of the components of $\F_i$ that conjugate into $F$. Clearly $L({\cal C}| F)<L({\cal C})$. Let $K(F)$ be the union of the components $C$ of $K$ such that $[C]$ is conjugate into $F$ and so $[K(F)] = \F | F$. By induction we can algorithmically produce a \ct\  and filtration that represents $\phi\restrict F$,  extends the restriction of $h$ to $K(F)$ and  such that each element of ${\cal C} | F$ is realized by a core filtration element;  if   $\hK$   satisfies (Inheritance) then so does this \ct. The disjoint union of these \ct s is a \ct\ [satisfying (Inheritance) if $h$ does] representing $\phi\restrict\F_{m-1}$ such that each element of $\F\sqsubset \F_1\sqsubset \dots \sqsubset \F_{m-1}$ is represented by a core filtration element.

{\bf Step 2} (Further extend to $\f$){\bf :} The sequence $\F_{m-1}\sqsubset \{[\f] \}$ also has proper length less than $L({\cal C})$. Hence  by induction we can algorithmically produce a \ct\  that represents $\phi$ and extends the restriction of $\phi$ to $\F_{m-1}$ found in Step~1;  if   $\hK$   satisfies (Inheritance) then so does this \ct. \qed

\medspace

To prove Theorem~\ref{algorithmic ct}, it remains to prove Theorem~\ref{extend or reduce}. The proof of Theorem~\ref{extend or reduce} is given in Section~\ref{s:extend or reduce proof}.

\section{Proof of Theorem~\ref{extend or reduce}} \label{s:extend or reduce proof}
The proof of Theorem~\ref{extend or reduce} is carried out in Sections~\ref{section one edge} and  \ref{s:multi-edge proof}. In both cases, $\phi, \F$ and $h:K \to K$ are as in the statement of the theorem.

\subsection {The one-edge extension case}\label{sec:one edge} \label{section one edge}
In this section we assume that $\F \sqsubset \{[F_n]\}$ is a one-edge extension, meaning that  either $\F = \{[F_1]\}$ where $F_1$ has rank $n-1$ or $\F = \{[F_1], [F_2]\}$ where the ranks of $F_1$ and $F_2$ add to $n$. In this case,  
 there is a marked graph $G$ obtained from $K$ by adding a single topological edge $E$ and there is a topological representative $g : G \to G$ of $\phi$  that agrees with $h$ on $K$ and satisfies $g(E) = \bar uEv$   or $g(E) = \bar u\bar Ev$    for some possibly trivial paths $u,v \subset K$ by \cite[Corollary 3.2.2]{bfh:tits1}.   We first complete the proof assuming that $g(E) = \bar uEv$ and then apply \ct\ theory to show that the $g(E) = \bar u\bar Ev$ case does not happen.

If $u$ and $v$ are both trivial then $g(E) = E$  and $g$ satisfies all the properties of a \ct\ except  that $K \subset G$ might not be reduced.   Check (using   Lemma~\ref{finding iNps} and (\noneg\ Nielsen Paths))     if there is a Nielsen path $\beta \subset K$ connecting the terminal endpoint of $E$ to the initial endpoint of $E$.    If not, then $K \subset G$  is reduced by Lemma~\ref{reducible noneg} and $g$ is a \ct.  If yes, then $E\beta$ is a closed Nielsen path and  the \ffs\ $\{[K],[E\beta]\}$ is properly contained between $[K]$ and $[G]$. 

Suppose  next  one of $u$ and $v$ is trivial and that the other is not. The cases are symmetric so we assume that $u$ is trivial and $v$ is non-trivial.    Let $C$ be the component of $K$ that contains $v$.   If  $C$ has rank one then $C$  has  a single fixed edge $e$ and single vertex $w$  and    $g(E) = E e^d$ for some $d \ne 0$.   There are two subcases.   If $w$ is  also the initial vertex  of $E$  then $n=2$ and  $G$ has only two edges,  $E$ and $e$. (\noneg\ Nielsen Paths)   follows from the basic splitting property for \noneg\ edges \cite[Lemma 4.1.4]{bfh:tits1}.    Lemma~\ref{reducible noneg} implies that $[K] \sqsubset F_n$ is reduced and so (Filtration) is satisfied.  The remaining \ct\ properties are clear so $g$ is a \ct.   If $w$ is not the initial vertex of $e$ then $g$ is not a \ct\ because $w$ is not principal.  In this case we redefine $g$ so that it fixes $E$ and is unchanged on $K$.  The resulting homotopy equivalence $\fG$ is homotopic to $g$ and so is still a topological representative of $\phi$.   We are now back in the case that $u$ and $v$ are trivial.  Lemma~\ref{reducible noneg} implies that $K \subset G$ is reduced so $\fG$ is a \ct.

 We assume now that $C$ has rank at least two and we apply   \cite[Step 5 pages 91--93]{fh:recognition}.    Choose a lift $\ti E$ of $E$, let $\ti g : \ti G \to \ti G$ be the lift of $g$ that fixes the initial endpoint of $\ti E$,    let $\Gamma \subset \ti G$ be the component of the full pre-image of $C$ that contains the terminal endpoint of $\ti E$ and let $\ti h =\ti g \restrict \Gamma : \Gamma \to \Gamma$.   
To make this step algorithmic, we apply Lemma~\ref{finding a fixed point} to either find a fixed point for $\ti h$ or to conclude that $\ti h$ is fixed point free and find a  completely split path $\ti \sigma$ that generates a completely split ray   $\ti R$.  No algorithm for this was given in the original proof.

The remainder of \cite[Step 5 pages 91--93]{fh:recognition}  can be applied as written in \cite{fh:recognition}. This is a sliding operation that changes the way that $E$ is attached to $C$.  It may be that $E$ becomes a fixed edge for the new modified $g:G \to G$.    In this case, we go back to the fixed edge case described above and proceed from there.  If $E$ is not fixed after the sliding operation  then all of the \ct\ properties are satisfied except   that $K \subset G$ might not be reduced. The proof now concludes by applying  Lemma~\ref{reducible noneg} and \cite[Step 5 page 91]{fh:recognition} to conclude that $K \subset G$ is reduced.

The final case is that both $u$ and $v$ are non-trivial.  Subdivide $E = \bar E_1 E_2$ where $g(E_1) = E_1 u$ and $g(E_2) = E_2 v$. Let $C_i$ be the component of $K$ that contains the terminal endpoint of $E_i$.  If  $C_1 =C_2$ has rank one, then $C_1=C_2$  has  a single fixed edge $e$ and single vertex $w$  and    $g(E_i) = E_i e^{d_i}$ for some $d_1,d_2 \ne 0$.   Define $f$ by $f(e) = e$ and $f(E) =  Ee^{d_2-d_1}$ and note that $f$ is homotopic to $g$ and so is still a topological representative of $\phi$.  If $d_1 \ne d_2$ then $f$ is a \ct.  If $d_1 = d_2$ then   $\{[e],[E]\}$ is a $\phi$-invariant free factor system that is properly contained between $[K]$ and $[F_n = F_2]$. 

If $C_1 \ne C_2$ both have rank one then the identity map of $G$ is homotopic to $g$ and is a \ct.   We may now assume, after interchanging $E_1$ and $E_2$ if necessary that $C_2$ has rank at least two.  If $C_1$ has rank one then $g$ is homotopic to $g'$  that agrees with $h$ on $K$ and that satisfies  $g'(E) = Ev$ so we are reduced to a previous case.

The remaining case is  that both $C_1$ and $C_2$ have rank at least two.    The edges $E_1$ and $E_2$ are modified in the same way that $E$ was modified in the case that $C$ had rank at least two. Namely,  follow \cite[Step 5 pages 91--93]{fh:recognition} and apply  Lemma~\ref{finding a fixed point} to make the construction algorithmic.  The edges $E_1$ and $E_2$ are considered one at a time so when $E_2$ is considered the subgraph it is being attached to is $K \cup E_1$ and not $K$ (This accounts for the hypotheses of Lemma~\ref{finding a fixed point} being what they are.) It may be that after sliding $E_2$, both of its endpoints are attached to the initial vertex of $E_1$; see Example~\ref{finding a loop}. 

This completes the proof in the case that $g(E) = \bar u Ev$.  Suppose now that $g(E) = \bar u\bar Ev$.  Applying the previous case to $h = g^2$ representing $\theta = \phi^2$, we see that there is a \ct\ representing $\theta$ and extending $h^2 :K \to K$.   As shown in the first two paragraphs of \cite[Section 5.1]{fh:recognition}, there is a line $\gamma \subset  G$ whose edge path contains one copy of $E$ and zero copies of $\bar E$  and such that some (and hence every) lift $\ti \gamma$ has endpoints in $\Fix_N(\Theta)$ for some $\Theta \in \PA(\theta)$.  Since $\phi$ is rotationless, there exists $\Phi \in \PA(\phi)$ such that  $\Fix_N(\Phi) = \Fix_N(\Theta)$.  Thus $\gamma$ is $\phi$-invariant which contradicts the fact that the edge path for $h(\gamma)$  crosses $\bar E$ once and $E$ zero times.  We conclude that this case does not happen. 
  \endproof

\subsection{The multi-edge extension case} \label{s:multi-edge proof}
We assume in this section  that $\F \sqsubset \{F_n\}$ is not a one-edge   extension.  If $\fG$ represents $\phi$ and extends $h :K \to K$ then the core filtration element that is identified with $K$ will be denoted $G_r$. In particular, $[G_r] = \F$.

\medspace

\noindent{\bf Step 1:} \ \  There is an algorithm that  either produces a \rtt\    extending $h : K \to K$ or finds a proper $\phi$-invariant free factor system  $\F'$   that properly contains $\F$.  Moreover, in the former case, either there are no \eg\ strata above the core filtration element $G_r$ that is identified with $K$ or  the top stratum $H_N$ of $G$ is the only \eg\ stratum above  $G_r$.

\medspace

We can not simply apply the \rtt\ algorithm of Theorem~\ref{thm:rtt}.  That algorithm was by downward induction through the filtration and so made no effort to leave lower filtration elements untouched. We say that an algorithm is {\it $K$-safe} if  it preserves the property of extending $h : K \to K$.  

\begin{lemma}\label{two safe moves} The algorithms of Lemma~\ref{core subdivision} and Lemma~\ref{subdivide and fold} are $K$-safe.
\end{lemma}

\proof    The algorithms use only subdivision, folding, valence one homotopies and valence two homotopies applied to edges above $K$ and so are $K$-safe.
\endproof

The algorithm of  Lemma~\ref{collapsing} (which is really \cite[Lemma 5.14]{bh:tracks})  is not entirely $K$-safe.  In order to isolate the part that is $K$-safe we introduce some notation and recall the steps in the algorithm.

If $H_s$ is an \eg\ stratum of a topological representative then a  non-trivial path $\sigma \subset G_{s-1}$    with endpoints in $H_s \cap G_{s-1}$ is called an {\em inessential connecting path for $H_s$} if  $f_\#(\sigma)$ is trivial.  If the \eg\ stratum $H_s$ satisfies   (RTT-i) then it  satisfies   (RTT-ii) if and only if there are no inessential connecting paths for  $H_s$.

An inessential connecting path $\sigma$ for $H_s$  is \lq collapsed\rq\  as follows.  Choose a turn in $\sigma$ whose  two directions have the same image under $Df$. This exists because $f_\#(\sigma)$ is trivial.   Then fold initial segments of these edges and tighten the new map if necessary to  \lq shorten\rq\   $\sigma$.  After finitely many such moves $\sigma$ is completely folded away and the endpoints of $\sigma$ are identified to a single vertex thereby reducing  the cardinality of $H_s \cap G_{s-1}$.

If $\sigma \subset G_{s-1}$ is disjoint from  $G_r$, for example if $\sigma$ is contained in a contractible component of $G_{s-1}$, then collapsing  $\sigma$ is $K$-safe.    We record this fragment of Lemma~\ref{collapsing}  as Lemma~\ref{partial collapsing} after adding one more piece of notation.
     
 \begin{notn} An \eg\ stratum $H_s$ satisfies {\em (partial RTT-ii)} if the contractible components of $G_{s-1}$ do not contain any  inessential connecting paths.
 \end{notn}  

\begin{lemma} \label{partial collapsing}  Suppose that $\fG$ is a bounded topological representative   of $\phi$  that extends $h : K \to K$ and that $H_s$ is an \eg\ stratum that does not satisfy (partial RTT-ii).  Then there is a $K$-safe algorithm to construct a bounded topological representative $f' :G' \to G'$  of $\phi$ such that 
\begin{enumerate}
\item $\Lambda(f) = \Lambda(f')$
\item  There is a   bijection $H_j \leftrightarrow H'_{j'}$ between the \eg\ strata of $f$ and the \eg\ strata of $f'$ such that:
\begin{enumerate}
\item  relative height is preserved; i.e.\ $j < k $ if and only if $j' < k'$.
\item   $|H'_{s'} \cap G'_{s'-1}|  < |H_s \cap G_{s-1}|$
\item  if  $H_s$ satisfies (RTT-i)  then $H'_{s'}$ satisfies (RTT-i). \qed
\end{enumerate}  
\end{enumerate}
\end{lemma}

\begin{remark}  
Note that Lemma~\ref{partial collapsing} is not exactly parallel to Lemma~\ref{collapsing}.   We will only apply Lemma~\ref{partial collapsing} with $H_s$ being the first \eg\ stratum above $K$ and we will not be concerned with preserving properties of higher \eg\ strata.
\end{remark}

The following corollary takes us as far as we can go using just the techniques of Theorem~\ref{thm:rtt}. 

\begin{corollary} \label{cor:partial} Given a bounded topological representative $\fG$  of $\phi$   that  extends $h : K \to K$, there is an algorithm that constructs a bounded topological representative  $f' : G' \to G'$  of $\phi$  that extends $h : K \to K$ such that $\Lambda(f') \le \Lambda(f)$ and such that either there are no \eg\ strata above $K$ or the first  \eg\ stratum above $K$ satisfies (RTT-i), (RTT-iii) and  (partial RTT-ii).
\end{corollary}

\proof If there are no \eg\ strata above $K$ then we are done.   Otherwise, apply Lemma~\ref{core subdivision} and Lemma~\ref{two safe moves}  to produce a    bounded topological representative (still called $\fG$)  of $\phi$ that extends $h : K \to K$ whose first \eg\ stratum above $K$ satisfies (RTT-i).   If that  stratum does not also satisfy (partial RTT-ii), apply Lemma~\ref{partial collapsing}   to produce a  new  bounded topological representative (still called $\fG$) of $\phi$ whose first  \eg\ stratum above $K$ still satisfies (RTT-i).    Item (b) of  Lemma~\ref{partial collapsing} guarantees that after finitely many applications of  Lemma~\ref{partial collapsing}, we arrive at $\fG$ whose first \eg\ stratum above $K$ satisfies    (RTT-i) and (partial RTT-ii).    Apply Lemma~\ref{checking rtt} to check if the  first \eg\ stratum above $K$ satisfies    (RTT-iii).  If yes, we are done.  Otherwise  apply Lemma~\ref{subdivide and fold}  and Lemma~\ref{two safe moves} to produce a bounded topological representative  $f':G' \to G' $ of $\phi$ that extends $h : K \to K$ with $\Lambda(f') < \Lambda(f)$.  Then start over again with $f':G' \to G'$ replacing the original $\fG$.   Since  every decreasing sequence $\Lambda(f) > \Lambda(f') > \ldots$ is finite (Definition~\ref{def:bounded}), this process produces the desired $\fG$ in finite time.
\endproof 

Before introducing the new move necessary to achieve (RTT-ii) in a $K$-safe manner, we prove a lemma that will simplify the situation in which the new move is needed.  

\begin{lemma} \label{proper extension}Suppose that $\fG$ is a   bounded topological representative    of $\phi$ that extends $K$ and that  $H_s$, $s > r$, is an \eg\ stratum that   satisfies (RTT-i),  (RTT-iii) and   (partial RTT-ii).  Then   $[G_r]= \F$ is properly contained in $[G_s]$.   In particular, if $G_s \ne G$ then $[G_s]$ is  a proper $\phi$-invariant free factor system that properly contains $\F$.
\end{lemma}

\proof  It suffices to show that there is a line in $G_s$ that crosses an edge in $H_s$ and for this it suffices to show that if a vertex $v \in H_s$ is either disjoint from $G_{s-1}$ or is  contained in a contractible component of $G_{s-1}$  then there is path $\mu =  \bar E_1 \tau E_2  \subset G_s$ where $E_1,E_2$ are edges in $H_s$ and $\tau$ is a possibly trivial path that contains $v$ and does not contain an edge in $H_s$. Since $H_s$ is an \eg\ stratum, the cardinality of $f^{-p}(v) \cap H_s$ goes to infinity with $p$. Choose $p \ge 1$ and  a point $x$ in the interior of an edge $E \subset H_s$   such that $f^p(x) = v$.   We may assume without loss   that either $f(x) \in G_{s-1}$ or that $f(x)$ is a vertex in $H_s$.  Subdivide $E$ into \lq edgelets\rq\ that are mapped by $f$   to single edges in $G$.   There is an edgelet subpath $e_1\ldots e_t$ such that $f(e_j)$  is an edge in $H_s$ if and only if $j=1$ or $j =t$ and such that either $t > 2$ and $x \in e_2\ldots e_{t-1}$ or $t = 2$ and $x$ is the common endpoint of $e_1$ and $e_2$.  If $t = 2$ then $(f(\bar e_1),f(e_2))$ is a legal turn in $H_s$ by (RTT-iii) and so    $Df^{p-1}(f(\bar e_1))$ and $Df^{p-1}(f(e_2))$   are distinct edges in $H_s$ based at $v$ and we are done (with $\tau$ being trivial).   If $t > 2$ then $\tau' = f(e_2)\ldots f(e_{t-1})$ is a non-trivial path in $G_{s-1}$ with endpoints in $H_s\cap G_{s-1}$.  In particular,  $f(x) \in G_{s-1}$.  It follows that  $v = f^{p}(x) \in G_{s-1}$ and hence (by hypothesis)    $v$ is contained in  a contractible component of $G_{s-1}$.  This in turn implies that each  of $\tau', f_\#(\tau'),\ldots, f^{p-1}_\#(\tau')$  is   contained in a contractible component of $G_{s-1}$. Since $\tau'$ is a non-trivial path in $G_{s-1}$ with endpoints in $H_s\cap G_{s-1}$, the same is true for $f_\#(\tau'),\ldots, f^{p-1}_\#(\tau')$   by (RTT-i) and the (partial RTT-ii) property.   Property (RTT-i)  implies that  $Df^{p-1}(f(\bar e_1))$ and $Df^{p-1}(f(e_t))$ are  directions in $H_s$ and again we are done.  This completes the proof of the lemma.
\endproof  

\begin{notn} Let $\cR$  be the set of bounded topological representatives $\fG$    of $\phi$  that extend $h : K \to K$ and such that the top stratum $H_N$ :
\begin{itemize}
\item is the only \eg\ stratum above $K$
\item   satisfies (RTT-i), (RTT-iii) and (partial RTT-ii).
\end{itemize}
The failure of $\fG$ in $\cR$ to be a \rtt\ is measured by the number $\delta(f)$ of directions in $H_N$ that are based at non-periodic vertices in non-contractible components of $G_{N-1}$.
\end{notn}

We will only define the new move in this context.
 
\begin{lemma} \label{K safe collapsing}  Suppose that $\fG$ is an element of  $ \cR$ and that $\delta(f) > 0$.  Then there is an algorithm to construct an element   $\hat f :\hat G \to \hat G$  of $\cR$ such that either:
\begin{enumerate}
\item \label{reduce Lambda}$\Lambda(\hat f) < \Lambda(f)$\ \  or
 \item  \label{reduce delta}  $\Lambda(\hat f) = \Lambda(f)$ and $\delta(\hat f) < \delta(f)$
\end{enumerate}
\end{lemma}

\proof   Let $A_p$ [resp. $A_{np}$]  be the set of periodic [resp. non-periodic] vertices of  $H_N$ that are contained in non-contractible components of $G_{N-1}$.  Thus   $f$ permutes the elements of $A_p$ and each element of $A_{np}$ is mapped by an iterate of $f$ into $A_p$.   By hypothesis, $A_{np}\ne \emptyset$.  Choose an element   $x \in A_{np}$ such that $f(x) \in A_p$.        There is a unique $y \in A_p$ such that $f(x) = f(y)$   and there is a unique inessential connecting path $\nu$ connecting $x$ to   $y$.    Choose an edge $E \subset H_N$ with initial endpoint  $x$ and slide its initial endpoint along $\nu$ to $y$ to produce a new   topological representative $f' :G' \to G'$.      The marked graph $G'$ is  obtained from $G$ by replacing $E$ with an edge $E'$ with  terminal endpoint equal to that of $E$ and with  initial endpoint $y$.  Thus $G \setminus E$ can be viewed as a subgraph of both $G$ and $G'$.  For each edge $e \subset G \setminus E$, the edge path $f'(e)$ is obtained from the edge path $f(e)$ by replacing each copy of $E$ with $\nu E'$ and each copy of $\bar E$ with $\bar E' \bar \nu $ and then tightening.  The edge path $f'(E')$ is obtained from the edge path  $f_\#( \bar \nu E) =f_\#(\bar \nu) f(E) = f(E)$ by making the same replacements as in the previous case and then tightening.   It is clear that $f \restrict G_{N-1} = f'\restrict G_{N-1}$ and that no edges in $H_N$ are cancelled during the tightening operation.  It follows that $f'$ extends $h : K \to K$, that the top stratum $H'_{N}$ of $G'$ is the only \eg\ stratum above $K$,   that $\Lambda(f) = \Lambda(f')$ and that  $f'$ satisfies (partial RTT-ii). By construction, $\delta(f') = \delta(f) -1$. If the initial direction determined by $E$  is not in the image of $Df$ then $H'_{N}$ satisfies (RTT-i).  If $H'_{N}$ also satisfies (RTT-iii) then $\hat f = f'$ satisfies  \pref{reduce delta}.  Otherwise, we apply Lemma~\ref{subdivide and fold} and Lemma~\ref{two safe moves} to produce $\hat f$ satisfying \pref{reduce Lambda}.

If the initial direction determined by $E$  is   in the image of $Df$ then $H'_{N}$ does not satisfy  (RTT-i).  (For example, if   $Df(e) = E$ then $f'(e)$ begins with  $\nu E'$.) In this case, we perform  a core subdivision producing a new map   $f'' :G'' \to G''$.  (Continuing with the  example, $e = e_1e_2$ where  $f''(e_1) =   \nu$ and $f''(e_2)$ begins with $E'$.) There is one subdivision point for each direction in $H_N$ that is mapped by some iterate of $Df$ to $E$.   If an edge $e$ is subdivided into $e_1e_2$ then $e_1$ is a  zero stratum for $f''$ and $e_2$ replaces $e$ as an edge in the top \eg\ stratum.  (It may be that the initial and terminal directions of $e$ are both eventually mapped to $E$ and so $e$ is ultimately subdivided into two zero strata and one edge in the top \eg\ stratum.)        The contribution of $e_2$ to $\delta(f'')$ balances the contribution of $e$ to $\delta(f')$ so $\delta(f'') = \delta(f') < \delta(f)$.   If (RTT-iii) is satisfied  then $\hat f = f''$ satisfies  \pref{reduce delta}.  Otherwise, we apply Lemma~\ref{subdivide and fold} and Lemma~\ref{two safe moves} to produce $\hat f$ satisfying \pref{reduce Lambda}.
   \endproof
   
   Step 1 can now be completed as follows.  Start with any bounded topological representation of $\phi$ that extends $h : K \to K$ (Lemma~\ref{l:restrictions extend}).      Apply Corollary~\ref{cor:partial} to produce a bounded topological representative  $f' : G' \to G'$  of $\phi$  that extends $h : K \to K$ such that $\Lambda(f') \le \Lambda(f)$ and such that either there are no \eg\ strata above $K$ or the first  \eg\ stratum $H'_{s'}$ above $K$ satisfies (RTT-i), (RTT-iii) and  (partial RTT-ii).  In the former case, $f'$ is  a \rtt\ and we are done so assume the latter holds.  If $H'_{s'}$ is   not the top stratum then (Lemma~\ref{proper extension}) we have found  a proper $\phi$-invariant free factor system that properly contains $\F$ and we are done.   We may therefore assume that $f':G' \to G'$ is an element of $\cR$. If $\delta(f') = 0$ then  $f':G' \to G'$ is a \rtt\ and we are done.  Otherwise, apply Lemma~\ref{K safe collapsing} to produce $f'' : G'' \to G'$ with either $\Lambda(f'') < \Lambda(f') \le \Lambda(f)$ or $\delta(f'') < \delta(f')$.  In the former case, we start all over. This can only happen a finite number of times so we may assume that we are in the case that $\delta(f'') < \delta(f')$.  After applying    Lemma~\ref{K safe collapsing} finitely many times, we arrive a \rtt\  and are done.   

\medspace

\noindent{\bf Step 2:} \ \   Modify   $\fG$ from Step 1 so that  the conclusions of Theorem~\ref{2.19} are satisfied. 

\medspace

The algorithm for Step 2 is explicitly described in Subsection~\ref{proving 2.19} and in \cite[pages 18--23]{fh:recognition}.  Each move effects only strata above $G_r$ and so is $K$-safe.  For future reference we note that the   number of edges in each  \eg\ stratum and the number of \iNp s of \eg\ height are not increased in Step 2.  This ends Step~2.

\medspace

Going forward, we may now also assume (with justification below) that:
\begin{description}
\item  [(i)]  $\fG$ is rotationless.
\item [(ii)] $\F \subset \{[F_n]\}$ is irreducible.
\item [(iii)] there are no periodic strata above $G_r$.
\item [(iv)] every \ipNp\ $\rho$ of \eg\ height has period one.
\item [(v)] $H_N$ is \eg\ and aperiodic, meaning that some iterate of its transition matrix is positive.
\end{description}

Let $\fG$ be as in Step 2.  Item (i) follows from \cite[Proposition 3.29]{fh:recognition}.   Applying the algorithm of    Proposition~\ref{find reduction},  we can either find a reduction of $\F \subset \{[F_n]\}$, in which case we are done, or conclude that $\F \subset \{[F_n]\}$ is irreducible.   
We may therefore assume that  (ii) is satisfied.    Item (iii) follows from (ii) and property (P) of Theorem~\ref{2.19}. 

  If there are no \eg\ strata above $G_r$ then each stratum $H_j$ above $G_r$ is non-periodic \noneg\ by  (iii) and property (Z) of Theorem~\ref{2.19}.  Item (i) and property (NEG) of Theorem~\ref{2.19} then imply that each $H_j$ is a single edge $E_j$ with terminal endpoint in a core filtration element of height less than $j$.   By  property (F) of Theorem~\ref{2.19}, the core of each filtration element is a filtration element.   Letting $G_t$ be the first core filtration element above $G_r$, it follows that  $[G_r] \sqsubset [G_t]$ is a one-edge extension.  By hypothesis, $[G_r] \sqsubset [G_N]$ is not a one-edge extension. We have therefore found a reduction in contradiction to (ii).  We conclude that $H_N$ is \eg\ and is the only irreducible stratum above $G_r$ (recall the output of Step~1).  
 
 We   need only check (iv)  for an    \ipNp\ $\rho$ of  height $N$.  Any such $\rho$  begins and ends with edges in $H_N$ by \cite[Lemma 5.11]{bh:tracks}.  It follows that the endpoints of $\rho$ are incident to  at least one periodic direction in $H_N$ and so are principal by   Definition~\ref{d:principal vertex}.    Item   (iv) therefore follows from (i) and \cite[Proposition 3.29]{fh:recognition}.     Item (i) and  \cite[Lemma 3.19]{fh:recognition} imply  that  there are   fixed directions in $H_N$ which in turn implies that $H_N$ is aperiodic so (v) is satisfied.

\medspace

\noindent{\bf Step 3:} \ \  Modify   $\fG$ from Step 2 to produce a \rtt\ that satisfies   (EG Nielsen Paths).

\medspace 

  An algorithm for Step 3 is given   in \cite[pages 87--89]{fh:recognition} but it is not entirely $K$-safe because it makes use of  the \lq collapsing inessential connecting paths\rq\ move in the \rtt\ algorithm. If we carry out this collapse as described in Step~1 above rather than as described in \cite{bh:tracks} then the algorithm becomes $K$-safe and we can use it  to complete Step 3.  
  For the readers convenience, we summarize this algorithm and point out the one place where the $K$-safe modification occurs.   
  
\begin{remark}  The algorithm of \cite{fh:recognition} applies results from \cite[Sections 3 and 5]{bh:tracks} and \cite[Sections 5.2 and 5.3]{bfh:tits1}.    Section~5.3 of \cite{bfh:tits1}   has the global hypothesis that $H_N$ is a geometric stratum.  That hypothesis is not used in any of the results  cited so our conclusions also hold in the non-geometric case.   Most of the results cited ultimately derive from Section~5 of \cite{bh:tracks} where there is no assumption that $H_N$ is geometric.
\end{remark} 

By \cite[Lemma 5.11]{bh:tracks}, every \iNp\ of height $N$ has the form $\rho= \bar \alpha \beta$ where $(\alpha,\beta)$ is the only illegal turn of height $N$ in $\rho$. Let $E_1$ and $E_2$ be the first edges of $\alpha$ and $\beta$ respectively.   Depending on the edge paths $f(E_1)$ and $f(E_2)$, there are three ways in which $\fG$ and $\rho$ can be modified to produce a new \rtt\ $f':G' \to G'$ and \iNp\ $\rho' \subset G'$.  In all three cases, the number of edges in   \eg\ strata   and the number of \iNp s of \eg\ height   do not increase.  The first and third are $K$-safe as described in   \cite{bh:tracks}  and \cite{bfh:tits1}. To make the second $K$-safe we  use Lemma~\ref{K safe collapsing} instead of Lemma~\ref{collapsing}.

If one of $f(E_1)$ and $f(E_2)$, say $f(E_2)$,  is properly contained in the other then the fold is said to be {\em proper}.  There is a maximal path $\delta \subset G_{N-1}$ such that $E_2 \delta$ is an initial segment of $\beta$.   In this case,   $f' : G' \to G'$ is defined by  folding an initial segment of $E_1$ with $E_2 \delta$; see  \cite[Definition 5.3.2 and Lemma 5.3.3]{bfh:tits1}. The \iNp\ $\rho'$ is the tightened  image of $\rho$ in $G'$ under the folding map.

In the {\em improper} case,   $f(E_1) = f(E_2)$ and one begins the process  \cite[Definition 5.3.4]{bfh:tits1} by folding $E_1$ and $E_2$ to form a single  new edge.  If the edge following $E_1$ in $\alpha$ or the edge following $E_2$ in $\beta$ belongs to $H_N$ then both of   those edges belong to $H_N$ and  nothing more is required.  Otherwise,   the resulting map is not a \rtt\ and one must perform core subdivisions and collapses of inessential connecting paths to restore the \rtt\ properties.  These should be done as in Step 1 so as to be $K$-safe. In this case, the the number of edges in the \eg\ stratum   decreases.   See \cite[Lemma 5.3.5]{bfh:tits1}.

The third possibility, a {\em partial fold},  is that the maximal common subpath of $f(E_1)$ and $f(E_2)$ is proper in both edge paths.  It follows \cite[page 25]{bh:tracks} that  $\alpha = E_1$ and $ \beta = E_2$. Items (ii) and (v) and \cite[Lemma 5.1.7]{bfh:tits1} imply that the endpoints of $\rho$ are distinct and that if both endpoints are contained in $G_{N-1}$ then at least one of them is  contained  in a contractible component of $G_{N-1}$ 
\cite[Lemma 5.1.7]{bfh:tits1}.  Item (iii) above implies that there are no invariant contractible components of $G_{N-1}$ and we conclude that at least one of the endpoints of $\rho$ is disjoint from  $G_{N-1}$.   In this case, do not perform a standard fold but rather entirely identify $E_1$ and $E_2$ to form a new graph $G'$ with an induced map $f':G'\to G'$.  Since   the turn $(\alpha, \beta)$ is not taken by $f(E)$ for any edge $E$, the image in  $G'$ of $f(E)$ is already tight and $f' :G' \to G'$ is a topological representative.  It is immediate from the construction that  (RTT-i) is preserved.  Since at least one of the endpoints of $\rho$ is disjoint from $G_{N-1}$,  (RTT-ii) is also preserved.  As argued in the proof of \cite[Lemma 5.2.4]{bfh:tits1}, $\Lambda' = \Lambda$ so Lemma~\ref{subdivide and fold} and Lemma~\ref{two safe moves} imply that (RTT-iii) is satisfied and  $f' :G' \to G'$ is a relative train track map.  By construction (see also the proof of \cite[Lemma~5.2.4]{bfh:tits1}) the number of \iNp s with \eg\ height has been decreased.

Since each of these operations begins and ends with a relative train track map and an \iNp, we can iteratively fold to produce a sequence of relative train track maps and \iNp s.    Let $C(f)$ be the sum of the entries in the transition matrix for $f$.  This gives an upper bound for the number of elementary folds in a  Stallings factorization of $\fG$.  If the first $C(f)$ folds encountered are proper then $f$ satisfies (EG Nielsen Paths); see  \cite[Lemma 4.33 and Remark 4.34]{fh:recognition} and the proof of \cite[Lemma 5.3.6]{bfh:tits1}.    Otherwise, one encounters either a partial or an improper fold   in which case either the number of edges in   \eg\ strata or the number of \iNp s  of \eg\ height  decreases and we start over.  Since these numbers never increase, this algorithm terminates in finite time.

\medspace

 At the end of Step 3,   $\fG$  satisfies (EG Nielsen Paths).   If the \rtt\ that was the input to Step 3 did not satisfy (EG Nielsen Paths) then either the number of edges in an \eg\ stratum decreased or the number of \iNp s of \eg\ height  decreased during Step 3.  Throughout the remaining steps, the number of edges in each  \eg\ stratum and the number of \iNp s of \eg\ height are never increased.  As a result, we need not verify the (EG Nielsen Paths) after each step.  If it fails at some point, go back to Step 3 and start again.  This can only happen finitely many times so does not prevent the process from terminating.   
 
\medspace

\noindent{\bf Step 4:} \ \   If the \rtt\ produced by Step 3 does not satisfy  the conclusions of Theorem~\ref{2.19} return to Step 2.  As just noted, changes are required in Step 3 only a finite number of times so Step 4 is a finite process.   

\medspace

\noindent{\bf Step 5:} \ \  (Rotationless), (Filtration), (Zero Strata) and (Periodic Edges).  

\medspace  The first two properties follow from \cite[Proposition 3.29]{fh:recognition} and Theorem~\ref{2.19} respectively.  (Zero Strata) is arranged by the $K$-safe  tree replacement moves  described in \cite[Step 3]{fh:recognition}.  (Periodic Edges) follows from the assumption that $f \restrict G_r$ is a \ct\ and item (c) above. 

\medspace

\noindent{\bf Step 6:}  \  Modify   $\fG$ from Step 5 so that it satisfies (Vertices), (Linear Edges) and (NEG Nielsen Paths).  Additionally, the modified $\fG$  satisfies (Completely Split) on all \noneg\ edges.

\medspace

By property (Z) of Theorem~\ref{2.19} there is an irreducible stratum $H_p$ such that  each stratum between $H_p$ and $H_N$ is a zero stratum that is a component of $G_{N-1}$.   The edges $E_1,\ldots, E_p$ of $G_p \setminus G_r$ are non-fixed \noneg\ edges with terminal endpoints in $G_r$ and  initial endpoints  that have valence one in $G_{N-1}$.  The proof of this assertion is essentially the same as the proof of (v) and is left to the reader. The three conditions to be achieved depend only on the \noneg\ edges $E_i$.     Since $\fG$ is rotationless  and satisfies Theorem~\ref{2.19},  $f(E_i) = E_i u_i$ for some  $u_i \subset G_r$.   By item (c) above, $u_i$ is non-trivial.

Suppose that $C$ is a rank one component of $G_r$ and that the unique vertex $w$ of $C$ is the terminal endpoint of $E^1,\ldots, E^q \subset \{E_1,\ldots, E_p\}$. Then $C$ has a single edge $e$   and $f(E^j) = E^je^{d_j}$ for some $d_j \ne 0$.    If $w$ is not the endpoint of an edge in $H_N$ then redefine $f$ on the $E^j$'s by $f(E^j) = E^j e^{d_j - d_1}$.   The new map  still represents $\phi$  and none of our established properties are lost.  The edge $E^1$ is now fixed and so can be collapsed.  After these moves, $w$ is the endpoint of at least one edge in $H_N$ and so is principal for $f$.  
We assume now that the unique vertex of each rank one component of $G_r$ is the endpoint of an edge in $H_N$.

With that special case out of the way, one just applies \cite[Step 5 pages 91--93]{fh:recognition},  applying Lemma~\ref{finding a fixed point}  as described in Section~\ref{section one edge}.   

\medspace

\noindent{\bf Step 7:}  \  Modify   $\fG$ from Step 6 so that it satisfies  (Completely Split).

\medspace

We can apply \cite[Step 6]{fh:recognition} without change.

This completes the proof of Theorem~\ref{extend or reduce}.
\qed

\section{Finding  $\Fix(\Phi)$ }  \label{s:Finding Fix}  The goal of this section is to give another proof of the result of Bogopolski-Maslakova \cite{bm:fix} that there is an algorithm that, given $\Phi\in\Aut(F_n)$, computes $\Fix(\Phi)$.

\subsection{The periodic case}\label{s:periodic fix}
In this section, we examine the special case that  $\Phi$ is periodic. The analysis in this section will parallel that of the general case.

Recall from Section~\ref{s:markings} that if $G$ is a marked graph and $\ti v \in \ti G$ is a lift to the universal cover of $v \in G$ then there is an isomorphism $J_{\ti v} :\pi_1(G,v) \to \cT(\ti G)$   given by mapping $[\circuit]$ to the covering translation $\trans$ of $\ti G$ that takes $\ti v$ to the terminal endpoint   of the lift $\tilde{\circuit}$ of $\circuit$ with initial endpoint $\ti v$.

\begin{lemma}\label{l:z periodic}  Suppose that $h : G \to G$ is a periodic homeomorphism of a marked graph $G$  and that $\ti h : \ti G \to \ti G$ is a periodic lift of $h$  to the universal cover $\ti G$.  Then 
\begin{enumerate}
\item \label{item:nonempty fix}$\Fix(\ti h) \ne \emptyset$ and a point   $\ti v \in \Fix(\ti f)$ can be found algorithmically.
\item \label{item:centralizer}  If    $\ti v \in \Fix(\ti h)$ projects to $v \in G$  then $J_{\ti v}(  \pi_1(\Fix(h),v)) =  \zt(\ti h)$. (See Definition~\ref{d:zt}.)
  \end{enumerate}
\end{lemma}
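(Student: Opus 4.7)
The plan rests on two classical facts: a finite-order simplicial action on a finite tree (without edge inversions) has a fixed vertex, and a periodic self-homeomorphism of a closed interval fixing both endpoints is the identity.

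For (\ref{item:nonempty fix}), I would start from any vertex $\ti w \in \ti G$ and compute the finite orbit $O = \{\ti h^i(\ti w) : 0 \le i < k\}$, where $k$ is the order of $\ti h$ (found by testing $\ti h, \ti h^2, \ldots$ on a ball around $\ti\star$).  The convex hull $T \subset \ti G$ of $O$ is a finite subtree containing $O$ and preserved by $\ti h$; using the discussion in Section~\ref{s:markings} of working effectively in $\ti G$, both $O$ and $T$ can be built explicitly in finitely many steps.  After a single midpoint subdivision to rule out edge inversions, $\ti h|T$ is a finite-order simplicial automorphism of a finite tree, and the classical fixed point theorem provides (indeed, a finite combinatorial search over vertices of $T$ locates) a vertex $\ti v$ with $\ti h(\ti v) = \ti v$.

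For the inclusion $J_{\ti v}(\pi_1(\Fix(h),v)) \subset \zt(\ti h)$ in (\ref{item:centralizer}), let $\gamma \subset \Fix(h)$ be a loop at $v$ and $\ti\gamma$ its lift starting at $\ti v$, so $\ti\gamma$ ends at $T\ti v$ where $T = J_{\ti v}([\gamma])$.  Because $h$ is the identity on $\gamma$, the restriction of $\ti h$ to $\ti\gamma$ is a lift of that identity which fixes $\ti v$, hence fixes all of $\ti\gamma$; in particular $\ti h(T\ti v) = T\ti v$.  The deck transformation $\ti h T \ti h^{-1}$ then agrees with $T$ at $\ti v$, so $\ti h T \ti h^{-1} = T$, proving $T \in \zt(\ti h)$.

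For the reverse inclusion $\zt(\ti h) \subset J_{\ti v}(\pi_1(\Fix(h),v))$, given $T \in \zt(\ti h)$ first note $\ti h(T\ti v) = T \ti h(\ti v) = T\ti v$, so $T\ti v \in \Fix(\ti h)$.  Let $\ti\gamma$ be the unique embedded arc in the tree $\ti G$ from $\ti v$ to $T\ti v$.  Since $\ti h$ is a homeomorphism, $\ti h(\ti\gamma)$ is an embedded arc with the same endpoints, so equals $\ti\gamma$; thus $\ti h$ restricts to a periodic self-homeomorphism of the arc $\ti\gamma$ fixing both endpoints.  Any such map is monotone, hence increasing, and an increasing periodic self-homeomorphism of a closed interval fixing the endpoints is forced to be the identity.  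Therefore $\ti h$ fixes $\ti\gamma$ pointwise, its projection $\gamma$ lies in $\Fix(h)$, and $J_{\ti v}([\gamma]) = T$.  The main obstacle I anticipate is minor bookkeeping: handling potential edge inversions via the one-time midpoint subdivision, and making precise the effective computation in $\ti G$ (already implicit throughout the paper).
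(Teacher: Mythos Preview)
Your proof is correct. The overall architecture matches the paper's, but you choose different tools at two points, and it is worth noting what each choice buys.

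For part~(\ref{item:nonempty fix}), the paper subdivides so that $h$ pointwise fixes each edge it preserves, invokes the elliptic/hyperbolic dichotomy from Bass--Serre theory (periodic implies elliptic), and then observes that for an elliptic isometry the midpoint of $[\ti x, \ti h(\ti x)]$ is fixed for \emph{any} $\ti x$. This is shorter and gives an explicit one-step formula for a fixed point. Your convex-hull-of-an-orbit argument is a standard alternative that avoids citing the dichotomy and instead reduces to a finite combinatorial search; it is slightly longer but equally effective and perhaps more self-contained.

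For the reverse inclusion in part~(\ref{item:centralizer}), the paper's subdivision step has already arranged that $\Fix(\ti h)$ is a subtree, so once $T\ti v \in \Fix(\ti h)$ the geodesic $[\ti v, T\ti v]$ lies in $\Fix(\ti h)$ automatically. You instead argue directly that a periodic self-homeomorphism of a closed interval fixing both endpoints is the identity. This is a nice elementary argument that sidesteps the need to know in advance that $\Fix(\ti h)$ is connected; effectively you are proving that connectivity on the fly, one arc at a time. Both routes are short; yours is marginally more self-contained, the paper's marginally cleaner once the subdivision is in place.
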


\proof After subdividing if necessary we may assume that $h$, and hence $\ti h$, pointwise fixes each edge that it preserves.  We are now in the setting of Bass-Serre theory and we use its language.  Note that $\ti h$ is not hyperbolic, for otherwise $\ti h$ has infinite order. Hence $\ti h$ is elliptic; equivalently $\Fix(\ti h) \ne \emptyset$.  It is algorithmic to find a fixed point $\ti v$. Indeed, if $\ti x\in\ti G$ then the midpoint of $[\ti x,\ti h(\ti x)]$ is fixed. This completes the proof of \pref{item:nonempty fix}.

 For \pref{item:centralizer}, let  $\Fix(\ti h)$ denote the subtree of $\ti G$ consisting of $\ti h$-fixed edges. Given $[\gamma] \in \pi_1(\Fix(h),v)$, let $\ti \gamma$ be the lift of $\gamma$ that begins at $\ti v$ and let $\ti w$ be the terminal endpoint of $\ti \gamma$.  Then $T = J_{\ti v}(\gamma)$  is the covering translation  that carries $\ti v$ to $\ti w$.  Since $\ti h$ fixes $\ti v$ and $\gamma \subset \Fix(h)$, \  $\ti h$ fixes  $\ti w$.   In particular, $\trans\circ\ti h(\ti v)=\trans(\ti v)=\ti w=\ti h(\ti w)=\ti h\circ\trans(\ti v)$ and so $\trans$ and $\ti h$ commute. We see that  $J_{\ti v}(  \pi_1(\Fix(h),v))$ is contained in $\zt(\ti h)$.  To see surjectivity, let $\trans\in\zt(\ti h)$. Then $\trans(\ti v)\in\Fix(\ti h)$. Since $\Fix(\ti h)$ is a tree, $[\ti v, \trans(\ti v)]$ descends to a closed path in $\Fix(h)$ based at $v$.
 \endproof
 
We could not find a reference for the following result so we have included a proof.

\begin{lemma}\label{l:nonrotationless fix}
There is an algorithm that, given periodic $\Phi\in \Aut(F_n)$, computes $\Fix(\Phi)$.  
\end{lemma}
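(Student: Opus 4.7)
The plan is to reduce to Lemma~\ref{l:z periodic} by realizing $\Phi$ geometrically as a periodic graph homeomorphism with a periodic lift. First, I would apply an algorithmic form of Nielsen realization: every finite order outer automorphism of $F_n$ is represented by a periodic simplicial homeomorphism of a marked graph (Culler--Khramtsov--Zimmermann). Since the order of $\Phi$ is a priori bounded in terms of $n$ and since there are only finitely many periodic graph automorphisms of bounded complexity up to combinatorial equivalence, one can enumerate pairs $(G,h)$ with $h$ periodic and test, using Stallings-folding and Lemma~\ref{free factor support}-style arguments, whether $h$ represents $\phi=[\Phi]$. Among the finitely many lifts of $h$ to $\ti G$, the discussion of markings in Section~\ref{s:markings} shows that the unique one with $\partial\ti h=\partial\Phi$ can be identified algorithmically by comparing actions on sufficiently large balls around $\ti\star$. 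Because $\Phi^k=\mathrm{id}$ for some $k$, the corresponding $\ti h^k$ acts trivially on $\partial F_n$, hence equals the identity of $\ti G$, so $\ti h$ is automatically periodic.

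Next, subdivide $G$ if necessary so that $h$ pointwise fixes each edge it preserves; then $\Fix(h)$ is a finite subgraph that can be read off by inspection. Lemma~\ref{l:z periodic}\pref{item:nonempty fix} yields $\ti v\in\Fix(\ti h)$ algorithmically by taking the midpoint of $[\ti x,\ti h(\ti x)]$ for any chosen vertex $\ti x$. Let $v$ be the projection of $\ti v$, choose a spanning tree for the component of $\Fix(h)$ containing $v$, and read off a free basis $\{[\gamma_1],\dots,[\gamma_k]\}$ of $\pi_1(\Fix(h),v)$ from the non-tree edges as edge loops.

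Combining Lemma~\ref{l:z periodic}\pref{item:centralizer} with Lemma~\ref{identify Fix}, the map $J_{\ti v}$ sends $\{[\gamma_1],\dots,[\gamma_k]\}$ to a free basis of $\zt(\ti h)=\Fix(\Phi)$ inside $\cT(\ti G)$. The marking-induced isomorphism $\mu_\#=J_{\ti\star}^{-1}J_{\ti *}$ composed with the change-of-basepoint isomorphism from $v$ to $\star$ converts each $J_{\ti v}([\gamma_i])$ into an explicit word in the fixed basis of $F_n$; this final translation is algorithmic because actions on bounded balls in $\ti G$ are computable.

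The main obstacle is the first step: making Nielsen realization effective and, given a candidate $h$, selecting the correct periodic lift corresponding to $\Phi$ rather than to some other lift of $\phi$. Once $(G,h,\ti h)$ is in hand, all remaining work reduces to finite inspection of the subgraph $\Fix(h)$ and a straightforward translation through the basepoint and marking isomorphisms described in Section~\ref{s:markings}.
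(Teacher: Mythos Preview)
Your overall architecture matches the paper's proof: realize $\phi$ by a periodic graph homeomorphism $h:G\to G$, pick out the lift $\ti h$ corresponding to $\Phi$, observe $\ti h$ is periodic, apply Lemma~\ref{l:z periodic} to get $\zt(\ti h)=J_{\ti v}(\pi_1(\Fix(h),v))$, and translate back through the marking and change of basepoint. Your justification that $\ti h$ is periodic (via $\partial\ti h^k=\partial\Phi^k=\mathrm{id}$) is fine, and the final translation step is exactly what the paper does.

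The substantive difference, and the real gap, is in your first step. The paper simply invokes the relative train track algorithm of Theorem~\ref{thm:rtt}: applied to a finite-order $\phi$, it algorithmically produces a periodic homeomorphism $h:G\to G$ \emph{together with a marking}. Your enumeration scheme instead lists unmarked pairs $(G,h)$ and proposes to ``test whether $h$ represents $\phi$.'' But a graph automorphism $h$ only determines an element of $\Out(F_n)$ once a marking is fixed, and changing the marking conjugates that element arbitrarily in $\Out(F_n)$. So your test is really: decide whether $\phi$ is conjugate in $\Out(F_n)$ to the outer automorphism determined by $(G,h)$ with some chosen marking, and if so, exhibit a conjugator (equivalently, the correct marking). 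That is the conjugacy problem for finite-order elements of $\Out(F_n)$; it is solvable (e.g.\ Krsti\'c), but it is not what ``Stallings-folding and Lemma~\ref{free factor support}-style arguments'' do, and you would need to cite and invoke such a result explicitly. The paper's route via Theorem~\ref{thm:rtt} sidesteps this entirely and is both shorter and self-contained within the paper.
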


\begin{proof}
Let $\phi\in\Out(F_n)$ be represented by $\Phi$. Since the only periodic automorphisms of $\Z$ are the identity and $x\mapsto -x$, we may assume that $n\ge 2$. The relative train track algorithm of \cite{bh:tracks} (see Theorem~\ref{thm:rtt}) finds a periodic homeomorphism $h: G \to G$ of a marked graph representing $\phi$. 

We recall some  notation from  Section~\ref{s:markings}.  The marking homotopy equivalence is $\mu : (R_n,*) \to (G,\star)$.  Via a lift of $\star$ to $\ti \star \in \ti G$ we have  an identification of   $\cT(\ti G)$  with $F_n$, an isomorphism $J_{\ti \star} : \pi_1(G,\star) \to \cT(\ti G)$ and   a lift $\ti h : \ti G \to \ti G$ that can be found algorithmically and that corresponds to $\Phi$ in a sense made precise in Section~\ref{s:markings}.  The key points for us are that $\ti h$ is periodic and (Lemma~\ref{identify Fix}) that  $\zt(\ti h)$ and $\Fix(\Phi)$ are equal when viewed as subgroups of $\cT(\ti G)$.    Since  $F_n$ has been identified via $\mu_\#$ with $\pi_1(G, \star)$,  our goal is to find $J_{\ti \star}^{-1}\zt(\ti h) < \pi_1(G,\star)$. By  Lemma~\ref{l:z periodic} we can algorithmically find an element $\ti v \in \Fix(\ti h)$.  Moreover, letting $v\in G$ be the projection of $\ti v$ and $H :=  \pi_1(\Fix(h), v) < \pi_1(G,v)$, we have  $J_{\ti v} (H) = \zt(\ti h) $.  Let $\ti \eta$ be the path in $\ti G$ from $\ti \star$ to $\ti v$ and let $\eta \subset G$ be its projection.  A quick chase through the definitions shows that $J_{\ti \star}^{-1}J_{\ti v} :  \pi_1(G,v) \to \pi_1(G,\star)$ is defined by $[\gamma] \to [\eta \gamma\eta^{-1}]$.   Thus $\Fix(\Phi)$ is identified with  $H^\eta < \pi_1(G,\star)$.

To recap, the algorithm is  $$\Phi \rightsquigarrow \ti h \rightsquigarrow \ti v \in \Fix(\ti h) \rightsquigarrow H = \pi_1(\Fix(h),v) < \pi_1(G,v) \rightsquigarrow H^\eta < \pi_1(G, \star)$$
 
\end{proof}

\subsection{A $G$-graph of Nielsen paths}\label{s:stallings}
For the remainder of Section~\ref{s:Finding Fix} we assume that $\fG$ is a \ct\ representing (a necessarily rotationless) $\phi\in\Out(\f)$.

Let $\Sigma$ be a (not necessarily connected) graph. It is often useful to work in the Stallings category of graphs labeled by $\Sigma$ or $\Sigma$-graphs, i.e.\ an object is a graph $H$ with a cellular immersion $H\to \Sigma$ and a morphism from $H\to \Sigma$ to $H'\to \Sigma$ is a cellular immersion $H\to H'$ making the following diagram commute:
$$
\begindc{0}[10]
\obj(30,10)[31]{$\Sigma$}
\obj(10,50)[15]{$H$}
\obj(50,50)[55]{$H'$}
\mor{15}{31}{}[\atright,\solidarrow]
\mor{55}{31}{}[\atright,\solidarrow]
\mor{15}{55}{}[\atright,\solidarrow]
\enddc
$$
The map to $\Sigma$ is often suppressed. In this section we assume that $H$ is finite but in Section~\ref{s:stallings_N} we   allow $H$ to be infinite. If we give $H$ the $CW$-structure whose vertex set is the preimage of the vertex set of $\Sigma$, then the resulting edges of $H$ (often called {\it edgelets}) are {\it labeled} by their image edges in $\Sigma$ and we refer to the oriented edges of $\Sigma$ as the set of {\it labels}.  An edge-path is {\it labeled} by its sequence of oriented edges. The {\it 
core of $H$} is the $\Sigma$-graph that is the union of all immersed circles in $H$. $H$ is {\it core} if it is its own core.

$\Sigma$-graphs are useful because, on each component of $H$, the immersion $H \to \Sigma$ induces an injection on the level of $\pi_1$ and so $H$ is a {\it geometric realization} of a collection of  conjugacy classes $\pi_1(\Sigma)$ of subgroups of $\f$ indexed by the components of $H$. Our goal in this section is to construct a $G$-graph that is the geometric realization of the collection of conjugacy classes $[\Fix(\Phi)]$ as $[\Phi]$ varies over isogredience classes $\PA(\phi)/\sim$ of principal automorphisms representing $\phi$. 
 
\begin{review}\label{r:Nps}
We precede the construction with a quick review of Nielsen paths in a \ct.  Since Nielsen paths with endpoints  at vertices split as products of fixed edges and \iNp s, we focus on \iNp s.  There are only two sources of indivisible Nielsen paths $\mu$. By the (Vertices) property of a \ct, the endpoints of \iNp s are always vertices. If $E\in\lin(f)$, the set of linear edges in $G$, then $f(E) =  Ew_E^d$ for some \twistpath\ $w_E$ and some $d \ne 0$ and $Ew_E^k\bar E$ is an indivisible Nielsen path for $k\not= 0$. By the (\noneg\ Nielsen Paths) property of a \ct, all \iNp s of \noneg -height have this form. To $E$ we associate a $G$-graph $Y_{\mu_E}$ which is a lollipop. Specifically, $Y_{\mu_E}$ is the union of an edge labeled $E$ and a circle labeled $w_E$ attached to the terminal end of $E$.  Note that  each $Ew_E^k\bar E \subset G$ lifts to a path in $Y_{\mu_E}$ with both endpoints at the initial vertex of the edge labeled $E$. The other possibility is an indivisible Nielsen path $\mu$ of $EG$-height, say $r$. In this case, $\mu$ and $\bar \mu$ are the only \iNp s of height $r$ and the initial edges of $\mu$ and $\bar \mu$ are distinct edges of $H_r$  \cite[Lemma~4.19]{fh:recognition}. Further, $\mu=\alpha\bar\beta$ where $\alpha$ and $\beta$ are $r$-legal and the turn $(\bar\alpha, \bar\beta)$ is illegal of height $r$ \cite[Lemma~2.11]{fh:recognition}. See \cite[Section~4]{fh:recognition} for details on Nielsen paths in \ct s.
\end{review}

 \begin{definition} \label{defn:S(f)} We construct a $G$-graph $\stallings(f)$ as follows. If $n=1$, then $G=R_1$ (a rank 1 rose) and $\stallings(f):=G$. Otherwise, start with the subgraph $\stallings_{1}(f)$ of $G$ consisting of all vertices in $\Fix(f)$  and all fixed edges.   For each $E \in \lin(f)$, attach the lollipop $Y_{\mu_E}$ to $\stallings_1(f)$ at the initial vertex of $E$ thought of as a vertex in $\stallings_1(f)$.  For each \eg\ stratum with an \iNp\ of that height, choose  one $\mu$ of that height (there are only two and they differ by orientation)    and   attach an  edge, say $E_\mu$, labeled by $\mu$ to   $\stallings_1(f)$  with initial and terminal endpoints equal to those of $\mu$.  This completes the construction of the graph $\stallings(f)$.  There is a natural map $h : \stallings(f) \to G$ given by inclusion on $\stallings_{1}(f)$ and by the $G$-graph structures on  each $Y_{\mu_E}$ and  $E_\mu$.   By construction,  $h$ is  an immersion away from the attaching points in $\stallings_{1}(f)$ and is a local homeomorphism at each vertex in $\stallings_{1}(f)$.  Thus $\stallings(f)$ is a $G$-graph.

Let $v\in\Fix(f)$. We abuse notation slightly by also denoting the unique lift of $v$ in $\stallings_1(f)$ by $v$. Define $\stallings(f,v)$ to be the component of $\stallings(f)$ that contains $v$. It is possible that $\stallings(f,v)$ is not core.
\end{definition}

\vspace{.1in}

\begin{figure}[h!] 
\centering
\includegraphics[width=0.3\textwidth]{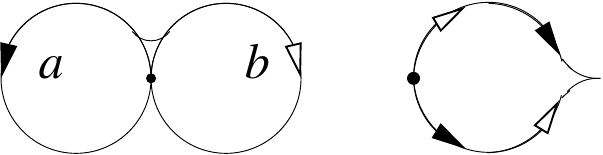}
\caption{A \ct\ $\fG$ given by $a\mapsto ab, b\mapsto bab$ and the graph $\stallings(f)$.  $\stallings_1(f)$ is the unique vertex of $G$. $\stallings(f)$ is the closed Nielsen path $\mu = ab\bar a \bar b$.}
\label{f:egstallings}
\end{figure}
 
\begin{figure}[h!]
\centering
\includegraphics[width=0.5\textwidth]{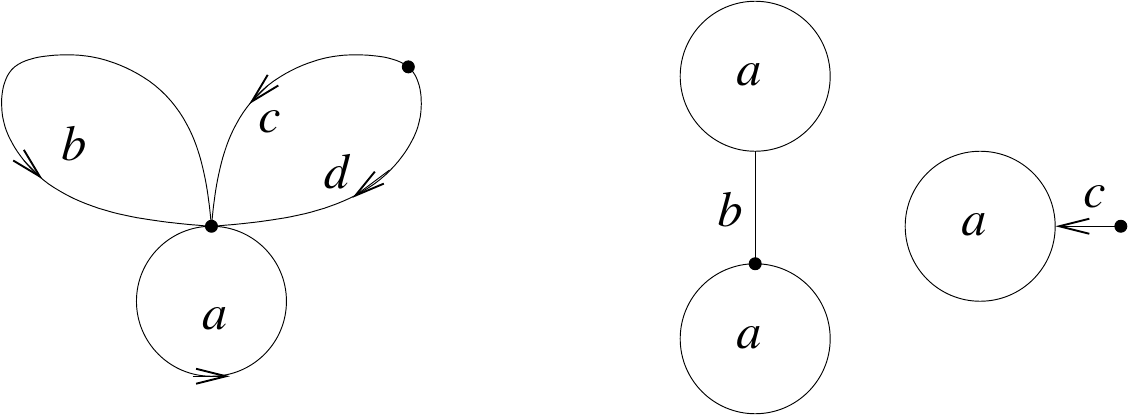}
\caption{A \ct\ $h:H\to H$ given by $a\mapsto a, b\mapsto ba^2, c\mapsto ca, d\mapsto db$ and the graph $\stallings(h)$. $\stallings_{1}(h)$ is the loop $a$ and the common initial vertex of $c$ and $d$.  Add the lollipop associated to $b$ to the former component and the lollipop associated to $c$ to the latter to make $\stallings(h)$.}
\label{f:stallingspg}
\end{figure}

\begin{remark}\label{r:stallings}
Since the construction of $\fG$ is algorithmic, finding the set of \iNp s of \eg\ height, and finding $\Fix(f)$  are all algorithmic, it follows that the constructions of $\stallings(f)$ and $\stallings(f,v)$ are algorithmic.
\end{remark}

\begin{lemma} \label{lifting to S(f)}  
\begin{enumerate}
\item 
A path $\sigma \subset G$ with endpoints at vertices lifts  to  a (necessarily unique) path $\hat \sigma \subset \stallings(f)$ with endpoints in $\stallings_1(f)$ if and only if $\sigma$ is a Nielsen path. Moreover, $\sigma$ is closed if and only if $\hat \sigma$ is closed.
\item
If $f$ represents $\phi\in\Out(F_n)$ then non-trivial $\phi$-periodic (equivalently $\phi$-fixed) conjugacy classes in $F_n$ are characterized as those classes represented by circuits in $\stallings(f)$. 
\end{enumerate}
\end{lemma}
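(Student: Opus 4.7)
The key fact enabling both parts is that the map $h:\stallings(f)\to G$ is an immersion (a local homeomorphism at every vertex of $\stallings_1(f)$ by construction, and an immersion elsewhere), so that lifts of paths are unique once a starting point is fixed. Moreover, the inclusion $\stallings_1(f)\hookrightarrow G$ is injective on vertices and its image on vertices is contained in $\Fix(f)$, so the endpoints of a lift $\hat\sigma$ in $\stallings_1(f)$ are determined by those of $\sigma$.

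For part (1), my plan is to prove each direction by decomposing $\sigma$ (or $\hat\sigma$) into elementary pieces. For the ``if'' direction I will use the fact, noted just before Definition~\ref{defn:S(f)}, that a Nielsen path with endpoints at vertices decomposes as a concatenation of fixed edges and \iNp s. Each elementary piece lifts by the very construction of $\stallings(f)$: a fixed edge lifts to itself in $\stallings_1(f)$; a NEG indivisible Nielsen path $E w_E^k \bar E$ (with $k\neq 0$) lifts through the lollipop $Y_{\mu_E}$ by traversing the edge labeled $E$, going $k$ times around the circle labeled $w_E$, and returning along the edge labeled $\bar E$; and an EG \iNp\ $\mu$ lifts to the single edge $E_\mu$ (or its reverse). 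Since all junction points in the decomposition are endpoints of fixed edges or of \iNp s and are therefore fixed vertices, they lie in $\stallings_1(f)$ and the local lifts concatenate to a well-defined $\hat\sigma$.

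For the converse, I will decompose $\hat\sigma$ maximally into subpaths contained in $\stallings_1(f)$, in a single lollipop $Y_{\mu_E}$, or on a single edge $E_\mu$. The crucial observation is that each $Y_{\mu_E}$ is attached to $\stallings_1(f)$ at only one vertex, so a subpath entering $Y_{\mu_E}$ must enter and exit along the edge labeled $E$; because $\hat\sigma$ is immersed, the round trip in the circle must have the form $w_E^k$ with $k\neq 0$, and thus projects to an \iNp\ of NEG-height. Traversals of $E_\mu$ project to $\mu^{\pm 1}$, and subpaths in $\stallings_1(f)$ project to concatenations of fixed edges, so $\sigma = h\circ\hat\sigma$ is a concatenation of fixed edges and \iNp s, hence Nielsen. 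The closedness statement is immediate from injectivity of $\stallings_1(f)\to G$ on vertices.

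For part (2), I reduce to part (1). In the forward direction, a $\phi$-fixed (equivalently $\phi$-periodic) conjugacy class is represented by a circuit $\sigma$ with $f_\#(\sigma)=\sigma$; applying the decomposition from (1) shows the junction points are fixed vertices, and in the degenerate case where the decomposition is a single piece $\sigma$ is a closed \iNp\ whose endpoint is a fixed principal vertex. Basing $\sigma$ at such a vertex and applying (1) produces a closed lift, which is a circuit in $\stallings(f)$. Conversely, a circuit $\hat\sigma$ in $\stallings(f)$ either passes through some vertex of $\stallings_1(f)$---in which case based there it becomes a closed path and (1) delivers a Nielsen circuit under $h$---or, by the structure of $\stallings(f)$, $\hat\sigma$ is disjoint from $\stallings_1(f)$ and hence entirely contained in the circle labeled $w_E$ of some lollipop, projecting to a power $w_E^k$ which is a Nielsen circuit. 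I expect the main obstacle to be making the maximal decomposition in the converse of (1) fully rigorous---in particular verifying that immersed excursions into lollipops are forced into the form $Ew_E^k\bar E$ with $k\neq 0$---while the remaining case analyses in both parts are routine book-keeping.
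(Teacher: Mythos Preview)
Your argument for part (1) is correct and is essentially the paper's argument, just spelled out in more detail; the paper simply says the statement is immediate from the construction of $\stallings(f)$ and the fact that a Nielsen path in a \ct\ is a concatenation of fixed edges and \iNp s.

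The gap is in the forward direction of part (2). You write that a $\phi$-fixed conjugacy class is represented by a circuit $\sigma$ with $f_\#(\sigma)=\sigma$ and then propose ``applying the decomposition from (1)''. But part (1) concerns Nielsen \emph{paths}---paths fixed rel endpoints---whereas what you have is a Nielsen \emph{circuit}, fixed only as an unbased loop. To invoke (1) you must first base $\sigma$ at a vertex $v\in\Fix(f)$ in such a way that the based closed path is itself a Nielsen path, and nothing in your argument establishes that such a $v$ exists. Your sentence ``applying the decomposition from (1) shows the junction points are fixed vertices'' is circular: the junction points only exist once you already have the decomposition. The paper closes this gap by invoking complete splitting theory: \cite[Lemmas~4.11 and 4.25]{fh:recognition} imply that $f^k_\#(\sigma)$ is completely split for large $k$, hence (since $f_\#(\sigma)=\sigma$) so is $\sigma$; each term of this splitting must then be a periodic, hence fixed, Nielsen path, and the splitting vertices are fixed. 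Only then can one base $\sigma$ at a splitting vertex and apply (1). Your converse direction of (2) is fine, though the paper handles it more simply by noting that every circuit in $\stallings(f)$ projects to a concatenation of fixed edges and \iNp s, without separating out the lollipop-circle case.
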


\proof
 (1): The main statement is an immediate consequence of the  construction of $\stallings_1(f)$ and the fact that a Nielsen path in  a \ct\ is the concatenation of fixed edges and \iNp s with endpoints at vertices. The moreover part follows from the fact that each vertex in $G$ has a unique lift into $\stallings_1(f)$.

(2): Suppose that the circuit $\sigma \subset G$ represents a $\phi$-fixed conjugacy class.  \cite[Lemmas 4.11 and 4.25]{fh:recognition} imply that   $f^k_\#(\sigma)$ has a unique complete splitting for all sufficiently large $k$  and hence $\sigma$ has a unique complete splitting.  It follows that each term in the splitting is a periodic, and hence fixed, Nielsen path.   Viewing $\sigma$ as a closed path with endpoint at one of the splitting vertices, we can lift $\sigma$ to a path  $\hat \sigma \subset \stallings(f)$ with endpoints in $\stallings_1(f)$.  Since each vertex in $G$ has a unique lift into $\stallings_1(f)$, $\hat \sigma$ is a closed path and hence a circuit (because it projects to a circuit).  Conversely, every circuit in $\stallings(f)$ projects in $G$ to a concatenation of fixed edges and \iNp s.
\endproof

\begin{definition}\label{d:PS(f)}
We will also have need of the subgraph $\pstallings(f):=\cup_v \pstallings(f,v)$ of $\stallings(f)$ where the union is over principal vertices $v$ of $G$.  Equivalently (see Lemma~\ref{l:not principal}), $\pstallings(f)$ is obtained from $\stallings(f)$ by removing components consisting of a single non-principal vertex. The construction of $\pstallings(f)$ goes exactly as in Definition~\ref{defn:S(f)} except we start with the subgraph $\pstallings_1(f)$ consisting of all principal vertices and fixed edges in $\Fix(f)$.
\end{definition}

\subsection{$\Fix(\Phi)$ for rotationless $\phi$} 
In this section we compute $\Fix(\Phi)$ for  (not necessarily principal) $\Phi\in\Aut(F_n)$ representing rotationless $\phi\in\Out(\f)$.
We begin with the analog of Lemma~\ref{l:z periodic}(2). Recall that the unique lift of $v \in \Fix(f)$ in $\stallings_1(f,v)$ is denoted $v$.

\begin{lemma}  \label{lift to stallings(f)}   
For each vertex  $v \in \Fix(f) $ and lift $\ti v \in \ti G$    let $\hat J_{\ti v}  = J_{\ti v} h_\#:\pi_1(\stallings(f,v), v ) \to \cT(\ti G)$  where $h_\# :\pi_1(\stallings(f,v),v) \to  \pi_1( G,v)$ is induced by the immersion $h :\stallings(f,v) \to G$.     Let $\ti f : \ti G \to \ti G$ be the lift of $f$ that fixes $\ti v$.  Then $\hat J_{\ti v}$ is injective and has  image equal to $\zt(\ti f)$.
\end{lemma}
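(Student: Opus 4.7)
The plan is to split the claim into injectivity of $\hat J_{\ti v}$ and the identification of its image with $\zt(\ti f)$. Injectivity is immediate from the definitions: $J_{\ti v}$ is a standard isomorphism $\pi_1(G,v) \to \cT(\ti G)$, and the immersion $h : \stallings(f,v) \to G$ induces an injection $h_\#$ on $\pi_1$, so $\hat J_{\ti v} = J_{\ti v} h_\#$ is injective.

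For the inclusion $\mathrm{image}(\hat J_{\ti v}) \subset \zt(\ti f)$, I would use Lemma~\ref{lifting to S(f)}(1) as the bridge between loops in $\stallings(f,v)$ based at $v$ and closed Nielsen paths in $G$ based at $v$. Given $[\hat\gamma] \in \pi_1(\stallings(f,v), v)$, set $\sigma := h_\#([\hat\gamma])$; this is a closed Nielsen path at $v$ by Lemma~\ref{lifting to S(f)}(1). Let $\ti\sigma$ be the lift of $\sigma$ starting at $\ti v$ and $\ti w$ its terminal endpoint, so that the covering translation $T := \hat J_{\ti v}([\hat\gamma]) = J_{\ti v}([\sigma])$ satisfies $T(\ti v) = \ti w$. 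Since $\ti f$ fixes $\ti v$, the tightened image $\ti f_\#(\ti\sigma)$ is a reduced lift of $f_\#(\sigma) = \sigma$ starting at $\ti v$; by uniqueness of reduced edge paths in the tree $\ti G$, $\ti f_\#(\ti\sigma) = \ti\sigma$, so $\ti f(\ti w) = \ti w$. Then $T\ti f$ and $\ti f T$ are two lifts of $f$ that agree at $\ti v$ (both send it to $\ti w$), so they coincide, giving $T \in \zt(\ti f)$.

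For the reverse inclusion, given $T \in \zt(\ti f)$, set $\ti w := T(\ti v)$; commutation gives $\ti f(\ti w) = T\ti f(\ti v) = T(\ti v) = \ti w$. The reduced edge path $[\ti v, \ti w]$ in $\ti G$ projects to a closed edge path $\sigma$ at $v$, and the same tree-uniqueness argument shows $\ti f_\#([\ti v, \ti w]) = [\ti v, \ti w]$, hence $f_\#(\sigma) = \sigma$, so $\sigma$ is a Nielsen path. By Lemma~\ref{lifting to S(f)}(1), $\sigma$ lifts to a closed path $\hat\sigma$ in $\stallings(f)$ with endpoints in $\stallings_1(f)$; using that $v$ has a unique lift in $\stallings_1(f)$, $\hat\sigma$ is a loop at $v$ in the component $\stallings(f,v)$, and by construction $\hat J_{\ti v}([\hat\sigma]) = J_{\ti v}([\sigma]) = T$.

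The only subtle point---really the one place to be careful---is ensuring that the lift $\hat\sigma$ produced by Lemma~\ref{lifting to S(f)}(1) lands in the component $\stallings(f, v)$ rather than some other component of $\stallings(f)$ carrying the same Nielsen path. This is arranged by the remark in Definition~\ref{defn:S(f)} that each $v \in \Fix(f)$ has a unique lift in $\stallings_1(f)$, so $\hat\sigma$, being a closed lift whose endpoints lie in $\stallings_1(f)$ above $v$, is forced to be a loop at $v \in \stallings(f,v)$. Everything else amounts to unwrapping the definitions and exploiting uniqueness of reduced paths in $\ti G$.
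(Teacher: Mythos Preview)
Your proof is correct and follows essentially the same approach as the paper's: both establish injectivity from the injectivity of $h_\#$ and the fact that $J_{\ti v}$ is an isomorphism, and both prove the two inclusions for the image by translating between closed Nielsen paths based at $v$ (via Lemma~\ref{lifting to S(f)}(1)) and fixed points of $\ti f$ in $\ti G$. Your write-up is slightly more explicit in two places---spelling out why the lift $\ti\sigma$ is itself a Nielsen path for $\ti f$ via uniqueness of reduced paths, and flagging the component issue handled by the unique lift of $v$ to $\stallings_1(f)$---but these are elaborations of the same argument rather than a different route.
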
 

\begin{proof}
$\hat J_{\ti v}$ is injective because $h_\#$ is injective and $J_{\ti v}$ is an isomorphism.  

If $\hat \gamma \subset \stallings(f,v)$ is a closed path based at $v$   then $\gamma:= h(\hat \gamma) \subset G$ is a closed Nielsen path based at $v$ by Lemma~\ref{lifting to S(f)}.     Lift $\gamma$ to a Nielsen path $\ti \gamma \subset \ti G$ for $\ti f$ with initial endpoint $\ti v$.  By definition,  $T :=\hat J_{\ti v}([\gamma])$  is the covering translation that maps $\ti v$ to the terminal endpoint $\ti w$ of $\ti \gamma$.  Since   $\ti w \in \Fix(\ti f)$, we have $\ti f\circ\trans(\tilde v)=\ti f(\ti w)=\ti w=\trans(\ti v)=\trans\circ\ti f(\ti v)$ and so $\ti f\circ\trans=\trans\circ\ti f$, i.e.\ $\trans\in\zt(\ti f)$. 

To see that  $\hat J_{\ti v}$ is surjective, let $T\in\zt 
(\ti f)$. Then $\trans(\ti v)$ is fixed by $\ti f$ and the path $\ti \gamma$ from $\ti v$ to $\ti w$ is a Nielsen path for $\ti f$ and so projects to a closed Nielsen path $\gamma  \subset G$ based at $v$ that lifts to a closed Nielsen path  $\hat \gamma \subset \stallings(f)$ based at $\hat v$.   By construction, $\hat J_{\ti v}[\hat \gamma] = T$.
\end{proof}

\begin{lemma} \label{l:rotationless fix}
There is an algorithm that, given rotationless $\phi\in \Out(F_n)$, computes $\Fix(\Phi)$ for $\Phi\in\phi$.
\end{lemma}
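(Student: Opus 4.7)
The plan is to identify $\Fix(\Phi)$, viewed as a subgroup of $\cT(\ti G)$ via Lemma~\ref{identify Fix}, with $\zt(\ti f)$ for the lift $\ti f$ corresponding to $\Phi$ of a \ct\ representative, and then to realize $\zt(\ti f)$ combinatorially as $\pi_1(\stallings(f, v), v)$ for a fixed vertex $v$, using Lemma~\ref{lift to stallings(f)}.

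First, apply Theorem~\ref{algorithmic ct} to construct a \ct\ $\fG$ representing $\phi$; after this step, isolated fixed points of $f$ all lie at vertices (cf.\ Section~\ref{proving 2.19}). Given $\Phi$, use the procedure of Section~\ref{s:markings} to algorithmically compute the lift $\ti f : \ti G \to \ti G$ corresponding to $\Phi$. Run the algorithm of Lemma~\ref{finding a fixed point} on $\ti f$. If it outputs a fixed point $\ti v$, let $v$ be its projection (a vertex by the above); construct $\stallings(f,v)$ algorithmically via Remark~\ref{r:stallings}; compute a spanning tree, and hence a free basis $\{\hat \gamma_i\}$, of the finitely generated $\pi_1(\stallings(f, v), v)$; then translate to $F_n$ by projecting each $\hat \gamma_i$ via $h$ to a Nielsen loop $\gamma_i \subset G$ based at $v$, conjugating by a fixed edge path $\eta$ from $\star$ to $v$ to obtain loops based at $\star$, and applying $\mu_\#^{-1}$. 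By Lemma~\ref{lift to stallings(f)} the resulting elements form a free basis of $\Fix(\Phi) \le F_n$.

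It remains to handle the case $\Fix(\ti f) = \emptyset$, in which Lemma~\ref{finding a fixed point} outputs a completely split path $\ti \sigma$ generating a ray $\ti R$; in this case I claim $\Fix(\Phi)$ is either trivial or infinite cyclic, with a generator readable from the output. Any $T \in \zt(\ti f)$ preserves $\Fix_N(\partial \ti f)$ setwise. If the projection $\sigma$ is not a Nielsen path, then by the moreover clause of Lemma~\ref{finding a fixed point}, $\Fix_N(\partial \ti f) = \{P\}$ with $P$ lying on no axis, so any nontrivial $T$ would fix $P$ as an endpoint of its own axis, a contradiction; hence $\Fix(\Phi) = 1$. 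If $\sigma$ is a Nielsen path, then $\ti f$ preserves the axis $\ti L$ of the root-free covering translation $T_w$ determined by $\sigma$ and acts on $\ti L$ by nontrivial translation; since any $T \in \zt(\ti f)$ must preserve $\ti L$ (the endpoints of which are the only points of $\Fix_N(\partial \ti f)$), and since $T_w$ itself commutes with $\ti f$ (translations of a line commute), we get $\zt(\ti f) = \langle T_w \rangle$ and $\Fix(\Phi) = \langle w \rangle$ with $w$ recovered from $\sigma$ via the marking. The main obstacle is the careful bookkeeping to pass explicit generators through the chain of identifications $\pi_1(\stallings(f,v), v) \hookrightarrow \pi_1(G, v) \cong \cT(\ti G) \cong F_n$ and the verification of the structural claims in the fixed-point-free case.
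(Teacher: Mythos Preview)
Your approach is essentially the paper's: build a \ct, pass to the lift $\ti f \leftrightarrow \Phi$, invoke Lemma~\ref{finding a fixed point}, and in the fixed-point case read off $\Fix(\Phi)$ from $\pi_1(\stallings(f,v),v)$ via Lemma~\ref{lift to stallings(f)}. Two points need tightening.

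First, a minor one: the \ct\ construction does not automatically force isolated fixed points of $f$ to be vertices (property (Vertices) only concerns endpoints of Nielsen paths). The paper explicitly subdivides at isolated interior fixed points; you should say so rather than claim it follows from Section~\ref{proving 2.19}.

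Second, in the Nielsen subcase of $\Fix(\ti f)=\emptyset$ your justification has gaps. You assert that the endpoints of $\ti L$ are exactly $\Fix_N(\partial\ti f)$, but the ``backward'' endpoint $Q$ of $\ti L$ is in fact repelling for $\partial\ti f$ (since $\ti f$ shifts along $\ti L$ toward $P$), so $\Fix_N(\partial\ti f)$ need not contain $Q$; in any case you give no argument. You also argue $T_w\in\zt(\ti f)$ via ``translations of a line commute,'' which is heuristic: $\ti f$ is not an isometry, and agreeing on $\ti L$ must be promoted to global equality of lifts. The paper avoids both issues. It observes directly that the covering translation $T$ taking $\ti v$ to $\ti w=\ti f(\ti v)$ satisfies $T\ti f(\ti v)=\ti f T(\ti v)$ (using that $\ti f_\#(\ti\sigma)$ is the lift of $\sigma$ at $\ti w$), hence $T\in\zt(\ti f)$. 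Since $\Fix(\ti f)=\emptyset$ forces $\ti f$ non-principal and thus $\rk(\Fix(\Phi))\le 1$, and $\Fix(\Phi)$ is root-free, one gets $\zt(\ti f)=\langle T_w\rangle$ with $T_w$ the primitive root of $T$. Replacing your $\Fix_N$ argument with this two-line computation closes the gap.
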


\begin{proof}    Let $\fG$ be a \ct\ representing the element $\phi \in \Out(F_n)$ determined by $\Phi$.  Subdivide if necessary so that every isolated fixed point of $f$ is a vertex. The setup is similar to that of Lemma~\ref{l:nonrotationless fix}. The marking homotopy equivalence is $\mu : (R_n,*) \to (G,\star)$.  Via a lift of $\star$ to $\ti \star \in \ti G$ we have  an identification of   $\cT(\ti G)$  with $F_n$, an isomorphism $J_{\ti \star} : \pi_1(G,\star) \to \cT(\ti G)$ and   a lift $\ti f : \ti G \to \ti G$ that can be found algorithmically and that corresponds to $\Phi$ in a sense made precise in Section~\ref{s:markings}.  The key point for us is  Lemma~\ref{identify Fix} which states that  $\zt(\ti f)$ and $\Fix(\Phi)$ are equal when viewed as subgroups of $\cT(\ti G)$.

Apply Lemma~\ref{finding a fixed point}  to decide if $\Fix(\ti f)  =  \emptyset$. 
 
If  $\Fix(\ti f) \ne \emptyset$ then Lemma~\ref{finding a fixed point} finds $\ti v \in \Fix(\ti f)$. Since isolated fixed points of $f$ are vertices, we may assume that $\ti v$ is a vertex. Indeed, by Convention~\ref{c:linear} non-isolated fixed points only occur in fixed edges.

Let $v\in G$ be the projection of $\ti v$, let   $\eta \subset G$ be the  projection of the path  $\ti \eta\subset \ti G$ from $\ti \star$ to $\ti v$, and let $h :\stallings(f,v) \to G$ be the immersion given by the $G$-structure. Define $H  =  h_\#(\pi_1(\stallings(f,v),v))< \pi_1(G,v)$. Arguing  as in Lemma~\ref{l:nonrotationless fix}, with Lemma~\ref{l:z periodic}(2) replaced by Lemma~\ref{lift to stallings(f)} we conclude that $\Fix(\Phi)$ is identified with  $H^\eta < \pi_1(G,\star)$. 
Since $F_n$ has been identified via $\mu_\#$ with $\pi_1(G, \star)$, we are done. In summary,
   $$\Phi \rightsquigarrow \ti f\rightsquigarrow \ti v \in \Fix(\ti f) \rightsquigarrow H = h_\#(\pi_1(\stallings(f,v),v)) < \pi_1(G,v) \rightsquigarrow H^\eta < \pi_1(G, \star)$$

If  $\Fix(\ti f) = \emptyset$  then $\ti f$ is not principal \cite[Corollary 3.17]{fh:recognition} and so $\rk(\Fix(\Phi)) \le 1$ \cite[Remark 3.3]{fh:recognition} and $\Fix_N(\partial\ti f) = \Fix_N(\partial\Phi)$ has at most two points.   Lemma~\ref{finding a fixed point}   finds  a completely split path $\ti \sigma$ that generates a ray $\ti R$.  There are two subcases.   If the projected image $ \sigma \subset G$   is not a Nielsen path then $|f^k_\#(\sigma)| \to \infty$ and   \cite[Proposition~I.1]{gjll:index} implies that the terminal endpoint $P$ of   $\ti R = \ti \sigma \cdot  \ti f_\#(\ti \sigma) \cdot \ti f^2_\#(\ti \sigma)\cdot \ldots$, which is evidently fixed by $\partial \ti f$, is an attractor for the action of $\partial \ti f$,  is contained in $\Fix_N(\partial\ti f)$ and  is not the endpoint of an axis of a  covering translation.   If $\zt(\ti f)$  contains a non-trivial element $T$  then the $T$-orbit of $P$ would be an infinite set in $\Fix_N(\partial\ti f)$.  This contradiction shows that $\zt(\ti f)$, and hence $\Fix(\Phi)$,  is trivial.  The remaining subcase is that $ \sigma$ is a Nielsen path. Let $\ti v$ and $\ti w$ be the initial and terminal endpoints of $\ti \sigma$ respectively and let   $T $ be the covering translation that carries $\ti v$ to $\ti w$.  Since  $\ti f_\#(\ti \sigma)$ is the unique lift of $\sigma$ with initial endpoint at $\ti w$,  we have $\ti f(\ti v) = T(\ti v) =\ti w$ and $\ti f(\ti w) = T(\ti w)$.   Thus $\ti f T(\ti v) = T \ti f(\ti v)$ and $T \in \zt(\ti f)$.   Equivalently $J_{\ti v}([\sigma]) \in \zt(\ti f)$.  Letting $H < \pi_1(G,v)$ be the maximal cyclic subgroup that contains $[\sigma]$,  we have $J_{\ti v}(H)  =\zt(\ti f)$ and  the usual argument shows that  $\Fix(\Phi)$ is identified with  $H^\eta < \pi_1(G,\star)$. In summary,
 $$\Phi \rightsquigarrow \ti f\rightsquigarrow \ti \sigma \rightsquigarrow   \langle [\sigma] \rangle  < H < \pi_1(G,v) \rightsquigarrow H^\eta < \pi_1(G, \star)$$ 
\end{proof}

\subsection{The general case}

\begin{prop}[Bolgopolski-Maslakova \cite{bm:fix}]\label{p:fix}
There is an algorithm that, given $\Phi\in\Aut(F_n)$, computes $\Fix(\Phi)$.
\end{prop}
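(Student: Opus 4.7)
The plan is to reduce the general case to the two special cases already established: the rotationless case (Lemma~\ref{l:rotationless fix}) and the periodic case (Lemma~\ref{l:nonrotationless fix}). Let $\phi\in\Out(\f)$ denote the outer class of $\Phi$. By Corollary~\ref{c:kn}, $\phi^{K_n}$ is rotationless for the explicit integer $K_n$, so Lemma~\ref{l:rotationless fix} applied to $\Phi^{K_n}$ algorithmically produces a finite generating set, and hence a basis $\{b_1,\ldots,b_m\}$ with $m\le n$, for the subgroup $H:=\Fix(\Phi^{K_n})$, expressed as words in a fixed basis of $\f$.

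Since $\Phi$ commutes with $\Phi^{K_n}$, we have $\Phi(H)=H$, so $\Phi$ restricts to an automorphism $\Phi|_H$ of $H$; moreover $(\Phi|_H)^{K_n}=\mathrm{id}_H$, so $\Phi|_H$ is periodic of order dividing $K_n$. To realize $\Phi|_H$ as an explicit periodic automorphism of the abstract free group $F_m\cong H$, I would compute each image $\Phi(b_i)\in\f$ and then rewrite $\Phi(b_i)$ as a word in $b_1,\ldots,b_m$. This rewriting step is a standard Stallings-folding computation: build the Stallings graph of $H$, confirm that $\Phi(b_i)\in H$ (which must hold), and read off the desired word from the unique lift of the edge path labeled by $\Phi(b_i)$ at the basepoint.

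With $\Phi|_H$ now presented as a periodic automorphism of $F_m$, Lemma~\ref{l:nonrotationless fix} computes $\Fix(\Phi|_H)$ as a finitely generated subgroup of $H$, given on the basis $\{b_j\}$. Since any element of $\Fix(\Phi)$ lies in $\Fix(\Phi^{K_n})=H$, we have $\Fix(\Phi)=\Fix(\Phi|_H)$; re-expressing the output in terms of the original basis of $\f$ yields $\Fix(\Phi)$.

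The main obstacle is the middle step: algorithmically translating the abstract restriction $\Phi|_H$ into an honest automorphism of $F_m$ specified by the images of a computed basis. Once this is recognized as a subgroup-membership-plus-word-rewriting problem solved by Stallings foldings, the remainder of the argument is a direct assembly of the two prior lemmas together with Corollary~\ref{c:kn}.
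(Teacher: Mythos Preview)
Your proposal is correct and follows essentially the same route as the paper: compute $\Fix(\Phi^{K_n})$ via Lemma~\ref{l:rotationless fix} using the rotationless iterate from Corollary~\ref{c:kn}, then apply Lemma~\ref{l:nonrotationless fix} to the periodic restriction $\Phi|_H$. You supply more detail than the paper does on the intermediate rewriting step (expressing $\Phi|_H$ on a computed basis of $H$ via Stallings foldings), which the paper leaves implicit, but the structure of the argument is identical.
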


\begin{proof}
The case of $\Phi\in\phi$ with $\phi$ rotationless was handled in Lemma~\ref{l:rotationless fix}.

Suppose then that $\phi$ is not rotationless. In Corollary~\ref{c:kn} we computed $M$ so that $\phi^M$ is rotationless. So quoting Lemma~\ref{l:rotationless fix} again, $\Fix(\Phi^M)$ can be computed algorithmically. We are reduced to finding the fixed subgroup of the periodic action of $\Phi$ on $\Fix(\Phi^M)$. More generally, we are reduced to finding $\Fix(\Phi)$ for a finite order $\Phi\in\Aut(F_n)$, and this is the content of Lemma~\ref{l:nonrotationless fix}.
\end{proof}

\section{$\pstallings(f)$ and $\Fix(\phi)$}\label{s:fix phi}
Let $\fG$ be a \ct\ representing $\phi\in\Out(F_n)$. In this section we characterize the components of $\pstallings(f)$ and show that $\pstallings(f)$ is the geometric realization of $\Fix(\phi)$ defined as follows.

\begin{definition}
For $\phi\in\Out(\f)$, $\Fix(\phi)$ is defined to be the collection of $[\Fix(\Phi)]$ indexed by isogredience classes $[\Phi]\in\PA(\phi)/\sim$.
\end{definition}

Recall two facts from the review in Section~\ref{s:principal}:
\begin{itemize}
\item
If $\ti f$ is a principal lift of $f$ then $\Fix(\ti f)$ is non-empty.
\item
If $\ti f_1, \ti f_2$ are principal lifts of $f$ then $\ti f_1$ and $\ti f_2$ are isogredient iff $\Fix(\ti f_1)$ and $\Fix(\ti f_2)$ have equal projections in $G$. 
\end{itemize}
 
By the definition of a \ct, endpoints of indivisible Nielsen paths are vertices and so Lemma~\ref{l:not principal}(\ref{i:ct principal}) implies that, for principal $\ti f$, $\Fix(\ti f)$ consists of vertices and edges. It follows that there are only finitely many equivalence classes of principal lifts of $f$ and   that there is a 1-1 correspondence between isogredience classes $[\ti f]$ of principal lifts $\ti f$ of $f$ and Nielsen classes $[v]$ of principal vertices $v$ in $G$ given by $[\ti f]\leftrightarrow [v]$ iff $\ti f$ fixes some lift of $v$. It is algorithmic to tell if a vertex of $G$ is principal and to find its Nielsen class (Lemmas~\ref{l:not principal}(\ref{i:ct principal}) and \ref{lifting to S(f)}).

By construction, for principal vertices $v$ and $v'$, $\pstallings(f,v)=\pstallings(f,v')$ iff $[v]=[v']$. We write $\pstallings(f,v)=\pstallings(f,[v])=\pstallings([\ti f])$ where $[v]\leftrightarrow[\ti f]$ and see that $\pstallings(f)=\sqcup_{[v]}\pstallings(f,[v])=\sqcup_{[\ti f]}\pstallings([\ti f])$ where $[v]$ runs over Nielsen classes of principal vertices in $G$ and $[\ti f]$ runs over isogredience classes of principal lifts of $f$.
 
If $\ti f\leftrightarrow\Phi$ and $\ti f'\leftrightarrow\Phi'$ are isogredient then $\Fix(\Phi)$ and $\Fix(\Phi')$ are conjugate and $\zt(\ti f)$ and $\zt(\ti f')$ are conjugate. Hence, the isogredience classes of $\Phi$ and $\ti f$ determine a conjugacy class $\Fix([\Phi])$ of subgroup of $F_n$ and a conjugacy class $\zt([\ti f])$ of subgroup of $\cT(\ti G)$. The sets $\Fix([\Phi])$ and $\zt([\ti f])$ correspond under the identifications of Section~\ref{s:markings}; we denote this by $\Fix([\Phi])\leftrightarrow\zt([\ti f])$.  If  $[\ti f]\leftrightarrow [v]$ and  $\ti f\leftrightarrow\Phi$ then $\pstallings(f,[v])$ is the geometric realization of  $\Fix([\Phi])$ and $\pstallings(f)$ is the geometric realization of $\Fix(\phi)$.

\section{Possibilites for $[\Fix(\Phi)]$}
In this section we algorithmically find the possibilities for $[\Fix(\Phi)]=\Fix([\Phi])$  for $\Phi\in\phi$ with $\phi$ rotationless (Corollary~\ref{c:possibilities}).

\begin{corollary}\label{c:possibilities}
Let $\phi\in\Out(\f)$ be rotationless.
\begin{enumerate}
\item
There are finitely many $\f$-conjugacy classes in $$\{\Fix(\Phi)\mid \Phi\in\Aut(\f) \mbox{ is principal and represents } \phi\}$$ These conjugacy classes are represented by the components of $\pstallings(f)$ where $\fG$ is a \ct\ for $\phi$. In particular, they can be computed algorithmically.  
\item
For all  $\Phi\in\phi$, $\Fix(\Phi)$ is root-free. Conversely, if $w\not= 1\in\f$ is root-free and $[w]$ is $\phi$-invariant then \begin{enumerate}
\item
there is $\Phi\in \PA(\phi)$ such that $w\in\Fix(\Phi)$ and
\item
there is non-principal $\Phi'\in\phi$ such that $\Fix(\Phi')=\langle w\rangle$.
\end{enumerate}
\end{enumerate}
\end{corollary}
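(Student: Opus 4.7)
The plan is to deduce both parts from the analysis of $\pstallings(f)$ in Section~\ref{s:fix phi}. Part (1) is essentially already proved there: the decomposition $\pstallings(f) = \sqcup_{[\ti f]} \pstallings(f,[v])$ puts the components of $\pstallings(f)$ in bijection with isogredience classes $[\ti f]$ of principal lifts of $f$, and each component is the geometric realization of the conjugacy class $\Fix([\Phi])$ for the associated principal isogredience class. Finiteness is immediate since $\pstallings(f)$ is a finite graph, and algorithmicity comes from Remark~\ref{r:stallings}.

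For the root-freeness assertion in (2), if $v^n \in \Fix(\Phi)$ with $n \geq 1$, then $\Phi(v)^n = v^n$, and uniqueness of $n$-th roots in $F_n$ forces $\Phi(v) = v$, so $v \in \Fix(\Phi)$. For the converse (a), the rotationless hypothesis together with \cite[Lemma~3.30]{fh:recognition} upgrades the $\phi$-invariance of $[w]$ to $\phi$-fixedness. By Lemma~\ref{lifting to S(f)}(2), $[w]$ is represented by a circuit $\hat\sigma \subset \stallings(f)$; since the isolated non-principal vertices of $\stallings(f) \setminus \pstallings(f)$ carry no circuits, $\hat\sigma$ lies in a unique component $\pstallings(f,[v])$, which corresponds to a unique isogredience class $[\ti f]$ of principal lifts and thus to a class $[\Phi] \in \PA(\phi)/\sim$. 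The argument of Lemma~\ref{lift to stallings(f)} (applied verbatim to $\pstallings(f,v)$ in place of $\stallings(f,v)$) converts $\hat\sigma$ into an element $T \in \zt(\ti f)$; via Lemma~\ref{identify Fix}, $T$ is an element of $\Fix(\Phi)$ in the conjugacy class $[w]$. Conjugating within the isogredience class of $\Phi$ then produces a representative with $w \in \Fix(\Phi)$.

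For (b), set $\Phi' := i_w \Phi$. Then $\Phi'(w) = w\Phi(w)w^{-1} = w$, so $\langle w\rangle \subseteq \Fix(\Phi')$. A direct calculation using $\Phi(w) = w$ shows $i_w \Phi i_w^{-1} = \Phi$, so $\Phi'$ is isogredient to $\Phi$ only when $w = 1$; hence $[\Phi'] \ne [\Phi]$. Because $\hat\sigma$ lies in a unique component of $\pstallings(f)$, the class $[\Phi]$ is the only class in $\PA(\phi)/\sim$ whose representatives fix $w$; consequently $\Phi'$ is non-principal. The inclusion $w \in \Fix(\Phi')$ gives $\{w^\infty, w^{-\infty}\} \subseteq \Fix_N(\partial\Phi')$ (a standard fact: a fixed element contributes the two endpoints of its axis as non-repelling fixed points of the boundary action), and non-principality forces $|\Fix_N(\partial\Phi')| \leq 2$, so $\Fix_N(\partial\Phi') = \{w^\infty, w^{-\infty}\}$; these being the axis endpoints of the covering translation $T_w$ is exactly the first option for non-principality in Definition~\ref{def:principal auto}. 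Since $\Fix(\Phi')$ acts on $\Fix_N(\partial\Phi')$ and since $w$ is root-free, the $F_n$-stabilizer of $\{w^\infty, w^{-\infty}\}$ equals $\langle w \rangle$; therefore $\Fix(\Phi') \subseteq \langle w \rangle$, and equality follows. The main obstacle is verifying cleanly the two standard dynamical facts invoked: that $w^{\pm\infty}$ is non-repelling for $\partial\Phi'$ whenever $w \in \Fix(\Phi')$, and that distinct principal isogredience classes correspond to distinct components of $\pstallings(f)$; both are built into the setup of Sections~\ref{s:markings} and \ref{s:fix phi}.
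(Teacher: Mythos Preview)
Your proofs of (1), root-freeness, and (2)(a) are fine and match the paper's approach.

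Your proof of (2)(b) has a genuine gap. You set $\Phi' = i_w\Phi$ and try to show it is non-principal by arguing that $[\Phi]$ is the \emph{only} principal isogredience class whose representatives fix a conjugate of $w$. This uniqueness claim is false: the same conjugacy class $[w]$ can be carried by more than one component of $\pstallings(f)$. For instance, in the example of Figure~\ref{f:stallingspg} the class $[a]$ is carried by both components, so there are two non-isogredient principal automorphisms fixing conjugates of $a$; these differ exactly by a power of $i_a$. Hence your specific choice $\Phi'=i_w\Phi$ can very well be principal. Moreover, the sentence ``a direct calculation using $\Phi(w)=w$ shows $i_w\Phi i_w^{-1}=\Phi$, so $\Phi'$ is isogredient to $\Phi$ only when $w=1$'' is a non sequitur: the computation shows $i_w$ commutes with $\Phi$, which says nothing about whether $\Phi'=i_w\Phi$ is conjugate to $\Phi$ by some \emph{other} $i_a$.

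The paper's fix is short: consider the whole family $\Phi_m:=i_w^m\Phi$ for $m\in\Z$, each of which fixes $w$. By \cite[Lemma~4.40]{fh:recognition} only finitely many $\Phi_m$ are principal, so some $\Phi'=\Phi_m$ is non-principal; then $\rk(\Fix(\Phi'))\le 1$ forces $\Fix(\Phi')=\langle w\rangle$.
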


\begin{proof}
(1): This is the content of Section~\ref{s:fix phi}.   

(2): $\Fix(\Phi)$ is always root-free.  The existence of $\Phi$ as in  (a) follows from \cite[Lemma 3.30 (1)]{fh:recognition}. 
The automorphism $\Phi_m:=i^m_w\circ\Phi$  fixes $w$ for all $m$ and is principal for only finitely many $m\in\Z$; see \cite[Lemma~4.40]{fh:recognition}. Take $\Phi'$ to be a non-principal $\Phi_m$. Since  $\Phi'$ is non-principal,    $\rk(\Fix(\Phi'))<2$ and we see that $\Fix(\Phi')=\langle w\rangle$.
\end{proof}

Lemma~\ref{trivial Fix} below seems obvious but we could not find a reference for it in the literature so we are including it here with a proof for completeness.  The following lemma is used in its proof.

\begin{lemma}  \label{filling circuit} For all non-trivial   $\phi$ there exists a filling conjugacy class that is not fixed by $\phi$.
\end{lemma}

\proof  After replacing $\phi$ by an iterate, we may assume that $\phi$ is rotationless.  Let $\fG$ be a topological representative of $\phi$ and let $\beta \subset G$ be a circuit representing a conjugacy class $[c]$ that  is not $\phi$-invariant.   Since $\phi$ is rotationless,  $[c]$ is not fixed by any iterate of $\phi$.  We may therefore choose $k$ so that the edge length of $f^k_\#(\beta)$ is greater than the edge length of $\beta$.   Choose a path $\alpha \subset G$ such that any circuit containing $\alpha$ is filling (any path $\alpha$ whose Whitehead graph does not have a cut-point will do \cite{rm:thesis}; see also \cite[Proof of Lemma~3.2]{bf:hyperbolicity}).  Choose a circuit $\tau$ that contains both $\alpha$ and $\beta$ as disjoint subpaths.  Decompose $\tau$ as a concatenation of subpaths  $\tau = \mu \beta$ and let $\tau_N = \mu \beta^N$.  The bounded cancellation lemma implies that  the edge length of $f^k_\#(\tau_N)$ is greater than the edge length of $\tau_N$ for all sufficiently large $N$.  Each such $\tau_N$ satisfies the conclusions of the lemma. 
\endproof    

\begin{lemma} \label{trivial Fix} Each infinite order $\phi \in \Out(F_n)$ is represented by $\Phi \in \Aut(F_n)$ with trivial fixed subgroup. 
\end{lemma}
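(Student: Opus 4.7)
(Plan.) Fix any representative $\Phi_0 \in \phi$. Every representative takes the form $\Phi_a := i_a \Phi_0$ for some $a \in F_n$, with $w \in \Fix(\Phi_a)$ iff $\Phi_0(w) = a^{-1} w a$. For fixed $w \ne 1$, this equation admits solutions $a \in F_n$ only when $[w]$ is $\phi$-invariant, and the solution set is then a single coset of the cyclic centralizer $C(w) = \langle w_0 \rangle$ (where $w = w_0^k$ and $w_0$ is root-free). Hence the bad set
\[ B := \{a \in F_n : \Fix(\Phi_a) \ne 1\} \]
is a union of cosets of conjugates of cyclic subgroups, one coset for each conjugate of each root-free $\phi$-invariant conjugacy class. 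Any $a \in F_n \setminus B$ yields the desired $\Phi = \Phi_a$.

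To control the collection of root-free $\phi$-invariant classes contributing to $B$, I would pass to a rotationless power $\psi := \phi^M$, which retains infinite order. Every $\phi$-invariant conjugacy class is $\psi$-invariant, and by Corollary~\ref{c:possibilities} the root-free $\psi$-invariant classes are structured: they either lie in one of the finitely many principal fixed subgroups $\Fix(\Psi)$ for principal $\Psi \in \psi$, or they are cyclic subgroups labeled by circuits of the graph $\pstallings(f)$ for a \ct\ $f$ representing $\psi$. This presents $B$ as a structured (countable) union of cosets of conjugates of finitely generated subgroups of controlled type.

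The main obstacle is to show $B \ne F_n$. The key input is Lemma~\ref{filling circuit} applied to $\psi$, which produces a filling conjugacy class $[c]$ that is not $\psi$-fixed and hence not $\phi$-fixed. The filling hypothesis forbids $c$ from being carried by any proper free factor, so $c$, and in particular its large powers, cannot be absorbed into the proper structural collection of cyclic subgroups comprising $B$. The plan is to take $a$ to be a sufficiently large power of $c$ (or a carefully chosen conjugate): if such $a \in B$, some $w \ne 1$ would satisfy $\Phi_0(w) = a^{-1} w a$, and comparing the word-length growth of $a^{-1} w a$ with the bounded Lipschitz distortion of $\Phi_0$, together with the constraint that the root-free part of $w$ must lie in the controlled structure from the previous paragraph, would derive a contradiction from filling. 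Turning the filling property into this coset-covering obstruction is the chief technical difficulty; once accomplished, the required $a \in F_n \setminus B$ is produced, giving $\Phi_a$ with $\Fix(\Phi_a) = 1$.
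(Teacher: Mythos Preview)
Your setup and overall strategy match the paper's: parametrize representatives as $\Phi_a = i_a\Phi_0$, invoke Lemma~\ref{filling circuit} to produce a filling class $[c]$ not fixed by a rotationless iterate, and argue that $a = c^m$ works for large $m$. The paper does exactly this (with your $c$ playing the role of its $a$). But your proof stops precisely at the point you yourself flag as ``the chief technical difficulty,'' and the method you sketch for it does not go through.

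The Lipschitz comparison is not enough. From $\Phi_0(w) = c^{-m} w c^m$ one gets $|c^{-m}wc^m| \le L|w|$, but $w = w_m$ depends on $m$, may be arbitrarily long, and the cancellation in $c^{-m} w_m c^m$ is uncontrolled: if $w_m$ begins and ends with long pieces of powers of $c$, the conjugate can be short relative to $|w_m|$, so no bound on $m$ follows. The structural input you cite (root-free $\phi$-invariant classes lift to the finite graph $\pstallings(f)$) does not help here either, since $\pstallings(f)$ carries infinitely many circuits, so the family of possible $w_m$ is not finite. In short, nothing in your outline rules out $c^m \in B$ for all $m$.

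The paper closes this gap by a genuinely different, dynamical argument on $\partial F_n$. Assuming $b_m \in \Fix(i_c^m\Phi)$ is nontrivial for arbitrarily large $m$, it extracts subsequential limits $b_m^\pm \to P,Q$ and shows $P,Q \in \{c^+,\partial\Phi^{-1}(c^-)\}$. If $P \ne Q$, the axis of $c$ is a weak limit of Nielsen axes and hence lifts to $\pstallings(f)$, contradicting that $[c]$ is not fixed by the rotationless iterate. If $P = Q$, a Levitt--Lustig fixed-point result produces attractors in $\Fix_N$ accumulating on $c^+$; the eigenray description (Lemma~\ref{Fix+}) together with weak-limit arguments then forces the axis of $c$ into a proper filtration element, contradicting that $[c]$ fills. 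These boundary and lamination arguments are what actually replace your missing step, and they are not visible in your outline.
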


\proof  Choose   (Lemma~\ref{filling circuit})  a filling conjugacy class $[a]$ that is not fixed by  $\psi = \phi^l$ where $l$ is chosen so that both $\psi$ and $\psi^{-1}$ are rotationless.  Let $\{a^\pm\}\subset \partial\f$ denote the endpoints of the axis of $a$.  Choose $\Phi$ representing $\phi$ such that $\partial\Phi(\{a^\pm\}) \cap \{a^\pm\} = \emptyset$  and let $\Phi_m = i_a^m \circ \Phi$.   We will prove that $\Fix(\Phi_m)$ is trivial for all sufficiently large $m$.

Assuming that $\Phi_m$  fixes some non-trivial $b_m$    for arbitrarily large $m$, we will argue to a contradiction.  For notational convenience we pass to a subsequence and   assume that   for all $m\ge 1$, $b_m$ is fixed by $\Phi_m$.  After passing to a further subsequence we may assume that $b^+_m \to P$ and $b^-_m \to Q$ for some $P,Q \in \partial F_n$.  From  $ \partial \Phi_m(b^+_m) =b^+_m$ we see that $\partial \Phi(b^+_m) = \partial i_a^{-m}(b^+_m)$  and hence that $P$ is either $a^+$ or $\partial \Phi^{-1}(a^-)$.   Similarly, $Q$  is either $a^+$ or $\partial \Phi^{-1}(a^-)$. 
 
 Let $\fG$ be a \ct\ representing $\psi$ and let  $\ti f_m$ be the lift of $\fG$ corresponding to $\Psi_m := \Phi_m^l$.   Let $T$ and $S_m$ be the covering translations corresponding to $a$ and $b_m$ respectively. Thus $T^\pm = a^\pm$ and $S_m^\pm = b_m^\pm$.  If $P \ne Q$ then the line connecting $a^+$ to $\partial\Phi^{-1}(a^-)$  is a weak limit of the  axes for $S_m$.  It follows that the axis of $T$ is a weak limit of a sequence of Nielsen axes for $\ti f_m$ and hence that  the axis of $T$ is an increasing union of arcs each lifting to $\pstallings(f)$. The axis of $T$ therefore  lifts to $\pstallings(f)$ and so represents a $\psi$-fixed conjugacy class by Lemma~\ref{lifting to S(f)}, contradiction.  

Suppose then that  $P = Q = \Phi^{-1}(T^-)$.  (The case that $P= Q = T^+$ is argued symmetrically.)   For all sufficiently large $m$, there is a neighborhood  $U_m^+$ of $T^+$ in $\partial F_n$ such that $\partial \Phi_m(U_m^+) \subset U_m^+$.   By \cite[Theorem I]{ll:dynamics},  $U_m^+$  contains a non-repelling periodic point      for the action of $\partial\Phi_m$.   Thus  $\Psi_m$  is a principal lift of $\psi$ and $U_m^+$ contains an element $B_m$ of $\Fix_N(\Psi_m)$.  As $m \to \infty$ we may assume that $B_m \to T^+$.

By Lemma~\ref{Fix+},  there exists a lift $\ti E_m$ of some $E_m \in \E$ such that the lift $\ti R_{E_m}$ of the eigenray generated by $E_m$ terminates at $B_m$.  We may assume without loss that $E = E_m$ is independent of $m$. The ray from the initial endpoint $\ti x_m$ of $\ti E_m$ to $b^+_m$  factors as a concatenation of Nielsen paths and so lifts into $\pstallings(f)$. After passing to yet another subsequence we may assume that  $\ti x_m$   converges to a point $X$.   If $X = T^+$ then the axis of $T$ is a weak limit of paths that lift into  $\pstallings(f)$ and so lifts to  $\pstallings(f)$.     As above, this gives the desired contradiction.   We may therefore assume that $X \ne T^+$.  In this case the axis of $T$ is a weak limit of lifts of $R_E$ and so the  periodic line that it projects to is a weak limit of $R_E$.   We will complete the proof by showing that each such periodic line $L$   is carried by $G_{r-1}$ where $H_r$ is the top stratum of $G$.   There is no loss in assuming that $E$ is an edge in $H_r$.  If $E$ is \noneg\ then \cite[Lemma 3.26(3)]{fh:recognition} completes the proof. If $E \subset H_r$ is \eg\ then $L$ is leaf in the attracting lamination $\Lambda_r$ associated to $H_r$ by \cite[Lemma 3.26(2)]{fh:recognition}. But every leaf   is  either contained in $G_{r-1}$ or is dense in $\Lambda_r$ by \cite[Lemma 3.1.15]{bfh:tits1} and the latter is impossible for a periodic line.    
\endproof

\section{A Stallings graph for $\Fix_N(\partial\Phi)$}\label{s:stallings_N}\label{s:S}
As usual, throughout Section~\ref{s:stallings_N} $\fG$ will denote a \ct\ for $\phi$. The goal of this section is to generalize Section~\ref{s:fix phi} by describing a (not necessarily finite) $G$-graph $\pstallings_N(f)$  with core $\pstallings(f)$ that in a sense described below represents the collection of $\Fix_N(\partial\Phi)$  (Definition~\ref{def:principal auto}) indexed by $\Phi\in\PA(\phi)$. This collection plays an important role in the main theorem of \cite{fh:recognition}. We will first define the graph $\pstallings_N(f)$  and  then relate it to the collection of $\Fix_N(\partial\Phi)$. $\Fix_N(\partial\Phi)$  has been of considerable interest; see for example \cite{jn:collected1, jn:unter2, ikt:fixN, co:bcc, gjll:index, bfh:tits0}.

\subsection{Definition of $\pstallings_N(f)$}\label{s:sn(f)}
The idea of the construction of $\pstallings_N(f)$ is to start with $\pstallings(f)$ and add a copy of $R_E$ for each $E\in\E_f$. This does not quite work since it is possible that $\partial R_E=\partial R_{E'}$ for $E\not= E'$. In the next paragraph we describe the ways that this can happen.

Suppose that $E \ne E' \in \E_f$ and that $\partial R_E =  \partial R_{E'}$.  Choose a lift $\ti E$ of $E$ to $\ti G$, let $\ti R_{E}$ be the lift of $R_E$ that begins at the initial endpoint $\ti v$ of $\ti E$  and let $P$ be  the terminal endpoint of $\ti R_E$.  By hypothesis, there is a lift $\ti R_{E'}$ of $R_{E'}$ that terminates at $P$. Let $\tilde E'$ be the initial edge of $\ti R_{E'}$ and let $\ti w$ be the initial vertex of $\ti E'$. If $\ti f$ is the lift that fixes $\ti v$ then $  \partial\ti f$ fixes $P$.  Since $\ti f$ is the only lift of $f$ that fixes $P$ (Remark~\ref{unique lift}), it follows that $\ti f$ also fixes $\ti w$.  The path $\ti \rho$ connecting $\ti v$ to $\ti w$ is therefore a Nielsen path for $\ti f$ and its image  $\rho \subset G$ is a Nielsen path for $f$. 
    The edges $E$ and $E'$ are of the same \eg\ height, say $r$, since the height of $E$ (resp.\ $E'$) is the height of $R_E$ (resp.\ $R_{E'}$) and edges of this height occur infinitely often in $R_E$ (resp.\ $R_{E'}$) and $R_E$ and $R_E'$ have a common tail. It follows from the facts in Review~\ref{r:Nps} that  $\rho$ is a concatenation of subpaths that are fixed edges or indivisible Nielsen paths, that $E$ (resp.\ $E'$) must be contained in an indivisible Nielsen subpath of height $r$, and that each of these indivisible Nielsen subpaths of height $r$ contributes an illegal turn of height $r$. By construction, $\rho$ has at most one illegal turn of height $r$ and so we conclude that $\rho$ is indivisible. Note that $\ti R_E$ and $\ti R_{E'}$ have a common terminal ray $\ti R_{E,E'}$ and that $\ti \rho = \ti \alpha \ti \beta^{-1}$ where  $\ti R_E = \ti \alpha  \ti R_{E,E'}$ and $\ti R_{E'} = \ti \beta  \ti R_{E,E'}$ giving $\rho$ the form $\alpha\beta^{-1}$ as in Review~\ref{r:Nps}.

\begin{definition}
We use the notation from the Definitions~\ref{defn:S(f)} and \ref{d:PS(f)}. Let $\CS_N(f)$ be the graph obtained from $\pstallings(f)$ as follows. For each \noneg\ $E \in \E_f$  and for each \eg\ $E\in \E_f$ that is not the initial edge of an \iNp, attach $R_E$ to $\pstallings(f)$ by identifying the initial endpoint of $R_E$ with the initial endpoint of $E$ thought of as a vertex in $\pstallings_{1}(f)$.  If $E\in \E_f$ belongs to an \eg\ stratum $H_r$ and $E$ is the initial edge of an \iNp\ $\rho$ of height $r$ then the initial edge $E'$ of $\rho^{-1}$ is also an edge of $\E_f \cap H_r$.    Subdivide the edge  labeled $\rho$ that was added  to $\pstallings(f)$ during stage two so that it is now two edges, one labeled $\alpha$ and the other $\beta^{-1}$.  Attach   $R_{E,E'}$ to $\pstallings(f)$ at the newly created vertex.
\end{definition}

\begin{remark}\label{r:cs algorithmic}
Given that the construction of $\pstallings(f)$ is algorithmic and that any initial segment of a ray $R_E$ of prescribed length can be explicitly computed, it follows that there is an algorithm that, given  $d>0$, constructs the $d$-neighborhood (in the graph metric) of $\pstallings(f)$ in $\CS_N(f)$.
\end{remark}

\begin{figure}[h!]
\centering
\includegraphics[width=0.6\textwidth]{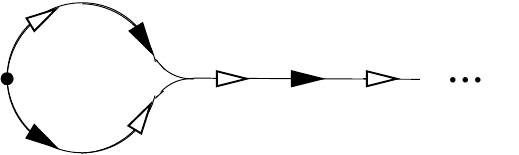}
\caption{$\CS_N(f)$ for $f$ as in Figure~\ref{f:egstallings}. $\CS_N(f)$ is obtained from $\pstallings(f)$ by adding the rays $R_{a,b}=babbab\dots$ and $R_{B}=BABBABAB\dots$.}
\label{f:sNeg}
\end{figure}
 
\begin{figure}[h!]
\centering
\includegraphics[width=0.4\textwidth]{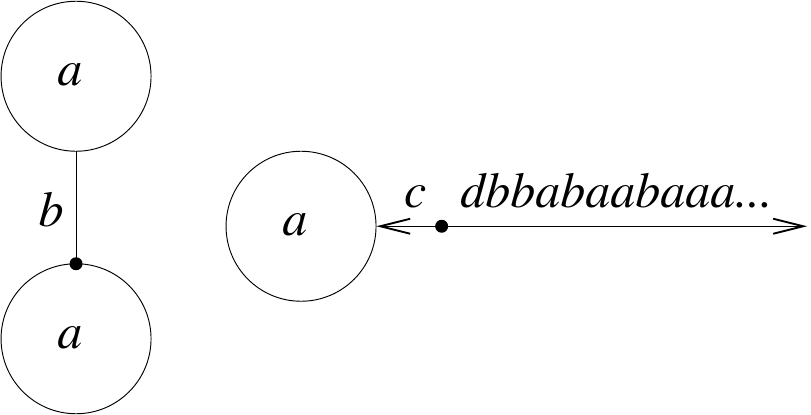}
\caption{$\CS_N(h)$ for $h$ as in Figure~\ref{f:stallingspg}. $\CS_N(h)$ is obtained from $\pstallings(h)$ by adding the eigenray $dbb\ldots$.}
\label{f:sNpg}
\end{figure}

By construction of $\CS_N(f)$, its components are in a 1-1 correspondence with the components of $\pstallings(f)$.  That is, components of $\CS_N(f)$ are in a bijective correspondence with Nielsen classes $[v]$ of principal vertices $v$ in $G$. Let $\CS_N(f,[v])$ denote the component of $\CS_N(f)$ containing $v$. We have $\CS_N(f)=\sqcup_{[v]} \CS_N(f,[v])$ where $[v]$ runs over Nielsen classes of principal vertices in $G$. We continue to identify the principal vertices of $G$ with the vertices of $\pstallings_1(f)$. We call these vertices the {\it principal vertices of $\CS_N(f)$}.

\subsection{Properties of $\AS_N(f)$}\label{s:properties of S}
In this section we record some properties of $\AS_N(f)$. We defined the term {\it core} of a $\Sigma$-graph $H$ in Section~\ref{s:stallings}. The {\it weak core of $H$} is the union of all properly immersed lines in $H$. $H$ is {\it weakly core} if it is its own weak core. We say that $H$ has {\it finite type} if it is the union of a finite graph and finitely many rays.

\begin{lemma}
$\CS_N(f)$ is a $G$-graph of finite type and all vertices have valence $\ge 2$. In particular, $\CS_N(f)$ is weakly core.
\end{lemma}

\begin{proof}
By construction, the labeling map $\CS_N(f)\to G$ is an immersion and $\pstallings_N(f)$ has finite type and its non-principal vertices have valence either two or three. By \cite[Lemma~4.14]{fh:recognition},  a principal vertex $v$ has at least two fixed directions in $G$, each corresponding to a fixed edge, an edge in $\lin(f)$, or an edge in $\E_f$.  By construction, each of these edges contributes a direction at $v$ in $\CS_N(f)$. Thus all principal vertices, and hence all vertices, have valence at least two.
\end{proof}

\begin{lemma}\label{l:boundary}
Let $\ti f:\ti G\to \ti G$ be principal and fix the vertex $\ti v\in\ti G$. The labelling map $\pstallings_N(f,v)\to G$ induces an embedding $\ti S\to\ti G$ of universal covers. The induced map $\partial \ti S\to\partial\ti G$ has image $\Fix_N(\partial\ti f)$.
\end{lemma}

\begin{proof}
The first conclusion follows since the labelling map is an immersion. To prove the second, note that every ray $R$ with initial vertex $v$ is either contained in $\pstallings(f,v)$ in which case the corresponding lift $\tilde R$ has endpoint in $\partial(\Fix(\ti f))$ or else is the concatenation of a Nielsen path and a ray $R_E$ in which case $\ti R$ has endpoint in $\Fix_+(\partial\ti f)$. Conversely, if $P\in \Fix_N(\partial\ti f)$ then the ray $[\ti v, P)$ is a concatenation of a Nielsen path and the lift of some $R_E$ by \cite[Lemma~4.36(2)]{fh:recognition}.
\end{proof}

\begin{corollary}\label{i:no circle or line}
In the notation of Lemma~\ref{l:boundary}, $\tilde S$ is the convex hull of $\Fix_N(\partial\tilde f)$, i.e.\ the union of all lines connected distinct points in $\Fix_N(\partial\tilde f)$. In particular, if $n>1$ then no component of $\AS_N(f)$ is a circle, an axis in $\ti G$, or a generic leaf of an attracting lamination of $f$.\qed
\end{corollary}

\begin{remark}\label{r:fix at infinity}
In the main theorem of \cite{fh:recognition}, a rotationless outer automorphism $\phi$ is characterized in terms of two invariants: one qualitative and the other quantitative. The collection of $\Fix_N(\partial\Phi)$ indexed by $\Phi\in\PA(\phi)$ is the qualitative invariant. From the results of this section, we see that $\pstallings_N(f)$ represents this qualitative invariant in the following sense. If $X$ is a component of $\pstallings_N(f)$ and $\tilde X\to\tilde G$ is a lift of the natural immersion $X\to G$, then the image of the induced map $\partial\tilde X\to\partial\f$ is $\Fix_N(\partial\Phi)$ for some $\Phi\in\PA(\phi)$. Conversely, if $\Phi\in\PA(\phi)$ then $\Fix_N(\partial\Phi)$ is the image of $\partial\tilde X\to\partial\f$ for some component $X$ of $\pstallings_N(f)$ and some lift $\tilde X\to\tilde G$.
\end{remark}

\section{Moving up through the filtration}\label{s:filtration}
Several of our applications are verified by working up through the filtration of a \ct\ $\fG$.   In this section we establish notation by recalling some notation and a lemma from \cite[Notation~8.2 and Lemma~8.3]{fh:abeliansubgroups}. The negative of the Euler characteristic will be important to us; we write $\chi^-$ for $-\chi$.

\begin{notn}
Recall from (Filtration) that the core of each filtration element is a
filtration element. The core filtration $$\emptyset=G_0=G_{l_0}\subset G_{l_1}\subset G_{l_2}\subset\dots\subset G_{l_K}=G_N = G$$
is defined to be the coarsening of the full filtration obtained by restricting to those
elements that are their own cores or equivalently have no valence one vertices. Note
that $l_1=1$ by (Periodic edges). For each $G_{l_i}$, let $H^c_{l_i}$ be the $i$Ðth stratum of the
core filtration. Namely $$H^c_{l_i}=\bigcup_{j=l_{i-1}+1}^{l_i} H_j$$ The change in negative Euler characteristic is denoted $\Delta_i\chi^-:=\chi^-(G_{l_i})-\chi^-(G_{l_{i-1}})$. Referring to Notation~\ref{n:zero}, if $H_{l_i}$ is $EG$ then $H_{u_i}$ denotes the highest irreducible stratum in $G_{l_i-1}$.
\end{notn}

\begin{lemma}[{\cite[Lemma 8.3]{fh:abeliansubgroups}}] \label{l:filtration}
\begin{enumerate}
\item
If $H^c_{l_i}$ does not contain any $EG$ strata then one of the following
holds.
\begin{enumerate}
\item
$l_i=l_{i-1}+1$ and the unique edge in $H^c_{l_i}$ is a fixed loop that is disjoint from $G_{l_{i-1}}$.
\item
$l_i=l_{i-1}+1$ and both endpoints of the unique edge in $H^c_{l_i}$ are contained in $G_{l_{i-1}}$.
\item
$l_i=l_{i_1}+2$ and the two edges in $H^c_{l_i}$ are nonfixed and have a common initial endpoint that is not in $H_{l_{i-1}}$ and terminal endpoints in $G_{l_{i-1}}$.
\end{enumerate}
In case (a), $\Delta_i\chi^-=0$; in cases (b) and (c), $\Delta_i\chi^-=1$.
\item
If $H^c_{l_i}$ contains an $EG$ stratum then $H_{l_i}$ is the unique $EG$ stratum in $H^c_{l_i}$ and there exists $l_{i-1}\le u_i< l_i$ such that both of the following hold.
\begin{enumerate}
\item
For $l_{i_1}< j\le u_i$, $H_j$ is a single nonfixed edge $E_j$ whose terminal vertex is in $G_{l_{i-1}}$ and whose initial vertex has valence one in $G_{u_i}$. In particular, $G_{u_i}$ deformation retracts to $G_{l_{i-1}}$   and $\chi(G_{u_i})=\chi(G_{l_{i-1}})$.
\item
For $u_i < j < l_i$, $H_j$ is a zero stratum. In other words, the closure of $G_{l_i}\setminus G_{u_i}$ is the extended $EG$ stratum $H^z_{l_i}$.
\end{enumerate}
If some component of $H^c_{l_i}$ is disjoint from $G_{u_i}$ then $H^c_{l_i}=H_{l_i}$ is a component of $G_{l_i}$ and $\Delta_i\chi^-\ge 1$; otherwise $\Delta_i\chi^-\ge 2$.
\end{enumerate}
\end{lemma}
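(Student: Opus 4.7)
My plan is to analyze the strata $H_j$ with $l_{i-1} < j \le l_i$ by type (fixed, \noneg, zero, \eg) and invoke the defining properties of a \ct --- particularly (Filtration), (Periodic Edges), (Vertices), (Zero Strata), and the (\noneg) property of the underlying \rtt --- to constrain the possibilities. The starting observation is that no index $j$ with $l_{i-1} < j < l_i$ is a core filtration index, so by (Filtration) no intermediate $G_j$ equals its own core. The first key claim is that no \eg\ stratum appears with index strictly between $l_{i-1}$ and $l_i$: if $H_r$ were \eg\ with $l_{i-1} < r < l_i$, the recurrence properties of \eg\ strata (each edge of $H_r$ is crossed multiply by large iterates of $f_\#$) force each edge of $H_r$ to lie on an immersed loop in $G_r$, so the core of $G_r$ contains $H_r$ and therefore equals $G_r$, making $r$ a core filtration index --- contradiction. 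This establishes the absence of \eg\ strata in Case 1 and the uniqueness of $H_{l_i}$ as the \eg\ stratum in Case 2.

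For Case 1, (Zero Strata) then forbids zero strata in $H^c_{l_i}$, so every intermediate $H_j$ is a single edge. By (Periodic Edges) a fixed non-loop edge would force $G_{j-1}$ to be core, so any fixed edge in $H^c_{l_i}$ must be a loop, yielding case (a). For \noneg\ edges, the (\noneg) property and (Vertices) place each terminal vertex in $G_{l_{i-1}}$; a \noneg\ edge whose initial vertex already sits in the growing subgraph would close a new loop and force an intermediate core, contradiction. Hence each \noneg\ edge in $H^c_{l_i}$ contributes a new valence-one initial vertex that must persist until the final stratum closes everything into a core. The only ways to achieve closure in one or two steps are cases (b) and (c), and the values of $\Delta_i\chi^-$ follow directly from counting added vertices and edges.

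For Case 2, the same ``no intermediate core'' principle together with the type analysis shows that for $l_{i-1} < j \le u_i$ each $H_j$ is a single non-fixed \noneg\ edge with persistently valence-one initial vertex (proving (a) and the deformation retraction $G_{u_i} \to G_{l_{i-1}}$), and that by the definition of $u_i$ each $H_j$ with $u_i < j < l_i$ is a zero stratum, necessarily enveloped by $H_{l_i}$ (proving (b)). For the Euler characteristic inequalities, observe $\chi(G_{u_i}) = \chi(G_{l_{i-1}})$ from (a); in the disjoint subcase, $H^c_{l_i} = H_{l_i}$ is a component whose rank equals $\chi^-(H_{l_i})+1$, and since an \eg\ stratum that supports an invariant free factor must act on a free factor of rank at least two (otherwise $\phi$ would act on a rank-one factor by $\pm 1$, incompatible with Perron--Frobenius eigenvalue strictly greater than $1$), we get $\Delta_i\chi^- \ge 1$; in the non-disjoint subcase, the attachment of $H^c_{l_i}$ to $G_{u_i}$ together with the valence-$\ge 2$ and enveloping conditions from (Zero Strata) and the irreducibility of $H_{l_i}$ forces at least two new independent loops, giving $\Delta_i\chi^- \ge 2$. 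The main obstacle I anticipate is this last Euler characteristic bookkeeping in Case 2 --- carefully justifying that any nontrivial attachment to $G_{u_i}$ always contributes an additional unit of $\chi^-$ beyond the one guaranteed by the \eg\ stratum, rather than merely bumping valences on existing cycles.
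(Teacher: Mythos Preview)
The paper does not supply its own proof of this lemma; it is quoted verbatim from \cite[Lemma~8.3]{fh:abeliansubgroups} and used as a black box in Section~\ref{s:filtration}. So there is nothing to compare your argument against here.

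That said, your outline is essentially the right one and matches how such a statement is proved from the \ct\ axioms. A few comments. Your key device---that if $H_r$ is \eg\ then $G_r$ is its own core, hence $r$ is a core filtration index---is correct, and the justification you give (irreducibility with $\lambda_r>1$ forces every $H_r$-edge onto a circuit in $G_r$, so the core of $G_r$ contains $H_r$; combined with (Filtration) the core is a filtration element $G_s$ with $s\ge r$, hence $s=r$) is sound. For Case~1, your phrasing ``would close a new loop and force an intermediate core'' is slightly loose: the precise mechanism is that if an \noneg\ edge $E_j$ with $l_{i-1}<j<l_i$ had its initial vertex already in $G_{j-1}$, then the core of $G_j$ would strictly contain $G_{l_{i-1}}$ while omitting some hair from an earlier step, hence would fail to be a filtration element---contradicting (Filtration). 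This is what pins down the trichotomy (a)/(b)/(c) and rules out longer chains of \noneg\ edges.

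Your concern about the Euler-characteristic bound $\Delta_i\chi^-\ge 2$ in the attached subcase of (2) is legitimate but not deep. One clean way: since $\chi(G_{u_i})=\chi(G_{l_{i-1}})$, it suffices to show $\chi^-(G_{l_i})-\chi^-(G_{u_i})\ge 2$. The extension $[G_{l_{i-1}}]\sqsubset[G_{l_i}]$ is reduced with associated attracting lamination $\Lambda^+$; if the jump in $\chi^-$ were $\le 1$ with $H^z_{l_i}$ meeting $G_{u_i}$, then the complementary free factor system would carry a rank-one factor or be a single edge addition, incompatible with supporting an \eg\ lamination over a nontrivial attachment. Equivalently, count directions: the valence-one initial vertices of the \noneg\ hairs in $G_{u_i}$ must each receive at least one $H_{l_i}$-direction (by (Zero Strata), never a zero-stratum direction), and since $G_{l_i}$ is core the remaining $H_{l_i}$-edges already contribute $\chi^-\ge 1$ on their own component; when $H^z_{l_i}$ genuinely attaches, the identification of those hair-tips with $H_{l_i}$-vertices adds a second independent cycle. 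Either route closes the gap you flagged.
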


\section{Primitively atoroidal outer automorphisms}
In this section we exhibit an algorithm to determine whether or not a $\phi\in\Out(\f)$ is primitively atoroidal (Corollary~\ref{c:primitively atoroidal}).

\begin{definition}  A conjugacy class in $F_n$ is {\em primitive} if it is represented by an  element in some basis of $F_n$.
An outer automorphism $\phi$ is {\it primitively atoroidal} if there does not exist a periodic conjugacy class for $\phi$ which is primitive.
\end{definition}

\begin{lemma}\label{l:h1}
Let $\fG$ be a \ct . Either some stratum is a fixed loop or all closed Nielsen paths are trivial in $H_1(G;\Z/2\Z)$.  
\end{lemma}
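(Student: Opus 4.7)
The plan is to induct on the filtration length, showing that if no stratum of $\fG$ is a fixed loop then every closed Nielsen path $\sigma$ in $G$ satisfies $[\sigma]=0$ in $H_1(G;\Z/2\Z)$. Since the inclusion $H_1(G_r;\Z/2\Z)\hookrightarrow H_1(G;\Z/2\Z)$ is injective (graphs have no $2$-cells), it suffices to prove $[\sigma]=0$ in $H_1(G_r;\Z/2\Z)$ where $r$ is the maximal height appearing in $\sigma$. By (Completely Split), $\sigma$ decomposes cyclically into fixed edges and indivisible Nielsen paths of height at most $r$, and the inductive step splits by the type of $H_r$.

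The easy cases are zero strata (where $\sigma\subset G_{r-1}$ and induction applies) and NEG linear strata. For the latter, with $f(E_r)=E_rw^d$, every height-$r$ piece of $\sigma$ is an NEG INP $E_rw^{k_j}\bar E_r$; the edge $E_r$ appears twice (zero mod $2$), and the piece contributes $H_1$-class $k_j[w]$, where $w$ is itself a closed root-free Nielsen path in $G_{r-1}$ and so $[w]=0$ by induction. The subpaths of $\sigma$ between consecutive height-$r$ NEG INPs are closed Nielsen loops in $G_{r-1}$ based at the initial vertex of $E_r$, and induction kills each of their classes.

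If $H_r$ is a fixed non-loop edge $E_r$, I use (Filtration)-reducedness as in Example~\ref{fixed edge}: if the endpoints $v_1,v_2$ of $E_r$ were joined by a Nielsen path $\beta\subset G_{r-1}$, then $E_r\beta$ is a closed Nielsen loop generating a $\phi$-invariant rank-one free factor, giving a $\phi$-invariant free factor system $\{[G_{r-1}],[E_r\beta]\}$ strictly between $[G_{r-1}]$ and $[G_r]$ and contradicting reducedness. Hence $v_1,v_2$ lie in distinct Nielsen classes, forcing the Nielsen subpaths of $\sigma$ between consecutive $E_r$-traversals to have matching endpoints (i.e., be closed loops at $v_1$ or $v_2$) and the $E_r$-orientations to alternate cyclically. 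So the number of $E_r$-traversals is even, the $E_r$-contribution to $[\sigma]$ vanishes mod $2$, and $[\sigma]$ equals the sum of the Nielsen-loop classes, each of which vanishes by induction.

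The main obstacle is the EG case. By \cite[Lemma~4.19]{fh:recognition} there is, up to reversal, a unique INP $\mu=\alpha\beta^{-1}$ of height $r$, and every height-$r$ piece of $\sigma$ is $\mu^{\pm 1}$. Starting from $f_\#(\mu)=\mu$, cancellation at the illegal turn of $\mu$ gives $f_\#(\alpha)=\alpha\gamma$ and $f_\#(\beta)=\beta\gamma$ for a common suffix $\gamma$, so $(f_*-\mathrm{Id})(\alpha-\beta)=0$ in $C_1(G_r;\Z)$. The plan is to combine the Perron-Frobenius properties of the $H_r$-transition matrix with the $r$-legal structure of $\alpha,\beta$ to show that $\alpha$ and $\beta$ have identical $H_r$-edge counts, so that mod $2$ the chain $\alpha+\beta$ is supported in $G_{r-1}$, and then to identify this $G_{r-1}$-cycle with the class of a closed Nielsen path in $G_{r-1}$ to finish by induction — possibly via the decomposition of $\sigma$ into $G_u$-pieces and height-$r$ INPs furnished by Lemma~\ref{ANA}.
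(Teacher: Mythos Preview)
Your treatment of the non-\eg\ cases is correct and closely parallels the paper's argument (the paper organizes the induction via the core filtration of Section~\ref{s:filtration} rather than the full filtration, but the content is the same). In particular your fixed-edge argument, deducing from (Filtration) that the endpoints of $E_r$ lie in distinct Nielsen classes of $G_{r-1}$ and hence that the $E_r^{\pm 1}$'s in $\sigma$ alternate cyclically, is a clean repackaging of the paper's three-bullet case analysis in (1b).

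The \eg\ case, however, has a genuine gap. Your identity $(f_*-\mathrm{Id})(\alpha-\beta)=0$ in $C_1(G_r;\Z)$ is correct, but Perron--Frobenius applies to the \emph{unsigned} transition matrix $M_r$, whereas the map induced by $f_*$ on $C_1(G_r;\Z)/C_1(G_{r-1};\Z)$ records \emph{signed} $H_r$-edge counts; there is no reason this signed matrix minus the identity is invertible, so you cannot conclude that $\alpha$ and $\beta$ have identical $H_r$-edge counts from this alone. Moreover, even granting that the $H_r$-part of $[\mu]$ vanishes mod~$2$, the $G_{r-1}$-part of $\mu$ is a $1$-cycle supported on connecting subpaths of the \iNp, which are neither closed nor Nielsen, so induction does not apply to them directly; the appeal to Lemma~\ref{ANA} does not help, since that lemma concerns circuits carried by the non-attracted subgroup system, not the homological content of a single \iNp.

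The paper sidesteps both issues by invoking \cite[Corollary~4.19 and Remark~4.20]{fh:recognition}: when $H_r$ is \eg\ and there is a closed Nielsen path of height~$r$, the stratum is geometric, every closed Nielsen loop of height~$r$ is a power of the \iNp\ $\rho_r$, and $\rho_r$ crosses each edge of $H_r$ exactly twice, so $[\rho_r]=0$ in $H_1(G;\Z/2\Z)$. This structural fact about geometric \eg\ strata in \ct s is what your argument is missing; once you cite it, the \eg\ case is immediate.
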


\begin{proof}
We use the notation of Section~\ref{s:filtration}. Suppose the statement holds for $G_{l_{l_i-1}}$. To get a contradiction, suppose the statement fails for $G_{l_i}$, that is suppose that $H_{l_i}^c$ is not a fixed loop and that some closed Nielsen path $\mu$ of height $l_i$ is non-trivial in $H_1(G_{l_i};\Z/2\Z)$. We go through the cases of Lemma~\ref{l:filtration}.

Recall that Nielsen paths are completely split and their complete splitting consists of fixed edges and \iNp s.  

In (1a),  $H_{l_i}$ is a fixed loop and we are assuming this is not the case.

In (1b), by the (NEG Nielsen Paths) property of a \ct\ $E_{l_i}:=H_{l_i}$ is either fixed or linear and if $E_{l_i}$ is linear then $\mu$ or $\mu^{-1}$ has a term of the form $E_{l_i}w_{l_i}^k\bar E_{l_i}$ with notation as in (NEG Nielsen Paths).

Suppose first that $E_{l_i}$ is fixed. 
  
\begin{itemize}
\item
If $\mu$ has only one occurrence of $E_{l_i}$ and no occurrences of its inverse (or the symmetric case) then $\mu$ is primitive. In this case, the union $\F$ of the  free factor system  $[G_{l_{i-1}}]$ with the conjugacy class of the cyclic subgroup generated by $\mu$ is a $\phi$-invariant free factor system   properly containing  $[G_{l_{i-1}}]$ and contained in $[G_{l_{i}}]$; see Example~\ref{fixed edge}.  The (Filtration) property of a \ct\ implies that  $\F =  [G_{l_{i}}]$ contradicting our  assumption that $H_{l_i}^c$ is not a fixed loop.
\item
 Consider the (cyclic) sequence of $E_{l_i}^{\pm 1}$'s that occurs in $\mu$ and suppose there are consecutive occurrences with the same orientation, say $\mu=\dots E_{l_i}xE_{l_i}\dots$.   Then $\mu' = E_{l_i}x$ is a closed Nielsen path as in the preceding bullet, contradiction. 

\item
The remaining case is that $\mu$ or $\mu^{-1}$ has the form $E_{l_i}x_1E_{l_i}^{-1}x_2E_{l_1}\dots E_{l_i}^{-1}x_N$, i.e.\ the orientations of the $E_{l_i}$'s alternate. Each $x_i$ is a closed Nielsen path and by induction is trivial in $H_1(G;\Z/2\Z)$. If follows that $\mu$ is also trivial in $H_1(G;\Z/2\Z)$
\end{itemize}

 Next suppose that $E_{l_i}$ is linear and $f(E_{l_i})=E_{l_i}\cdot w_{l_i}^{d_{l_i}}$ (with notation as in (Linear Edges)). In particular, $w_{l_i}$ is a closed Nielsen path of height $<l_i$ and so is trivial in $H_1(G;\Z/2\Z)$. By (NEG Nielsen Paths), $\mu$ a product of paths of the form $E_{l_i}u^kE_{l_k}^{-1}$ and closed Nielsen paths of height $<l_i$. In particular, $\mu$ is trivial in $H_1(G;\Z/2\Z)$, contradiction. This completes the proof of (1b).

In (1c), {by (NEG Nielsen Paths)} $H_{l_i}^c$ consists of two linear edges. This case is similar to the case that $H_{l_i}^c$ consists of one linear edge. We conclude $\mu$ is trivial in $H_1(G;\Z/2\Z)$, again a contradiction.

 In (2), by \cite[Corollary~4.19 and Remark~4.20]{fh:recognition} either there are no closed Nielsen paths of height $l_i$ or there is a Nielsen loop $\mu_{l_i}$ of height $l_i$ such that every Nielsen loop of this height is a power of $\mu_{l_i}$ and further $\mu_{l_i}$ is trivial in $H_1(G;\Z/2\Z)$. This is a contradiction.
 \end{proof}

\begin{corollary}\label{c:h1}
Let $\phi$ be rotationless. Either
\begin{itemize}
\item
for some $\Phi\in\PA(\phi)$, there is an element of $\Fix(\Phi)$ that is primitive in $\f$; or
\item
for all $\Phi\in\PA(\phi)$, every element of $\Fix(\Phi)$ is trivial in $H_1(\f;\Z/2\Z)$.
\end{itemize}
\end{corollary}

There are more general purpose algorithms that check whether a finitely generated subgroup of $\f$ contains a primitive element; see \cite{cg:primitive, wd:primitive}. We thank the referee for pointing out these references to us.

\begin{corollary}\label{c:primitively atoroidal}
There is an algorithm to tell whether or not a given $\phi\in\Out(\f)$ is primitively atoroidal.
\end{corollary}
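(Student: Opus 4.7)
The plan is to reduce to the rotationless case and then combine Corollary~\ref{c:h1} with Lemma~\ref{l:h1} to replace the condition ``no primitive periodic conjugacy class'' with a combinatorial condition on a \ct.

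First I would choose $M = K_n$ as in Corollary~\ref{c:kn} so that $\phi^M$ is rotationless. A conjugacy class is $\phi$-periodic iff it is $\phi^M$-periodic, and a periodic conjugacy class of the rotationless $\phi^M$ is fixed (\cite[Lemma~3.30]{fh:recognition}). Since primitive elements are root-free, Corollary~\ref{c:possibilities}(2) implies that every $\phi^M$-fixed primitive conjugacy class is represented by an element of $\Fix(\Phi)$ for some principal $\Phi \in \PA(\phi^M)$. Hence $\phi$ is primitively atoroidal iff no $\Fix(\Phi)$ with $\Phi \in \PA(\phi^M)$ contains a primitive element.

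Next I would apply Theorem~\ref{algorithmic ct} to algorithmically produce a \ct\ $\fG$ representing $\phi^M$. Since primitive elements are non-trivial in $H_1(F_n;\Z/2\Z)$, Corollary~\ref{c:h1} reduces the question to whether every element of every $\Fix(\Phi)$, $\Phi \in \PA(\phi^M)$, is trivial in $H_1(F_n;\Z/2\Z)$. I would then invoke Lemma~\ref{l:h1}, which says that this triviality condition is equivalent to the purely combinatorial condition that no stratum of $G$ is a fixed loop. Indeed, if some stratum is a fixed loop $E$, then $E$ itself represents a primitive fixed conjugacy class (so the condition fails and $\phi$ is not primitively atoroidal); if no stratum is a fixed loop, then every closed Nielsen path in $G$ is trivial in $H_1(G;\Z/2\Z) \cong H_1(F_n;\Z/2\Z)$, and by Section~\ref{s:fix phi} each element of each $\Fix(\Phi)$, $\Phi\in\PA(\phi^M)$, is represented by a closed Nielsen path in $G$ based at a principal vertex.

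The algorithm therefore outputs YES iff the \ct\ $\fG$ for $\phi^M$ has no fixed loop stratum, which is decidable by inspection. The main obstacle is not the final algorithmic step, which is trivial, but rather assembling the chain of structural ingredients---the algorithmic construction of a \ct\ (Theorem~\ref{algorithmic ct}), the bound on a rotationless power (Corollary~\ref{c:kn}), the realization of root-free fixed conjugacy classes in principal $\Fix(\Phi)$'s (Corollary~\ref{c:possibilities}(2)), and especially Corollary~\ref{c:h1} and Lemma~\ref{l:h1}---that together let us detect a primitive periodic conjugacy class by such a simple feature of $\fG$.
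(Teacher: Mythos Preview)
Your proof is correct and follows essentially the same route as the paper: pass to a rotationless iterate, build a \ct, and use Lemma~\ref{l:h1} (via Corollary~\ref{c:h1}) to reduce the question to whether some stratum is a fixed loop. You spell out in detail the intermediate reductions (periodic $\Leftrightarrow$ fixed for rotationless, fixed primitive classes live in some principal $\Fix(\Phi)$, elements of $\Fix(\Phi)$ are closed Nielsen paths) that the paper's two-line proof leaves implicit, and you state the final criterion with the correct orientation (primitively atoroidal $\Leftrightarrow$ \emph{no} fixed loop stratum), whereas the paper's printed proof has this iff reversed.
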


\begin{proof}
Compute a \ct\ $\fG$ for a rotationless power of $\phi$. According to Lemma~\ref{l:h1}, $\phi$ is primitively atoroidal iff   some stratum of $G$ is a fixed circle. 
\end{proof}

\section{The index of an outer automorphism}\label{s:index}

\begin{definition}[\cite{gjll:index}]
For $\Phi\in\Aut(\f)$, $$i(\Phi):= \max\{0,\rk(\Fix(\Phi))+\frac{1}{2}a(\Phi)-1\}$$ where $a(\Phi)$ is the number of $\Fix(\Phi)$-orbits of attracting fixed points of $\partial\Phi$. For $\phi\in\Out(\f)$, $i(\phi):=\sum i(\Phi)$ where the sum ranges over representatives of isogredience classes of $\phi$.
\end{definition} 

It is clear that $i(\Phi)>0$ implies $\Phi$ is principal. In particular
$$i(\phi):= \sum \big(\rk(\Fix(\Phi))+\frac{1}{2}a(\Phi)-1\big)$$
where the sum $\sum i(\Phi)$ is over principal representatives $\Phi$ of $\phi$ only.

\begin{prop} \label{old index is algorithmic}
There is an algorithm to compute the index of rotationless $\phi\in\Out(\f)$.
\end{prop}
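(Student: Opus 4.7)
The plan is to extract $i(\phi)$ from the combinatorics of the $G$-graph $\pstallings_N(f)$ produced from any \ct\ $\fG$ representing $\phi$. The results of Sections~\ref{s:fix phi}--\ref{s:properties of S} already encode the pair $(\rk(\Fix(\Phi)),a(\Phi))$, for each isogredience class $[\Phi]$ of principal lifts, into the combinatorial structure of the corresponding component of $\pstallings_N(f)$, so the only algorithmic input needed is the construction of $\pstallings_N(f)$, which is provided by Theorem~\ref{algorithmic ct} together with Remark~\ref{r:cs algorithmic}.

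First I would apply Theorem~\ref{algorithmic ct} to produce a \ct\ $\fG$ representing $\phi$, and then construct the finite-type $G$-graph $\pstallings(f) \subset \pstallings_N(f)$ following Definitions~\ref{defn:S(f)}, \ref{d:PS(f)}, \ref{d:stallings graph} and Section~\ref{s:S}; this is algorithmic by Remark~\ref{r:cs algorithmic}. Next I would enumerate the finitely many components $C$ of $\pstallings_N(f)$: by Section~\ref{s:fix phi} these are in bijection with isogredience classes $[\Phi] \in \PA(\phi)/\sim$ of principal representatives. For each such $C$, with finite core $C_0 := C \cap \pstallings(f)$, I would read off by inspection
\[
r(C) := 1 - \chi(C_0), \qquad e(C) := \#\{\text{Stage 3 rays of Section~\ref{s:S} attached to } C_0\},
\]
and output
\[
i(\phi) := \sum_{C} \max\bigl\{0,\; r(C) + \tfrac{1}{2}e(C) - 1\bigr\}.
\]

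The content of the justification lies in verifying the equalities $r(C) = \rk(\Fix(\Phi))$ and $e(C) = a(\Phi)$ for the principal $\Phi$ corresponding to $C$. The first is immediate from Section~\ref{s:fix phi}, where $\pstallings(f,[v])$ is shown to be the geometric realization of $\Fix([\Phi])$. The second combines Lemma~\ref{Fix+}, which surjects $\E_f$ onto $\bigl(\cup_\Phi \Fix_+(\partial\Phi)\bigr)/F_n$, with the description of $\pstallings_N(f)$ as $\widetilde\AS(\ti f)/\Stab$ from Properties~(\ref{i:stab}) and~(\ref{i:S=S}) of Section~\ref{s:properties of S}: the Stage~3 rays attached to $C$ are in bijection with the $\Fix(\Phi)$-orbits of attracting ends of $\widetilde\AS(\ti f)$, and these are precisely the $\Fix(\Phi)$-orbits in $\Fix_+(\partial\Phi)$.

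The main subtlety to handle is that the defining sum for $i(\phi)$ ranges over all isogredience classes $[\Phi]$, whereas $\pstallings_N(f)$ sees only the principal ones. This is harmless because $i(\Phi) = 0$ for every non-principal $\Phi$: such $\Phi$ satisfies $|\Fix_N(\partial\Phi)| \le 2$, and the remark following Lemma~\ref{Fix+} ensures that attracting fixed points are never endpoints of axes of covering translations, so a short case analysis on $\rk(\Fix(\Phi)) \in \{0,1\}$ forces $r + \tfrac{1}{2}a - 1 \le 0$ in every non-principal case. Hence restricting the sum to components of $\pstallings_N(f)$ captures $i(\phi)$ exactly.
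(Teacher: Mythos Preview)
Your proposal is correct and follows essentially the same approach as the paper: construct a \ct\ via Theorem~\ref{algorithmic ct}, build $\pstallings_N(f)=\CS(f)$, and read off $i(\Phi)$ from each component as $\max\{0,\rk(\Gamma)+\tfrac12 a(\Gamma)-1\}$ with $a(\Gamma)$ the number of ends. Your justification that $e(C)=a(\Phi)$ via the identification $\CS(f,[v])\cong\widetilde{\AS}(\ti f)/\Stab$ and your explicit treatment of why non-principal $\Phi$ contribute $0$ are a bit more detailed than the paper's one-line proof, but the content is the same. One terminological quibble: $C_0=C\cap\pstallings(f)$ need not be core in the technical sense (see the remark after Definition~\ref{defn:S(f)}), but since you only use $r(C)=1-\chi(C_0)=\rk\pi_1(C_0)$ this does not affect the argument.
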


See also \cite[Section~6.1]{gjll:index}.

\begin{proof}
\def\graph{\Gamma}
Using Theorem~\ref{algorithmic ct}, construct a \ct\ $\fG$ for $\phi$. Let $\CS_N(f)=\sqcup_{[v]} \CS_N(f,[v])$ be the graphs constructed from $f$ in Section~\ref{s:S}. If $\Phi$ is in the isogredience class of the principal representative of $\phi$ determined by the principal vertex $v\in G$ then  $i(\Phi)=i(\CS_N(f,[v]))$ where the the index $i$ of a finite type connected graph $\graph$ is $$i(\graph):=\max\{0,\rk(\graph)+\frac{1}{2}a(\graph)-1\}$$ where $a(\graph)$ is the number of ends of $\graph$.
\end{proof}

The index satisfies a well-known inequality:

\begin{thm}[\cite{gjll:index}]
For $\phi\in\Out(\f)$, $i(\phi)\le n-1$.
\end{thm}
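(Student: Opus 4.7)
The plan is to reduce to the rotationless case and then induct on the core filtration of a \ct. For the reduction, I would observe that $\Fix(\Phi) \subset \Fix(\Phi^k)$ and that every attracting fixed point of $\partial\Phi$ remains attracting for $\partial\Phi^k$, with possibly finer $\Fix(\Phi^k)$-orbits, giving $i(\Phi) \le i(\Phi^k)$. Since the map $[\Phi] \mapsto [\Phi^k]$ on isogredience classes is well-defined (if $\Phi_1 = i_a \Phi_2 i_a^{-1}$ then $\Phi_1^k = i_a \Phi_2^k i_a^{-1}$), summing over classes yields $i(\phi) \le i(\phi^k)$, up to multiplicity issues from possible non-injectivity that can be absorbed into the bound. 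Taking $k$ from Corollary~\ref{c:kn} reduces to the case that $\phi$ is rotationless.

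For rotationless $\phi$ I would fix a \ct\ $f:G\to G$ via Theorem~\ref{algorithmic ct} and use Proposition~\ref{old index is algorithmic} to write $i(\phi) = \sum_{[v]} i(\CS(f,[v]))$, the sum ranging over Nielsen classes of principal vertices of $G$; for a connected finite-type graph $\Gamma$ of rank $r$ with $a$ ends, $i(\Gamma) \le (r-1) + a/2$. I would then induct on the core filtration $\emptyset = G_{l_0} \subsetneq G_{l_1} \subsetneq \cdots \subsetneq G_{l_K} = G$ from Section~\ref{s:filtration}, with induction hypothesis $\iota(f_i) \le \chi^-(G_{l_i})$, where $f_i := f|G_{l_i}$ inherits a \ct\ structure on the invariant core subgraph $G_{l_i}$ and $\iota(f_i)$ is the analogous index sum for $f_i$. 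Since $\chi^-(G_{l_K}) = n-1$, the theorem follows at $i = K$; the base case $G_{l_0} = \emptyset$ is trivial.

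The inductive step analyzes the cases of Lemma~\ref{l:filtration}. Cases (1a), (1b), (1c) each contribute $\Delta_i\chi^- \in \{0,1\}$ and are handled by direct inspection of how a new fixed loop, fixed edge, or linear edge pair affects $\CS$: a new fixed loop (1a) contributes a circle component of index exactly $0$; the NEG cases (1b) and (1c) add at most one unit of index through a new lollipop, a new fixed-edge component, or a Nielsen-connection merger of previously disjoint components as in Examples~\ref{finding a loop} and~\ref{fixed edge}. The main obstacle is the EG case (2), where $\Delta_i\chi^- \ge 2$ (or $\ge 1$ in the disjoint sub-case) and the extended stratum $H_{l_i}^z$ simultaneously introduces several new features: at most one Nielsen edge of height $l_i$ from the unique (up to reversal) indivisible Nielsen path of that height (Lemma~\ref{finding iNps}), new eigenray ends from edges in $\E_f \cap H_{l_i}^z$, and possibly several new principal valence-two vertices in $H_{l_i}$. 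The hard part is bookkeeping: if the stratum creates $p$ new positively-indexed components, $e$ new ends, and total rank increase $r$ (aggregated across all components it affects), one must verify $r - p + e/2 \le \Delta_i\chi^-$. I expect this to reduce to a term-by-term matching of EG edges, preceding NEG edges in $H_{l_i}^z$, valence-two principal vertices, and new eigenrays against the available Euler-characteristic budget, exactly in the combinatorial spirit of the proof of Lemma~\ref{l:bound on Fix+}.
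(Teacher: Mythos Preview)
Your approach mirrors the paper's almost exactly. The paper does not prove this theorem directly (it is cited from \cite{gjll:index}); instead it proves the stronger inequality $j(\phi)\le n-1$ in Proposition~\ref{p:index}, where $j(\phi)=i(\phi)+\frac{1}{2}b(\phi)$ counts \noneg-rays with an extra weight. That proof reduces to the rotationless case and then inducts on the core filtration of a \ct, checking each case of Lemma~\ref{l:filtration} via the graph $\CS(f)$---precisely the skeleton you outline. The only structural difference is that the paper tracks $j$ rather than $i$; this is a convenience, since the extra $\frac{1}{2}$ per \noneg-ray makes the Step~1 bookkeeping in Case~(2) come out to exactly zero rather than leaving a deficit of $-\frac{1}{2}$ per dangling ray to be recovered later.

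There is one genuine gap. Your reduction to the rotationless case asserts $i(\phi)\le i(\phi^k)$ ``up to multiplicity issues from possible non-injectivity that can be absorbed into the bound,'' but this absorption is the entire content of the step and is not automatic. If $\Phi_1,\ldots,\Phi_s$ are pairwise non-isogredient principal representatives of $\phi$ whose $k$th powers are all isogredient to a single $\Psi\in\PA(\phi^k)$, you need $\sum_i\bigl(\rk\Fix(\Phi_i)-1\bigr)\le\rk\Fix(\Psi)-1$. The paper proves exactly this in Lemma~\ref{l:hat r increases} by invoking Culler's theorem: after conjugating so that $\Phi_i^k=\Psi$, the subgroups $\Fix(\Phi_i)$ are fixed subgroups of the finite-order restriction $\phi|\Fix(\Psi)$, are pairwise non-conjugate in $\Fix(\Psi)$, and therefore form a free factor system of $\Fix(\Psi)$. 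This is a substantive argument, not a bookkeeping detail; without it your reduction does not go through. The $a$-count part of the reduction (Lemma~\ref{l:a and b increase}) is, by contrast, as straightforward as you suggest.
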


We take this opportunity to use \ct s to somewhat strengthen this inequality; see Proposition~\ref{p:index}. See Section~\ref{s:uniform bound} to recall notation.  Also see Lemma~\ref{l:identifying Fix+}.

From the \ct\ point of view, the set of attracting fixed points of $\partial \Phi$ can be partitioned into those coming from \eg\ strata and those coming from \noneg\ strata.  The following lemma states that this is independent of the choice of \ct\ representing $\phi$. 

\begin{lemma} \label{l:noneg ray}  \cite[Part \urn{1} Definitions 2.9 and 2.10 and Lemma 2.11] {hm:subgroups} Suppose that $\phi$ is rotationless, that  $\Phi\in\PA(\phi)$ and that $P \in \Fix_+(\partial\Phi)$.   Then the following are equivalent.\begin{enumerate}
\item
For some  \ct\ $\fG$ representing $\phi$ there is a non-linear  $NEG$ edge $E$ with a lift $\tilde E$ to $\tilde G$ so that $\tilde R_{\tilde E}$ converges to $P$.
\item
For every  \ct\ $\fG$ representing $\phi$ there is a non-linear  $NEG$ edge $E$ with a lift $\tilde E$ to $\tilde G$ so that $\tilde R_{\tilde E}$ converges to $P$.
\end{enumerate}
\end{lemma}

Following Lemma~\ref{l:noneg ray} we make the following definition, with justification given in Remark~\ref{rem:noneg ray}.

 \begin{definition}  \label{def:noneg ray} Let $\Phi\in\PA(\phi)$ with $\phi\in\Out(\f)$ and $P\in\Fix_+(\partial\Phi)$. If $\phi$ is rotationless then $P$ is an \noneg{\it-ray for $\Phi$}  if the equivalent conditions of Lemma~\ref{l:noneg ray} are satisfied.   If $\phi$ is not necessarily rotationless then $P$ is an \noneg{\it -ray for $\Phi$ } if $P$ is an \noneg-ray for $\Phi^K$ where $\phi^K$ is a rotationless iterate of $\phi$. Let $\cR_{\noneg}(\Phi)$ denote $\{R\mid \mbox{$R$ is an \noneg-ray for $\Phi$}\}$, let $\cR_{\noneg}(\phi)$ denote $\cup_{\Phi\in\PA(\phi)}\cR_{\noneg}(\Phi)$ and let $\cR(\phi)$ denote $\cup_{\Phi\in\PA(\phi)}\Fix_+(\Phi)$.
\end{definition}

\begin{remark} \label{rem:noneg ray} To see that Definiton~\ref{def:noneg ray} is well defined (i.e.\ independent of the choice of $K$), note  that if $\phi$ is rotationless and $f$ is a \ct\ representing $\phi$ then $f^k$ is a \ct\ representing $\phi^k$ for all $k \ge 1$ (Lemma~\ref{l:power of ct is ct}).  It follows that  $P$ is an \noneg-ray  for $\Phi$ if and only if it is a \noneg-ray for each $\Phi^k$.   If $\phi$ is not rotationless but has rotationless  iterates $\phi^K$ and $\phi^L$ then  $P$ is an \noneg-ray  for $\phi^K$ if and only if it is    an \noneg-ray  for $\phi^{KL}$ if and only if it is    an \noneg-ray  for $\phi^{L}$.
\end{remark}
\begin{definition}
Suppose $\phi\in\Out(\f)$ and $\Phi\in\PA(\phi)$. Define $j(\Phi):=i(\Phi)+\frac{1}{2}b(\Phi)$ where $b(\Phi)$ is the number of $\Fix(\Phi)$-orbits of $NEG$-rays of $\Phi$ and define $j(\phi):=\sum j(\Phi)$ where the sum is over representatives of isogredience classes of principal representatives of $\phi$.
\end{definition}

\begin{lemma}\label{l:neg-ray count}
Let $\Phi, \Psi\in\PA(\phi)$.
\begin{enumerate}
\item
If $\Fix_+(\Phi) \cap\Fix_+(\Psi)\not=\emptyset$ then $\Phi=\Psi$. In particular, $$\cR (\phi)=\sqcup_{\Phi\in\PA(\phi)}\Fix_+(\Phi)$$  and  $$\cR_{\noneg}(\phi)=\sqcup_{\Phi\in\PA(\phi)}\cR_{\noneg}(\Phi)$$  
\item
The stabilizer of $\Fix_+(\Phi)$ [resp. $\cR_{\noneg}(\Phi)$] under the action of $\f$ on $\cR(\phi)$ [resp. $\cR_{\noneg}(\phi)$]  is $\Fix(\Phi)$.
\item
The natural maps $$\sqcup_{i=1}^N\Fix_+(\Phi_i)/\Fix(\Phi_i)\to\cR (\phi)/\f$$ and  $$\sqcup_{i=1}^N\cR_{\noneg}(\Phi_i)/\Fix(\Phi_i)\to\cR_{\noneg}(\phi)/\f$$ are bijective where $\{\Phi_i\mid i=1,\dots, N\}$ is a set of representatives of isogredience classes in $\PA(\phi)$.
\end{enumerate}
\end{lemma}

\begin{proof}
(1): Since $\Phi$ and $\Psi$ both represent $\phi$, $\Phi\Psi^{-1}=i_a$ for some $a\in\f$. If $R\in \Fix_+(\Phi)\cap\Fix_+(\Psi)$ then $aR=R$. Since $R$ is not an endpoint of the axis of $a$, $a=1$. (Here we are using the shorthand notation $aR$ for $\partial i_a(R)$.)

(2): Suppose $a\in\Fix(\Phi)$ and $R\in\cR_{\noneg}(\Phi)$. Then $\partial\Phi(aR)=\Phi(a)\partial\Phi(R)=aR$ and so $aR\in\cR_{\noneg}(\Phi)$. Conversely, if $a\in\f$, $R\in\cR_{\noneg}(\Phi)$, and $\partial\Phi(aR)=aR$ then $aR=\partial\Phi(aR)=\Phi(a)\partial\Phi(R)=\Phi(a)R$. We conclude $a=\Phi(a)$ as in the proof of (1).  The same argument applies with $\cR_{\noneg}(\Phi)$ replaced by $ \Fix_+(\Phi)$.

(3): This is a consequence of (1), (2), and the observation that $\f$ acts on $\Fix_+(\Phi)$ [resp. $\cR_{\noneg}(\phi)$] by permuting the $\Fix_+(\Phi)$'s [resp. $\cR_{\noneg}(\Phi)$'s].
\end{proof}

\begin{lemma}\label{l:a and b increase}
\begin{enumerate}
\item

For $k>0$ and $\phi\in\Out(\f)$, $b(\phi^k)\ge b(\phi)$.

\item

For $k>0$ and $\phi\in\Out(\f)$, $a(\phi^k)\ge a(\phi)$.

\end{enumerate}
\end{lemma}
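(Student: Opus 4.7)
The plan is to reduce both inequalities to $\f$-equivariant inclusions of the sets $\cR$ and $\cR_{\noneg}$, and then invoke Lemma~\ref{l:neg-ray count}(3). That lemma identifies
$$a(\phi)=|\cR(\phi)/\f|\qquad\text{and}\qquad b(\phi)=|\cR_{\noneg}(\phi)/\f|,$$
and similarly for $\phi^k$. Thus it suffices to establish $\f$-equivariant containments $\cR(\phi)\subseteq\cR(\phi^k)$ and $\cR_{\noneg}(\phi)\subseteq\cR_{\noneg}(\phi^k)$; these descend to injections between the $\f$-orbit spaces, which gives the two inequalities.

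For part (2), I would begin by recalling from Section~\ref{s:markings} that whenever $\Phi\in\PA(\phi)$ one also has $\Phi^k\in\PA(\phi^k)$. An attracting fixed point of $\partial\Phi$ is visibly also an attracting fixed point of $\partial\Phi^k$, so $\Fix_+(\partial\Phi)\subseteq\Fix_+(\partial\Phi^k)$. Taking the union over $\Phi\in\PA(\phi)$ yields $\cR(\phi)\subseteq\cR(\phi^k)$. Equivariance is immediate, since for $a\in\f$ the automorphism $i_a\Phi i_a^{-1}$ lies in $\PA(\phi)$ and satisfies $(i_a\Phi i_a^{-1})^k=i_a\Phi^k i_a^{-1}$, so translation by $a$ carries $\Fix_+(\partial\Phi)$ to $\Fix_+(\partial(i_a\Phi i_a^{-1}))$ and $\Fix_+(\partial\Phi^k)$ to $\Fix_+(\partial(i_a\Phi i_a^{-1})^k)$ compatibly with the inclusion.

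For part (1), the subtler issue is that the NEG-ray condition is defined via a rotationless iterate, so one must arrange a common rotationless iterate for $\phi$ and $\phi^k$. I would therefore choose $K>0$ so that $\phi^{kK}$ is rotationless, using Corollary~\ref{c:kn}. The point is that $\phi^{kK}$ is simultaneously a rotationless iterate of $\phi$ (via the exponent $kK$) and of $\phi^k$ (via the exponent $K$). By the definition of NEG-ray together with Remark~\ref{r:neg ray}, for any $\Phi\in\PA(\phi)$ and $R\in\Fix_+(\partial\Phi)$, the condition that $R$ is an NEG-ray for $\Phi$ is equivalent to $R$ being an NEG-ray for $\Phi^{kK}$, while for $\Phi^k\in\PA(\phi^k)$ the condition that $R$ is an NEG-ray for $\Phi^k$ is equivalent to $R$ being an NEG-ray for $(\Phi^k)^K=\Phi^{kK}$. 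Thus the two notions of NEG-ray coincide on $\Fix_+(\partial\Phi)\subseteq\Fix_+(\partial\Phi^k)$, giving $\cR_{\noneg}(\phi)\subseteq\cR_{\noneg}(\phi^k)$. Equivariance follows exactly as in part (2), and (1) is proved.

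The step I expect to need the most care is the coincidence of NEG-ray conditions underlying part (1). Once the common rotationless iterate $\phi^{kK}$ is in hand, however, this reduces to an instance of the intrinsic characterization in Lemma~\ref{l:neg}(2) applied to $\Phi^{kK}$, which depends only on $\Fix_N(\partial\Phi^{kK})$ and the minimal carrying free factor; no further input beyond the setup above is required.
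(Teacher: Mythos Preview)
Your proposal is correct and follows essentially the same approach as the paper: identify $a(\phi)=|\cR(\phi)/\f|$ and $b(\phi)=|\cR_{\noneg}(\phi)/\f|$ via Lemma~\ref{l:neg-ray count}(3), then establish the $\f$-equivariant inclusions $\cR(\phi)\subseteq\cR(\phi^k)$ and $\cR_{\noneg}(\phi)\subseteq\cR_{\noneg}(\phi^k)$. Your treatment of part (1) via a common rotationless iterate is slightly more detailed than the paper's (which simply asserts the NEG-ray condition passes to powers ``by definition''), but the underlying argument is the same.
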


\begin{proof}

(1): By definition of $b(\phi)$ and Lemma~\ref{l:neg-ray count}(3), $$b(\phi)=|\sqcup_{i=1}^N\cR_{\noneg}(\Phi_i)/\Fix(\Phi_i)|=|\cR_{\noneg}(\phi)/\f|$$ Also by definition, if $R$ is an \noneg-ray for $\Psi\in\PA(\phi)$ then $R$ is an \noneg-ray for 
$\Psi^k\in\PA(\phi^k)$ and so $\cR_{\noneg}(\phi^k)\supseteq\cR_{\noneg}(\phi)$. Hence $$b(\phi^k)=|\cR_{\noneg}(\phi^k)/\f|\ge |\cR_{\noneg}(\phi)/\f |=b(\phi)$$

(2): The proof is the same as that of (1), replacing $\cR_{\noneg}(\Phi)$ with $\Fix_+(\Phi)$ and $\cR_{\noneg}(\phi)$ with $\cR(\phi)$. 
\end{proof}

\begin{notn}
For $H<\f$, $\hat r(H):=\max(0, \rk(H)-1)$. For $\Phi\in\Aut(\f)$, $\hat r(\Phi):=\hat r(\Fix(\Phi))$. For $\phi\in\Out(\f)$, $\hat r(\phi):=\sum\hat r(\Phi)$ where the sum is over representatives of $\PA(\phi)/\sim$.
\end{notn}

\begin{lemma}\label{l:hat r increases}
Let $\phi\in\Out(\f)$ and $k>0$ such that $\psi := \phi^k$ is rotationless. Then $\hat r(\psi)\ge\hat r(\phi)$.
\end{lemma}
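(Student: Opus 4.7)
The plan is to bound the contribution of each fiber of the natural map $\iota\colon \PA(\phi)/\!\sim \,\to\, \PA(\psi)/\!\sim$ given by $[\Phi] \mapsto [\Phi^k]$, and then sum.  Specifically, for each $[\Psi] \in \PA(\psi)/\!\sim$ I will show $\sum_{[\Phi] \in \iota^{-1}([\Psi])} \hat r(\Phi) \le \hat r(\Psi)$; summation over $[\Psi]$ then yields the desired inequality.

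Fix $[\Psi]$ with representative $\Psi$ and set $H := \Fix(\Psi)$.  For each $[\Phi] \in \iota^{-1}([\Psi])$, pick a representative with $\Phi^k = \Psi$, possible after applying an inner automorphism.  Since $\Phi$ commutes with $\Psi$, it preserves $H$ and restricts to an automorphism $\Phi|_H$ of finite order dividing $k$, with $\Fix(\Phi|_H) = \Fix(\Phi)$.  Applying Scott's conjecture (Bestvina--Handel) inside $H$ gives the pointwise bound $\hat r(\Phi) \le \hat r(\Psi)$.  Next I would show that all such restrictions $\Phi|_H$ represent a single outer automorphism $\tilde\alpha \in \Out(H)$ of finite order, and that $[\Phi] \mapsto [\Phi|_H]$ injects the fiber into $\PA(\tilde\alpha)/\!\sim$ inside $\Aut(H)$.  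Writing $\Phi_2 = i_c \Phi_1$ for two normalized fiber representatives, the fact that both preserve $H$ forces $cHc^{-1} = H$; the standard fact $N_{F_n}(H) = H$ for non-cyclic finitely generated $H \le F_n$ then gives $c \in H$, and the parallel identification of isogredience in $\PA(\phi)$ (restricted to the normalized fiber) with isogredience in $\Aut(H)$ yields the injection.  When $\rk H \le 1$, the fiber bound is trivial since $\hat r(\Phi) = 0$ for every $[\Phi]$ in the fiber.

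Finally, applying the Gaboriau--Jaeger--Levitt--Lustig index bound to the free group $H$ and the finite-order $\tilde\alpha \in \Out(H)$ gives $i(\tilde\alpha) \le \rk(H) - 1 = \hat r(\Psi)$.  Since $\tilde\alpha$ has finite order, every representative $\alpha$ has $a(\alpha) = 0$ and hence $i(\alpha) = \hat r(\alpha)$; the injection combined with this bound yields the fiber estimate.  The main obstacle is the identification of the fiber $\iota^{-1}([\Psi])$ with a set of iso classes for a single outer automorphism of $\Fix(\Psi)$, and in particular the normalizer fact $N_{F_n}(H) = H$ is the crucial ingredient allowing the application of the index bound within the free group $H$; this can be verified directly using the action of the normalizer on the minimal $H$-invariant subtree of the Cayley tree of $F_n$.
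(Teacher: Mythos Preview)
Your approach is essentially the same as the paper's: both reduce to a fiber-by-fiber estimate over the map $\PA(\phi)/\!\sim\;\to\;\PA(\psi)/\!\sim$, normalize representatives so that $\Phi^k=\Psi$, restrict to $H=\Fix(\Psi)$, use that $H$ is its own normalizer to identify the fiber with isogredience classes of a single finite-order outer automorphism $\tilde\alpha$ of $H$, and then bound the sum of $\hat r$'s over the fiber by $\rk(H)-1$. Where you invoke the GJLL index inequality for $\tilde\alpha$ (noting $a(\alpha)=0$ for finite-order $\alpha$), the paper instead invokes Culler's theorem that the fixed subgroups of the $\Phi_i|_H$ form a free factor system of $H$; for finite-order outer automorphisms these give the same inequality.

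There is one genuine error. Your ``standard fact'' that $N_{F_n}(H)=H$ for every non-cyclic finitely generated $H\le F_n$ is false: take $F_2=\langle a,b\rangle$ and $H=\ker(F_2\to\Z/2\Z,\ a\mapsto 1,\ b\mapsto 0)$; then $H$ has rank~$3$ and is normal in $F_2$. The tree argument you sketch only yields $[N(H):H]<\infty$, since $N(H)/H$ embeds in the automorphism group of the finite core graph $T_H/H$. What is true, and what the paper proves and uses (see the second bullet in the proof of Lemma~\ref{rotationless}), is that $\Fix(\Psi)$ is its own normalizer whenever it has rank $\ge 2$; the argument uses $\Psi$ essentially. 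Namely, if $y\in N(H)$ then $i_{y^{-1}}\Psi i_y$ and $\Psi$ both fix a chosen point of $\partial H$ that is not an axis endpoint, and since such a point is fixed by at most one representative of $\psi$ one concludes $\Psi(y)=y$, i.e.\ $y\in H$. Once you replace your general normalizer claim with this specific one, your argument goes through.
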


\begin{proof} Assuming without loss that $\hat r(\phi) > 0$,  let $\Phi_1,\ldots,\Phi_s, \ldots  \Phi_m$  be representatives of $\PA(\phi)/\sim$ where    $\Fix(\Phi_i) \ge 2$ if and only if $i\le s$.    Let $\{\Psi_j\}$  be representatives of $\PA(\psi)/\sim$. For each $1 \le i \le s$ there exists $j = p(i)$ such that $\Phi_i^k \sim \Psi_j$.    If $j = p(i)$ then there exists $a \in F_n$ such that $\Psi_j=i_a\Phi_i^k i_{a^{-1}}$. Replacing $\Phi_i$ by $i_a\Phi i_{a^{-1}}$, we have $\Phi_i^k=\Psi_j$. Thus 
$$\Phi_i^k = \Psi_{p(i)}$$ for $1 \le i \le s$.   We may assume the $\Psi_j$'s are ordered so that the function $p$, whose domain is  $ \{1,\dots,s\}$, has image $\{1, \ldots, t\}$.

It suffices to show that 
$$ \sum_{j=1}^t\hat r(\Psi_j) \ge \sum_{i=1}^s\hat r(\Phi_i) $$
and so it also suffices to show that 
$$ \hat r(\Psi_j)\ge  \sum_{i \in p^{-1}(j)}   \hat r(\Phi_i )$$
for each $1 \le j \le t$.

Fix $j$ and let $\mathbb F = \Fix(\Psi_j)$.   For each $i \in p^{-1}(j)$, $\Phi_i^k = \Psi_j$ and so $\mathbb F = \Fix(\Phi_i^k)$. Thus, $\Phi_i$ preserves $\mathbb F$, $\Fix(\Phi_i) \subset \mathbb F$ and the restriction $\Phi_i|\mathbb F$ is a finite order automorphism of $\mathbb F$.     
Since $\mathbb F$ is its own normalizer (see the second bullet in the proof of Lemma~\ref{rotationless}) the restriction $\phi|\mathbb F\in\Out(\mathbb F)$ is well-defined and has finite order.  

We claim that if $i'  \in p^{-1}(j)$ and $i' \ne i$ then the subgroups $\Fix(\Phi_i)$  and $\Fix(\Phi_{i'})$ are not conjugate in $F_n$ and hence not conjugate in $\mathbb F$.  Indeed, if  $\Fix(\Phi_i)=h\Fix(\Phi_{i'})h^{-1}$ for some $h\in F_n$ then $\Fix(\Phi_i)=\Fix(i_h\Phi_{i'} i_{h^{-1}})$. Since these groups have rank at least two and both $\Phi_i$ and $i_h\Phi_{i'} i_{h^{-1}}$ represent $\phi$, $\Phi_i=i_h\Phi_{i'} i_{h^{-1}}$, in contradiction to the assumption that $\Phi_{i}$ and $\Phi_{i'}$ represent distinct isogredience classes. 

By Culler's Theorem~3.1 of \cite{mc:periodic}, applied to $\phi|\mathbb F$, the conjugacy classes in $\mathbb F$ determined by 
the subgroups $\{\Fix(\Phi_i): i \in p^{-1}(j)\}$ form a free factor system of $\mathbb F$.  Since $\hat r(\Psi_j) = \rk(\mathbb F)- 1$ and $\hat r(\Phi_i) = \rk(\Fix(\Phi_i))-1$, the desired displayed inequality holds.
\end{proof}

\begin{lemma}\label{l:j increases}
Suppose $\phi\in\Out(\f)$ and $k>0$ such that $\phi^k$ is rotationless. Then $j(\phi^k)\ge j(\phi)$.
\end{lemma}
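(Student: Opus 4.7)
The plan is to adapt the proof of Lemma~\ref{l:hat r increases}. The assignment $[\Phi]\mapsto[\Phi^k]$ gives a well-defined map $p\colon \PA(\phi)/\!\sim\,\to\,\PA(\phi^k)/\!\sim$, and as in the earlier proof we may choose representatives $\Phi_i$ of the source and $\Psi_j$ of the target so that $\Phi_i^k=\Psi_j$ whenever $i\in p^{-1}(j)$. Because $j(\Psi_j)\ge 0$ for $[\Psi_j]\notin\im(p)$, it suffices to prove
\[
  j(\Psi_j)\;\ge\;\sum_{i\in p^{-1}(j)} j(\Phi_i)
\]
for each $[\Psi_j]$ in the image of $p$.

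The new ingredient beyond Lemma~\ref{l:hat r increases} is a comparison of $a$'s and $b$'s. The inclusion $\Fix_+(\partial\Phi_i)\subseteq\Fix_+(\partial\Psi_j)$ is immediate since an attractor of $\partial\Phi_i$ is still an attractor for its iterate $\partial\Psi_j$, and Remark~\ref{r:neg ray} (applied with $K=k$, using that $\phi^k$ is rotationless) shows $\cR_{\noneg}(\Phi_i)=\cR_{\noneg}(\Psi_j)\cap\Fix_+(\partial\Phi_i)\subseteq\cR_{\noneg}(\Psi_j)$. I would show that the induced map
\[
  \bigsqcup_{i\in p^{-1}(j)}\Fix_+(\partial\Phi_i)/\Fix(\Phi_i)\;\longrightarrow\;\Fix_+(\partial\Psi_j)/\Fix(\Psi_j),
\]
and its analog for $\cR_{\noneg}$, are injective: if $aP=P'$ with $a\in\Fix(\Psi_j)$, $P\in\Fix_+(\partial\Phi_i)$, and $P'\in\Fix_+(\partial\Phi_{i'})$, then $i_a\Phi_i i_a^{-1}$ is a principal representative of $\phi$ that attractively fixes $P'$, so Lemma~\ref{l:neg-ray count}(1) forces $i_a\Phi_i i_a^{-1}=\Phi_{i'}$; hence $i=i'$, and using that $F_n$ is centerless (for $n\ge 2$; $n=1$ is trivial) we get $\Phi_i(a)=a$, i.e., $a\in\Fix(\Phi_i)$. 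Injectivity yields
\[
  a(\Psi_j)\;\ge\;\sum_{i\in p^{-1}(j)} a(\Phi_i),\qquad b(\Psi_j)\;\ge\;\sum_{i\in p^{-1}(j)} b(\Phi_i).
\]

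To finish, I would combine these with the inequality $\hat r(\Psi_j)\ge\sum_i\hat r(\Phi_i)$ of Lemma~\ref{l:hat r increases} and the elementary bound $i(\Phi)\le\hat r(\Phi)+\tfrac12 a(\Phi)$ for every principal $\Phi$ (with equality when $\rk(\Fix(\Phi))\ge 1$), obtaining
\[
  \sum_{i\in p^{-1}(j)} j(\Phi_i)\;\le\;\hat r(\Psi_j)+\tfrac12 a(\Psi_j)+\tfrac12 b(\Psi_j).
\]
When $\rk(\Fix(\Psi_j))\ge 1$ the right-hand side equals $j(\Psi_j)$ and we are done. The main obstacle is the remaining case $\Fix(\Psi_j)=1$: then each $\Fix(\Phi_i)$ is trivial too, so by the classification of non-repelling fixed points (\cite[Proposition~I.1]{gjll:index}) we have $\Fix_N(\partial\Phi_i)=\Fix_+(\partial\Phi_i)$; principality of $\Phi_i$ then forces $a(\Phi_i)\ge 2$, so $i(\Phi_i)=\tfrac12 a(\Phi_i)-1$ holds exactly, and analogously for $\Psi_j$. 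In this case the inequality reduces to
\[
  \tfrac12 a(\Psi_j)-1+\tfrac12 b(\Psi_j)\;\ge\;\sum_{i\in p^{-1}(j)}\bigl(\tfrac12 a(\Phi_i)-1+\tfrac12 b(\Phi_i)\bigr),
\]
which follows from the two displayed inequalities together with $|p^{-1}(j)|\ge 1$, since the $|p^{-1}(j)|$ copies of $-1$ on the right are absorbed by the single $-1$ on the left.
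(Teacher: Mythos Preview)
Your argument is correct and follows exactly the strategy the paper's one--line proof points to: it cites Lemmas~\ref{l:a and b increase} and~\ref{l:hat r increases} and says the result is a ``direct consequence''. Your version simply unpacks this, working fiber--by--fiber over the map $p\colon\PA(\phi)/\!\sim\,\to\,\PA(\phi^k)/\!\sim$, reproving the componentwise inequalities $a(\Psi_j)\ge\sum_i a(\Phi_i)$ and $b(\Psi_j)\ge\sum_i b(\Phi_i)$ directly (your injectivity argument is a local refinement of Lemma~\ref{l:neg-ray count}(3), which is what underlies Lemma~\ref{l:a and b increase}), and invoking the fiberwise inequality $\hat r(\Psi_j)\ge\sum_i\hat r(\Phi_i)$ established inside the proof of Lemma~\ref{l:hat r increases}. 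The one genuine addition in your write--up is the explicit treatment of the case $\Fix(\Psi_j)=\{1\}$, where the ``$-1$'' in the index formula prevents the bound $i(\Phi)\le\hat r(\Phi)+\tfrac12 a(\Phi)$ from being tight; you correctly observe that then every $\Phi_i$ in the fiber also has trivial fixed subgroup and $a(\Phi_i)\ge 2$, so the $-1$'s line up and $|p^{-1}(j)|\ge 1$ finishes it. The paper leaves this step implicit.
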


\begin{proof}
By definition of $j(\phi)$, the lemma is a direct consequence of Lemmas~\ref{l:a and b increase} and \ref{l:hat r increases}.
\end{proof}

\begin{prop}
There is an algorithm to compute $j(\phi)$ for rotationless $\phi\in\Out(\f)$. 
\end{prop}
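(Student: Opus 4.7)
The plan is to reduce the computation of $j(\phi) = \sum_\Phi \bigl(i(\Phi) + \tfrac{1}{2} b(\Phi)\bigr)$ to a count of ends of the components of $\pstallings_N(f)$, where $\fG$ is an algorithmically constructed \ct\ representing $\phi$. The summand $i(\Phi)$ is already algorithmically computable by Proposition~\ref{old index is algorithmic}, so the real work is to extract $b(\Phi)$, i.e.\ the number of $\Fix(\Phi)$-orbits of \noneg-rays for each principal representative $\Phi$, from the data of $\fG$.

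First I would build a \ct\ $\fG$ for $\phi$ via Theorem~\ref{algorithmic ct}, and then build (an approximation of) $\pstallings_N(f)$ via the constructive model $\CS(f)$ described in Section~\ref{s:S}. By Remark~\ref{r:cs algorithmic}, for any prescribed radius around $\pstallings(f)$ the initial portions of the attached rays can be explicitly computed; since $\CS(f)$ has finite type (a finite core union finitely many rays), the combinatorial type of $\pstallings_N(f)$ and the labels of its ends are algorithmically accessible. The decomposition $\pstallings_N(f) = \sqcup_{[v]} \pstallings_N(f,[v])$ corresponds bijectively to the set of isogredience classes $[\Phi]\in\PA(\phi)/\sim$ (Definition~\ref{d:stallings graph}), so each component corresponds to a single summand of $j(\phi)$.

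For the component associated to $[\Phi]$, the ends of $\pstallings_N(f,[v])$ correspond, via lifts to $\ti G$, exactly to the elements of $\Fix_N(\partial\Phi)/\Fix(\Phi)$ by the discussion following Definition~\ref{d:stallings graph} and Lemma~\ref{l:neg-ray count}. Moreover, the construction of $\CS(f)$ sorts these ends by their source: ends contributed by the lollipops attached in Stage~2 come from linear edges $E_i \in \lin(f)$ and correspond to boundary points of $\Fix(\Phi)$; ends attached in Stage~3 correspond to eigenrays $R_E$ with $E \in \E_f$, and each such end is an \noneg-ray precisely when $E$ lies in an \noneg\ stratum, by the equivalence in Lemma~\ref{l:neg}. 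Since the stratum containing each $E$ is part of the filtration data of the \ct, this sorting is algorithmic.

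Thus the algorithm is: compute $\fG$, compute $\pstallings_N(f)$, decompose into components, and for each component $\pstallings_N(f,[v])$ compute $i(\Phi) = \max\{0,\rk(\pstallings_N(f,[v])) + \tfrac12 a - 1\}$ as in Proposition~\ref{old index is algorithmic} (where $a$ is the number of ends) and count $b(\Phi)$ as the number of those ends that are attached to edges of \noneg\ strata of $G$. Sum $j(\Phi) = i(\Phi) + \tfrac12 b(\Phi)$ over components to obtain $j(\phi)$. There is no real obstacle remaining: all the serious work was done in constructing $\pstallings_N(f)$ algorithmically and in establishing (via Lemma~\ref{l:neg}) that the \noneg-ray condition can be read off from the stratum of the generating edge, both of which are already in place.
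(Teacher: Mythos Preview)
Your proposal is correct and follows essentially the same approach as the paper. The paper compresses the argument to one line by observing that $j(\phi) = i(\phi) + \tfrac{1}{2}\cdot(\text{number of non-linear \noneg\ edges in }G)$, which is exactly your count of \noneg\ ends of $\CS(f)$ once one notes (from the discussion opening Section~\ref{s:S}) that distinct \noneg\ edges in $\E_f$ yield distinct $F_n$-orbits of rays. One small slip: lollipops attached in Stage~2 are compact and contribute to rank, not to ends of $\CS(f,[v])$; but since you compute $i(\Phi)$ via Proposition~\ref{old index is algorithmic} and read $b(\Phi)$ only from the Stage~3 \noneg\ rays, this does not affect your algorithm.
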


\begin{proof}
This follows from Proposition~\ref{old index is algorithmic} and the fact that $j(\phi)$ is the sum of $i(\phi)$ and one half the number of non-linear \noneg\ edges in any \ct\ representing $\phi$.
\end{proof}

\begin{prop}\label{p:index}
For $\phi\in\Out(\f)$, $j(\phi)\le n-1$.
\end{prop}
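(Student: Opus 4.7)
The plan is to reduce to the rotationless case and then interpret $j(\phi)$ as a Euler-characteristic quantity on a suitably ``doubled'' Stallings graph, which we compare to the Euler characteristic of $G$.

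First, by Lemma~\ref{l:j increases}, $j(\phi)\le j(\phi^k)$ for any rotationless iterate $\phi^k$, the existence of which is guaranteed by Corollary~\ref{c:kn}. Hence it suffices to establish the bound when $\phi$ is rotationless. Fix a \ct\ $\fG$ representing $\phi$, and (after handling components individually using the additivity of both sides over free product decompositions) assume $G$ is connected of rank $n$.

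For each principal isogredience class $[\Phi]$, let $X_{[\Phi]}:=\pstallings_N(f,[\Phi])$ denote the associated component of the Stallings graph constructed in Section~\ref{s:stallings_N}. Then $\rk(X_{[\Phi]})=\rk(\Fix(\Phi))$, the ends of $X_{[\Phi]}$ are in bijection with $\Fix(\Phi)$-orbits of attracting fixed points of $\partial\Phi$, and by Lemma~\ref{l:neg} the NEG-ray ends are precisely those ends arising from Stage~3 attachment of rays generated by non-linear NEG edges $E\in\E_f$. Form $X^*_{[\Phi]}$ by attaching a parallel copy of each NEG-ray. Then $\rk(X^*_{[\Phi]})=\rk(\Fix(\Phi))$ and $e(X^*_{[\Phi]})=a(\Phi)+b(\Phi)$, so
$$j(\Phi)=\rk(X^*_{[\Phi]})+\tfrac12\,e(X^*_{[\Phi]})-1.$$
Truncate each ray of $X^*_{[\Phi]}$ at some sufficiently late vertex to obtain a finite connected graph $X^{*,T}_{[\Phi]}$ with valence-one ``end'' vertices; applying the identity $2\rk+e-2=\sum_{u\,\mathrm{interior}}(\val(u)-2)$ and summing over principal classes yields
$$2j(\phi)=\sum_{[\Phi]}\sum_{u\text{ interior of }X^{*,T}_{[\Phi]}}\bigl(\val_{X^{*,T}_{[\Phi]}}(u)-2\bigr).$$

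The remaining, and main, step is to show that this sum is bounded by $\sum_{v\in V(G)}(\val_G(v)-2)=-2\chi(G)=2(n-1)$. The interior vertices of the $X^{*,T}_{[\Phi]}$ are of two kinds: principal vertices of $G$ (one copy per Nielsen class containing the vertex) and subdivision vertices arising from the lollipops $Y_{\mu_E}$ attached for linear edges $E$ and the subdivided iNp-edges introduced in Stage~3 for EG strata. At a principal vertex $v\in G$ one has $\val_{X^*}(v)=d_f(v)+n_{\mathrm{NEG}}(v)$, where $d_f(v)$ counts the fixed directions at $v$ and $n_{\mathrm{NEG}}(v)$ counts the non-linear NEG edges in $\E_f$ starting at $v$ (each contributing an extra $+1$ via its doubled ray). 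Because every non-fixed direction at $v$ contributes to $\val_G(v)$ but not to $\val_{X^*}(v)$, the key CT structural facts---that the terminal direction $\bar E$ of a non-fixed NEG or EG edge $E$ is not fixed (for non-linear NEG this follows from $f(\bar E)=\bar u\bar E$ with $u$ in a lower stratum), and that the twist-path $w_E$ of a linear edge is itself a Nielsen circuit---allow one to match each $+1$ overcount (from doubling) and each subdivision-vertex contribution against a corresponding non-fixed direction or Nielsen-path structure in $G$, term by term.

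The main obstacle is this final bookkeeping: one must verify the inequality
$$\sum_{[\Phi]}\sum_{u\,\mathrm{interior}}\bigl(\val_{X^*_{[\Phi]}}(u)-2\bigr)\;\le\;\sum_{v\in V(G)}\bigl(\val_G(v)-2\bigr)$$
by carefully pairing the doubled NEG-ray contributions with the non-fixed terminal directions of the generating NEG edges, and the lollipop/iNp subdivision-vertex contributions with the Nielsen-path data in $G$. Once this is done, $2j(\phi)\le 2(n-1)$, completing the proof.
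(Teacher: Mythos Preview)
Your reduction to the rotationless case via Lemma~\ref{l:j increases} is correct, and the idea of encoding $j(\Phi)$ as an Euler-characteristic quantity on a doubled Stallings graph $X^*_{[\Phi]}$ is a reasonable reformulation. However, the proposal has a genuine gap: the ``final bookkeeping'' inequality
\[
\sum_{[\Phi]}\sum_{u\ \mathrm{interior}}\bigl(\val_{X^*_{[\Phi]}}(u)-2\bigr)\ \le\ \sum_{v\in V(G)}\bigl(\val_G(v)-2\bigr)
\]
is asserted but not proved, and this inequality is not a detail---it is the entire content of the proposition. The matchings you sketch are not local (the $+1$ from doubling an NEG ray lives at the initial vertex of $E$, while the non-fixed direction $\bar E$ you want to pair it with lives at the \emph{terminal} vertex), and the contributions from lollipop junctions and from EG indivisible Nielsen paths are not accounted for beyond the phrase ``Nielsen-path data.'' In particular, the EG case with an \iNp\ is delicate: one needs that the illegal turn inside $\rho$ supplies at least one non-fixed direction, and that the two initial directions of $\rho$ share a single ray in $\CS(f)$; none of this is visible in your outline.

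The paper's proof organizes exactly this accounting by induction along the core filtration $\emptyset=G_{l_0}\subset\cdots\subset G_{l_K}=G$, defining intermediate quantities $j(k)$ from the subgraph $CS(k)\subset CS(f)$ and verifying $\Delta_k j\le\Delta_k\chi^-$ case by case using Lemma~\ref{l:filtration}. The cases (1a)--(1c) are short, while Case~(2) (EG strata, with or without an \iNp) requires tracking, at each vertex $v\in H_{l_k}$, the quantity $\kappa(v)$ of non-fixed $H_{l_k}$-directions and invoking the existence of an illegal turn to ensure $\sum\kappa(v)\ge 1$. Your global matching would, if made precise, have to reproduce this case analysis; the filtration induction is what makes it tractable.
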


\begin{ex}
Start with the usual linear 2-rose $a_1\mapsto a_1$ and $a_2\mapsto a_2a_1$. Further attach edges $a_i$, $2<i\le n$, with $a_i\mapsto a_2^{2i}a_ia^{2i+1}_2$. If we subdivide each $a_i$, $2 < i\le n$ then the result $\fG$ is a \ct\ for a $\phi\in\Out(\f)$. Here $\CS_N(f)$ is the disjoint union of a pair of eyeglasses and $n-2$ lines each with two $NEG$-rays. Hence $j(\phi)=n-1$ and $i(\phi)=1$.
\end{ex}

\begin{proof}[Proof of Proposition~\ref{p:index}]
 By Lemma~\ref{l:j increases}, we may assume that $\phi$ is rotationless. Let $\fG$ be a \ct\ for $\phi$. We use the terminology of Section~\ref{s:filtration}.  In particular, 
$$\emptyset=G_0=G_{l_0}\subset G_{l_1}\subset G_{l_2}\subset\dots\subset G_{l_K}=G_N = G$$
is the core filtration and  $\Delta_i\chi^-:=\chi^-(G_{l_i})-\chi^-(G_{l_{i-1}})$.  

Recall that $\pstallings_N(f)$ is built up in three stages: starting with  principal vertices and fixed edges, lollipops corresponding to linear edges, edges corresponding to \eg\ Nielsen paths, and then rays corresponding (perhaps not bijectively) to oriented edges in $\E$ are added.   For each $1 \le 1 \le K$,   define $\CS_N(k)$ to be the subgraph of $\CS_N(f)$ corresponding to $G_{l_k}$.  More precisely, start with  principal vertices   in $G_{l_k}$ and fixed edges in $G_{l_k}$, then add lollipops corresponding to linear edges  in $G_{l_k}$ and edges corresponding to \eg\ Nielsen paths  in $G_{l_k}$  and then add rays corresponding to oriented edges in $\E \cap G_{l_k}$. 

For each component $\Gamma$ of $\CS_N(k)$ define 
 $$j(\Gamma):= \rk(\Gamma)+\frac{1}{2}a(\Gamma)+\frac{1}{2}b(\Gamma)-1$$ 
 where $a(\Gamma)$ is the number of ends of $\Gamma$ and $b(\Gamma)$ is the number of $NEG$-rays of $\Gamma$.    Then define 
$$j(k) = \sum_\Gamma j(\Gamma)$$
where the sum is taken over all components $\Gamma$ of $\CS_N(k)$.  Using Remark~\ref{r:fix at infinity}, $j(\AS_N(f))=j(\phi)=j(K)$. Proposition~\ref{p:index}  therefore follows from the more general inequality
\begin{equation}\label{e:index}
j(k)\le \chi^-(G_{l_k})\tag{$*_k$}
\end{equation}
for $0\le k\le K$, which we will prove by induction on $k$.

The $k=0$ case holds because $j(\emptyset) =  \chi^-(\emptyset) = 0$.

Suppose that ($*_{k-1}$) holds for some $k\in\{ 1, \dots, K\}$. Define $\Delta_k j:=j(k)-j(k-1)$. We check that, in each of the cases of Lemma~\ref{l:filtration}, $\Delta_k j \le \Delta_k\chi^-$. Once this has been done, ($*_k$) holds and the proposition is proven.
 
 We use the following formula to compute $\chi^-$ of a finite graph: $$\chi^-= \Sigma_v (\frac{val(v)}{2}-1)$$ where the sum is over the vertices $v$ and $val(v)$ is the number of directions at $v$. Thus, $\Delta_k\chi^-$ is the half number of ``new" directions (two for each new edge) minus the number of ``new" vertices. As we proceed through the process of verifying $\Delta_k j \le \Delta_k\chi^-$, we refer to vertices, directions, or rays of $G_{l_k}$ or $\CS_N(k)$ previously considered as {\it old}; others are {\it new}.

Lemma~\ref{l:not principal} implies that in cases (1a), (1b) and (1c) the endpoints of all new edges are principal.   

{\bf Case 1a.} $\CS_N(k)$ is obtained from $\CS_N(k-1)$   by attaching a circle component  and hence
$\Delta_k j=\Delta_k\chi^-=0$.

{\bf Case 1b.} If the edge $H:=H_{l_k}$ is fixed then $\CS_N(k)$  [resp. $G_{l_k}$] is obtained from $\CS_N(k-1)$ [resp. $G_{l_{k-1}}$] by adding a new edge to old vertices. Hence $\Delta_k j=\Delta_k\chi^-=1$.   We may therefore assume that the edge $H$ is not fixed and so $f(H)=H\cdot u$ with $u$ non-trivial. In this case, $\CS_N(k)$ is obtained from $\CS_N(k-1)$ by attaching to an old vertex either a lollipop (if $u$ is a Nielsen path) or the $NEG$-ray $H\cdot u \cdot f(u) \cdot f^2(u)\dots$ (otherwise). In each case, $\Delta_k j=\Delta_k\chi^-=1$.

{\bf Case 1c.} There are three subcases depending on whether or not the added edges are linear. Their shared initial vertex is a new principal vertex in $G_{l_k}$ and so $\CS_N(k)$ is obtained from $\CS_N(k-1)$ by adding a new component. The new component is a pair of eyeglasses if both edges are linear. It is a one point union of an $NEG$-ray and a lollipop if only one edge is linear, and it is a line with two $NEG$-rays if neither edge is linear. In all cases, $\Delta_kj=\Delta\chi^-=1$.

{\bf Case 2.} In Case~2, we will break the verification that $\Delta_k j \le \Delta_k\chi^-$ into two steps: first passing from $G_{l_{k-1}}$ to $G_{u_k}$ and then passing from $G_{u_k}$ to $G_{l_k}$.

Step~1: For the edge $H_j$ with $j$ as in Case~(2a), the contribution to $\CS_N(k)$ is a new \noneg -ray [or lollipop] with new initial vertex. (The terminal direction in $H_j$ is not fixed and so does not contribute.) Thus, the contribution to $\Delta_kj$ is 0 ($\frac{1}{2}$ to the $a$-count since the ray is new, another $\frac{1}{2}$ to the $b$-count since it is \noneg\ [or 1 to rank] and -1 for the new component). This is balanced with the contribution of $H_j$ to $\Delta_k \chi^-$, namely $\frac{1}{2}$ for each of its directions and -1 for the new vertex.

Step~2:   For each vertex $v \in H_{l_k}$, let    $\Delta_kj(v)$  and $\Delta_k\chi^-(v)$ be the contributions to $\Delta_kj$  and $\Delta_k\chi^-$ coming from $v$ and the   $H_{l_k}$-directions that are incident to $v$.   For each principal vertex $v \in H_{l_k}$ let $\kappa(v)$ be the number of $H_{l_k}$-directions  that are incident to $v$ and not fixed.   If $v \in H_{l_k}$ is not principal let $\kappa(v) = val(v) -2$. 
 Since each direction we encounter in this step is contained in $H_{l_k}$, and is hence \eg, the $b$-count does not change.
 
 As a first case assume that there are no \iNp s of height $l_k$.     If $v$ is not principal then $\Delta_kj(v) = 0$.    By Lemma~\ref{l:not principal},  $v$ is new   so the directions incident to $v$  are all in $H_{l_k}$ and     $\Delta_k\chi^-(v) = \frac{1}{2} (val(v) - 2) = \frac{1}{2} \kappa(v)$.  Thus  $\Delta_k\chi^-(v)  -\Delta_kj(v)  =\frac{1}{2} \kappa(v) \ge 0$.

 We next consider principal $v$, letting $L(v)$ be the number of fixed $H_{l_k}$-directions   that are based at $v$.   Thus $L(v) + \kappa(v)$ is the number of  $H_{l_k}$-directions   that are based at $v$.  If $v$ is old then $\Delta_kj(v) = \frac{1}{2}L(v)$  and $\Delta_k\chi^-(v) = \frac{1}{2}(L(v)+\kappa(v))$.  If $v$ is new then      $\Delta_kj(v)  = \frac{1}{2}L(v) -1$  and $\Delta_k\chi^-(v) = \frac{1}{2}(L(v)+\kappa(v))-1$.     As in the previous case, $\Delta_k\chi^-(v)  -\Delta_kj(v) =\frac{1}{2} \kappa(v) \ge 0$.

     The second and final case is that there is an \iNp\ $\rho$ of height $l_{k}$.  By \cite[Lemma~4.24]{fh:recognition}, $l_k = u_{k}+1$.   From \cite[Lemma 5.1.7]{bfh:tits1} and \cite[Corollary 4.19]{fh:recognition} it follows  that  if  the endpoints $w,w'$ of $\rho$ are distinct then  at least one of $w,w'$ is new and  if $w = w'$ then $w$ is new.   Let $d$ and $d'$ be the directions determined by $\rho$ at $w$ and $w'$ respectively.  
 
  Let $\V$ be the set of  vertices of $H_{l_k}$ that are not   endpoints of $\rho$.   Each $v \in \V$ is handled as in the no Nielsen path case.  The conclusion is that  
  
  $$\sum_{v \in \V} \Delta_k\chi^-(v)  - \sum_{v \in \V}\Delta_kj(v) = \sum_{v \in \V}\frac{1}{2} \kappa(v) \ge 0$$

 Let $\Delta_kj(\rho)$     and $\Delta_k\chi^-(\rho)$ be the contributions to $\Delta_kj$  and $\Delta_k\chi^-$ coming from the endpoints of $\rho$  and the   $H_{l_k}$-directions that are incident to the endpoints of $\rho$.  The remaining analysis breaks up into subcases as follows.
 
  \begin{itemize}
 
\item  Suppose that $\rho$ is a closed path based at a  new vertex $w$.  
The change in $CS_N(k)$ corresponding to $w$ is the addition of a new vertex $w$, an edge  representing $\rho$ with both endpoints at $w$,  one ray for each  fixed $H_{l_k}$-direction based at $w$  other than $d$ and $d'$ and then one ray corresponding to $d$ and $d'$.  (See Section~\ref{s:S}.)  We therefore have 
 $$\Delta_kj(\rho) = 1+ \frac{1}{2}(L(w) -1)   -1  =  \frac{1}{2}L(w) - \frac{1}{2} $$  
 $$\Delta_k\chi^-(\rho) = \frac{1}{2}(L(w)  + \kappa(w))  - 1$$  Thus 
 
$$ \Delta_k\chi^-(\rho)  -\Delta_kj(\rho)      = \frac{1}{2}(\kappa(w) - 1)  $$

Since there is always at least one illegal turn between  $H_{l_k}$-directions (for example, the one in $\rho)$   there must be at least one vertex $v \in \V \cup \{w\}$ with $\kappa(v) \ne 0$.  We conclude that   $\ \Delta_kj \le \Delta_k\chi^-$ as desired with equality if and only if $\sum \kappa(v) =1$ where the sum is taken over all vertices in $H_{i_k}$.

\item  Suppose that    $w$ is new and $w'$ is old.  The change in $CS(k)$ corresponding to $w$ and $w'$ is the addition of a new vertex $w$, an edge  representing $\rho$ connecting $w$ to $w'$,  one ray for each  fixed $H_{l_k}$-direction based at $w$ or $w'$  other than $d$ and $d'$ and then one ray corresponding to $d$ and $d'$.    This yields 
$$\Delta_kj(\rho) =   \frac{1}{2}(L(w) + L(w'))- \frac{1}{2} $$  
 $$\Delta_k\chi^-(\rho) = \frac{1}{2}(L(w) +L(w')+ \kappa(w) + \kappa(w'))- 1$$    
and the proof concludes as in the previous case.

   \item Suppose that $w$ and $w'$ are distinct and both new.   The change in $CS(k)$ corresponding to $w$ and $w'$ is the addition of two new vertices, an edge  representing $\rho$ connecting them,  one ray for each  fixed $H_{l_k}$-direction based at $w$ or $w'$  other than $d$ and $d'$ and then one ray corresponding to $d$ and $d'$. The calculation is the same as in  the previous case.  
\end{itemize}

\end{proof}

We have the following two corollaries to the proof of Proposition~\ref{p:index}.
\begin{corollary}
If $G$ has no $EG$-strata then $j(\phi)=n-1$. If $G$ has an $EG$-stratum without Nielsen paths, then $j(\phi)<n-1$. \qed
\end{corollary}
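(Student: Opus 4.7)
The argument for both statements derives directly from the inductive per-level comparison $\Delta_k j \le \Delta_k \chi^-$ that drives the proof of Proposition~\ref{p:index}, which telescopes to $j(\phi) = j(K) \le \chi^-(G) = n-1$. What matters is tracking when equality holds at each level of the core filtration.

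For the first statement I would invoke Lemma~\ref{l:filtration}: if $G$ has no EG strata then every step of the core filtration falls into one of Cases (1a), (1b), or (1c), since Case~(2) requires an EG stratum (and (Zero Strata) forbids zero strata from appearing without an enveloping EG stratum). In the proof of Proposition~\ref{p:index} each of these three cases was shown to satisfy $\Delta_k j = \Delta_k \chi^-$ exactly. Starting from $j(0) = 0 = \chi^-(\emptyset)$ and telescoping gives $j(\phi) = \chi^-(G) = n-1$, using that $G$ is a core marked graph with fundamental group $F_n$.

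For the second statement I would invoke the no-Nielsen-path subcase of Case~2 in the proof of Proposition~\ref{p:index}: if $H_{l_k}$ is an EG stratum with no iNp of height $l_k$ then
\[
\Delta_k \chi^- - \Delta_k j \;=\; \frac{1}{2}\sum_{v \in H_{l_k}} \kappa(v) \;\ge\; 0,
\]
where $\kappa(v)$ counts non-fixed $H_{l_k}$-directions at principal $v$ and equals $\mathrm{val}(v) - 2$ at non-principal $v$. Combined with the equalities in every other case, strict positivity of this sum at even one such $k$ immediately yields $j(\phi) < n-1$.

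The main obstacle is exhibiting a vertex $v \in H_{l_k}$ with $\kappa(v) > 0$. The approach I would take is to argue that an EG stratum without any iNp of its own height must contain at least one illegal turn between $H_{l_k}$-directions, which forces either a principal vertex with a non-fixed $H_{l_k}$-direction or a non-principal vertex of $H_{l_k}$-valence exceeding two. This should follow by combining the hypothesis $\lambda_{l_k} > 1$ with the (Completely Split) property for $f|H_{l_k}$ and the algorithmic enumeration of iNps from Lemma~\ref{finding iNps}: the absence of iNps of height $l_k$ rules out precisely the degenerate configurations in which folding could be concentrated in Nielsen paths, so any expansion $\lambda_{l_k} > 1$ must manifest as an illegal turn recorded by a positive $\kappa(v)$.
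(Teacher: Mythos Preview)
Your argument tracks the paper's intent closely; the corollary carries a \qed\ precisely because both statements are meant to be read off from the case analysis in the proof of Proposition~\ref{p:index}. The first statement is handled correctly: in the absence of EG strata every step of the core filtration falls under Case~(1a), (1b), or (1c), where the proof establishes $\Delta_k j = \Delta_k\chi^-$ exactly, and telescoping gives $j(\phi) = \chi^-(G) = n-1$.

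For the second statement your reduction is also right: in the no-Nielsen-path subcase of Case~(2) the proof yields $\Delta_k\chi^- - \Delta_k j = \tfrac12\sum_v \kappa(v)$, so one needs $\sum_v \kappa(v) \ge 1$, and you correctly observe that an illegal turn between $H_{l_k}$-directions supplies this. The weak spot is your justification of that fact. The existence of an illegal turn in an EG stratum is not special to the no-Nielsen-path case and does not follow from (Completely Split) or Lemma~\ref{finding iNps}; it is a general feature of EG strata in any relative train track map, and the paper simply asserts it in the Nielsen-path subcase (``there is always at least one illegal turn between $H_{l_k}$-directions''), with the turn inside $\rho$ offered merely as an example, not as the source of the phenomenon. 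Your heuristic that ``the absence of iNps rules out the configurations in which folding could be concentrated in Nielsen paths'' gets the dependence backwards. A clean direct argument: if every turn between $H_r$-directions were legal then every circuit in $G_r$ would be $r$-legal, so for the Perron--Frobenius length $\ell_r$ one would have $\ell_r(f_\#(\sigma)) = \lambda_r\,\ell_r(\sigma)$ with no cancellation; since $[G_r]$ is $\phi$-invariant, realizing $\phi^{-k}(\sigma)$ in $G_r$ gives $\ell_r(\phi^{-k}(\sigma)) = \lambda_r^{-k}\ell_r(\sigma) \to 0$, forcing $\phi^{-k}(\sigma)$ into $G_{r-1}$ for large $k$ and hence $\sigma = f^k_\#(\phi^{-k}(\sigma)) \subset G_{r-1}$, a contradiction for any $\sigma$ crossing $H_r$. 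Once an illegal turn exists, at least one of its two directions is non-periodic, and that forces $\kappa(v)\ge 1$ at its base vertex.
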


For the next corollary, recall  (Sections~\ref{s:principal} and \ref{s:sn(f)}) that there are bijections between Nielsen classes of principal vertices of $G$ and isogredience classes in $\PA(\phi)$ and the set of components of $\CS_N(f)$.
\begin{corollary}
\begin{enumerate}
\item Suppose that $\CS_N(f,[v])$ is a line $L$. Then one of its rays is $\noneg$.
\item
Each $\CS_N(f,[v])$ contributes at least 1/2 to $j(\phi)$.
\item
$|[\PA(\phi)]|\le 2j(\phi)$, i.e.\ $\CS_N(f)$ has at most $2j(\phi)$ components.
\end{enumerate}
\end{corollary}
 
\proof   (3) follows from (2)   We will prove (1) and (2) simultaneously.

Since $ \CS_N(f,[v])$ is a weakly core graph we may assume that each vertex of $\CS_N(f,[v])$  has valence two.   In particularly, if  $[w] = [v]$ then   $w$  is not the endpoint of an \iNp\ (see Section~\ref{s:S}) and $w$ has only two gates in $G$.   We view $\CS_N(f)$ as being built up as in the proof of Proposition~\ref{p:index}.  Let $w$ be the lowest principal vertex satisfying $[w] = [v]$  and let $G_{l_k}$ be the lowest core filtration element that contains $w$.  We consider the cases enumerated in the proof of  Proposition~\ref{p:index}.  Case (1a) is ruled out because $w$ is principal and so would have at least three gates.    Since $w$ is new in $H_{l_k}$ we are not in case (1b).  If we are in case (1c)   then both edges must be  non-linear and $j(\Gamma) \ge 1$.   It remains to rule out case (2).      Since $w$ is a new  vertex in the \eg\ strata $H_{l_k}$ there are at least two gates in $H_{l_k}$ based at $w$.  Since $w$ is principal but is  is not the endpoint of  an \iNp, there must be at least three gates at $w$.  This completes the proof of (1) and (2).
 
\begin{corollary}\label{c:improved ends count}
$|\big(\cup_{\Phi\in\PA(\phi)}\Fix_+(\partial\Phi)\big)/\f|\le 6(n-1)$.
\end{corollary}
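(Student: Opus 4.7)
The plan is to translate the bound into a counting problem on the graph $\CS(f)$ and then apply the bounds from Proposition~\ref{p:index} and the corollary preceding Corollary~\ref{c:improved ends count}. By Lemma~\ref{l:a and b increase}(2) we have $a(\phi^k) \ge a(\phi)$ where $a(\phi)$ denotes $|\big(\cup_{\Phi\in\PA(\phi)}\Fix_+(\partial\Phi)\big)/\f|$; taking $k$ as in Corollary~\ref{c:kn} so that $\phi^k$ is rotationless, we may replace $\phi$ by $\phi^k$ and assume $\phi$ is rotationless. Fix a \ct\ $\fG$ representing $\phi$.

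By Lemma~\ref{l:neg-ray count}(3), $a(\phi) = \sum_i a(\Phi_i)$, where $\{\Phi_i\}$ is a set of representatives of $\PA(\phi)/\sim$. Property~(3) in Section~\ref{s:properties of S} identifies the universal cover of the component $\Gamma_i = \CS(f,[v_i])$ corresponding to $[\Phi_i]$ with the convex hull of $\Fix_N(\partial\ti f_i)$ in $\ti G$, with deck group $\Fix(\Phi_i)$. Since attracting fixed points are not stabilized by any nontrivial covering transformation while points of $\partial\Fix(\Phi_i)$ are, the ends of $\Gamma_i$ biject with the $\Fix(\Phi_i)$-orbits of $\Fix_+(\partial\Phi_i)$. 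Hence $a(\Phi_i)$ equals the number of ends of $\Gamma_i$, which agrees with $a(\Gamma_i)$ in the notation of the proof of Proposition~\ref{p:index}, giving $a(\phi) = \sum_\Gamma a(\Gamma)$ with the sum over components $\Gamma$ of $\CS(f)$.

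From the proof of Proposition~\ref{p:index}, each component satisfies $j(\Gamma) = \rk(\Gamma) + \tfrac{1}{2}a(\Gamma) + \tfrac{1}{2}b(\Gamma) - 1$ and $j(\phi) = \sum_\Gamma j(\Gamma)$. Dropping the non-negative terms $\rk(\Gamma)$ and $\tfrac{1}{2}b(\Gamma)$ yields $j(\phi) \ge \tfrac{1}{2}a(\phi) - N$, where $N$ is the number of components of $\CS(f)$, equivalently $|\PA(\phi)/\sim|$. Therefore
$$a(\phi) \;\le\; 2j(\phi) + 2N.$$
By part (3) of the corollary preceding Corollary~\ref{c:improved ends count}, $N \le 2j(\phi)$, and by Proposition~\ref{p:index}, $j(\phi) \le n-1$. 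Combining,
$$a(\phi) \;\le\; 2j(\phi) + 4j(\phi) \;=\; 6j(\phi) \;\le\; 6(n-1).$$

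The main subtle step is the end-count identification $a(\Phi_i) = a(\Gamma_i)$ in the second paragraph; everything else is assembly of prior results.
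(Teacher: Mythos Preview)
Your proof is correct and follows essentially the same route as the paper: drop the nonnegative $\rk$ and $b$ terms from $j(\phi)=\sum_\Gamma j(\Gamma)$ to get $j(\phi)\ge \tfrac12 a(\phi)-N$, apply the component bound $N\le 2j(\phi)$ from the preceding corollary, and finish with Proposition~\ref{p:index}. Your added reduction to the rotationless case via Lemma~\ref{l:a and b increase}(2) and your more explicit justification of $a(\Phi_i)=a(\Gamma_i)$ are reasonable elaborations of steps the paper leaves implicit.
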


\begin{proof} 
\begin{align*}
j(\phi)&=\sum_{[v]}j(\CS_N(f,[v]))\ge\sum_{[v]}(\frac{a(\CS_N(f,[v])}{2}-1)\\
&=\frac{|\mbox{ends of $\CS_N(f)$}|}{2}-|\mbox{components of $\CS_N(f)$}|\\
&\ge \frac{1}{2}\cdot |
\big(\cup_{\Phi\in\PA(\phi)}\Fix_+(\partial\Phi)\big)/\f\}|-2j(\phi)
\end{align*}

Thus $|\big(\cup_{\Phi\in\PA(\phi)}\Fix_+(\partial\Phi)\big)/\f| \le 6 j(\phi) \le 6(n-1)$ by Proposition~\ref{p:index}.
\end{proof}

\begin{remark}
We commented in Remark~\ref{r:better count} how Proposition~\ref{p:index} could be used to improve the bound in Lemma~\ref{l:bound on Fix+} and hence also Corollary~\ref{c:kn}. With more care, Proposition~\ref{p:index} could be used to further improve this bound.
\end{remark}

\section{Appendix: Hyperbolic and atoroidal automorphisms}

In this appendix we reprove a result of Brinkmann (Lemma~\ref{l:brinkmann}) and a result of Kapovich (Corollary~\ref{c:brinkmann}).

\begin{definition}
An outer automorphism $\phi\in\Out(F_n)$ is {\it hyperbolic} if for some $N>0$ and $\lambda>1$, $$\lambda \| \alpha\| \le  \max\{\| \phi^N(\alpha)\|,\|\phi^{-N}(\alpha)\|\}$$  for all non-trivial conjugacy classes $\alpha$ in $F_n$. 
We say $\phi$ is {\it atoroidal} if $\phi$ has no non-trivial periodic conjugacy classes. $\Phi\in\Aut(F_n)$ is {\it hyperbolic} if for some $N>0$ and $\lambda>1$, $$\lambda | a| \le \max\{| \Phi^N(a)|,|\Phi^{-N}(a)|\}$$ for all non-trivial $a$ in $F_n$. Here $\|\cdot\|$ and $|\cdot |$ denote respectively reduced word length and word length with respect to a fixed basis for $F_n$. The {\it mapping torus $M_\Phi$ of $\Phi\in\Aut(F_n)$} is the group with presentation $\langle F_n, t\mid tat^{-1}=\Phi(a) \mbox{ for each } a\in F_n\rangle$. 
\end{definition}

\begin{lemma}[\cite{pb:hyperbolic}]\label{l:brinkmann}
Suppose that no conjugacy class in $F_n$ is fixed by an iterate of $\phi$. Then $\phi$ is  hyperbolic.
\end{lemma}

\proof Suppose that  $\Lambda^\pm_1,\ldots, \Lambda^\pm_m$ are the lamination pairs for $\phi$.  After replacing $\phi$ by an iterate, we may assume that $\phi$ and $\phi^{-1}$  are rotationless. Since $\phi$ does not fix any conjugacy classes,  each   $\Lambda_i^\pm$  is non-geometric,  and so     (\cite[Part \urn{3} Theorem~F]{hm:subgroups}; see also \cite[Definition~5.1.4 and Theorem~6.0.1]{bfh:tits1}) there is a $\phi$-invariant free factor system $\A_i$ for which the following are equivalent for each conjugacy class $[a]$ in $F_n$.
 \begin{itemize}
 \item  $[a]$  is not weakly attracted to $\Lambda_i^+$ under iteration by $\phi$.
 \item    $[a]$  is not weakly attracted to $\Lambda_i^-$ under iteration by $\phi^{-1}$.
 \item  $[a]$ is carried by $\A_i$.  
 \end{itemize}
 In particular, $\A_i$ does not carry $\Lambda_i^\pm$.
 
We claim that no line is carried by every $\A_i$.  If this failed, then one could choose free factors $A_i$ such that $[A_i]$ is a component of $\A_i$ and such that $A := A_1 \cap \ldots \cap A_m$ is  non-trivial.  Thus   $A$ is a non-trivial free factor   that   does not  carry  any $\Lambda_i^\pm$.   There is a \ct\    representing $\phi$ in which $[A]$ is represented by a filtration element.  The lowest stratum  cannot be \eg\ and so must be a fixed loop.  But that contradicts the assumption that  there are no $\phi$-invariant conjugacy classes and so completes the proof of the claim. 

Choose \ct s $\fG$ representing $\phi$ and $f' :G' \to G'$ representing $\phi^{-1}$. Let $\lambda_i$ the expansion factor (\cite[Definition 3.3.2]{bfh:tits1}) for $\phi$ with respect to $\Lambda^+_i$,  $\mu_i$ the expansion factor for $\phi^{-1}$ with respect to $\Lambda^-_i$ and $\lambda=\min\{\lambda_i,\mu_i\}> 1$. 

 Let $H_i \subset G$ be the stratum corresponding to $\Lambda^+_i$.    By \cite[Lemma~4.2.2]{bfh:tits1} there exists a subpath  $\delta_i \subset G$ of $\Lambda^+_i$ (any subpath of $\Lambda_i^+$ that crosses sufficiently many edges of $H_i$ will do)  with the following property:  if   $\sigma \subset G$ is a circuit or path and $\sigma_0 \subset \sigma$ is a copy of $\delta_i $ or its inverse $\bar \delta_i$, then there is a splitting of $\sigma$ in which one of the terms is an edge of $H_i$ contained in $\sigma_0$.    Let $B^+(\sigma)$  be the maximum number of disjoint subpaths of $\sigma$ that are copies of some $\delta_i $ or $\bar \delta_i $.  Then $\sigma$ has a splitting in which $B^+(\sigma)$ terms are single edges in \eg\ strata.  It follows that   $$|f^k_\#(\sigma)| > C  B^+ (\sigma) \lambda^k$$ for all $k \ge 1$ where  $C$ is a positive constant and $| \cdot |$ denotes length.
 
 Define $\delta_i' \subset G'$ and $B^-(\sigma')$ symmetrically replacing $\Lambda^+_i$ and $\fG$ with $\Lambda^-_i$ with $f' :G' \to G'$.  Each path or circuit $\sigma' \subset G'$  has a splitting in which $B^-(\sigma')$ terms are single edges in \eg\ strata of $f' :G' \to G'$.  After decreasing $C$ if necessary, we have     $$|(f')^k_\#(\sigma')| > C B^- (\sigma') \lambda^k$$ for all $k \ge 1$.  

Let $h :G \to G'$  be a homotopy equivalence that preserves markings. After replacing $\phi$ (and hence $f$ and $f'$) by an iterate, we may assume that for each $i$  the neighborhood $V(\delta_i)$ of $\Lambda^+_i$ consisting of lines that contain either $\delta_i$ or $\bar \delta_i$ as a subpath is mapped into itself by $\phi$ and that  the neighborhood $V(\delta_i'$) of $\Lambda^-_i$ determined by $\delta_i'$  is mapped into itself by $\phi^{-1}$. By  \cite[Part \urn{3} Theorem H]{hm:subgroups}  there is  a positive integer $N_i$ so that if  $\beta \subset G$ is a line that is not carried by  $\A_i$ then either $ h_\#(\beta)$   contains a copy of $\delta'_i$ or $\bar \delta'_i$ or $f^{N_i}_\#(\beta)$ contains a copy of $\delta_i$  or  $\bar \delta_i$.  Let $N := \max\{N_i\}$. Since $f_\#$ maps each $V(\delta_i)$ into itself and since no line is carried by $\A_i$ for all $i$, we have  
$$B^+(f^N_\#(\beta)) + B^-(h_\#(\beta))\ge 1$$
 for all lines $\beta $.
 
Recall (see  \cite[Part \urn{1} Section 1.1.6]{hm:subgroups}) that for all paths $\alpha \subset G$ there is a path $h_{\#\#}(\alpha)  \subset G'$ obtained from $h_\#(\sigma)$ by removing initial and terminal segments of length at most the bounded cancellation constant of $h$ and satisfying $h_{\#\#}(V(\alpha)) \subset V(h_\#(\alpha))$. 
We claim that  
 there exists a  positive integer $L$  so that if $ \sigma \subset G$ has length at least $L$ then 
 $$ B^+(f_\#^N(\sigma)) + B^-(h_{\#\#} (\sigma)) \ge 1$$
 Indeed, if this  fails then there exists    $L_j\to \infty$ and paths $\sigma_j \subset G$ with length $ \ge L_j$ such that  for all $1 \le i \le m$
\begin{itemize}
\item $h_{\#\#} (\sigma_j)$    does not contain  $ \delta'_i$ or $\bar \delta'_i$ as a subpath.
\item $f_\#^{N}(\sigma_j)$    does not contain $\delta_i$ or $\bar \delta_i$ as a subpath.
\end{itemize}
By focusing on the \lq middle\rq\ of each $ \sigma_j$ and passing to a subsequence we may assume that there are subpaths $ \beta_j \subset  \sigma_j$ such that $\beta_1 \subset \beta_2 \subset \ldots$ is an increasing sequence of paths whose union is a line $\beta \subset G$.   As verified above,  there exists $1 \le i \le m$ such that $\A_i$ does not carry $ \beta$.     By \cite[Part \urn{1} Lemma 1.6(3)]{hm:subgroups}  $h_{\#\#} (\beta_j) \subset  h_{\#\#} (\beta_{j+1}) \cap  h_{\#\#} (\sigma_j)$ and $h_\#(\beta)$ is the union of the $h_{\#\#} (\beta_j)$'s. It follows that  $h_\#(\beta)$ does not contain $\delta'_i$ or $\bar \delta'_i$  as a subpath and so $f^N_\#(\beta)$ must  contain a copy of $\delta_i$ or $\bar \delta_i$.  But then $f^N_{\#\#}(\beta_j)$   contains a copy of $\delta_i$ or $\bar \delta_i$ for all sufficiently large $j$ and hence $f^N_{\#}(\sigma_j)$    contains a copy of $\delta_i$ or $\bar \delta_i$ for all sufficiently large $j$.     This contradiction completes the proof of the claim.

We are now ready to complete the proof.  After replacing $\phi$ with $\phi^N$, we may assume that $N=1$.  Given a circuit $\sigma  \subset G$ with $|\sigma|\ge 2L$ divide it into $[ \frac{|\sigma|}{L}]$ subpaths $\sigma_l$ of length at least $L$ where $[ \cdot ]$ is the greatest integer function.   Applying the preceding claim and the fact   that  the $f_{\#\#}(\sigma_l)$'s are disjoint subpaths of  $f_{\#}(\sigma)$   we have 
$$\max\{ B^+(f_\#(\sigma)), B^-(\sigma')\} \ge [ \frac{| \sigma|}{2L}]$$
  In conjunction with the above displayed inequalities   this completes the proof of the lemma if $|\hat \sigma| \ge 2 L$.  As there are only finitely many remaining $\hat \sigma$ and none of these is fixed by an iterate of $\phi$ we are done. \endproof

\begin{prop}\label{p:brinkmann}
Let $\Phi\in \Aut(F_n)$ represent $\phi\in\Out(F_n)$. The following are equivalent.
\begin{enumerate}
\item
$\phi$ is atoroidal.
\item
$\phi$ is hyperbolic.
\item
$\Phi$ is hyperbolic.
\item
$M_\Phi$ is hyperbolic.  
\end{enumerate}
\end{prop}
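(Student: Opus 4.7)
My plan is to close the cycle $(2)\Rightarrow(1)\Rightarrow(4)\Rightarrow(3)\Rightarrow(2)$. The two non-trivial links $(1)\Rightarrow(4)$ and $(4)\Rightarrow(3)$ together are Brinkmann's theorem on mapping tori of free-group automorphisms, asserting that atoroidality of $\Phi$, hyperbolicity of $\Phi$, and hyperbolicity of $M_\Phi$ are all equivalent. I will quote this as a black box; its proof passes through the Bestvina-Feighn combination theorem applied to the mapping telescope of a train-track representative of $\phi$, where atoroidality is exactly what supplies the annuli-flaring hypothesis needed to conclude that $M_\Phi$ is hyperbolic.

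To glue condition (2) into this loop I need only two short arguments. First, for $(3)\Rightarrow(2)$, given a conjugacy class $\alpha$ I pick a cyclically reduced representative $a\in F_n$ so that $|a|=\|\alpha\|$; then for every $N$ one has $\|\phi^{\pm N}(\alpha)\|\le |\Phi^{\pm N}(a)|$, and the inequality in (3) applied to $a$ becomes exactly the inequality in (2) for $\alpha$ with the same $N$ and $\lambda$. Second, for $(2)\Rightarrow(1)$, suppose $\alpha$ is a non-trivial periodic conjugacy class of period $p$. Then $\|\phi^m(\alpha)\|$ is bounded above by $M:=\max_{0\le i<p}\|\phi^i(\alpha)\|$ for all $m\in\Z$. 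But iterating (2), choosing at each stage whichever of $\phi^{\pm N}$ multiplies the norm by at least $\lambda$, produces a sequence $\alpha_k$ of $\phi$-iterates of $\alpha$ with $\|\alpha_k\|\ge \lambda^k\|\alpha\|$, which eventually exceeds $M$; this is a contradiction, so (2) forbids periodic conjugacy classes.

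The only real obstacle is Brinkmann's theorem itself, which is an established result and is simply invoked. All remaining work is elementary bookkeeping comparing word length on $F_n$ with cyclically reduced length on conjugacy classes, so no further train-track machinery is needed inside this proposition; the CT-based input will be used afterward, in Corollary~\ref{c:brinkmann}, to make the atoroidality check algorithmic.
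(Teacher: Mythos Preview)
Your cycle has a genuine gap in the step $(3)\Rightarrow(2)$. You choose a cyclically reduced representative $a$ of $\alpha$ so that $|a|=\|\alpha\|$, and you correctly note that $\|\phi^{\pm N}(\alpha)\|\le |\Phi^{\pm N}(a)|$. But these inequalities point the wrong way: from $\lambda|a|\le\max\{|\Phi^{N}(a)|,|\Phi^{-N}(a)|\}$ and $\|\phi^{\pm N}(\alpha)\|\le|\Phi^{\pm N}(a)|$ you cannot conclude $\lambda\|\alpha\|\le\max\{\|\phi^{N}(\alpha)\|,\|\phi^{-N}(\alpha)\|\}$. The problem is real: $\Phi^{N}(a)$ may be far from cyclically reduced, so large $|\Phi^{N}(a)|$ does not force large $\|\phi^{N}(\alpha)\|$. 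Passing from hyperbolicity of $\Phi$ on word length to hyperbolicity of $\phi$ on conjugacy length is not elementary bookkeeping.

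The paper runs the cycle in the opposite direction, $(1)\Rightarrow(2)\Rightarrow(3)\Rightarrow(4)\Rightarrow(1)$, and in particular quotes Brinkmann for $(1)\Rightarrow(2)$ directly (atoroidal implies $\phi$ hyperbolic in the conjugacy-length sense). The step $(2)\Rightarrow(3)$, which is the reverse of your problematic step, is handled by citing the proof of \cite[Theorem~5.1]{bfh:tits0}; $(3)\Rightarrow(4)$ is the Bestvina--Feighn combination theorem; and $(4)\Rightarrow(1)$ is the trivial observation that a periodic conjugacy class produces a $\Z^2$ in $M_\Phi$. Your $(2)\Rightarrow(1)$ argument is fine, and your black-box invocation of Brinkmann together with the combination theorem covers the equivalence of $(1)$, $(3)$, $(4)$; the simplest repair is to drop your $(3)\Rightarrow(2)$ argument and instead observe that Brinkmann already gives $(1)\Rightarrow(2)$, closing the cycle as the paper does.
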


\begin{proof}
$(1)\implies (2)$ is Lemma~\ref{l:brinkmann}. 
{$(2)\implies (3)$ follows from the proof of Theorem~5.1 of \cite{bfh:tits0}.}
$(3)\implies (4)$ is a consequence of the first corollary of Section~5 of \cite{bf:combination}. $(4)\implies (1)$ since otherwise $M_\Phi$ contains $\Z^2$.
\end{proof}

\begin{corollary}[\cite{ik:hyperbolic}; see also {\cite[page~2]{fd:conjugacy}}]\label{c:brinkmann}
There is an algorithm with input $\phi\in\Out(F_n)$ that outputs {YES} or {NO} depending on whether or not $\phi$ is hyperbolic.
\end{corollary}

\begin{proof}
Construct a \ct\ $\fG$ for a $\phi^M$ with $M$ as in Corollary~\ref{c:kn}.   By Proposition~\ref{p:brinkmann}, $\phi$ is hyperbolic iff $\phi$ is atoroidal. By Lemma~\ref{lifting to S(f)}(2), $\phi$ is atoroidal if and only if $\pstallings(f)$ has no circuits and this can be checked algorithmically; see Remark~\ref{r:stallings}.  
\end{proof}

\bibliographystyle{amsalpha}
\bibliography{../../ref}

\providecommand{\bysame}{\leavevmode\hbox to3em{\hrulefill}\thinspace}
\providecommand{\MR}{\relax\ifhmode\unskip\space\fi MR }
% \MRhref is called by the amsart/book/proc definition of \MR.
\providecommand{\MRhref}[2]{%
  \href{http://www.ams.org/mathscinet-getitem?mr=#1}{#2}
}
\providecommand{\href}[2]{#2}
\begin{thebibliography}{GJLL98}

\bibitem[BF92]{bf:combination}
M.~Bestvina and M.~Feighn, \emph{A combination theorem for negatively curved
  groups}, J. Differential Geom. \textbf{35} (1992), no.~1, 85--101.
  \MR{93d:53053}

\bibitem[BF14]{bf:hyperbolicity}
Mladen Bestvina and Mark Feighn, \emph{Hyperbolicity of the complex of free
  factors}, Adv. Math. \textbf{256} (2014), 104--155. \MR{3177291}

\bibitem[BFH97]{bfh:tits0}
M.~Bestvina, M.~Feighn, and M.~Handel, \emph{Laminations, trees, and
  irreducible automorphisms of free groups}, Geom. Funct. Anal. \textbf{7}
  (1997), no.~2, 215--244. \MR{98c:20045}

\bibitem[BFH00]{bfh:tits1}
Mladen Bestvina, Mark Feighn, and Michael Handel, \emph{The {T}its alternative
  for ${{\rm {O}ut}({F}\sb n)}$. {I}. {D}ynamics of exponentially-growing
  automorphisms}, Ann. of Math. (2) \textbf{151} (2000), no.~2, 517--623. \MR{1
  765 705}

\bibitem[BH92]{bh:tracks}
Mladen Bestvina and Michael Handel, \emph{Train tracks and automorphisms of
  free groups}, Ann. of Math. (2) \textbf{135} (1992), no.~1, 1--51.

\bibitem[BM16]{bm:fix}
Oleg Bogopolski and Olga Maslakova, \emph{An algorithm for finding a basis of
  the fixed point subgroup of an automorphism of a free group}, Internat. J.
  Algebra Comput. \textbf{26} (2016), no.~1, 29--67. \MR{3463201}

\bibitem[Bri00]{pb:hyperbolic}
P.~Brinkmann, \emph{Hyperbolic automorphisms of free groups}, Geom. Funct.
  Anal. \textbf{10} (2000), no.~5, 1071--1089. \MR{1800064 (2001m:20061)}

\bibitem[BT68]{bt:tf}
Gilbert Baumslag and Tekla Taylor, \emph{The centre of groups with one defining
  relator}, Math. Ann. \textbf{175} (1968), 315--319. \MR{0222144 (36 \#5196)}

\bibitem[CG10]{cg:primitive}
A.~Clifford and R.~Z. Goldstein, \emph{Subgroups of free groups and primitive
  elements}, J. Group Theory \textbf{13} (2010), no.~4, 601--611. \MR{2661660
  (2011g:20028)}

\bibitem[CMP15]{cmp:iwip}
Matt Clay, Johanna Mangahas, and Alexandra Pettet, \emph{An algorithm to detect
  full irreducibility by bounding the volume of periodic free factors},
  Michigan Math. J. \textbf{64} (2015), no.~2, 279--292. \MR{3359026}

\bibitem[Coo87]{co:bcc}
Daryl Cooper, \emph{Automorphisms of free groups have finitely generated fixed
  point sets}, J. Algebra \textbf{111} (1987), no.~2, 453--456. \MR{89a:20024}

\bibitem[Cul84]{mc:periodic}
Marc Culler, \emph{Finite groups of outer automorphisms of a free group},
  Contributions to group theory, Contemp. Math., vol.~33, Amer. Math. Soc.,
  Providence, RI, 1984, pp.~197--207. \MR{767107}

\bibitem[Dah16]{fd:conjugacy}
Fran{\c{c}}ois Dahmani, \emph{On suspensions and conjugacy of hyperbolic
  automorphisms}, Trans. Amer. Math. Soc. \textbf{368} (2016), no.~8,
  5565--5577. \MR{3458391}

\bibitem[DF05]{df:models}
Guo-An Diao and Mark Feighn, \emph{The {G}rushko decomposition of a finite
  graph of finite rank free groups: an algorithm}, Geom. Topol. \textbf{9}
  (2005), 1835--1880 (electronic). \MR{2175158 (2006i:20045)}

\bibitem[Dic14]{wd:primitive}
Warren Dicks, \emph{On free-group algorithms that sandwich a subgroup between
  free-product factors}, J. Group Theory \textbf{17} (2014), no.~1, 13--28.
  \MR{3176649}

\bibitem[DV96]{dv:endos}
Warren Dicks and Enric Ventura, \emph{The group fixed by a family of injective
  endomorphisms of a free group}, Contemporary Mathematics, vol. 195, American
  Mathematical Society, Providence, RI, 1996. \MR{1385923 (97h:20030)}

\bibitem[FH09]{fh:abeliansubgroups}
Mark Feighn and Michael Handel, \emph{Abelian subgroups of {${\rm Out}(F\sb
  n)$}}, Geom. Topol. \textbf{13} (2009), no.~3, 1657--1727. \MR{MR2496054}

\bibitem[FH11]{fh:recognition}
\bysame, \emph{The recognition theorem for {${\rm Out}(F_n)$}}, Groups Geom.
  Dyn. \textbf{5} (2011), no.~1, 39--106. \MR{2763779 (2012b:20061)}

\bibitem[Ger84]{sg:whitehead}
S.~M. Gersten, \emph{On {W}hitehead's algorithm}, Bull. Amer. Math. Soc. (N.S.)
  \textbf{10} (1984), no.~2, 281--284. \MR{85g:20051}

\bibitem[GJLL98]{gjll:index}
Damien Gaboriau, Andre Jaeger, Gilbert Levitt, and Martin Lustig, \emph{An
  index for counting fixed points of automorphisms of free groups}, Duke Math.
  J. \textbf{93} (1998), no.~3, 425--452.

\bibitem[HM]{hm:subgroups}
M.~Handel and L.~Mosher, \emph{Subgroup decomposition in {${\rm Out}(F_n)$}},
  to appear in Mem. Amer. Math. Soc.

\bibitem[HM11]{hm:axes}
Michael Handel and Lee Mosher, \emph{Axes in outer space}, Mem. Amer. Math.
  Soc. \textbf{213} (2011), no.~1004, vi+104. \MR{2858636}

\bibitem[IKT90]{ikt:fixN}
W.~Imrich, S.~Krsti{\'c}, and E.~C. Turner, \emph{On the rank of fixed point
  sets of automorphisms of free groups}, Cycles and rays ({M}ontreal, {PQ},
  1987), NATO Adv. Sci. Inst. Ser. C Math. Phys. Sci., vol. 301, Kluwer Acad.
  Publ., Dordrecht, 1990, pp.~113--122. \MR{1096989}

\bibitem[Kap]{ik:algorithm}
Ilya Kapovich, \emph{Detecting fully irreducible automorphisms: a polynomial
  time algorithm. {W}ith an appendix by {M}ark {C}. {B}ell}, arXiv:1609.03820.

\bibitem[Kap00]{ik:hyperbolic}
Ilya Kapovich, \emph{Mapping tori of endomorphisms of free groups}, Comm.
  Algebra \textbf{28} (2000), no.~6, 2895--2917. \MR{1757436 (2001c:20098)}

\bibitem[Kap14]{ik:iwip}
\bysame, \emph{Algorithmic detectability of iwip automorphisms}, Bull. Lond.
  Math. Soc. \textbf{46} (2014), no.~2, 279--290. \MR{3194747}

\bibitem[LL08]{ll:dynamics}
Gilbert Levitt and Martin Lustig, \emph{Automorphisms of free groups have
  asymptotically periodic dynamics}, J. Reine Angew. Math. \textbf{619} (2008),
  1--36. \MR{2414945}

\bibitem[Mar95]{rm:thesis}
Reiner Martin, \emph{Non-uniquely ergodic foliations of thin type, measured
  currents, and automorphisms of free groups}, {U}niversity of {U}tah, {P}h{D}
  thesis, 1995.

\bibitem[Nie29]{jn:unter2}
Jakob Nielsen, \emph{Untersuchungen zur {T}opologie der geschlossenen
  zweiseitigen {F}l\"achen. {II}}, Acta Math. \textbf{53} (1929), no.~1, 1--76.
  \MR{1555290}

\bibitem[Nie86]{jn:collected1}
\bysame, \emph{Jakob {N}ielsen: collected mathematical papers. {V}ol. 2},
  Contemporary Mathematicians, Birkh\"auser Boston, Inc., Boston, MA, 1986,
  Edited and with a preface by Vagn Lundsgaard Hansen. \MR{865336}

\bibitem[Sta83]{st:folding}
J.~Stallings, \emph{Topology of finite graphs}, Inv. Math. \textbf{71} (1983),
  551--565.

\end{thebibliography}
%\bibliography{ref}

\end{document}